\titleformat*{\subsection}{\large\bfseries}
\numberwithin{equation}{section}
\pgfplotsset{compat=newest}
\pgfplotsset{plot coordinates/math parser=false}
\newlength\figureheight
\newlength\figurewidth
\numberwithin{equation}{section}
\newcommand{\beq}{\begin{equation}}
\newcommand{\bEq}{\end{equation}}
\newcommand{\bx}{{\bf{x}}}
\newcommand{\by}{{\bf{y}}}
\newcommand{\al}{\alpha}
\newcommand{\be}{\begin{equation}}
\newcommand{\ee}{\end{equation}}
\newcommand{\e}{{\varepsilon}}
\newcommand{\fa}{{\mathfrak a}}
\newcommand{\fb}{{\mathfrak b}}
\renewcommand{\cal}{\mathcal}
\newcommand{\wh}{\widehat}
\newcommand{\wt}{\widetilde}
\newcommand{\ii}{\mathrm{i}} 
\newcommand{\dd}{\mathrm{d}}
\renewcommand{\epsilon}{\varepsilon}
\renewcommand{\leq}{\leqslant}
\renewcommand{\geq}{\geqslant}
\renewcommand{\le}{\leq}
\renewcommand{\ge}{\geq}
\renewcommand{\P}{\mathbb{P}}
\newcommand{\E}{\mathbb{E}}
\newcommand{\R}{\mathbb{R}}
\newcommand{\C}{\mathbb{C}}
\newcommand{\N}{\mathbb{N}}
\newcommand{\wsG}{{\widehat{\cal G}}}
\newcommand{\wsH}{{\widehat{\cal H}}}
\DeclareMathOperator{\diag}{diag}
\DeclareMathOperator{\tr}{Tr}
\DeclareMathOperator{\Cov}{Cov}
\DeclareMathOperator{\im}{Im}
\DeclareMathOperator{\OO}{O}
\DeclareMathOperator{\oo}{o}
\DeclareMathOperator{\bC}{\mathbf{C}}
\DeclareMathOperator{\bv}{\mathbf{v}}
\DeclareMathOperator{\bu}{\mathbf{u}}
\DeclareMathOperator{\bw}{\mathbf{w}}
\DeclareMathOperator{\bbE}{\mathbb{E}}
\DeclareMathOperator{\bbN}{\mathbb{N}}
\DeclareMathOperator{\bbP}{\mathbb{P}}
\DeclareMathOperator{\sI}{\mathcal{I}}
\theoremstyle{plain} 
\newtheorem{theorem}{Theorem}[section]
\newtheorem*{theorem*}{Theorem}
\newtheorem{lemma}[theorem]{Lemma}
\newtheorem{assumption}[theorem]{Assumption}
\newtheorem*{lemma*}{Lemma}
\newtheorem{corollary}[theorem]{Corollary}
\newtheorem*{corollary*}{Corollary}
\newtheorem{proposition}[theorem]{Proposition}
\newtheorem*{proposition*}{Proposition}
\newtheorem{claim}[theorem]{Claim}
\newtheorem{definition}[theorem]{Definition}
\newtheorem*{definition*}{Definition}
\theoremstyle{remark}
\newtheorem*{example*}{Example}
\newtheorem{remark}[theorem]{Remark}
\newtheorem*{remark*}{Remark}
\newtheorem*{remarks*}{Remarks}
\renewcommand{\Im}{{\rm{Im}}}
\newcommand{\nc}{\normalcolor}
\def\env@dmatrix{\hskip -\arraycolsep
  \let\@ifnextchar\new@ifnextchar
  \extrarowheight=2ex
  \array{*\c@MaxMatrixCols{>{\displaystyle}c}}}
\title{Sample canonical correlation coefficients of high-dimensional random vectors: local law and Tracy-Widom limit}
\author[1]{Fan Yang  \thanks{E-mail: fyang75@wharton.upenn.edu}}
\affil[1]{Department of Statistics, University of Pennsylvania}
\begin{document}
\maketitle

\begin{abstract}
Consider two random vectors $\bC_1^{1/2}\mathbf x \in \R^p$ and $\bC_2^{1/2}\mathbf y\in \mathbb R^q$, where the entries of $\mathbf x$ and $\mathbf y$ are i.i.d. random variables with mean zero and variance one, and $\bC_1$ and $\bC_2$ are respectively  $p \times p$ and $q\times q$ deterministic population covariance matrices. With $n$ independent samples of $(\bC_1^{1/2}\mathbf x,\bC_2^{1/2}\mathbf y)$, we study the sample correlation between these two vectors using canonical correlation analysis. 
Under the high-dimensional setting with ${p}/{n}\to c_1 \in (0, 1)$ and ${q}/{n}\to c_2 \in (0, 1-c_1)$ as $n\to \infty$,  we prove that the largest sample canonical correlation coefficient converges to the Tracy-Widom distribution as long as we have $\lim_{s \rightarrow \infty}s^4 \mathbb{P}(\vert  x_{ij} \vert \geq s)=0$ and $\lim_{s \rightarrow \infty}s^4 \mathbb{P}(\vert  y_{ij} \vert \geq s)=0$, which we believe to be a sharp moment condition. This extends the result in \cite{CCA2}, which established the Tracy-Widom limit under the assumption that all moments exist for the entries of $\bx$ and $\by$. Our proof is based on a new linearization method, which reduces the problem to the study of a $(p+q+2n)\times (p+q+2n)$ random matrix $H$. In particular, we shall prove an optimal local law on its inverse $G:=H^{-1}$, called resolvent. This local law is the main tool for both the proof of the Tracy-Widom law in this paper, and the study in \cite{PartII,PartI} on the canonical correlation coefficients of high-dimensional random vectors with finite rank correlations. 
\end{abstract}

\section{Introduction}\label{sec_intro}

In multivariate statistics, the canonical correlation analysis (CCA) has been one of the most general and classical methods to study the correlations between two random vectors $\mathbf x\in \R^p$ and $\mathbf y\in \R^q$ since the seminal work by Hotelling \cite{Hotelling}. CCA seeks two sequence of orthonormal vectors, such that the projections of $\mathbf x$ and $\by$ onto these vectors have maximized correlations. The corresponding sequence of correlations are called the {\it canonical correlation coefficients} (CCC). More precisely, we first find the unit vectors $\mathbf a_1\in \R^p$ and $\mathbf b_1\in \R^p$ that maximize the correlation, 
$$\rho(\mathbf a_1,\mathbf b_1)=\sup_{\|\mathbf a\|=1,\|\mathbf b\|=1}\rho(\mathbf a,\mathbf b),\quad  \rho(\mathbf a,\mathbf b):=\text{Corr}(\mathbf a^T \bx, \mathbf b^T \by ).$$
Then $\rho_1:=\rho(\mathbf a_1,\mathbf b_1)$ is the first CCC, and $(\mathbf a_1^T \bx, \mathbf b_1^T \by )$ is called the first pair of canonical variables. Suppose we have obtained the first $k$ CCC, $\rho_i$, $1\le i \le k$, and the corresponding pairs of canonical variables $(\mathbf a_i^T \bx, \mathbf b_i^T \by )$, $1\le i \le k$. We then define inductively the $(k+1)$-th CCC by seeking the vectors $(\mathbf a_{k+1},\mathbf b_{k+1})$ that maximize $\rho(\mathbf a_{k+1},\mathbf b_{k+1})$ subject to the constraint that $(\mathbf a_{k+1}^T \bx, \mathbf b_{k+1}^T \by )$ is uncorrelated with the first $k$ pairs of canonical variables. Then $\rho_{k+1}:=\rho(\mathbf a_{k+1},\mathbf b_{k+1})$ is the $(k+1)$-th  CCC.

Define the population covariance and cross-covariance matrices
$$\Sigma_{xx}:= \Cov(\mathbf x,\mathbf x),\quad \Sigma_{yy}:= \Cov(\mathbf y,\mathbf y), \quad \Sigma_{xy}=\Sigma_{yx}^T:= \Cov(\mathbf x,\mathbf y). $$
It is well-known that $\rho_i^2$  is the $i$-th largest eigenvalue of the population canonical correlation matrix ${\bm\Sigma}:=\Sigma_{xx}^{-1}\Sigma_{xy}\Sigma_{yy}^{-1}\Sigma_{yx}$. Given $n$ independent samples of $( \mathbf x, \mathbf y)$, we study the CCC through their sample counterparts, which  are defined as the eigenvalues of the {\it sample canonical correlation} (SCC) matrix  
$$\cal C_{XY}:=S_{xx}^{-1}S_{xy}S_{yy}^{-1}S_{yx},$$
where
$$S_{xx}:=\frac1n\sum_{i=1}^n \bx_i \bx_i^T, \quad S_{yy}:=\frac1n\sum_{i=1}^n \by_i \by_i^T, \quad S_{xy}=S_{yx}^T:=\frac1n\sum_{i=1}^n \bx_i \by_i^T.$$
We denote the eigenvalues of $\cal C_{XY}$, i.e. the sample CCC, as $\lambda_1\ge \lambda_2 \ge \cdots \ge \lambda_{p\wedge q}$.

In this paper, we consider the case where $\bx$ and $\by$ are independent. We are interested in the behaviors of the eigenvalues of the SCC matrix $\cal C_{XY}$, including the convergence of (almost) all the eigenvalues and the limiting distribution of the largest few eigenvalues.  If the entries of $X$ and $Y$ are i.i.d. Gaussian distributed, then the eigenvalues of $\cal C_{XY}$ reduce to those of the double Wishart matrices \cite{CCA_TW}. Moreover, the joint distribution of the eigenvalues of double Wishart matrices has been studied in the context of the so-called Jacobi ensemble and F-type matrices, and it has been shown that the largest eigenvalue converges to the type-1 Tracy-Widom distribution under a proper scaling \cite{CCA_TW2,CCA_TW}. For general distribution of the entries of $X$ and $Y$, the Tracy-Widom law of the largest eigenvalue of $\cal C_{XY}$ was established in \cite{CCA2} under the assumption that all the moments of the entries are finite. There have been many other works on high-dimensional CCA, and, without attempting to be comprehensive, we mention some of them that are most related to the topic of this paper. In \cite{Fujikoshi2017}, the author derived the asymptotic distributions of the canonical correlation coefficients when one of $p$ and $q$ is fixed as $n\to \infty$. When $p$ and $q$ are proportional to $n$, the asymptotic distributions of the spiked eigenvalues for CCA with finite rank correlations have been established in \cite{CCA}. The CLT for linear spectral statistics of CCA was proved in \cite{CCA_CLT}. Under certain sparsity assumptions, the theory of high-dimensional sparse CCA
and it applications have been discussed in \cite{gao2015,gao2017}.  In a recent paper \cite{johnstone2020}, the authors studied the asymptotic behaviors of likelihood ratios of CCA under the null hypothesis of no spikes and the alternative hypothesis with a single spike.

One purpose of this paper is to extend the Tracy-Widom law in \cite{CCA2} to the case with weaker moment assumptions. In fact, we prove that the largest eigenvalue of $\mathcal C_{XY}$ converges to the Tracy-Widom distribution as long as the following tail condition holds (see Theorem \ref{main_thm1}): 
\be\label{intro tail}\lim_{s \rightarrow \infty}s^4\left[ \mathbb{P}( \vert  x_i \vert \geq s)+\mathbb{P}(\vert  y_i \vert \geq s)\right]=0.\ee 
We believe it to be the sharp moment condition, because it has been shown to be necessary and sufficient for the Tracy-Widom limit of the largest eigenvalue of sample covariance matrices \cite{DY}. Besides the Tracy-Widom law for the largest eigenvalue, we will also prove a rigidity estimate for (almost) all the eigenvalues of $\cal C_{XY}$, including the ones in the bulk of the spectrum. This rigidity estimate was not presented \cite{CCA2}, and we expect that it will be of independent interest. Different from the methods used in \cite{CCA2}, we will develop a new linearization method, which reduces the problem to the study of a $(p+q+2n)\times (p+q+2n)$ random matrix $H$ that is linear in $X$ and $Y$; see \eqref{linearize_block} below. Moreover, we will prove an optimal local law on its inverse $G:=H^{-1}$, i.e. the so-called resolvent, which is another main result of this paper. The linearization idea and the local law allow us to relax the moment assumptions in \cite{CCA2} and prove the Tracy-Widom law under the tail condition in \eqref{intro tail}. 
 
 Besides the Tracy-Widom distribution of the largest eigenvalues, the local law on $G$ proved in this paper will also serve as the base of further studies of high-dimensional CCA with finite rank correlations. 
More precisely, we consider the sample CCC of  two high-dimensional random vectors $\wh{ \mathbf x} \in \R^p$ and $ \wh \by\in \mathbb R^q$ with finite rank correlations as following:
$$\wh{ \mathbf x} = \mathbf C_1^{1/2} \mathbf x + A \mathbf z, \quad \wh \by= \mathbf C_2^{1/2} \mathbf y + B \mathbf z,$$
where $\mathbf C_1$ and $\mathbf C_2$ are $p \times p$ and $q\times q$ deterministic non-negative definite symmetric matrices, which give the population covariances,  and $A$ and $B$ are $p\times r$ and $q\times r$ deterministic matrices, which are the factor loading matrices. Moreover, suppose that the entries of $\mathbf x\in \R^p$, $\mathbf y\in \R^q$ and $\mathbf z\in \R^r$ are real independent random variables with zero mean  and unit variance. For $n$ independent samples $( \wh \bx_i, \wh \by_i)$, $1\le i \le n$, we can arrange them into the following data matrix with a conventional scaling $n^{-1/2}$:
$$ {\mathcal X}: = \mathbf C^{1/2}_1 X + AZ, \quad  {\mathcal Y}: = \mathbf C^{1/2}_2 Y +  B Z.$$
Now $X$, $Y$ and $Z$ are respectively $p\times n$, $q\times n$ and $r\times n$ matrices with real independent entries with mean zero and variance $n^{-1}$. We consider the high-dimensional setting with low-rank perturbations, that is, ${p}/{n}\to c_1 \in (0, 1)$ and ${q}/{n}\to c_2 \in (0, 1-c_1)$ as $n\to \infty$, and $r=\OO(1)$ is fixed.
For this model, the canonical correlation matrix ${\bm\Sigma}$ is of rank $\le r$, and has at most $r$ nonzero eigenvalues $t_i:=\rho_i^2$, $1\le i \le r$. Bao et al. \cite{CCA} consider this setting for Gaussian vectors, that is, $X$, $Y$ and $Z$ are all random matrices with i.i.d. Gaussian entries. They show that $t_i$ will give rise to an outlier of the spectrum if it is above some threshold $t_c$. The outlier lies around a fixed location determined by $t_i$, and moreover, it is asymptotic Gaussian under the $\sqrt{n}$ scaling. 
The proof in \cite{CCA} depends on the fact that multivariate Gaussian distribution is rotational invariant, which is not true for more general distributions. On the other hand, the linearization method developed in this paper allows us to circumvent this issue. Based on the main results of this paper, we will extend the results in \cite{CCA} to more general distributions, assuming only certain moments conditions on the entries of $X$, $Y$ and $Z$. Due to restraint of length of this paper, we will put those results in other papers \cite{PartII,PartI}. In \cite{PartI} we will study the convergence of the spiked eigenvalues of the SCC matrices, and in \cite{PartII} we will prove a central limit theorem for the spiked eigenvalues. For all these proofs, the  local law for $G$ and the eigenvalue rigidity proved in this paper play central roles. 
 \nc

This paper is organized as follows.  In Section \ref{main_result}, we define our model and state the main results---Theorem \ref{lem null} and Theorem \ref{main_thm1}, which give the eigenvalue rigidity and Tracy-Widom law, and Theorem \ref{thm_local} and Theorem \ref{thm_largerigidity}, which give the local laws for the resolvent $G$. In Section \ref{secsectools}, we introduce the notations and collect some basic tools that will be used in the proof. Section \ref{secpf1} is devoted to the proof of  Theorem \ref{lem null} and Theorem \ref{thm_largerigidity}, and Section \ref{secpf2} contains the proof of Theorem \ref{main_thm1}. Finally, the proof of Theorem \ref{thm_local}  is divided into two parts: in Section \ref{pf thmlocal}, we prove a weaker version of Theorem \ref{thm_local}, which gives the entrywise local law for $G$; the proof of Theorem \ref{thm_local}  is then completed in Section \ref{sec_aniso} based on the results in Section \ref{pf thmlocal}.

\vspace{5pt}

\noindent{\bf Conventions.} 
The fundamental large parameter is $n$ and we always assume that $p,q$ are comparable to $n$. All quantities that are not explicitly constant may depend on $n$, and we usually omit $n$ from our notations. We use $C$ to denote a generic large positive constant, whose value may change from one line to the next. Similarly, we use $\epsilon$, $\tau$, $\delta$ and $c$ to denote generic small positive constants. If a constant depends on a quantity $a$, we use $C(a)$ or $C_a$ to indicate this dependence. 
For two quantities $a_n$ and $b_n$ depending on $n$, the notation $a_n = \OO(b_n)$ means that $|a_n| \le C|b_n|$ for some constant $C>0$, and $a_n=\oo(b_n)$ means that $|a_n| \le c_n |b_n|$ for some positive sequence $c_n\downarrow 0$ as $n\to \infty$. We also use the notations $a_n \lesssim b_n$ if $a_n = \OO(b_n)$, and $a_n \sim b_n$ if $a_n = \OO(b_n)$ and $b_n = \OO(a_n)$. For a matrix $A$, we use $\|A\|:=\|A\|_{l^2 \to l^2}$ to denote the operator norm,  $\|A\|_F $ to denote the Frobenius norm, and $\|A\|_{\max}:=\max_{i,j}|A_{ij}|$ to denote the max norm.  
For a vector $\mathbf v=(v_i)_{i=1}^n$, $\|\mathbf v\|\equiv \|\mathbf v\|_2$ stands for the Euclidean norm. 
In this paper, we often write an identity matrix as $I$ or $1$ without causing any confusions. If two random variables $X$ and $Y$ have the same distribution, we write $X\stackrel{d}{=} Y$.

\vspace{5pt}

\noindent{\bf Acknowledgements.} The author would like to thank Zongming Ma for bringing this problem to his attention and for helpful discussions. 

\section{Definitions and main results}\label{main_result}

\subsection{The model}

We consider two data matrices
$$ {\mathcal X}: = \mathbf C^{1/2}_1 X , \quad  {\mathcal Y}: = \mathbf C^{1/2}_2 Y ,$$
where $ \mathbf C_1$ and $\mathbf C_2$ are $p\times p$ and $q\times q$ deterministic population covariance matrices, 
and 
$X=(x_{ij})$ and $Y=(y_{ij})$ are $p\times n$ and $q\times n$ random matrices, respectively. We assume that the entries $x_{ij}$, $1 \le i \le p$, $1\le j \le n$ and $y_{ij}$, $1 \le i\le q$, $1\le j \le n$ are independent (but not necessarily identically distributed) random variables satisfying
\begin{equation}\label{assm1}
\mathbb{E} x_{ij}=\mathbb{E} y_{ij} =0, \ \quad \ \mathbb{E} \vert x_{ij} \vert^2=\mathbb{E} \vert y_{ij} \vert^2  =n^{-1}.
\end{equation}
For definiteness, in this paper we focus on the real case, that is, all the random variables are real. However, we remark that our proof can be applied to the complex case after minor modifications. 
In this paper, we consider the high dimensional setting, i.e., 
\begin{equation}
c_1(n) := \frac{p}{n} \to \hat c_1 \in (0,1), \ \ \  c_2(n) := \frac{q}{n} \to \hat c_2 \in (0,1), \ \ \ \text{with} \ \ \ c_1(n) + c_2(n) \in (0,1).  \label{assm2}
 \end{equation}
For simplicity, we will always abbreviate $c_1(n)\equiv c_1$ and $c_2(n)\equiv c_2$ in the rest of the paper. Without loss of generality, we can assume that $c_1\ge c_2$. 
In this paper, we are interested in the eigenvalues of the  sample canonical correlation matrix 
\begin{align*}
\cal C_{\cal X\cal Y}:= \left({\cal X} {\cal X}^T\right)^{-1/2} \left( {\cal X} {\cal Y}^T\right)\left(  {\cal Y} {\cal Y}^T\right)^{-1}\left( {\cal Y} {\cal X}^T\right)  \left( {\cal X} {\cal X}^T\right)^{-1/2} 
\end{align*}
Since the canonical correlations are invariant under block diagonal transformations $(X,Y)\to (\bC_1^{1/2}X,\bC_2^{1/2}Y)$,  it is equivalent to study the eigenvalues of 
\begin{align*}
\cal C_{X Y}:= S_{xx}^{-1/2} S_{xy} S_{yy}^{-1}S_{yx}S_{xx}^{-1/2} ,
\end{align*}
where 
\be\label{def Sxy}S_{xx} := {X}{ X}^T, \quad S_{yy} := {Y}{ Y}^T, \quad S_{xy} = S^T_{yx}:=XY^T.\ee
We will also use the following matrix
$$\cal C_{YX}:= S_{yy}^{-1/2} S_{yx} S_{xx}^{-1}S_{xy}S_{yy}^{-1/2},$$
and denote its eigenvalues by $ \lambda_1 \ge \lambda_2 \ge \cdots \ge \lambda_q\ge 0$. Note that $\cal C_{ X Y}$ shares the same eigenvalues with $\cal C_{ Y X}$, except that it has $(p-q)$ more trivial zero eigenvalues $ \lambda_ {q+1} = \cdots = \lambda_{p}=0$. 

We now summarize the main assumptions for future reference. For our purpose, we shall relax the assumption \eqref{assm1} a little bit. 


\begin{assumption}\label{main_assm}
Fix a small constant $\tau>0$. Let $X=(x_{ij})$ and $Y=(Y_{ij})$ be two real independent $p\times n$ and $q\times n$ matrices, whose entries are independent random variables that satisfy the following moment conditions: 
\begin{align}
\max_{i,j}\left|\mathbb{E} x_{ij}\right|   \le n^{-2-\tau}, \quad & \max_{i,j}\left|\mathbb{E} y_{ij}\right|   \le n^{-2-\tau},\label{entry_assm0} \\
\max_{i,j}\left|\mathbb{E} | x_{ij} |^2  - n^{-1}\right|   \le n^{-2-\tau}, \quad & \max_{i,j}\left|\mathbb{E} | y_{ij} |^2  - n^{-1}\right|   \le n^{-2-\tau}. \label{entry_assm1}
\end{align}
Note that (\ref{entry_assm0}) and (\ref{entry_assm1}) are slightly more general than (\ref{assm1}).
Moreover, we assume that
\begin{equation}
\tau \le c_2 \le  c_1 , \quad  c_1 + c_2 \le 1 -\tau.  \label{assm20}
 \end{equation} 
\end{assumption}



\subsection{The Tracy-Widom limit and eigenvalue rigidity}

We denote the ESD of $\cal C_{YX}$ by 
$$ F_n(x):= \frac1q\sum_{i=1}^q \mathbf 1_{\lambda_i \le x}.$$
If $X$ and $Y$ are both i.i.d.\,Gaussian matrices, then it is known that, almost surely, $F_n$ converges weakly to a deterministic probability distribution $F(x)$ with density \cite{Wachter}
\be\label{LSD}
f(x)= \frac{1}{2\pi c_2} \frac{\sqrt{(\lambda_+ - x)(x-\lambda_-)}}{x(1-x)}, \quad \lambda_- \le x \le \lambda_+,
\ee
where
\be\label{lambdapm}\lambda_\pm:= \left( \sqrt{c_1(1-c_2)} \pm \sqrt{c_2(1-c_1)}\right)^2.\ee
The convergence of the ESD actually holds under a more general distribution assumption on the entries of $X$ and $Y$ as proved by \cite{CCA_ESD}. 
We define the quantiles of the density \eqref{LSD}, which correspond to the classical locations of the eigenvalues of $\cal C_{YX}$. 
\begin{definition} [Classical locations of eigenvalues]
The classical location $\gamma_j$ of the $j$-th eigenvalue is defined as
\begin{equation}\label{gammaj}
\gamma_j:=\sup_{x}\left\{\int_{x}^{+\infty} f(x)\dd x > \frac{j-1}{q}\right\},
\end{equation}
where $f$ is defined in \eqref{LSD}. Note that we have $\gamma_1 = \lambda_+$ and $\lambda_+ - \gamma_j \sim (j/n)^{2/3}$ for $j>1$.
\end{definition}

 

Before stating the main results, we first define the following notion of stochastic domination, which was first introduced in \cite{Average_fluc} and subsequently used in many works on random matrix theory, such as \cite{isotropic,principal,local_circular,Delocal,Semicircle,Anisotropic}. It simplifies the presentation of the results and their proofs by systematizing statements of the form ``$\xi$ is bounded by $\zeta$ with high probability up to a small power of $N$".

\begin{definition}[Stochastic domination]\label{stoch_domination}
(i) Let
\[\xi=\left(\xi^{(n)}(u):n\in\bbN, u\in U^{(n)}\right),\hskip 10pt \zeta=\left(\zeta^{(n)}(u):n\in\bbN, u\in U^{(n)}\right)\]
be two families of nonnegative random variables, where $U^{(n)}$ is a possibly $n$-dependent parameter set. We say $\xi$ is stochastically dominated by $\zeta$, uniformly in $u$, if for any fixed (small) $\epsilon>0$ and (large) $D>0$, 
\[\sup_{u\in U^{(n)}}\bbP\left[\xi^{(n)}(u)>n^\epsilon\zeta^{(n)}(u)\right]\le n^{-D}\]
for large enough $n\ge n_0(\epsilon, D)$, and we shall use the notation $\xi\prec\zeta$. Throughout this paper, the stochastic domination will always be uniform in all parameters that are not explicitly fixed (such as matrix indices, and $z$ that takes values in some compact set). Note that $n_0(\epsilon, D)$ may depend on quantities that are explicitly constant, such as $\tau$ in Assumption \ref{main_assm}. If for some complex family $\xi$ we have $|\xi|\prec\zeta$, then we will also write $\xi \prec \zeta$ or $\xi=\OO_\prec(\zeta)$.

\medskip
\noindent (ii) We extend the definition of $\OO_\prec(\cdot)$ to matrices in the weak operator sense as follows. Let $A$ be a family of random matrices and $\zeta$ be a family of nonnegative random variables. Then $A=\OO_\prec(\zeta)$ means that $\left|\left\langle\mathbf v, A\mathbf w\right\rangle\right|\prec\zeta \| \mathbf v\|_2 \|\mathbf w\|_2 $ for any deterministic vectors $\mathbf v$ and $\mathbf w$. 

\medskip
\noindent (iii) We say an event $\Xi$ holds with high probability if for any constant $D>0$, $\mathbb P(\Xi)\ge 1- n^{-D}$ for large enough $n$. {Moreover, we say an event $\Xi$ holds with high probability on an event $\Omega$, if for any constant $D>0$, $\mathbb P(\Omega\setminus \Xi)\le n^{-D}$ for large enough $n$. In particular, $\xi \prec \zeta$ on $\Omega$ means that for any fixed $\epsilon>0$, $\xi \le n^\e\zeta$ with high probability on $\Omega$.
}
\end{definition}

For $X$ and $Y$, we introduce the following bounded support condition.

\begin{definition}[Bounded support condition] \label{defn_support}
We say a random matrix $X$ satisfies the {\it{bounded support condition}} with $\phi_n$, if
\begin{equation}
\max_{i,j}\vert x_{ij}\vert \prec  \phi_n. \label{eq_support}
\end{equation}
Usually $ \phi_n$ is a deterministic parameter and satisfies $ n^{-{1}/{2}} \leq \phi_n \leq n^{- c_\phi} $ for some (small) constant $c_\phi>0$. Whenever (\ref{eq_support}) holds, we say that $X$ has support $\phi_n$. 
\end{definition}

Then we have the following eigenvalue rigidity and edge universality result for $\cal C_{Y X }$, which extends the result in \cite{CCA2}. 
\begin{theorem}\label{lem null}
Suppose Assumption \ref{main_assm} holds. Suppose $X$ and $Y$ have bounded support $\phi_n$ such that $ n^{-{1}/{2}} \leq \phi_n \leq n^{- c_\phi} $ for some constant $c_\phi>0$. Assume that 
\be\label{conditionA3} 
\begin{split}
& \max_{i,j}\mathbb{E} | x_{ij} |^3  =\OO(n^{-3/2}), \quad  \max_{i,j}\mathbb{E} | x_{ij} |^4  \prec n^{-2},\\
&\max_{i,j}\mathbb{E} | y_{ij} |^3=\OO(n^{-3/2}), \quad \max_{i,j}\mathbb{E} | y_{ij} |^4  \prec n^{-2}.
 \end{split}
\ee
Then the eigenvalues $\lambda_i$ of the {sample canonical correlation (SCC)} matrix $\cal C _{ YX}$ satisfy the following eigenvalue rigidity estimate: if $\lambda_-\ge \e$ for some constant $\e>0$, then
\be\label{rigidity}
|\lambda_i - \gamma_i | \prec \left[i \wedge (q+1-i)\right]^{-1/3} n^{-2/3},\quad 1\le i \le q,
\ee
Otherwise, if $\lambda_-=\oo(1)$, then \eqref{rigidity} hold for all $1\le i \le (1-\e)q$ for any constant $\e>0$. Moreover, we have that for any fixed $k$,
\begin{equation}\label{joint TW}
\begin{split}
\lim_{n\to \infty}\mathbb{P}&\left( \left(n^{\frac{2}{3}}\frac{\lambda_{i} - \lambda_+}{c_{TW}} \leq s_i\right)_{1\le i \le k} \right) = \lim_{n\to \infty} \mathbb{P}^{GOE}\left(\left(n^{\frac{2}{3}}(\lambda_i - 2) \leq s_i\right)_{1\le i \le k} \right), 
\end{split}
\end{equation}
for all $s_1 , s_2, \ldots, s_k \in \mathbb R$, where
$$c_{TW}:= \left[ \frac{\lambda_+^2 (1-\lambda_+)^2}{\sqrt{c_1c_2(1-c_1)(1-c_2)}}\right]^{1/3},$$
and $\mathbb P^{GOE}$ stands for the law of the Gaussian orthogonal ensemble (GOE) of dimension $n\times n$. 
\end{theorem}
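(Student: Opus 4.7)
The plan is to deduce Theorem \ref{lem null} from the two resolvent results stated in Section \ref{main_result}: the anisotropic local law Theorem \ref{thm_local} and the edge rigidity Theorem \ref{thm_largerigidity}. The bridge between the sample canonical correlation spectrum and these resolvent statements is the linearization mentioned in Section \ref{sec_intro}: the $(p+q+2n)\times(p+q+2n)$ matrix $H$ is designed so that a suitable sub-block of its resolvent $G(z)=H^{-1}$, after the appropriate spectral substitution, captures the resolvent of $\cal C_{YX}$. Consequently, entrywise and anisotropic control on $G$ translates directly into control of the Stieltjes transform $m_n(z)=q^{-1}\sum_{i=1}^{q}(\lambda_i-z)^{-1}$ of the ESD $F_n$, and thereby, via a Helffer-Sj\"ostrand contour integral, into control of the empirical counting function $N_n(E)=\#\{i:\lambda_i\le E\}$.

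For the rigidity \eqref{rigidity}, my plan follows the now-standard route. Combining Theorem \ref{thm_local} on the optimal spectral domain $\eta\gg n^{-1}$ with the Helffer-Sj\"ostrand formula yields $|N_n(E)-q\int_{-\infty}^{E} f|\prec n^{-1}$ uniformly for $E$ in the bulk and near the right edge $\lambda_+$. Inverting this count, and using the square-root vanishing of $f$ at $\lambda_+$, produces the cubic-root scaling $|\lambda_i-\gamma_i|\prec[i\wedge(q+1-i)]^{-1/3}n^{-2/3}$ for indices near the right edge, while Theorem \ref{thm_largerigidity} furnishes the matching optimal estimates on the extreme eigenvalues. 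When $\lambda_-\ge\epsilon$, the left edge exhibits a symmetric square-root singularity and the same argument applies there; if instead $\lambda_-=\oo(1)$, a hard edge forms near zero and the inversion fails for the smallest $\epsilon q$ eigenvalues, which is exactly why the claim is restricted to $i\le(1-\epsilon)q$ in that regime.

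For the Tracy-Widom limit \eqref{joint TW}, the strategy is Green function comparison (GFT). The joint Tracy-Widom law for the top eigenvalues of $\cal C_{YX}$ is already known in the Gaussian case via the Jacobi/double-Wishart connection \cite{CCA_TW2,CCA_TW}, so it suffices to show that the joint law of $n^{2/3}(\lambda_i-\lambda_+)/c_{TW}$, $i\le k$, is asymptotically insensitive to the entry distributions under the bounded support condition and \eqref{conditionA3}. Concretely, one writes $\mathbb{P}(n^{2/3}(\lambda_i-\lambda_+)\le s_i,\,i\le k)$ as the expectation of a smooth functional of traces $\tr G(z)$ at spectral parameters $z=\lambda_++sn^{-2/3}+\ri\eta$ with $\eta$ slightly below $n^{-2/3}$, and compares this expectation under the given $(X,Y)$ with the Gaussian case through a Lindeberg-type interpolation that swaps one matrix entry at a time. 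The per-swap error is bounded by combining the local law with moment matching: the first and second moments already agree to order $n^{-2-\tau}$ by Assumption \ref{main_assm}, while the third-moment match to order $n^{-3/2}$ and the fourth-moment control to order $n^{-2}$ in \eqref{conditionA3} are precisely what force the telescoping sum of per-swap errors to vanish as $n\to\infty$.

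The main obstacle is the proof of Theorem \ref{thm_local} itself, which occupies the remainder of the paper; once that local law is in hand, the present Theorem \ref{lem null} follows by the largely mechanical arguments sketched above. The technical wrinkle specific to this work, compared to \cite{CCA2}, is accommodating the sharp tail condition \eqref{intro tail} through the bounded support parameter $\phi_n$: the GFT step must remain valid under only the fourth-moment control in \eqref{conditionA3} rather than the boundedness of all moments, and the reduction from the raw tail assumption \eqref{intro tail} to the bounded-support hypothesis requires a careful truncation and perturbation argument (in the spirit of \cite{DY}). Tracking powers of $\phi_n$ in both the local law and the per-swap estimate, so that the telescoping sum remains summable, is where the weakened moment assumption enters most delicately.
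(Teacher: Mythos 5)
Your bulk strategy (local laws plus a counting-function/Helffer--Sj\"ostrand inversion) matches the paper's for the indices $n^{\delta}\le i\le q-n^{\delta}$, but there is a genuine gap at the extremes. The local laws you invoke are only stated on $S(\e)$ (so $E\ge \e$) and, for the strong averaged law, on $\wt S(\e,\wt\e)$, which stays away from $E=1$ precisely because of the $|1-z|^{-1}$ factor in passing from $m_3$ to $m$ (see \eqref{m3m} and the remark after the proof of Theorem \ref{thm_largerigidity}). Hence inverting $n(E)$ controls eigenvalues only inside a compact subinterval of $(0,1)$, and ``Theorem \ref{thm_largerigidity} furnishes the extreme eigenvalues'' is not correct as stated: nothing in the local laws excludes eigenvalues of $\cal C_{YX}$ sitting in $[1-c,1]$ or in $[0,c]$, so rigidity for $i=1$ and $i=q$ (hence also the Tracy--Widom statement) is not yet established. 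The paper closes this with a separate argument you are missing: the interpolation $X_t=\sqrt{1-t}\,X^G+\sqrt{t}\,X$, $Y_t=\sqrt{1-t}\,Y^G+\sqrt{t}\,Y$ to an independent Gaussian pair, continuity of $\lambda_i^t$ in $t$, a grid-plus-perturbation bound showing $\|G_t(1-c+\ii n^{-10})\|_{\max}=\OO(1)$ uniformly in $t$ so that no eigenvalue can cross $E=1-c$, and, in the Gaussian case, the double-Wishart facts \eqref{remainG} and \eqref{remainsmallG} to know the spectrum starts inside $[c,\lambda_++n^{-2/3+\e}]$.

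For the Tracy--Widom limit, your one-step Lindeberg swap directly to the Gaussian ensemble does not go through. Condition \eqref{conditionA3} only \emph{bounds} the third and fourth moments; it does not match them to the Gaussian ones, and with support as large as $\phi_n=n^{-c_\phi}$ the unmatched third-order per-swap errors are far too large at the $n^{-2/3}$ edge scale, which is exactly why \cite{LY,DY} and this paper use a two-step reduction. The paper first constructs (Lemma \ref{lem_decrease}) a pair $(\wt X,\wt Y)$ whose entries match the first \emph{four} moments of $(X,Y)$ but have support $\prec n^{-1/2}$; the Green function comparison (Lemma \ref{lem_compdiffsupport}) is carried out only between these two bounded-support ensembles, where four-moment matching cancels the leading terms and the surviving fifth-order terms carry the factor $n^{-c_\phi}$; and the Tracy--Widom law for the small-support ensemble is obtained by rerunning the argument of \cite{CCA2} (Lemma \ref{lem_smallcomp}), not by a further comparison to the Jacobi/double-Wishart case done in this paper. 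Finally, converting the Green function comparison into the comparison of joint distributions of the top $k$ eigenvalues uses the averaged law \eqref{aver_in} and the rigidity \eqref{rigidity} as in \cite{EYY}; your sketch treats this as automatic, but it is a needed ingredient and it is where the previously established rigidity (including the outlier exclusion above) enters.
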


Recall that the joint distribution of the $k$ largest eigenvalues of GOE can be written in terms of the Airy kernel for any fixed $k$ \cite{Forr}. Moreover, taking $k=1$ in \eqref{joint TW}, we obtain that
$$n^{\frac{2}{3}}\frac{\lambda_{i} - \lambda_+}{c_{TW}} \Rightarrow F_1,$$
where $F_1$ is the Type-1 Tracy-Widom distribution.
The result \eqref{joint TW} was proved in \cite{CCA2} under the assumption that all the moments of $\sqrt{n}x_{ij}$ and $\sqrt{n}y_{ij}$ exist. On the other hand, 
combining our result with  
 a simple cutoff argument allows us to obtain the following corollary under the finite $(4+\e)$-th moment assumption. Since we do not assume the entries of $X$ and $Y$ are identically distributed, the means and variances of the truncated entries may be different. This is why we assume the slightly more general conditions \eqref{entry_assm0} and \eqref{entry_assm1}.

\begin{corollary}\label{main_cor}
Suppose \eqref{assm20} holds. Assume that $X=(x_{ij})$ and $Y=(Y_{ij})$ are two real independent $p\times n$ and $q\times n$ matrices, whose entries are independent random variables that satisfy \eqref{assm1} and  
\be\label{condition_4e} 
\max_{i,j}\mathbb{E}  |\sqrt{n} x_{ij} | ^{4+\tau}  \le C, \quad \max_{i,j}\mathbb{E}|\sqrt{n} y_{ij} |^{4+\tau}  \le C,
\ee 
for some constants $\tau,C>0$. 
{Then the Tracy-Widom law \eqref{joint TW} holds. Moreover, the rigidity estimate \eqref{rigidity} holds on an event $\Omega$ (cf. \eqref{event Omega}) with probability $1-\oo(1)$. } 
\end{corollary}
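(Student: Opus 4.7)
The proof reduces Corollary \ref{main_cor} to Theorem \ref{lem null} by the standard truncation-and-moment-match device. The generalized conditions \eqref{entry_assm0}--\eqref{entry_assm1}, which permit the entrywise mean and variance to deviate from their ideal values by amounts of order $n^{-2-\tau}$, are exactly tailored so that a single-step truncation with no recentering or rescaling suffices---this is presumably why the author chose to state Assumption \ref{main_assm} in this slightly more general form, as the notation in the corollary already anticipates.

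My first step would be to fix a small constant $c_\phi>0$ (to be chosen at the end) and define truncated entries $\tilde{x}_{ij} := x_{ij}\,\mathbf{1}(|x_{ij}|\leq \phi_n)$ and $\tilde{y}_{ij} := y_{ij}\,\mathbf{1}(|y_{ij}|\leq \phi_n)$ with $\phi_n := n^{-c_\phi}$, assembling them into matrices $\tilde X,\tilde Y$ that satisfy the bounded support condition with $\phi_n$ by construction. Using \eqref{condition_4e} combined with the elementary inequality $\mathbb{E}|x_{ij}|^{k}\mathbf{1}(|x_{ij}|>\phi_n)\leq \phi_n^{k-(4+\tau)}\mathbb{E}|x_{ij}|^{4+\tau}$, I would verify that both $|\mathbb{E}\tilde{x}_{ij}|$ and $|\mathbb{E}|\tilde{x}_{ij}|^2 - n^{-1}|$ are at most $n^{-2-\tau'}$ for some $\tau'>0$ once $c_\phi$ is small enough, which establishes \eqref{entry_assm0}--\eqref{entry_assm1}. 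The third- and fourth-moment conditions \eqref{conditionA3} then follow directly from Lyapunov's inequality, since truncation only decreases the absolute moments and $\mathbb{E}|x_{ij}|^{k}\leq (\mathbb{E}|x_{ij}|^{4+\tau})^{k/(4+\tau)}\leq Cn^{-k/2}$ for $k=3,4$. Thus $(\tilde X,\tilde Y)$ satisfies all the hypotheses of Theorem \ref{lem null}.

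Next, I would introduce the good event $\Omega := \{X=\tilde X,\ Y=\tilde Y\}$. A union bound together with Markov gives
\begin{equation*}
\mathbb{P}(\Omega^c) \;\leq\; (np+nq)\max_{i,j}\mathbb{P}(|x_{ij}|>\phi_n \text{ or } |y_{ij}|>\phi_n)\;\leq\; C\, n^{-\tau/2 + c_\phi(4+\tau)},
\end{equation*}
which is $\oo(1)$ provided $c_\phi < \tau/(2(4+\tau))$. On $\Omega$ the SCC matrices built from $(X,Y)$ and from $(\tilde X,\tilde Y)$ coincide and hence share all their eigenvalues. Applying Theorem \ref{lem null} to $(\tilde X,\tilde Y)$ yields the rigidity estimate \eqref{rigidity} for the eigenvalues $\tilde\lambda_i$ with high probability; intersecting with $\Omega$ transfers it to $\lambda_i$ on $\Omega$, which is precisely the rigidity statement in the corollary.

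For the joint Tracy-Widom limit, I would split the probability on the left-hand side of \eqref{joint TW} over $\Omega$ and $\Omega^c$. The contribution from $\Omega^c$ is $\oo(1)$, while on $\Omega$ the event for $(\lambda_1,\ldots,\lambda_k)$ agrees with the corresponding event for $(\tilde\lambda_1,\ldots,\tilde\lambda_k)$, so Theorem \ref{lem null} applied to $(\tilde X,\tilde Y)$ delivers the GOE Tracy-Widom limit. The only step requiring care is a simultaneous choice of $c_\phi$ meeting the three polynomial constraints arising above (mean deviation, variance deviation, and $\mathbb{P}(\Omega^c)=\oo(1)$); any $c_\phi<\tau/(2(4+\tau))$ works, so no genuine obstacle arises. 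In this sense the corollary is a purely technical bridge: once Theorem \ref{lem null} is proved under bounded support, the $(4+\tau)$-moment version is essentially free.
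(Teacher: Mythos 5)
Your proposal is correct and follows essentially the same route as the paper: truncate at level $n^{-c_\phi}$ with $c_\phi$ small relative to $\tau$, check that the truncated matrices satisfy Assumption \ref{main_assm} and \eqref{conditionA3} (the relaxed conditions \eqref{entry_assm0}--\eqref{entry_assm1} absorbing the untracked mean/variance shifts), bound the probability of the discrepancy event by $\oo(1)$, and conclude by applying Theorem \ref{lem null}. The only cosmetic difference is that you truncate entrywise while the paper multiplies by the indicator of the global event $\Omega$; the moment estimates and the conclusion are the same.
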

\begin{proof}
We choose the constants $c_\phi >0$ small enough such that $\left(n^{1/2-c_\phi}\right)^{4+\tau} \ge n^{2+\e}$ for some constant $\e>0$. Then we introduce the following truncation  
\be\label{event Omega}
\wt X :=\mathbf 1_{\Omega} X, \quad \wt Y :=\mathbf 1_{\Omega} Y,\quad \Omega :=\left\{\max_{i,j} |x_{ij}|\le n^{-c_\phi},\max_{i,j} |y_{ij}|\le n^{-c_\phi}\right\}.
\ee
By the moment conditions (\ref{condition_4e}) and a simple union bound, we have
\begin{equation}\label{XneX}
\mathbb P(\wt X \ne X, \wt Y \ne Y) =\OO ( n^{-\e}).
\end{equation}
Using (\ref{condition_4e}) and integration by parts, it is easy to verify that 
\begin{align*}
\mathbb E  \left|x_{ij}\right|1_{|x_{ij}|> n^{-c_\phi}} =\OO(n^{-2-\e}), \quad \mathbb E \left|x_{ij}\right|^2 1_{|x_{ij}|> n^{-c_\phi}} =\OO(n^{-2-\e}),
\end{align*}
which imply that
$$|\mathbb E  \tilde x_{ij}| =\OO(n^{-2-\e}), \quad  \mathbb E |\tilde x_{ij}|^2 = n^{-1} + \OO(n^{-2-\e}).$$
Moreover, we trivially have
$$\mathbb E  |\tilde x_{ij}|^4 \le \mathbb E  |x_{ij}|^4 =\OO(n^{-2}).$$
Similar estimates also hold for the entries of $Y$. Hence $\wt X$ and $\wt Y$ are random matrices satisfying Assumption \ref{main_assm} and condition \eqref{conditionA3}. 
Now combing \eqref{XneX} and Theorem \ref{lem null}, we conclude the corollary.
\end{proof}

If we assume that the entries of $X$ and $Y$ are identically distributed, respectively, then the Tracy-Widom law actually holds under the weaker tail condition \eqref{tail_cond}.

\begin{theorem}
\label{main_thm1}
Suppose \eqref{assm20} holds. Assume that $x_{ij}=n^{-1/2} \wh x_{ij}$ and $y_{ij}= n^{-1/2} \wh y_{ij}$, where $\{\wh x_{ij}\}$ and $\{\wh y_{ij}\}$ are independent  families of i.i.d. random variables with mean zero and variance one. Then for any fixed $k$, \eqref{joint TW} holds under the following tail condition:
\begin{equation}
\lim_{t \rightarrow \infty } t^4 \left[\mathbb{P}\left( \vert \wh x_{11} \vert \geq t\right)+ \mathbb{P}\left( \vert \wh y_{11} \vert \geq t\right)\right]=0. \label{tail_cond}
\end{equation}
 
\end{theorem}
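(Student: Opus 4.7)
The plan is to reduce Theorem \ref{main_thm1} to Corollary \ref{main_cor} (equivalently Theorem \ref{lem null}) via a truncation plus resolvent comparison argument, in the spirit of \cite{DY} for sample covariance matrices. Corollary \ref{main_cor} does not apply directly, because the tail condition \eqref{tail_cond} is compatible with $\E|\wh x_{11}|^4=\infty$ and a fortiori does not guarantee the finite $(4+\tau)$-th moment assumed there.

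For the truncation step, fix a slowly diverging threshold $b_n\to \infty$ with $b_n\le (\log n)^K$ for some large $K$, and set
$$\wt{\wh x}_{ij}\deq \frac{\wh x_{ij}\indb{|\wh x_{ij}|\leq b_n}-\E\qb{\wh x_{ij}\indb{|\wh x_{ij}|\leq b_n}}}{\sqrt{\Var\pb{\wh x_{ij}\indb{|\wh x_{ij}|\leq b_n}}}},\qquad \wt x_{ij}\deq n^{-1/2}\wt{\wh x}_{ij},$$
and analogously $\wt y_{ij}$. The tail condition implies $\E\qb{\wh x_{11}^2\indb{|\wh x_{11}|>b_n}}=\oo(1)$ and the crude bound $\E|\wt{\wh x}_{11}|^4\leq Cb_n^2$, from which one checks that $\wt X,\wt Y$ satisfy Assumption \ref{main_assm} and condition \eqref{conditionA3} with bounded support $\phi_n=b_n/\sqrt n$. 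Theorem \ref{lem null} then yields the rigidity estimate \eqref{rigidity} and the joint Tracy--Widom limit \eqref{joint TW} for the eigenvalues of $\cC_{\wt Y\wt X}$.

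The remaining task, and the main obstacle, is to transfer these conclusions from $\cC_{\wt Y\wt X}$ to $\cC_{YX}$. Unlike in the proof of Corollary \ref{main_cor}, the event $\h{X=\wt X,\, Y=\wt Y}$ need not have probability tending to one: the expected number of outlier entries with $|\wh x_{ij}|>b_n$ is of order $n^2/b_n^4$, which may well diverge. What \eqref{tail_cond} does provide is the Frobenius smallness
$$\E\|X-\wt X\|_F^2=p\cdot \E\qb{\wh x_{11}^2\indb{|\wh x_{11}|>b_n}}=\oo(1),$$
and an analogous bound for $Y-\wt Y$, together with the fact that the outlier positions are sparse: with high probability only $\oo(n)$ entries are modified, and they lie in distinct rows and columns.

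Since $\cC_{YX}$ depends rationally on $X,Y$ through $S_{xx}^{-1}$ and $S_{yy}^{-1}$, a direct application of Weyl's or Hoffman--Wielandt's inequality to the Frobenius bound gives eigenvalue control only at the scale $\oo(1)$, far short of the Tracy--Widom scale $n^{-2/3}$. To reach the required precision, the plan is to work at the level of the linearized matrix $H$ in \eqref{linearize_block} and its resolvent $G=H^{-1}$: each entrywise modification of $X$ or $Y$ is a rank-one perturbation of $H$ whose effect on $G$ is computed explicitly via the Woodbury identity, with entries of $G$ controlled by the anisotropic local law (Theorem \ref{thm_local}) and the large-scale eigenvalue bounds (Theorem \ref{thm_largerigidity}). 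Iterating the Woodbury perturbation over the sparse set of outlier coordinates, using the Frobenius smallness to sum the $\ell^2$-contributions, and localizing the top eigenvalues inside an $n^{-2/3+\e}$ window around $\lambda_+$ via the rigidity from the previous step, one shows that the shift of the $k$ largest eigenvalues of $\cC_{YX}$ relative to those of $\cC_{\wt Y\wt X}$ is $\oo(n^{-2/3})$ with probability $1-\oo(1)$. Combined with the Tracy--Widom limit for $\cC_{\wt Y\wt X}$, this gives \eqref{joint TW} for $\cC_{YX}$, completing the proof.
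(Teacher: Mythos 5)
Your truncation level is the critical flaw. The paper's proof cuts the entries at the level $n^{1/2-\epsilon}$ (support $n^{-\epsilon}$ after scaling), precisely because under \eqref{tail_cond} the probability of exceeding this threshold is $\oo(n^{-2+4\epsilon})$, so with probability $1-\oo(1)$ only $\OO(n^{5\epsilon})$ entries of $X$ and $Y$ are affected, no two of them share a row or column, and each affected entry is bounded by a small constant $\omega$. With your polylogarithmic threshold $b_n\le(\log n)^K$ neither of your two key quantitative claims holds: the expected number of modified entries is of order $n^2\,\P(|\wh x_{11}|>b_n)$, which for borderline laws (e.g. $\P(|\wh x_{11}|>t)\sim t^{-4}/\log t$, allowed by \eqref{tail_cond}) is far larger than $n$, so the modified entries cannot lie in distinct rows and columns and the perturbation is not low rank; and the Frobenius bound fails, since $\E\|X-\wt X\|_F^2\asymp n\,\E\bigl[\wh x_{11}^2\,\mathbf 1(|\wh x_{11}|>b_n)\bigr]$, which \eqref{tail_cond} only controls as $\oo(n\,b_n^{-2})$ — a diverging quantity, not $\oo(1)$. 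Since your entire transfer step ("sum the $\ell^2$-contributions using Frobenius smallness" over a sparse set of rank-one Woodbury updates) rests on these two claims, it collapses as stated; note also that even a single modified entry has size $\sim b_n n^{-1/2}\gg n^{-2/3}$ in operator norm, so no norm-based bookkeeping at this level can reach the Tracy--Widom scale.

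Beyond the wrong threshold, the comparison mechanism itself is missing the ingredients that make the paper's argument work. After truncating at $n^{1/2-\epsilon}$, the paper interpolates $H_t(\lambda)=H^s(\lambda)+t\,(\text{sparse perturbation})$, uses $\det(1+\cA\cB)=\det(1+\cB\cA)$ to reduce the question of whether an eigenvalue crosses $\mu=\lambda_1^s\pm n^{-3/4}$ to the invertibility of an $\OO(n^{5\epsilon})$-dimensional matrix $1+tF(\mu)$, and bounds $F(\mu)$ by combining: the small constant bound $\|\cD_e\|\le\omega$, the second-moment isotropic estimate of Lemma \ref{thm_largebound} at $z=\lambda_++\ii n^{-2/3}$, eigenvector delocalization (Lemma \ref{lem delocalX}), rigidity \eqref{rigidity}, and — crucially — the edge level-repulsion estimate $\min_i|\lambda_i^s-\mu|\ge n^{-3/4}$ with probability $1-\oo(1)$, which comes from the Tracy--Widom/GOE gap statistics of the truncated ensemble, not from rigidity. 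Your plan invokes only rigidity and an $n^{-2/3+\epsilon}$ localization window, which is insufficient to control the resolvent at the real spectral parameter $\mu$; without the repulsion input the bound \eqref{final claim4}-type comparison between $G^s(\mu)$ and $G^s(z)$ cannot be established. So while your first (truncation) step and the general impulse to work with the linearization and the local law are in the right spirit, the proposal as written has a genuine gap in both the choice of cutoff and the transfer argument.
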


\begin{remark}
The tail condition \eqref{tail_cond} has been shown to be a necessary condition for the largest eigenvalue to converge to the Tracy-Widom law in the case of Wigner matrices \cite{LY} and sample covariance matrices \cite{DY}. For example, let $H$ be a Wigner matrix. It is shown in \cite{LY} that if the entries of $H$ do not satisfy the tail condition as in \eqref{tail_cond}, then its largest eigenvalue $\lambda_1$ satisfies that for any fixed $s>0$, $ \P(\lambda_1>s)>c_s$ for a constant $c_s>0$ depending on $s$. This shows that the Tracy-Widom law cannot hold for $\lambda_1$. A similar result is shown for sample covariance matrices in \cite{DY}. 

We conjecture that the tail condition \eqref{tail_cond} is also necessary for the Tracy-Widom law of the largest sample canonical correlation coefficient, but the proof in \cite{DY,LY} cannot be applied to our setting directly. To illustrate the point, suppose the entries of $X$ do not satisfy \eqref{tail_cond}, and the entries of $Y$ are i.i.d. Gaussian. Then the largest singular value $\lambda_1$ of $X$ satisfies that for any fixed $s>0$, $ \P(\lambda_1>s)>c_s$ for a constant $c_s>0$ depending on $s$ \cite{DY}. However, it is not clear whether such a result will be sufficient to show that the largest eigenvalue of $\cal C_{X Y}= S_{xx}^{-1/2} S_{xy} S_{yy}^{-1}S_{yx}S_{xx}^{-1/2}$ deviates from $\lambda_+$. To solve this problem, we also need to understand the behaviors of the singular vectors of $X$, and we will pursue it in a future work. 
\end{remark}

\begin{remark}
If we do not assume that the entries of $X$ and $Y$ are identically distributed, then our proof still works if we assume that 
  \begin{equation*}
\lim_{t \rightarrow \infty } t^4 \left[\max_{i,j}\mathbb{P}\left( \vert \wh x_{ij} \vert \geq t\right)+ \max_{i,j}\mathbb{P}\left( \vert \wh y_{ij} \vert \geq t\right)\right]=0.  
\end{equation*}
However, this is not the sharp moment condition. For example, we can consider a case where $n^\e$ many entries of $X$ and $Y$ have variance 1 and infinite third moments, and all the other entries are i.i.d. random variables satisfying \eqref{tail_cond}. Then using a perturbation argument (that is similar to the one used in Section \ref{secpf2}), we can show that the Tracy-Widom law \eqref{joint TW} still holds as long as $\e$ is small enough. The sharp moment condition in the non-i.i.d. case is still unknown even for the cases of Wigner and sample covariance matrices. 
\end{remark}

\subsection{The linearization method and local law}\label{sec_maintools}


The self-adjoint linearization method has been proved to be useful in studying the local laws of random matrices of the Gram type \cite{Alt_Gram, AEK_Gram, DY2,CCA2,CCA_TW2,Anisotropic, XYY_circular,yang2018}. We now introduce a generalization of this method, which will be the starting point of this paper. 


For now, we assume that $XX^T$ and $YY^T$ are both non-singular almost surely. This is trivially true if, say, the entries of $X$ and $Y$ have continuous densities. For any $\lambda > 0$, it is an eigenvalue of $\cal C_{XY}$ if and only if the following equation holds:
\be\label{eq det0}\det\left( \left(XY^T\right)\left(YY^T\right)^{-1}\left(YX^T\right) - \lambda XX^T\right) = 0 .\ee
 By Schur complement, it is equivalent to
$$\det \begin{pmatrix} \lambda XX^T &  \lambda^{1/2}XY^T \\  \lambda^{1/2}YX^T &  \lambda YY^T \end{pmatrix} = 0  \ \  \Leftrightarrow \ \ \det \begin{pmatrix} X & 0 \\ 0 &  Y \end{pmatrix} \begin{pmatrix} \lambda I_n & \lambda^{1/2} I_n \\ \lambda^{1/2}I_n & \lambda I_n \end{pmatrix} \begin{pmatrix} X^T & 0 \\ 0 &  Y^T \end{pmatrix} = 0.$$
Using Schur complement again, if $\lambda \notin \{0, 1\}$, then it is equivalent to 
\be\label{deteq}\det \begin{pmatrix} 0 & \begin{pmatrix} X & 0\\ 0 & Y\end{pmatrix}\\ \begin{pmatrix}  X^T & 0\\ 0 &  Y^T\end{pmatrix}  & \begin{pmatrix}  \lambda  I_n & \lambda^{1/2}I_n\\ \lambda^{1/2} I_n &  \lambda I_n\end{pmatrix}^{-1}\end{pmatrix} = 0 . \ee

Inspired by the above discussion, we define the following $(p+q+2n)\times (p+q+2n)$ self-adjoint block matrix 
 \begin{equation}\label{linearize_block}
   H(\lambda) : = \begin{pmatrix} 0 & \begin{pmatrix}  X & 0\\ 0 &  Y\end{pmatrix}\\ \begin{pmatrix} X^T & 0\\ 0 &  Y^T\end{pmatrix}  & \begin{pmatrix}  \lambda  I_n & \lambda^{1/2}I_n\\ \lambda^{1/2} I_n &  \lambda I_n\end{pmatrix}^{-1}\end{pmatrix} .
 \end{equation}
 We can also extend the argument $\lambda$ to $z\in \C_+:=\{z\in \C: \im z>0\}$ and define $H(z)$ in general, where we take $z^{1/2}$ to be the branch with positive imaginary part.  We then define the resolvent (or Green's function) as
\begin{equation}\label{eqn_defG}
 G(z):= \left[H(z)\right]^{-1} , \quad z\in \mathbb C_+  ,
\end{equation}
whenever the inverse exists. 



\begin{definition}[Index sets]\label{def_index}
For simplicity of notations, we define the index sets
$$\cal I_1:=\llbracket 1,p\rrbracket, \ \quad \ \cal I_2:=\llbracket p+1,p+q\rrbracket,$$
and 
$$\cal I_3:=\llbracket p+q+1,p+q+n\rrbracket, \ \quad \ \cal I_4:=\llbracket p+q+n+1,p+q+2n\rrbracket. $$
 We will consistently use the latin letters $i,j\in\sI_{1,2}$ and greek letters $\mu,\nu\in\sI_{3,4}$. Moreover, we shall use the notations $\fa,\fb\in \cal I:=\cup_{i=1}^4 \cal I_i$. We label the indices of the matrices according to
 $$X= (x_{i\mu}:i\in \mathcal I_1, \mu \in \mathcal I_3), \quad Y= (y_{j\nu}:j\in \mathcal I_2, \nu \in \mathcal I_4).$$
Moreover, we denote $\overline i:= i+p$ for $i\in \cal I_1$, $\overline j:= j-p$ for $j\in \cal I_2$, $\overline \mu : = \mu +n $ for $\mu \in \cal I_3$, and $\overline \nu : = \nu - n $ for $\nu \in \cal I_4$. 
\end{definition}

\begin{definition}[Resolvents]\label{resol_not}
We denote the $\cal I_\al \times \cal I_\al$ block of $ G(z)$ by $ \cal G_\al(z)$ for $\al=1,2,3,4$. We denote the $(\cal I_1\cup \cal I_2)\times (\cal I_1\cup \cal I_2)$ block of $ G(z)$ by $ \cal G_L(z)$, the $(\cal I_1\cup \cal I_2)\times (\cal I_3\cup \cal I_4)$ block by $ \cal G_{LR}(z)$, the $(\cal I_3\cup \cal I_4)\times (\cal I_1\cup \cal I_2)$ block  by $ \cal G_{RL}(z)$, and the $(\cal I_3\cup \cal I_4)\times (\cal I_3\cup \cal I_4)$ block by $ \cal G_R(z)$. We introduce the following random quantities:
\be\label{def m1234} m_\al(z) :=\frac1n\tr  \cal G_{\al}(z) = \frac{1}{n}\sum_{\fa \in \cal I_\al}  G_{\fa\fa}(z) ,\quad \al=1,2,3,4. \ee
Recalling the notations in \eqref{def Sxy}, we define $\cal H:=S_{xx}^{-1/2}S_{xy}S_{yy}^{-1/2}$ and
\be\label{Rxy}
\begin{split}
 R_1(z):=(\cal C_{XY}-z)^{-1}&=(\cal H\cal H^T-z)^{-1}, \\
 R_2(z):=(\cal C_{YX}-z)^{-1}&=(\cal H^T\cal H-z)^{-1},  \quad m(z):= q^{-1}\tr  R_2(z).
 \end{split}
\ee
Note that we have $R_1\cal H = \cal HR_2$, $\cal H^T R_1 = R_2 \cal H^T $, and 
\be\label{R12} \tr  R_1 = \tr  R_2 - \frac{p-q}{z}= q  m(z) - \frac{p-q}{z},\ee
since $\cal C_{XY}$ has $(p-q)$ more zeros eigenvalues than $\cal C_{YX}$. 
\end{definition}

By Schur complement formula, we immediately obtain that
\be\label{GL1}
\begin{split}
 \cal G_L & = \begin{pmatrix} S_{xx}^{-1/2}R_1S_{xx}^{-1/2} & - z^{-1/2}S_{xx}^{-1/2}R_1\cal HS_{yy}^{-1/2} \\ - z^{-1/2}S_{yy}^{-1/2}\cal H^T R_1S_{xx}^{-1/2} & S_{yy}^{-1/2}R_2S_{yy}^{-1/2}\end{pmatrix}, 
\end{split}
\ee
and
\begin{equation*}
\begin{split}\cal G_1= S_{xx}^{-1/2}R_1S_{xx}^{-1/2} = \left(S_{xy}S_{yy}^{-1}S_{yx} - z S_{xx}\right)^{-1}, \\ \cal G_2 = S_{yy}^{-1/2}R_2S_{yy}^{-1/2}= \left(S_{yx}S_{xx}^{-1}S_{xy}  - z S_{yy}\right)^{-1}.
\end{split}
\end{equation*}
The other blocks are
\be\label{GR1}
\cal G_R =   \begin{pmatrix}  z  I_n & z^{1/2}I_n\\ z^{1/2}I_n &  z  I_n\end{pmatrix} +   \begin{pmatrix}  z  I_n & z^{1/2}I_n\\ z^{1/2}I_n &  z  I_n\end{pmatrix}  \begin{pmatrix} X^T & 0 \\ 0 & Y^T \end{pmatrix} \cal G_L \begin{pmatrix} X & 0 \\ 0 &  Y \end{pmatrix} \begin{pmatrix}  z  I_n & z^{1/2}I_n\\ z^{1/2}I_n &  z  I_n\end{pmatrix}  ,
\ee
and
\be\label{GLR1}
\begin{split}
&{\cal G}_{LR}(z)= -\cal G_L(z) \begin{pmatrix} X & 0 \\ 0 &  Y \end{pmatrix} \begin{pmatrix}  z  I_n & z^{1/2}I_n\\ z^{1/2}I_n &  z  I_n\end{pmatrix}  , \\ 
&{\cal G}_{RL}(z)= -  \begin{pmatrix}  z  I_n & z^{1/2}I_n\\ z^{1/2}I_n &  z  I_n\end{pmatrix}  \begin{pmatrix} X^T & 0 \\ 0 & Y^T \end{pmatrix} {\cal G}_L(z).
\end{split}
\ee
Expanding the product in \eqref{GR1} using \eqref{GL1} and calculating the partial traces, one can verify directly that
\be\label{m3m}m_3 (z)= z+\frac1n\left( -2 z p - z^{2}\tr R_1 + z \tr  R_2\right)= c_2 z(1-z)m(z) + (1-c_1-c_2)z,\ee
and 
\be\label{m4m}
\begin{split}m_4(z)& = z+\frac1n\left(  - 2z q - z^{2}\tr  R_2+ z  \tr R_1 \right) \\
&=  c_2 z(1-z)m(z) - (c_1-c_2)+ (1- 2c_2) z.\end{split}\ee
where we also used \eqref{R12}. In particular, we have the identity
\be\label{m34}
m_3(z) - m_4 (z)= (1-z)(c_1-c_2) . 
\ee

We now give the deterministic limit of $m_\al$, $\al=1,2,3,4$, as $n\to \infty$: for $\lambda_\pm$ defined in \eqref{lambdapm},
\begin{align}
&m_{1c}(z) 
= \frac{ - z +c_1+c_2+\sqrt{(z-\lambda_-)(z-\lambda_+)} }{2(1-c_1)z(1-z)} - \frac{c_1}{(1-c_1)z}, \label{m1c}\\
&m_{2c}(z) = \frac{ -z +c_1 + c_2+ \sqrt{(z-\lambda_-)(z-\lambda_+)}}{2(1-c_2)z(1-z)} -\frac{c_2}{(1-c_2)z}, \label{m2c}\\
&m_{3c}(z) 
= \frac{1}{2}\left[ (1-2c_1) z + c_1 - c_2 + \sqrt{(z-\lambda_-)(z-\lambda_+)}\right] , \label{m3c}\\
&m_{4c}(z)= \frac{1}{2}\left[ (1-2c_2) z +  c_2 - c_1 + \sqrt{(z-\lambda_-)(z-\lambda_+)}\right] ,\label{m4c}
\end{align}
where we take the branch of the square root functions with non-negative imaginary parts. One can verify when $z\to 1$, $m_{1c}(z)$ and $m_{2c}(z)$ have finite limits, which we define as $m_{1c}(1)$ and $m_{2c}(1)$. Moreover, by \eqref{m3m}  the deterministic limit of $m$ is
\begin{align}\label{mc}
&m_c(z)= \frac{m_{3c}(z) + (c_1+c_2 - 1)z}{c_2z(1-z)} = \frac{1-c_2}{c_2} m_{2c}(z).
\end{align}
We then define the matrix limit of $G(z)$ as
\be \label{defn_pi}
\Pi(z) := \begin{pmatrix} \begin{pmatrix} c_1^{-1}m_{1c}(z)I_p & 0\\ 0 & c_2^{-1}m_{2c}(z)I_q\end{pmatrix} & 0 \\ 0  & \begin{pmatrix}  m_{3c}(z)I_n  & h(z)I_n\\  h(z)I_n &  m_{4c}(z)  I_n\end{pmatrix}\end{pmatrix} ,\ee
where
\be
\begin{split}
h(z):&=  \frac{z^{-1/2}m_{3c}(z)}{1+(1-z)m_{2c}(z)} =  \frac{z^{-1/2}m_{4c}(z)}{1+(1-z)m_{1c}(z)} \\
&= \frac{z^{1/2}}{2} \left[ - z + (2-c_1-c_2) + {\sqrt{(z-\lambda_-)(z-\lambda_+)}}\right].\label{hz}
\end{split}
\ee

Through a direct calculation, we can check that the following equations hold for $(m_{1c},m_{2c},m_{3c},m_{4c})$:
\begin{align}
& m_{1c}= - \frac{c_1}{m_{3c}} , \quad {m_{2c}} = -\frac{ c_2}{m_{4c}}, \quad  m_{3c}(z) - m_{4c} (z)= (1-z)(c_1-c_2) ,\label{selfm12}\\
& m_{3c}(z) 
=\frac{ 1-(z-1)m_{2c}(z)}{z^{-1} - (m_{1c}(z)+m_{2c}(z)) + (z-1)m_{1c}(z)m_{2c}(z)}, \label{selfm3}  \\
& m_{3c}^2(z) + \left[ (2c_1 -1)z - c_1+c_2\right]m_{3c}(z) + c_1(c_1-1)z(z-1) =  0. \label{selfm32}
\end{align}
{Conversely, we can also solve these equations to get $(m_{1c},m_{2c},m_{3c},m_{4c})$. First, using equations \eqref{selfm12} and \eqref{selfm3} we obtain that 
\begin{align*}
0&= z^{-1}m_{3c} + m_{3c}\left(\frac{c_1}{m_{3c}}+\frac{c_2}{m_{4c}}\right) + (z-1)m_{3c}\frac{c_1c_2}{m_{3c}m_{4c}} -1-(z-1)\frac{c_2}{m_{4c}} \\
&= z^{-1}m_{3c} + (c_1-1) +\frac{c_2 m_{3c} + c_1c_2 (z-1) - c_2(z-1)}{m_{3c}-(1-z)(c_1-c_2)}  ,
\end{align*}
which gives equation \eqref{selfm32} after multiplying $z\left[m_{3c}-(1-z)(c_1-c_2)\right]$ on both sides. Similarly, from equations \eqref{selfm12} and \eqref{selfm32}, we can also derive equation \eqref{selfm3}. Hence the system of equations \eqref{selfm12} and \eqref{selfm32} is equivalent to the system of equations \eqref{selfm12} and \eqref{selfm3}. Second, solving \eqref{selfm32} and using 
\be\label{add eq34} \left[ (2c_1 -1)z - c_1+c_2\right]^2 - 4 c_1(c_1-1)z(z-1)  = (z-\lambda_-)(z-\lambda_+),\ee
we can solve \eqref{selfm32} to get $m_{3c}$ in \eqref{m3c} (by taking the proper branch of the square root function). Then using the third equation in \eqref{selfm12}, we can obtain $m_{4c}$ in \eqref{m4c}. Finally, plugging $m_{3c}$ into the first equation in \eqref{selfm12} and using \eqref{add eq34}, we can obtain that
\begin{align*}
m_{1c}= -2c_1\frac{(1-2c_1) z + c_1 - c_2 - \sqrt{(z-\lambda_-)(z-\lambda_+)}}{[(1-2c_1) z + c_1 - c_2]^2 - (z-\lambda_-)(z-\lambda_+)} = - \frac{(1-2c_1) z + c_1 - c_2 - \sqrt{(z-\lambda_-)(z-\lambda_+)}}{  2(1-c_1)z(1-z)},
\end{align*}
which gives \eqref{m1c}. Similarly, plugging $m_{4c}$ into the second equation in \eqref{selfm12}, we can obtain \eqref{m2c}.}


For simplicity of notations, we introduce the notion of generalized entries.

\begin{definition}[Generalized entries]
For $\mathbf v,\mathbf w \in \mathbb C^{\mathcal I}$, $\fa\in \mathcal I$ and an $\mathcal I\times \mathcal I$ matrix $\cal A$, we shall denote
\begin{equation}
\cal A_{\mathbf{vw}}:=\langle \mathbf v,\cal A\mathbf w\rangle, \quad  \cal A_{\mathbf{v}\fa}:=\langle \mathbf v,\cal A\mathbf e_\fa\rangle, \quad \cal A_{\fa\mathbf{w}}:=\langle \mathbf e_\fa,\cal A\mathbf w\rangle,
\end{equation}
where $\mathbf e_\fa$ is the standard unit vector along $\fa$-th coordinate axis, and the inner product is defined as $\langle \mathbf v, \mathbf w\rangle:= \bv^* \bw$ with $\bv^*$ denoting the conjugate transpose. Given a vector $\mathbf v\in \mathbb C^{\mathcal I_\al}$, $\al=1,2,3,4$, we always identify it with its natural embedding in $\C^{\cal I}$. For example, we shall identify $\mathbf v\in \mathbb C^{\mathcal I_1}$ with $\left( {\begin{array}{*{20}c}
   {\mathbf v}  \\
   \mathbf 0_{q+2n} \\
\end{array}} \right)\in \C^{\cal I}$.
\end{definition}

Now we are ready to state the local laws for $G(z)$. For any constant $\e >0$, we define a domain of the spectral parameter $z$ as
\begin{equation}
S(\e):= \left\{z=E+ \ii \eta: \e \leq E \leq 1, n^{-1+\e} \leq \eta \leq \e^{-1} \right\}. \label{SSET1}
\end{equation}
We define the distance to the two edges as
\begin{equation}
\kappa \equiv \kappa_E := \min\left\{ \vert E -\lambda_-\vert,\vert E -\lambda_+\vert\right\} , \ \ \text{for } z= E+\ii \eta.\label{KAPPA}
\end{equation}

\begin{theorem} [Local laws]\label{thm_local} 
Suppose the assumptions of Theorem \ref{lem null} hold. Then for any fixed $\e>0$, the following estimates hold. 
\begin{itemize}
\item[(1)] {\bf Anisotropic local law}: For any $z\in S(\epsilon)$ and deterministic unit vectors $\mathbf u, \mathbf v \in \mathbb C^{\mathcal I}$,
\begin{equation}\label{aniso_law}
\left|  G_{\mathbf u\mathbf v}(z)   - \Pi_{\mathbf u\mathbf v} (z)  \right| \prec \phi_n + \Psi(z),
\end{equation}
where $\Psi(z)$ is a deterministic control parameter defined as
\begin{equation}\label{eq_defpsi}
\Psi (z):= \sqrt {\frac{\Im \, m_{c}(z)}{{n\eta }} } + \frac{1}{n\eta}, \quad z=E+\ii\eta.
\end{equation}

\item[(2)] {\bf Weak averaged local law}: For any $z \in S( \epsilon)$,  we have 
\begin{equation}
\vert m_{\al}(z)-m_{\al c}(z) \vert \prec \min \left\{\phi_n,\frac{\phi_n^2}{\sqrt{\kappa+\eta}}\right\} + \frac{1}{n\eta}, \quad \al=1,2,3,4. \label{aver_in1} 
\end{equation}
Moreover, outside of the spectrum we have the following stronger estimate
\begin{equation}\label{aver_out1}
 | m_\al(z)-m_{\al c}(z)|\prec \min \left\{\phi_n,\frac{\phi_n^2}{\sqrt{\kappa+\eta}}\right\} + \frac{1}{n(\kappa +\eta)} + \frac{1}{(n\eta)^2\sqrt{\kappa +\eta}}, \quad \al=1,2, 3,4,
\end{equation}
uniformly in $z\in S_{out}(\e):=S(\epsilon)\cap \{z=E+\ii\eta: E\notin [\lambda_-,\lambda_+], n\eta\sqrt{\kappa + \eta} \ge n^\epsilon\}$. 
\end{itemize}
The above estimates are uniform in the spectral parameter $z$ and any set of deterministic vectors of cardinality $n^{\OO(1)}$. 
\end{theorem}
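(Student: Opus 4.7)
The plan is to follow the classical two-step strategy indicated in the organization of the paper: first prove an entrywise local law, then upgrade to the anisotropic version. For the entrywise law, the starting point is Schur's complement formula applied to the linearization $H(z)$. Fix any index $\fa \in \cal I$ and let $G^{(\fa)}$ denote the resolvent of the minor obtained by deleting row and column $\fa$. Schur's identity expresses $1/G_{\fa\fa}$ as a diagonal entry of $H$ minus a quadratic form $\sum_{\fb,\fc\ne\fa} H_{\fa\fb}G^{(\fa)}_{\fb\fc}H_{\fc\fa}$. Because the off-diagonal blocks of $H$ connecting $\cal I_1\cup \cal I_2$ with $\cal I_3\cup\cal I_4$ are exactly the random matrices $X,Y$ whose entries are independent with variance $n^{-1}$, mean zero, and support $\phi_n$, a standard large-deviation bound for quadratic forms in independent variables concentrates this sum around the relevant partial trace $n^{-1}\tr \cal G^{(\fa)}_\al$ with fluctuations $\OO_\prec(\phi_n + \sqrt{\Im m_{\al c}/(n\eta)})$. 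Averaging the four families of identities (one per index group $\cal I_\al$) yields an approximate self-consistent system in $(m_1,m_2,m_3,m_4)$ that up to such errors coincides with the deterministic system \eqref{selfm12}--\eqref{selfm32} whose physical solution is $(m_{1c},m_{2c},m_{3c},m_{4c})$.

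The core analytic step is the stability analysis of this self-consistent system. At the edges $\lambda_\pm$ the deterministic solution develops a square-root singularity, so the Jacobian of the self-consistent map has a vanishing eigenvalue of order $\sqrt{\kappa+\eta}$. Inverting this Jacobian carefully along its range, uniformly in $z\in S(\e)$, converts the approximate equation into the target bound. Combined with the Erd\H{o}s-Knowles-Yau-Yin fluctuation averaging lemma, which replaces the $\phi_n$-type fluctuations by $\phi_n^2$ after summation over $\fa\in\cal I_\al$ thanks to independence of the rows of $X$ and $Y$, one obtains the averaged estimate \eqref{aver_in1}. A continuity bootstrap starting at $\eta=\OO(1)$ (where every quantity is trivially $\OO(1)$) and continuously descending to $\eta\sim n^{-1+\e}$ along a fine grid, together with a Lipschitz estimate on $G$ in $z$, then propagates the bound throughout $S(\e)$. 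The sharper estimate \eqref{aver_out1} outside the spectrum follows from the same scheme after using that $\Im m_{\al c}(z)\sim \eta/\sqrt{\kappa+\eta}$ there, so that $\Psi$ is effectively smaller and the fluctuation-averaging gain is larger.

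To pass from the entrywise bound to \eqref{aniso_law}, I would implement the polynomialization method: write $G_{\mathbf u\mathbf v} - \Pi_{\mathbf u\mathbf v}$ via iterated resolvent expansion as a polynomial in the entries of $X,Y$ and the entries of minors $G^{(T)}$, then compute high moments of this polynomial using the entrywise estimate as input. The block structure of $\Pi$---namely that it is block-scalar on each $\cal I_\al$ with only the $\cal I_3$--$\cal I_4$ off-diagonal piece $h(z)I_n$ being nontrivial---guarantees that $\Pi_{\mathbf u\mathbf v}$ reproduces exactly the deterministic leading term, and a graph-theoretic counting argument then shows that each moment of order $2p$ is bounded by $(\phi_n+\Psi)^{2p}$ up to the stochastic-domination slack.

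The main obstacle will be the stability analysis near the spectral edges $\lambda_\pm$. Because $H$ has a nontrivial deterministic lower-right block coupling $\cal I_3$ and $\cal I_4$ through the inverse of $\begin{pmatrix} z I_n & z^{1/2} I_n \\ z^{1/2} I_n & z I_n \end{pmatrix}$, the linearized stability operator is a genuine $4\times 4$ matrix whose vanishing direction at the edge must be identified explicitly and inverted only on its orthogonal complement, with uniform control as $E\to \lambda_\pm$. A secondary difficulty is delivering the correct edge rate $\phi_n^2/\sqrt{\kappa+\eta}$ in \eqref{aver_in1}: this requires the fluctuation-averaging bound to interact with the square-root singularity of the Jacobian in the same way as in the sample covariance analysis of \cite{DY}, and the absence of the usual ``one resolvent'' structure here (we have four coupled scalars) makes the bookkeeping more delicate than in standard cases.
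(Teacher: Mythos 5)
Your overall skeleton (entrywise law via Schur complement and self-consistent equations, fluctuation averaging, continuity bootstrap from large $\eta$, then polynomialization for the anisotropic bound) is the same as the paper's, but two steps as you describe them would not go through. First, the single-index Schur expansion fails for $\fa=\mu\in\cal I_3\cup\cal I_4$: the $\mu$-th row of $H$ is not purely random, since the lower-right block $\bigl(\begin{smallmatrix} z & z^{1/2}\\ z^{1/2} & z\end{smallmatrix}\bigr)^{-1}\otimes I_n$ contributes $\OO(1)$ deterministic entries coupling $\mu$ with $\overline\mu$. Consequently $1/G_{\mu\mu}$ does \emph{not} concentrate around a partial trace of a minor; the quadratic form contains the deterministic term proportional to $G^{(\mu)}_{\overline\mu\overline\mu}$ and mixed terms $\sum_i X_{i\mu}G^{(\mu)}_{i\overline\mu}$, so the unknowns cannot be only $(m_1,\dots,m_4)$ — the average of $G_{\mu\overline\mu}$, whose limit is the nonzero $h(z)$ in \eqref{defn_pi} and \eqref{hz}, enters on the same footing. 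The paper handles this by removing $\mu$ and $\overline\mu$ simultaneously and working with $2\times2$ resolvent groups $G_{[\mu\nu]}$ (Lemma \ref{lemm_resolvent_group}, formula \eqref{eq_res11}), with matrix-valued $Z_{[\mu]}$ variables and a matrix fluctuation-averaging bound for $[Z]$ in Lemma \ref{abstractdecoupling}; your plan needs this grouped structure or an equivalent device.

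Second, the stability step you flag as the ``main obstacle'' — inverting a degenerate $4\times4$ Jacobian along its range uniformly near $\lambda_\pm$ — is precisely what the paper never has to do, and you are missing the idea that makes the edge analysis standard. The approximate system closes algebraically: $m_1\approx -c_1/m_3$, $m_2\approx -c_2/m_4$, and $m_3-m_4=(1-z)(c_1-c_2)$ holds \emph{exactly} by \eqref{m34}, so everything is eliminated in favor of $m_3$, which satisfies the scalar quadratic \eqref{selfm32}, $f_3(m_3,z)=m_3^2+[(2c_1-1)z-c_1+c_2]m_3+c_1(c_1-1)z(z-1)\approx 0$ (Lemma \ref{lemm_selfcons_weak}). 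Edge stability is then the usual one-variable square-root analysis (Lemma \ref{stability}), exactly as for sample covariance matrices, and the rates $\phi_n^2/\sqrt{\kappa+\eta}$ and the improved outside-spectrum bound follow from plugging the fluctuation-averaged error into $\delta/\sqrt{\kappa+\eta+\delta}$ together with $\im m_c\sim\eta/\sqrt{\kappa+\eta}$ off the spectrum. Without this reduction (or a genuinely executed vector-Dyson-equation stability argument, which you only announce), the hardest part of the proof remains open in your proposal. The anisotropic upgrade by polynomialization is the same route as the paper, though note that the sums over the $\cal I_3$–$\cal I_4$ indices must again be organized in $2\times2$ groups, and the paper additionally needs a regularized resolvent and rough spectral bounds on $S_{xx},S_{yy}$ to justify taking expectations of high moments.
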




With Theorem \ref{thm_local} as input, we can prove an even stronger estimate on $m(z)$ that is independent of $\phi_n$. This averaged local law will give the rigidity of eigenvalues for $\mathcal Q_1$ in \eqref{rigidity}. For fixed $\wt\e>0$, we define the following domains
$$
\wt S(\e,\wt \e):= \left\{z=E+ \ii \eta: \e \leq E \leq 1-\wt\e, n^{-1+\e} \leq \eta \leq \e^{-1} \right\}, \quad \wt S_{out}(\e,\wt\e):= \wt S(\e,\wt \e)\cap  S_{out}(\e).
$$
Note that these two domains are away from $z=1$. 

\begin{theorem}[Strong averaged local law] \label{thm_largerigidity}
Suppose the assumptions of Theorem \ref{lem null} hold. Then for any fixed $\epsilon,\wt\e>0$, we have
\begin{equation}
 \vert m(z)-m_{c}(z) \vert \prec (n \eta)^{-1}, \label{aver_in}
\end{equation}
uniformly in $z \in \wt S(\epsilon,\wt \e)$. Moreover, outside of the spectrum we have the following stronger estimate 
\begin{equation}\label{aver_out0}
 | m(z)-m_{c}(z)|\prec \frac{1}{n(\kappa +\eta)} + \frac{1}{(n\eta)^2\sqrt{\kappa +\eta}} , 
\end{equation}
uniformly in $z\in \wt S_{out}(\epsilon,\wt \e)$. These estimates also hold for $(m_{\al}(z) -m_{\al c}(z))$, $\al=1,2,3,4$. 
Finally, given any small constant $ 0< \e_0< 1 - \lambda_+$, we have
\begin{equation}
\max_{E\ge \e_0} \vert n(E)-n_{c}(E) \vert \prec n^{-1}  ,  \label{Kdist}
\end{equation}
where 
\begin{equation}\label{ncE}
n(E):=\frac{1}{q} \# \{ \lambda_j \ge E\}, \ \quad \ n_{c}(E):=\int^{1-\e_0}_E f(x)dx,
\end{equation}
for $f(x)$ defined in \eqref{LSD}.
\end{theorem}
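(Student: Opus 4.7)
The strategy is to bootstrap from Theorem \ref{thm_local} via a fluctuation averaging argument, so as to cancel the $\phi_n$-dependent error and obtain the optimal $(n\eta)^{-1}$ rate. The starting point is to derive perturbed self-consistent equations for $(m_1,m_2,m_3,m_4)$. Applying Schur's complement formula to $H(z)$ in \eqref{linearize_block} expresses each diagonal entry $G_{\fa\fa}$ in terms of a minor resolvent plus a fluctuation $Z_\fa$; by Theorem \ref{thm_local} one has $|Z_\fa|\prec \Psi(z)$. Averaging over $\fa\in\cal I_\al$ produces an equation of the form
\[
m_\al(z) = F_\al\bigl(m_1(z),m_2(z),m_3(z),m_4(z),z\bigr) + [Z_\al](z),\qquad [Z_\al]:=\frac1n\sum_{\fa\in\cal I_\al}Z_\fa,
\]
which is a perturbation of the deterministic system \eqref{selfm12}--\eqref{selfm3}.

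The central step is the estimate $|[Z_\al]|\prec\Psi(z)^2 \lesssim \phi_n^2 + (n\eta)^{-1}$, which improves the naive bound $|[Z_\al]|\prec\Psi(z)$ by exploiting cancellations in the average. I would prove this via a high-moment argument: expand $\E|[Z_\al]|^{2p}$ through iterated resolvent identities and bound the resulting graphical sums using (i) the anisotropic law \eqref{aniso_law} to control individual $G_{\fa\fb}$ for $\fa\ne\fb$ by $\Psi$, and (ii) the Ward identity $\sum_{\fb}|G_{\fa\fb}|^2=\eta^{-1}\Im G_{\fa\fa}$ for collective sums. Each uncoupled summation index gains an extra factor of $\Psi$, so only indices forced into matched pairs contribute at the leading order, yielding the $\Psi^2$ improvement. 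This is in the spirit of the standard fluctuation-averaging lemmas for Wigner and sample covariance models, but must be carried out here for the four-block linearization $H$, taking into account the cross-blocks $\cal G_{LR},\cal G_{RL}$ produced by \eqref{GL1}--\eqref{GLR1}.

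With $|[Z_\al]|\prec(n\eta)^{-1}$ in hand, I would invert the perturbed system using the stability of \eqref{selfm12}--\eqref{selfm3}. Since $\wt S(\e,\wt\e)$ avoids a neighborhood of $z=1$, the Jacobian of the deterministic fixed-point map has uniformly bounded inverse in the bulk and is controlled by $(\kappa+\eta)^{-1/2}$ near the edges $\lambda_\pm$; on $\wt S_{out}(\e,\wt\e)$ it is uniformly bounded. This yields $|m_\al-m_{\al c}|\prec(n\eta)^{-1}$ inside the spectrum, and the sharper two-term bound \eqref{aver_out0} outside, after inserting the refined form of the fluctuation estimate in which $\Im m_c$ is replaced by its edge behaviour $\sim\sqrt{\kappa+\eta}$. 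The bound on $m(z)$ itself follows from the exact identity $m_3-m_{3c}=c_2 z(1-z)(m-m_c)$, which is a consequence of \eqref{m3m} together with \eqref{mc}, and is non-degenerate on $\wt S(\e,\wt\e)$ because $|c_2z(1-z)|$ is bounded below there.

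Finally, the counting-function rigidity \eqref{Kdist} follows by a standard Helffer--Sj\"ostrand contour argument: I would represent $n(E)-n_c(E)$ as an integral of $\Im[m(z)-m_c(z)]$ against a smooth approximation of $\mathbf 1_{[E,1-\e_0/2]}$, split the contour at imaginary part $\eta\sim n^{-1}$, and substitute \eqref{aver_in} and \eqref{aver_out0} to obtain the $n^{-1}$ rate after a routine calculation that is by now textbook. The main obstacle will be the fluctuation-averaging estimate of the second paragraph: the overall scheme is well established, but the four-block index structure of $H$ and the coupling among $m_1,\ldots,m_4$ require careful bookkeeping of which minor expansions apply to each cross-block, and of the interaction between off-diagonal Ward identities and the stability bounds of the deterministic system.
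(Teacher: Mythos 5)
There is a genuine gap at what you call the central step. Under the assumptions of Theorem \ref{lem null} the entries only satisfy the bounded support condition with $\phi_n\le n^{-c_\phi}$, which may be far larger than $n^{-1/2}$, and the anisotropic law \eqref{aniso_law} controls individual resolvent entries only by $\phi_n+\Psi(z)$, not by $\Psi(z)$. Consequently a fluctuation-averaging argument of the type you describe (and the one the paper actually proves, Lemma \ref{abstractdecoupling}) gains one factor of the off-diagonal size per averaged index and yields $|[Z_\al]|\prec(\phi_n+\Psi)^2$, not $\prec\Psi^2$. Feeding this into the stability analysis reproduces exactly the weak averaged law \eqref{aver_in1}, whose error contains $\min\{\phi_n,\phi_n^2/\sqrt{\kappa+\eta}\}$; near the edge, at $\eta\sim n^{-2/3}$, this term is of order $\phi_n^2 n^{1/3}$, which dominates $(n\eta)^{-1}$ whenever $c_\phi<1/3$. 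The moment conditions \eqref{conditionA3} do not rescue the direct expansion, since the higher moments are only bounded in terms of powers of $\phi_n$ and the $\phi_n$-dependent contributions survive in the graphical sums; this is precisely why the support dependence cannot be removed ``in place''.

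The paper removes the support dependence by a Green's function comparison instead: by Lemma \ref{lem_decrease} one constructs $(\wt X,\wt Y)$ matching the first four moments of $(X,Y)$ but with support $\prec n^{-1/2}$; for these, Theorem \ref{thm_local} already gives the strong bounds (take $J=\Psi$ and $K=(n\eta)^{-1}$, resp.\ the outside bound); and Lemma \ref{comparison lem2}, a Lindeberg-type swapping argument using the matched moments and \eqref{aniso_law} for both ensembles, transfers these to $(X,Y)$ with error $\Psi^2+J^2+K\lesssim (n\eta)^{-1}$, which is \eqref{KEYEYEYEY}. Your remaining steps --- passing from $m_3-m_{3c}$ to $m-m_c$ via $m_3-m_{3c}=c_2z(1-z)(m-m_c)$ and staying away from $z=1$, and deducing \eqref{Kdist} by a Helffer--Sj\"ostrand-type argument --- agree with the paper (which quotes standard references for the last step) and are fine once the key estimate is available. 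So the missing ingredient is the four-moment matching and comparison step; without it, the proposed fluctuation-averaging route cannot reach the $\phi_n$-independent rate $(n\eta)^{-1}$.
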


The rest of the paper is devoted to proving these main results---Theorems \ref{lem null}, \ref{main_thm1}, \ref{thm_local} and \ref{thm_largerigidity}. {Before ending this section, we give a heuristic derivation of the limit $\Pi(z)$ in \eqref{defn_pi}. For the rigorous argument, we refer the reader to Section \ref{pf thmlocal}.

For $i\in \cal I_1\cup \cal I_2$, $\mu\in \cal I_3$ and $\overline \mu =\mu + n\in \cal I_4$, we denote by $H^{(i)}$ the $(p+q+2n-1)\times (p+q+2n-1)$ matrix obtained by removing the $i$-th row and column of $H$, and by $H^{[\mu]}$ the $(p+q+2n-2)\times (p+q+2n-2)$ matrix obtained by removing the $\mu$-th and $\overline \mu$-th rows and columns of $H$ (cf. Definition \ref{den minor}). Using Schur complement formula (cf. equation \eqref{resolvent1}), we obtain that for $i\in \cal I_1$, 
$$\frac{1}{G_{ii}}= - \sum_{\mu, \nu \in \cal I_3}x_{i\mu} x_{i\nu}G^{(i)}_{\mu \nu}.$$
Since $G^{(i)}$ is independent of the $i$-th row and column of $H$, the right-hand side should concentrate around its partial expectation over all $x_{i\mu}$, $\mu\in \cal I_2$:
\be\label{derv1}\frac{1}{G_{ii}}\approx - \frac{1}{n}\sum_{\mu \in \cal I_3} G^{(i)}_{\mu \mu} \approx -\frac{1}{n}\sum_{\mu \in \cal I_3} G_{\mu \mu} =-m_3 \quad \text{with high probability}.\ee
Here in the second step, we used the intuition that removing only one row and column out of the $(p+q+2n)$ rows and columns of $H$ should have a negligible affect on the partial trace over $\mu \in \cal I_3$. In the last step, we used the definition of $m_3$ in \eqref{def m1234}. Now from \eqref{derv1}, we immediately obtain that  
$$m_1= \frac{1}{n}\sum_{i\in \cal I_1} {G_{ii}}\approx -\frac{p/n}{m_3 }= -\frac{c_1}{m_3}\quad \text{with high probability},$$
which leads to the first equation in \eqref{selfm12}. With a similar argument, we can show that $m_2\approx  -c_2/m_4$, which leads to the second equation in \eqref{selfm12}. Moreover, $m_3$ and $m_4$ satisfy the third equation in \eqref{selfm12} by \eqref{m34}. 

On the other hand, using Schur complement formula (cf. equation \eqref{eq_res11}) and a similar argument as above, we obtain that for $\mu\in \cal I_3$, 
\be\nonumber
\begin{split}
\begin{pmatrix} 
   { G_{\mu\mu} } & { G_{\mu \overline \mu} }  \\
   { G_{ \overline\mu \mu} } & { G_{ \overline \mu \overline \mu} }  \\
\end{pmatrix}^{ - 1}  &=\frac{1}{z-1}\begin{pmatrix}1 & -z^{-1/2} \\ -z^{-1/2} & 1 \end{pmatrix}  -\begin{pmatrix}\sum_{i, j \in \cal I_1}x_{i\mu  }x_{j\mu} G^{[\mu]}_{ij}  & \sum_{i \in \cal I_1, j\in \cal I_2} x_{i\mu}y_{j\overline \mu} G^{[\mu]}_{ij}  \\ \sum_{i \in \cal I_1, j\in \cal I_2}x_{i \mu} y_{j\overline \mu }  G^{[\mu]}_{ji} & \sum_{i, j \in \cal I_2} y_{i\overline\mu }y_{j\overline\mu} G^{[\mu]}_{ij}   \end{pmatrix} \\
& \approx   \begin{pmatrix}(z-1)^{-1}- m_1 & - {z^{-1/2}}/(z-1) \\  - {z^{-1/2}}/(z-1) &   (z-1)^{-1} - m_2 \end{pmatrix} \qquad \text{with high probability}   .
\end{split}
\ee
Taking matrix inverse on both sides, we obtain that with high probability,
\begin{align}
\begin{pmatrix} 
   { G_{\mu\mu} } & { G_{\mu \overline \mu} }  \\
   { G_{ \overline\mu \mu} } & { G_{ \overline \mu \overline \mu} }  \\
\end{pmatrix}  & \approx   \frac{ z-1}{[1-(z-1)m_{1}][ 1-(z-1)m_{2}] -z^{-1}  }\begin{pmatrix}1-(z-1)m_{2} & z^{-1/2} \\  z^{-1/2} &   1-(z-1)m_{1} \end{pmatrix}  \label{derv2}
\end{align}
Using \eqref{derv2}, we obtain that with high probability,
\be\label{derv3}m_3= \frac{1}{n}\sum_{\mu \in \cal I_3}G_{\mu\mu}\approx  \frac{ (z-1)[1-(z-1)m_{2} ]}{[1-(z-1)m_{1}][ 1-(z-1)m_{2}] -z^{-1}  } = \frac{1-(z-1)m_{2}}{z^{-1}-(m_{1}+m_2)+(z-1)m_1 m_{2} },\ee
which leads to equation \eqref{selfm3}.

From the above (non-rigorous) derivation, we have seen that $(m_1,m_2,m_3,m_4)$ satisfies equations \eqref{selfm12} and \eqref{selfm3} approximately. Then we should have $(m_1,m_2,m_3,m_4)\approx (m_{1c},m_{2c},m_{3c},m_{4c})$ with high probability. Moreover, by \eqref{derv1} we have $G_{ii}\approx -m_3^{-1}\approx -m_{3c}^{-1} = c_1^{-1}m_{1c}$ with high probability for $i\in \cal I_1$. Similarly, we can get that $G_{jj}\approx  c_2^{-1}m_{2c}$ with high probability for $j\in \cal I_2$. By \eqref{derv2} and \eqref{derv3} we have $G_{\mu\mu} \approx m_3\approx m_{3c}$ with high probability for $\mu \in \cal I_3$. Similarly, we have $G_{\nu\nu} \approx m_4 \approx m_{4c}$ with high probability for $\nu \in \cal I_4$. Finally, by \eqref{derv2} and \eqref{derv3} we have that for $\mu \in \cal I_3$,
$$G_{\mu \overline \mu} \approx \frac{z^{-1/2}}{1-(z-1)m_{2}}m_{3} \approx h(z) \quad \text{with high probability},$$ 
where we recall $h(z)$ defined in \eqref{hz}. The above arguments explain the nonzero entries in $\Pi(z)$. Using Schur complement formula and concentration estimates, we can also show that all the other entries of $G(z)$ are approximately zero. The reader can refer to Section \ref{pf thmlocal} for more details.

}

\section{Basic notations and tools}\label{secsectools}

In this preliminary section, we introduce some basic notations and tools that will be used in the proof. First, the following lemma collects basic properties of stochastic domination $\prec$, which will be used tacitly in the proof.

\begin{lemma}[Lemma 3.2 in \cite{isotropic}]\label{lem_stodomin}
Let $\xi$ and $\zeta$ be families of nonnegative random variables.
\begin{itemize}
\item[(i)] Suppose that $\xi (u,v)\prec \zeta(u,v)$ uniformly in $u\in U$ and $v\in V$. If $|V|\le n^C$ for some constant $C$, then $\sum_{v\in V} \xi(u,v) \prec \sum_{v\in V} \zeta(u,v)$ uniformly in $u$.

\item[(ii)] If $\xi_1 (u)\prec \zeta_1(u)$ and $\xi_2 (u)\prec \zeta_2(u)$ uniformly in $u\in U$, then $\xi_1(u)\xi_2(u) \prec \zeta_1(u)\zeta_2(u)$ uniformly in $u$.

\item[(iii)] Suppose that $\Psi(u)\ge n^{-C}$ is deterministic and $\xi(u)$ satisfies $\mathbb E\xi(u)^2 \le n^C$ for all $u$. Then if $\xi(u)\prec \Psi(u)$ uniformly in $u$, we have $\mathbb E\xi(u) \prec \Psi(u)$ uniformly in $u$.
\end{itemize}
\end{lemma}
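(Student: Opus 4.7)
The plan is to verify each of the three assertions by unpacking Definition \ref{stoch_domination} and using straightforward union bounds and a Cauchy--Schwarz cutoff argument. None of the three parts requires ingredients beyond elementary probability, so the main obstacle is only bookkeeping of the $\epsilon$ and $D$ exponents so that the quantifiers come out in the right order.

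For part (i), I would fix $\epsilon>0$ and $D>0$, and apply the hypothesis with $(\epsilon,D+C)$ in place of $(\epsilon,D)$, so that for every $v\in V$ and uniformly in $u$,
\[
\mathbb{P}\!\left[\xi(u,v)>n^{\epsilon}\zeta(u,v)\right]\leq n^{-D-C}.
\]
A union bound over the at most $n^{C}$ elements of $V$ gives that, with probability at least $1-n^{-D}$, the inequality $\xi(u,v)\leq n^{\epsilon}\zeta(u,v)$ holds simultaneously for all $v\in V$. Summing over $v$ yields $\sum_{v}\xi(u,v)\leq n^{\epsilon}\sum_{v}\zeta(u,v)$ on this event, which is exactly the definition of $\sum_{v}\xi\prec\sum_{v}\zeta$ uniformly in $u$.

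For part (ii), given $\epsilon,D>0$, apply each hypothesis with the pair $(\epsilon/2,D)$. On the intersection of the two good events, which has probability at least $1-2n^{-D}$, we have $\xi_{i}(u)\leq n^{\epsilon/2}\zeta_{i}(u)$ for $i=1,2$, hence $\xi_{1}(u)\xi_{2}(u)\leq n^{\epsilon}\zeta_{1}(u)\zeta_{2}(u)$. Replacing $D$ by $D+1$ at the start absorbs the factor of $2$, so we obtain $\xi_{1}\xi_{2}\prec\zeta_{1}\zeta_{2}$ uniformly.

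For part (iii), the idea is the standard split $\mathbb{E}\xi(u)=\mathbb{E}[\xi(u)\mathbf 1_{A_{u}}]+\mathbb{E}[\xi(u)\mathbf 1_{A_{u}^{c}}]$, where $A_{u}\deq\{\xi(u)\leq n^{\epsilon/2}\Psi(u)\}$. The first contribution is trivially at most $n^{\epsilon/2}\Psi(u)$. For the second, Cauchy--Schwarz and the moment bound give
\[
\mathbb{E}[\xi(u)\mathbf 1_{A_{u}^{c}}]\leq\bigl(\mathbb{E}\xi(u)^{2}\bigr)^{1/2}\mathbb{P}(A_{u}^{c})^{1/2}\leq n^{C/2}\mathbb{P}(A_{u}^{c})^{1/2}.
\]
By hypothesis $\mathbb{P}(A_{u}^{c})\leq n^{-D'}$ for any prescribed $D'$, and since $\Psi(u)\geq n^{-C}$, choosing $D'$ sufficiently large in terms of $C$, $D$, and $\epsilon$ makes $n^{C/2}\mathbb{P}(A_{u}^{c})^{1/2}\leq n^{\epsilon/2}\Psi(u)$ uniformly in $u$. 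Combining the two contributions yields $\mathbb{E}\xi(u)\leq n^{\epsilon}\Psi(u)$, which, since $\mathbb{E}\xi(u)$ is deterministic, is equivalent to $\mathbb{E}\xi(u)\prec\Psi(u)$ uniformly. The only place any care is needed is to verify that the exponent $D'$ can indeed be chosen uniformly in $u$, which follows from the uniformity of the input $\xi\prec\Psi$.
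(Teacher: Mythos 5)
Your proof is correct. The paper itself states this lemma as a quotation of Lemma 3.2 in \cite{isotropic} without reproving it, and your argument (union bounds for (i) and (ii), and the cutoff plus Cauchy--Schwarz estimate using $\Psi(u)\ge n^{-C}$ and $\mathbb E\xi(u)^2\le n^C$ for (iii)) is exactly the standard proof given in that reference, with the quantifiers handled correctly and uniformly in $u$.
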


We have the following lemma, which can be verified through direct calculation using \eqref{m1c}-\eqref{m4c}. 

\begin{lemma}\label{lem_mbehavior}
Fix any constants $c, C>0$. If \eqref{assm20} holds, then 
for $z\in \C_+ \cap \{z: c\le |z| \le C\}$ we have
\begin{equation}\label{absmc}
\left|z^{-1} - (m_{1c}(z)+m_{2c}(z)) + (z-1)m_{1c}(z)m_{2c}(z) \right|\sim 1,
 \ee 
 and
 \be\label{Immc} 
  \vert m_{3c}(z) \vert \sim |h(z)| \sim 1,  \quad 0\le \im m_{3c}(z) \sim \begin{cases}
    {\eta}/{\sqrt{\kappa+\eta}}, & \text{ if } E \notin [\lambda_-,\lambda_+] \\
    \sqrt{\kappa+\eta}, & \text{ if } E \in [\lambda_-,\lambda_+]\\
  \end{cases}.
\end{equation}
%
The estimate \eqref{Immc} also holds for $m_{1c}$, $m_{2c}(z)$, $m_{4c}(z)$ and $m_c(z)$. 
\end{lemma}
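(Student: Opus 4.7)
The proof proceeds by direct verification from the explicit formulas \eqref{m1c}--\eqref{m4c} and \eqref{hz}, together with the self-consistent identities \eqref{selfm12}--\eqref{selfm3}. The arithmetic preliminaries, all following from \eqref{assm20}, are: $0<\lambda_-<\lambda_+<1$; $\lambda_-+\lambda_+=2(c_1+c_2-2c_1c_2)$ and $\lambda_-\lambda_+=(c_1-c_2)^2$; the edge gap $\lambda_+-\lambda_-=4\sqrt{c_1c_2(1-c_1)(1-c_2)}$ is $\geq 4\tau^2$; and $\sqrt{(1-\lambda_-)(1-\lambda_+)}=1-c_1-c_2\geq\tau$. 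The last identity, in particular, resolves the apparent $0/0$ in several of the formulas at $z=1$.

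For \eqref{absmc}, I would rearrange \eqref{selfm3} as
\begin{equation*}
z^{-1}-(m_{1c}+m_{2c})+(z-1)m_{1c}m_{2c}=\frac{1-(z-1)m_{2c}}{m_{3c}},
\end{equation*}
and combine this with the first equality in \eqref{hz}, namely $h(z)[1-(z-1)m_{2c}]=z^{-1/2}m_{3c}$, to obtain
\begin{equation*}
\bigl|z^{-1}-(m_{1c}+m_{2c})+(z-1)m_{1c}m_{2c}\bigr|=\frac{|z|^{-1/2}}{|h(z)|}.
\end{equation*}
Thus \eqref{absmc} follows at once from the bound $|h(z)|\sim 1$ (which is part of \eqref{Immc}) together with $|z|\sim 1$.

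For the size estimates $|m_{3c}|,|h(z)|\sim 1$ in \eqref{Immc}, the upper bounds are immediate from the explicit formulas together with $|\sqrt{(z-\lambda_-)(z-\lambda_+)}|\leq(|z|+1)\lesssim 1$. For the lower bound on $h$, the identity $[(2-c_1-c_2)-z]^2-(z-\lambda_-)(z-\lambda_+)=4(1-c_1)(1-c_2)(1-z)$, obtained by expanding and using the preliminary relations for $\lambda_-+\lambda_+$ and $\lambda_-\lambda_+$, allows me to rationalize \eqref{hz} into
\begin{equation*}
h(z)\cdot\bigl[-z+(2-c_1-c_2)-\sqrt{(z-\lambda_-)(z-\lambda_+)}\bigr]=2z^{1/2}(1-c_1)(1-c_2)(1-z),
\end{equation*}
whence $|h(z)|\gtrsim|z|^{1/2}|1-z|$ uniformly. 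The apparent zero at $z=1$ is spurious: direct evaluation of \eqref{hz} using $\sqrt{(1-\lambda_-)(1-\lambda_+)}=1-c_1-c_2$ gives $h(1)=1-c_1-c_2\geq\tau$, and continuity extends the lower bound across $z=1$. The bound $|m_{3c}|\sim 1$ then follows from the identity $m_{1c}m_{3c}=-c_1$ in \eqref{selfm12} combined with an analogous upper bound $|m_{1c}|\lesssim 1$ (where the apparent pole of $m_{1c}$ at $z=1$ is cancelled by the vanishing of the numerator of \eqref{m1c} there, again via $\sqrt{(1-\lambda_-)(1-\lambda_+)}=1-c_1-c_2$). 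The corresponding estimates for $m_{2c},m_{4c},m_c$ follow from the remaining identities in \eqref{selfm12} and \eqref{mc}.

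For the imaginary-part estimate in \eqref{Immc}, I would use \eqref{m3m} to write $m_{3c}(z)=c_2z(1-z)m_c(z)+(1-c_1-c_2)z$, which yields
\begin{equation*}
\im m_{3c}=c_2\bigl[\eta(1-2E)\re m_c+(E(1-E)+\eta^2)\im m_c\bigr]+(1-c_1-c_2)\eta.
\end{equation*}
Since $m_c$ is the Stieltjes transform of the density $f$ from \eqref{LSD}, which has square-root behavior at the edges $\lambda_\pm$ and avoids the singularities $0,1$ under \eqref{assm20}, standard Stieltjes-transform analysis yields the comparable estimates $\im m_c\sim\sqrt{\kappa+\eta}$ for $E\in[\lambda_-,\lambda_+]$ and $\im m_c\sim\eta/\sqrt{\kappa+\eta}$ for $E\notin[\lambda_-,\lambda_+]$, together with $|\re m_c|\lesssim 1$. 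The dominant positive contribution is $c_2(E(1-E)+\eta^2)\im m_c$, which has the claimed scaling whenever $E$ stays bounded away from $1$; near $z=1$, where $E(1-E)\to 0$, the term $(1-c_1-c_2)\eta$ carries the correct order $\eta\sim\eta/\sqrt{\kappa+\eta}$ (since $\kappa=1-\lambda_+$ is bounded below). The main technical point is to rule out cancellation from the sign-indefinite cross term $c_2\eta(1-2E)\re m_c$; this is handled by a short case analysis using the explicit value $m_c(1)=-(1-c_2)/(1-c_1-c_2)$ (computable from \eqref{mc} at $z=1$), which shows that near $z=1$ the cross term actually contributes positively and reinforces the lower bound, while away from $z=1$ it is controlled by the already-dominant $\im m_c$ term. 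The estimates for $m_{1c},m_{2c},m_{4c},h$ then follow from the algebraic identities \eqref{selfm12}, \eqref{hz}, \eqref{mc} combined with the established bounds on $m_{3c}$ and $m_c$.
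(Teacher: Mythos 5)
The paper itself gives no argument beyond ``direct calculation,'' and the first two-thirds of your proposal is a correct way of carrying that out: the reduction of \eqref{absmc} to $|h(z)|\sim 1$ via $h\,[1-(z-1)m_{2c}]=z^{-1/2}m_{3c}$, the rationalization identity $[(2-c_1-c_2)-z]^2-(z-\lambda_-)(z-\lambda_+)=4(1-c_1)(1-c_2)(1-z)$, the value $h(1)=1-c_1-c_2$, and the lower bound on $|m_{3c}|$ through $m_{1c}m_{3c}=-c_1$ all check out (modulo routine uniformity-in-$\tau$ remarks near $z=1$). The gap is in the lower bound for $\im m_{3c}$. You decompose $\im m_{3c}=c_2\bigl[(E(1-E)+\eta^2)\im m_c+\eta(1-2E)\re m_c\bigr]+(1-c_1-c_2)\eta$ and handle the sign-indefinite cross term only near $z=1$ (where indeed $\re m_c<0$ and $1-2E<0$ make it positive); elsewhere you assert it is ``controlled by the already-dominant $\im m_c$ term.'' But in the regime $\kappa+\eta\sim 1$ (for instance $E$ outside the spectrum with $\lambda_+<E<1/2$, so $\re m_c<0$ and $1-2E>0$, or simply $\eta\sim 1$), the cross term, the main term $c_2E(1-E)\im m_c$, the term $(1-c_1-c_2)\eta$, and the target rate $\eta/\sqrt{\kappa+\eta}$ are all of size $\sim\eta$; the facts you quote ($\im m_c$ comparable to the two-regime rate, $|\re m_c|\lesssim 1$) come with unspecified constants and cannot by themselves exclude a cancellation. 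So the acknowledged ``main technical point'' is not actually closed by the proposed case analysis.

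The statement is true and the repair is short, but it needs an input beyond those crude bounds. One option is to note that
\begin{equation*}
z(1-z)m_c(z)=z-1+\int x f(x)\,\dd x+\int\frac{x(1-x)f(x)}{x-z}\,\dd x ,
\end{equation*}
so that $\im\bigl[z(1-z)m_c(z)\bigr]\ge\eta$ (the last integral is the Stieltjes transform of the positive measure $x(1-x)f(x)\dd x$), which makes the bracket in your formula nonnegative up to the harmless $+(1-c_1-c_2)\eta$ and kills the cancellation in every regime. Alternatively, bypass $m_c$ entirely and read off from \eqref{m3c} that $\im m_{3c}=\tfrac12\bigl[(1-2c_1)\eta+\im\sqrt{(z-\lambda_-)(z-\lambda_+)}\bigr]$; for $E$ outside $[\lambda_-,\lambda_+]$ and small $\eta$ the square-root term is $\approx\frac{\eta\,(|E-\lambda_-|+|E-\lambda_+|)}{2\sqrt{|E-\lambda_-|\,|E-\lambda_+|}}\ge\eta>(2c_1-1)\eta$ by AM--GM, and inside the spectrum it is $\sim\sqrt{\kappa+\eta}\gg\eta$, so the possibly negative $(1-2c_1)\eta$ can never overwhelm it. Two smaller points you should also flag: the identification of $m_c$ with the Stieltjes transform of $f$ in \eqref{LSD} is itself a fact to be verified from \eqref{mc} and \eqref{m2c} (it is true, but not stated in the paper), and $\lambda_-$ is not bounded below under \eqref{assm20} (it vanishes when $c_1=c_2$), so the ``square-root edge away from $0$'' description of $f$ is not uniform; it is only the restriction $|z|\ge c$ that makes your quoted $\im m_c$ asymptotics valid on the stated domain.
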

By \eqref{absmc} and (\ref{Immc}), we have for $z\in S(\e)$ (recall \eqref{eq_defpsi}),
\begin{equation}\label{psi12}
\begin{split}
&\|\Pi\|=\OO(1), \quad \Psi \gtrsim n^{-1/2} , \quad \Psi^2 \lesssim (n\eta)^{-1}, \\
 & \Psi(z) \sim  \sqrt {\frac{\Im \, m_{\al c}(z)}{{n\eta }} } + \frac{1}{n\eta} \ \ \text{with} \ \ \al=1,2,3,4.
 \end{split}
\end{equation}

 Note that $S_{xx}$ (resp. $S_{yy}$) is a standard sample covariance matrix, and it is well-known that its eigenvalues are all inside the support of the Marchenko-Pastur law $[(1-\sqrt{c_1})^2 , (1+\sqrt{c_1})^2]$ (resp. $[(1-\sqrt{c_2})^2 , (1+\sqrt{c_2})^2]$) with probability $1-\oo(1)$ \cite{No_outside}. Hence both $S_{xx}^{-1}$ and $S_{yy}^{-1}$ behaves well under the assumption \eqref{assm20}. In our proof, we shall need a slightly stronger probability bound, which is given by the following lemma. Denote the eigenvalues of $S_{xx}$ and $S_{yy}$ by $\lambda_1 (S_{xx}) \ge \cdots \ge \lambda_p (S_{xx})$ and $\lambda_1 (S_{yy}) \ge \cdots \ge \lambda_q (S_{yy})$.
 
\begin{lemma}\label{SxxSyy}
Suppose Assumption \ref{main_assm} holds. Suppose $X$ and $Y$ have bounded support $\phi_n$ such that $ n^{-{1}/{2}} \leq \phi \leq n^{- c_\phi} $ for some constant $c_\phi>0$. Then for any constant $\e>0$, we have with high probability,
\be\label{op rough1} (1-\sqrt{c_1})^2 - \e \le  \lambda_p(S_{xx})   \le  \lambda_1(S_{xx}) \le (1+\sqrt{c_1})^2 + \e ,
\ee
and
\be\label{op rough2} (1-\sqrt{c_2})^2 - \e \le  \lambda_q(S_{yy})  \le  \lambda_1(S_{yy}) \le (1+\sqrt{c_2})^2 + \e .
\ee
\end{lemma}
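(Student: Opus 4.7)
The plan is to reduce to a standard no-eigenvalues-outside-the-support statement for sample covariance matrices with entries exactly normalized to mean zero and variance $n^{-1}$, and then invoke a classical edge estimate, which is applicable because the bounded support assumption $\phi_n \le n^{-c_\phi}$ makes all moments of $\sqrt n\, x_{ij}$ under control.

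First I would perform a recentering and rescaling step. Set
\[
\widetilde x_{ij} := \frac{x_{ij} - \E x_{ij}}{\sqrt{n\,\E|x_{ij}-\E x_{ij}|^2}},
\]
so that $\E \widetilde x_{ij}=0$ and $\E|\widetilde x_{ij}|^2 = n^{-1}$ exactly, and write $X = \widetilde X + E$ where $E$ collects the errors. By \eqref{entry_assm0}--\eqref{entry_assm1}, each entry of $E$ is $\OO(n^{-2-\tau})$ deterministically, so $\|E\|\le\|E\|_F = \OO(n^{-1-\tau/2})$. Moreover, $\widetilde X$ still has bounded support $\OO(\phi_n)$ with high probability. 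Using $\|S_{xx}-\widetilde S_{xx}\|\le 2\|X\|\|E\| + \|E\|^2$ together with the trivial bound $\|X\|\prec 1$ (which can be verified from a crude moment inequality once the bounded support is imposed), I get $\|S_{xx}-\widetilde S_{xx}\|= o(1)$ with high probability. Hence it suffices to prove \eqref{op rough1} with $S_{xx}$ replaced by the canonical sample covariance matrix $\widetilde S_{xx}=\widetilde X\widetilde X^T$.

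Second, I would invoke a classical no-eigenvalues-outside-the-support result for $\widetilde S_{xx}$. Since $|\sqrt n\,\widetilde x_{ij}|\prec \sqrt n\,\phi_n \le n^{1/2-c_\phi}$, all finite moments of $\sqrt n\, \widetilde x_{ij}$ are of size $n^{\OO(1)}$, and in particular the fourth moment is bounded. Under these assumptions, the result of \cite{No_outside} (or equivalently the rigidity consequence of the local Marchenko–Pastur law, which under the bounded support condition applies in this generality) gives that for any fixed $\e>0$, with high probability every eigenvalue of $\widetilde S_{xx}$ lies in $[(1-\sqrt{c_1})^2-\e/2,(1+\sqrt{c_1})^2+\e/2]$. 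Combining with the $o(1)$ operator norm perturbation from the previous step yields \eqref{op rough1}. The argument for $S_{yy}$ is identical with $(p,c_1,X)$ replaced by $(q,c_2,Y)$.

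The only subtlety is verifying that the recentering/rescaling step indeed has negligible effect in operator norm; this is handled by the deterministic Frobenius bound above together with a crude high-probability estimate on $\|X\|$. Once this reduction is done, the conclusion is a direct quotation of a standard result, and no new ideas are required.
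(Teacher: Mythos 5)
Your reduction step is essentially the paper's own argument: the paper writes $X=\cal M_1\odot \wt X+\cal M_2$ with $\cal M_2=\E X$ and $(\cal M_1)_{ij}=\sqrt{n\E(x_{ij}-\E x_{ij})^2}$, and disposes of the perturbation by a Frobenius-norm bound, which is the same recentering/rescaling you propose (two small inaccuracies: the rescaling part of your error matrix $E$ is random and only $\OO_\prec(\phi_n n^{-1-\tau})$ per entry, not deterministically $\OO(n^{-2-\tau})$, though the Frobenius bound still gives $\|E\|=\oo(1)$ with high probability; and $\|X\|\prec 1$ is not available from a "crude moment inequality" before the edge bound is established — you should first bound $\|\wt X\|$ by the quoted result and then perturb).

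The genuine gap is in the key input you quote. The lemma assumes only Assumption \ref{main_assm} and the bounded support condition — the moment condition \eqref{conditionA3} is \emph{not} part of its hypotheses — so all you know is $\E|\sqrt n\,\wt x_{ij}|^4\lesssim n\phi_n^2$, which diverges whenever $\phi_n\gg n^{-1/2}$ (e.g. $\phi_n=n^{-c_\phi}$ with small $c_\phi$). Your claim that "in particular the fourth moment is bounded" is therefore false, and the Bai--Silverstein result \cite{No_outside} does not apply; it moreover requires i.i.d.\ entries, whereas here the entries are merely independent. The parenthetical appeal to "the rigidity consequence of the local Marchenko--Pastur law" gestures at the right tool but skips precisely the point the paper's proof is careful about: for large support $\phi_n$ the available edge results are those for bounded-support matrices — \cite[Theorem 2.10]{isotropic} only covers $\phi_n\prec n^{-1/2}$, and \cite[Lemma 3.12]{DY} covers general $\phi_n$ but states only the upper bound on the largest eigenvalue, so the lower bound on $\lambda_p(S_{xx})$ requires a separate (if routine) adaptation of the argument in \cite{DY}, which works because \eqref{assm20} keeps $c_1\le 1-\tau$ and hence the lower edge $(1-\sqrt{c_1})^2$ away from $0$. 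As written, your proof rests on a citation whose hypotheses fail under the lemma's assumptions and does not address the smallest-eigenvalue bound for large support.
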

\begin{proof}
Note that $X$ can be written as
$$ X= \cal M_1 \odot \wt X + \cal M_2,$$
where $\odot$ denotes the Hadamard product, $\wt X$ is a $p\times n$ random matrices whose entries are independent random variables that satisfy \eqref{assm1}, and {$\cal M_1$ and $\cal M_2$ are $p\times n$ deterministic matrices with $(\cal M_1)_{ij}=\sqrt{n\mathbb E (x_{ij} - \mathbb E x_{ij})^2}=1+\OO(n^{-1/2-\tau/2})$ and $(\cal M_2)_{ij}=\OO(n^{-2-\tau})$}. In particular, we have that
\be\label{op rough1.25}\|\cal M_2\| \le \|\cal M_2\|_{F}= \OO(n^{-1-\tau}).\ee
Moreover, $\wt X$ has bounded support $\OO(\phi_n)$. Then we claim that for any constant $\e>0$, 
\be\label{op rough1.5} 
(1-\sqrt{c_1})^2 - \e \le  \lambda_p(\wt X\wt X^T) \le   \lambda_1(\wt X\wt X^T) \le (1+\sqrt{c_1})^2 + \e 
\ee
with high probability. This result essentially follows from \cite[Theorem 2.10]{isotropic}, although the authors considered the case with $\phi_n \prec n^{-1/2}$ only. The results for more general $\phi_n$ follows from \cite[Lemma 3.12]{DY}, but only the bounds for the largest eigenvalues are given there in order to avoid the issue with the smallest eigenvalue when $c_1$ is close to 1. However, under the assumption \eqref{assm20}, the lower bound for the smallest eigenvalue follows from the exactly the same arguments as in \cite{DY}. Hence we omit the details. Now using \eqref{op rough1.25}, \eqref{op rough1.5} and the estimates on the entries of $\cal M_1$, we conclude \eqref{op rough1}. The estimate \eqref{op rough2} can be proved in the same way. 
\end{proof}

Next we provide a rough bound on the operator norms of the resolvents.
\begin{lemma}\label{lem op1}
For $z=E+\ii \eta\in \C_+$ such that $c\le |z|\le c^{-1}$ for some constant $c>0$, we have
\be\label{op RH1} \left\| R(z)\right\| \le \frac{C}{\eta}, \quad R(z):=\begin{pmatrix} R_1  & - z^{-1/2} R_1\cal H  \\ - z^{-1/2} \cal H^T R_1  &  R_2 \end{pmatrix},
\ee
and
\be\label{op G1} \left\| G(z)\right\| \le \frac{C(1+\|S_{xx}^{-1}\|+\|S_{yy}^{-1}\|)}{\eta},
\ee
for some constant $C>0$.
\end{lemma}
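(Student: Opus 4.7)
The plan is to first establish $\|R(z)\|\le C/\eta$ by a simultaneous diagonalization of $\mathcal{H}$, and then to bootstrap to $\|G(z)\|$ using the block formulas \eqref{GL1}, \eqref{GR1}, and \eqref{GLR1}.

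For the first bound, I would take the SVD $\mathcal{H} = U\Sigma V^T$, with $U\in \R^{p\times p}, V\in \R^{q\times q}$ orthogonal and $\Sigma\in\R^{p\times q}$ diagonal with singular values $\sigma_1,\ldots,\sigma_q$. Using the identities $R_1 = U(\Sigma\Sigma^T - zI_p)^{-1}U^T$, $R_2 = V(\Sigma^T\Sigma - zI_q)^{-1}V^T$ and $R_1\mathcal{H} = U(\Sigma\Sigma^T - zI_p)^{-1}\Sigma V^T$, conjugation of $R(z)$ by $\diag(U,V)$ followed by a row/column permutation produces a direct sum whose summands are either the $2\times 2$ matrices
\begin{equation*}
M_i = (\sigma_i^2 - z)^{-1}\begin{pmatrix} 1 & -z^{-1/2}\sigma_i \\ -z^{-1/2}\sigma_i & 1 \end{pmatrix}, \quad i = 1, \ldots, q,
\end{equation*}
or the residual scalar block $-z^{-1}I_{p-q}$ coming from the trivial zero singular values of $\mathcal{H}$. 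A short computation of $M_i M_i^*$ gives $\|M_i\|\le \eta^{-1}(1+|z|^{-1/2}\sigma_i)$. Since each $\sigma_i^2$ is an eigenvalue of $\mathcal{C}_{XY}$ and thus lies in $[0,1]$ (a Cauchy--Schwarz argument applied to the pooled positive semidefinite sample covariance of $(X^T, Y^T)$), and $|z|\ge c$, this is at most $C/\eta$, while the scalar block contributes $|z|^{-1}\le\eta^{-1}$.

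For the second bound, I would exploit the deterministic isometry $\|S_{xx}^{-1/2}X\| = 1$ (which holds because $(S_{xx}^{-1/2}X)(S_{xx}^{-1/2}X)^T = I_p$) and its analog for $Y$. The formula \eqref{GL1} rewrites as $\mathcal{G}_L = D\, R(z)\, D$ with $D := \diag(S_{xx}^{-1/2}, S_{yy}^{-1/2})$, which immediately gives $\|\mathcal{G}_L\|\le C(\|S_{xx}^{-1}\|+\|S_{yy}^{-1}\|)/\eta$. Substituting $\mathcal{G}_L = DRD$ into \eqref{GLR1} and \eqref{GR1} leaves every copy of $X$ or $Y$ packaged as $S_{xx}^{-1/2}X$ or $S_{yy}^{-1/2}Y$, each of operator norm one. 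Combined with $|z|\le c^{-1}$ and $\|R(z)\|\le C/\eta$, this gives $\|\mathcal{G}_{LR}\|,\|\mathcal{G}_{RL}\|\le C(\|S_{xx}^{-1}\|^{1/2}+\|S_{yy}^{-1}\|^{1/2})/\eta$ (which is bounded by $C(1+\|S_{xx}^{-1}\|+\|S_{yy}^{-1}\|)/\eta$ via $a^{1/2}\le 1+a$), and $\|\mathcal{G}_R\|\le C|z|+C|z|^2\|R(z)\|\le C/\eta$ on the range $\eta\le c^{-1}$. Summing the four block bounds yields the desired estimate on $\|G(z)\|$.

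I do not expect any substantive obstacle here: the argument reduces to block-matrix algebra combined with elementary spectral calculus. The only structural inputs are the deterministic bound $\sigma_i\in[0,1]$ on the sample canonical correlations and the isometry $\|S_{xx}^{-1/2}X\|=1$, both of which are immediate consequences of the definitions of the sample covariance matrices.
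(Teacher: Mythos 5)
Your proposal is correct and follows essentially the same route as the paper: the spectral/SVD representation of $R(z)$ together with $|\sigma_i^2-z|\ge\eta$ and $\sigma_i\le 1$ gives \eqref{op RH1}, and then the identity $\cal G_L = \diag(S_{xx}^{-1/2},S_{yy}^{-1/2})\,R(z)\,\diag(S_{xx}^{-1/2},S_{yy}^{-1/2})$ combined with \eqref{GR1}--\eqref{GLR1} and the isometry $\|S_{xx}^{-1/2}X\|\le 1$, $\|S_{yy}^{-1/2}Y\|\le 1$ gives \eqref{op G1}, exactly as in the paper's proof (your explicit $2\times 2$ block diagonalization is just a rewriting of the paper's formula \eqref{spectral1}).
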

\begin{proof}
Let
 $\cal H = \sum_{k = 1}^{q} \sqrt {\lambda_k} \xi_k \zeta _{k}^T $
be a singular value decomposition of $\cal H$, where
$$\lambda_1\ge  \ldots \ge \lambda_{q} \ge 0 = \lambda_{q+1} = \ldots = \lambda_{p},$$
$\{\xi_{k}\}_{k=1}^{p}$ are the left-singular vectors, and $\{\zeta_{k}\}_{k=1}^{q}$ are the right-singular vectors. Then we have
\begin{equation}
R\left( z \right) = \sum\limits_{k = 1}^q \frac{1}{\lambda_k-z}\left( {\begin{array}{*{20}c}
   {{\xi _k \xi _k^T  }} & {-z^{-1/2}\sqrt {\lambda _k } \xi _k \zeta _{ k}^T}  \\
   {-z^{-1/2} \sqrt {\lambda _k } \zeta _{k} \xi _k^T  } & {\zeta _k\zeta _k^T }  \\
\end{array}} \right) - \frac1z \left( {\begin{array}{*{20}c}
   {\sum_{k=q+1}^p{\xi _k \xi _k^T  }} & 0  \\
   {0 } & {0}  \\
\end{array}} \right). \label{spectral1}
\end{equation}
The estimate \eqref{op RH1} follows immediately from this representation and the fact that $|\lambda_k-z|\ge \eta$. The bound \eqref{op G1} holds for $\cal G_L$ by noticing that
\be\label{simple obs}
\begin{split}
 \cal G_L & = \begin{pmatrix} S_{xx}^{-1/2} & 0 \\ 0 & S_{yy}^{-1/2} \end{pmatrix}R(z) \begin{pmatrix} S_{xx}^{-1/2} & 0 \\ 0 & S_{yy}^{-1/2} \end{pmatrix}.
\end{split}
\end{equation}
For $\cal G_R$, $\cal G_{LR}$ and $\cal G_{RL}$, stronger bounds hold by \eqref{simple obs} and \eqref{GR1}-\eqref{GLR1}:
$$\left\| \cal G_{R}(z)\right\| \le \frac{C}{\eta},\quad \left\| \cal G_{LR}(z)\right\| + \left\| \cal G_{RL}(z)\right\| \le \frac{C(1+\|S_{xx}^{-1/2}\|+\|S_{yy}^{-1/2}\|)}{\eta}  ,$$
where we used $\|S_{xx}^{-1/2} X\| \le 1$ and  $\|S_{yy}^{-1/2} Y\| \le 1$. 
\end{proof}

One subtle point is that in order to apply Lemma \ref{lem_stodomin} (iii) in our proof, we need a bound on the high moments of $\|S_{xx}^{-1}\|$ and $\|S_{yy}^{-1}\|$ (since we will take expectation over the products of many resolvent entries). 
However instead of using such a bound, we shall regularize the resolvents a little bit in the following way.

\begin{definition}[Regularized resolvents]\label{resol_not2}
For $z = E+ \ii \eta \in \mathbb C_+,$ we define the regularized resolvent $\wh G(z)$ as
$$ \wh G(z) := \left[H(z)- z n^{-10} \begin{pmatrix} I_{p+q} & 0 \\ 0 & 0 \end{pmatrix} \right]^{-1} .$$
Then ${\wsG}_L(z)$, ${\wsG}_R(z)$, ${\wsG}_\al(z)$ and $\wh m_\al(z)$, $\al=1,2,3,4$, are defined in the obvious way. We define 
$${\wsH }:=\wh S_{xx} ^{-1/2}S_{xy}\wh S_{yy}^{-1/2}, \quad \wh S_{xx}:= S_{xx}+n^{-10}, \quad \wh S_{yy}:= S_{yy}+n^{-10}.$$
Then $\wh R_1$, $\wh R_2$ and $\wh m(z)$ are defined in the obvious way. We also define $\wh R(z)$ and the spectral decomposition
\be\label{specrtal2}
\begin{split}
&\wh R(z):=\begin{pmatrix}  \wh R_1  & - z^{-1/2} \wh R_1{\wsH }  \\ - z^{-1/2} {\wsH }^T  \wh R_1  &   \wh R_2 \end{pmatrix} \\
& = \sum\limits_{k = 1}^q \frac{1}{ \wh\lambda_k-z}\left( {\begin{array}{*{20}c}
   {{ \wh\xi _k \wh\xi _k^T  }} & {-z^{-1/2}\sqrt { \wh\lambda _k } \wh\xi _k \wh\zeta _{ k}^T}  \\
   {-z^{-1/2} \sqrt { \wh\lambda _k }  \wh\zeta _{k}  \wh\xi _k^T  } & { \wh\zeta _k \wh\zeta _k^T }  \\
\end{array}} \right)  - \frac1z \left( {\begin{array}{*{20}c}
   {\sum_{k=q+1}^p{\wh\xi _k \wh\xi _k^T  }} & 0  \\
   {0 } & {0}  \\
\end{array}} \right).
\end{split}
\ee
\end{definition}

By Schur complement formula, we have
\be\label{GL}
\begin{split}
 {\wsG}_L & = \begin{pmatrix} \wh S_{xx}^{-1/2} \wh R_1\wh S_{xx}^{-1/2} & - z^{-1/2}\wh S_{xx}^{-1/2}\wh R_1 {\wsH }\wh S_{yy}^{-1/2} \\ - z^{-1/2}\wh S_{yy}^{-1/2} {\wsH }^T  \wh R_1\wh S_{xx}^{-1/2} & \wh S_{yy}^{-1/2}\wh R_2 \wh S_{yy}^{-1/2}\end{pmatrix},
\end{split}
\ee
\be\label{GR}
 {\wsG}_R =   \begin{pmatrix}  z  I_n & z^{1/2}I_n\\ z^{1/2}I_n &  z  I_n\end{pmatrix} +   \begin{pmatrix}  z  I_n & z^{1/2}I_n\\ z^{1/2}I_n &  z  I_n\end{pmatrix}  \begin{pmatrix} X^T & 0 \\ 0 & Y^T \end{pmatrix} {\wsG}_L \begin{pmatrix} X & 0 \\ 0 &  Y \end{pmatrix} \begin{pmatrix}  z  I_n & z^{1/2}I_n\\ z^{1/2}I_n &  z  I_n\end{pmatrix}  ,
\ee
and
\be\label{GLR}
\begin{split}
&\wsG_{LR}(z)= -\wsG_L(z) \begin{pmatrix} X & 0 \\ 0 &  Y \end{pmatrix} \begin{pmatrix}  z  I_n & z^{1/2}I_n\\ z^{1/2}I_n &  z  I_n\end{pmatrix}  , \\
 & \wsG_{RL}(z)= -  \begin{pmatrix}  z  I_n & z^{1/2}I_n\\ z^{1/2}I_n &  z  I_n\end{pmatrix}  \begin{pmatrix} X^T & 0 \\ 0 & Y^T \end{pmatrix} \wsG_L(z).
 \end{split}
\ee
With a straightforward calculation and using \eqref{m34}, we obtain that
\be\label{m3-4} 
 \wh m_3(z) -\wh m_4 (z)= (1-z)(c_1-c_2) + z(1-z) n^{-11} \left(\tr \wsG_1(z) - \tr \wsG_2(z)\right). 
\ee
For the regularized resolvents, it is easy to prove the following result using the same argument for the proof of Lemma \ref{lem op1}.

\begin{lemma}\label{lem op}
For $z=E+\ii \eta\in \C_+$ such that $c\le |z|\le c^{-1}$ for some constant $c>0$, \eqref{op RH1} and \eqref{op G1} hold for $\wh G(z)$ and $\wh R(z)$. Moreover, we have 
\be\label{op G}
\left\| \wh G(z)\right\| \le \frac{Cn^{10}}{\eta} , 
\ee 
for some constant $C>0$.
\end{lemma}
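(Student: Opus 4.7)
The plan is to follow the proof of Lemma \ref{lem op1} essentially verbatim with $S_{xx}, S_{yy}, \cal H, R$ replaced by their hatted counterparts, using that the $n^{-10}$ regularization preserves the Schur-complement identities \eqref{GL}--\eqref{GLR} and the spectral representation \eqref{specrtal2}. First I will establish $\|\wh R(z)\| \le C/\eta$. Since the eigenvalues $\wh\lambda_k$ of $\wsH^T\wsH$ are non-negative, we have $|\wh\lambda_k - z| \ge \eta$ for $z = E + \ii\eta \in \mathbb{C}_+$, and the orthonormality of the singular vectors $\{\wh\xi_k\}$, $\{\wh\zeta_k\}$ in \eqref{specrtal2} then yields the bound by exactly the same sum-of-rank-one-projections argument used in \eqref{spectral1}.

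Next I will derive \eqref{op G1} for $\wh G$ block-by-block. For $\wsG_L$, the identity \eqref{GL} exhibits it as a sandwich of $\wh R$ by $\mathrm{diag}(\wh S_{xx}^{-1/2}, \wh S_{yy}^{-1/2})$, which immediately contributes the factor $\|\wh S_{xx}^{-1}\| + \|\wh S_{yy}^{-1}\|$ appearing in \eqref{op G1}. For the blocks $\wsG_R, \wsG_{LR}, \wsG_{RL}$ given by \eqref{GR}--\eqref{GLR}, the one non-routine point is the observation
\[
\|\wh S_{xx}^{-1/2} X\|^2 = \|\wh S_{xx}^{-1/2} S_{xx} \wh S_{xx}^{-1/2}\| = \|I - n^{-10}\wh S_{xx}^{-1}\| \le 1,
\]
and analogously $\|\wh S_{yy}^{-1/2} Y\| \le 1$. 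This is the regularized counterpart of the identity $\|S_{xx}^{-1/2} X\| \le 1$ used in Lemma \ref{lem op1}, and it lets me pair every $X$ (resp.\ $Y$) factor in \eqref{GR}--\eqref{GLR} with one $\wh S_{xx}^{-1/2}$ (resp.\ $\wh S_{yy}^{-1/2}$) from the corresponding block of \eqref{GL}, \emph{without} picking up a large loose factor of $\|\wh S_{xx}^{-1/2}\|$. Consequently these three blocks in fact satisfy the stronger bound $O((1 + \|\wh S_{xx}^{-1/2}\| + \|\wh S_{yy}^{-1/2}\|)/\eta)$, which is absorbed into \eqref{op G1}.

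Finally, the absolute bound \eqref{op G} is immediate from \eqref{op G1} applied to $\wh G$, combined with the deterministic estimate $\|\wh S_{xx}^{-1}\|, \|\wh S_{yy}^{-1}\| \le n^{10}$, which holds because all eigenvalues of $\wh S_{xx} = S_{xx} + n^{-10}$ and $\wh S_{yy} = S_{yy} + n^{-10}$ are bounded below by $n^{-10}$ by construction. I do not anticipate any real obstacle: unlike Lemma \ref{SxxSyy}, this statement is purely algebraic and deterministic, and the $n^{-10}$ regularization is designed precisely so that $\wh R(z)$ and $\wh G(z)$ are well-defined on all of $\mathbb{C}_+$ while preserving the identities \eqref{GL}--\eqref{GLR} and the norm bound $\|\wh S_{xx}^{-1/2} X\|, \|\wh S_{yy}^{-1/2} Y\| \le 1$ that drove the proof of Lemma \ref{lem op1}.
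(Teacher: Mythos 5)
Your proposal is correct and follows essentially the same route as the paper, which simply asserts that Lemma \ref{lem op} follows by repeating the argument of Lemma \ref{lem op1} for the hatted quantities; your verification of the regularized identity $\|\wh S_{xx}^{-1/2}X\|^2=\|I-n^{-10}\wh S_{xx}^{-1}\|\le 1$ and the deterministic bound $\|\wh S_{xx}^{-1}\|,\|\wh S_{yy}^{-1}\|\le n^{10}$ supplies exactly the details the paper leaves implicit for \eqref{op G}.
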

 
In the proof, we will take $\eta\gg n^{-1}$, and the deterministic bound \eqref{op G} then justifies the application of Lemma \ref{lem_stodomin} (iii) when we calculate the expectation of polynomials of the entries of $\wh G(z)$. For simplicity of presentation, we will not repeat this again in the proof. 

\begin{remark}\label{removehat}
The results for $\wh G(z)$ can be extended to $G(z)$ with a standard perturbative argument. We will show that there exists a high probability event $\Xi$ on which $ \|\wh G(z)\|_{\max}=\OO(1)$ for $z$ in some bounded regime. Then we define 
$$ G_t(z) := \left[H(z)- t z n^{-10} \begin{pmatrix} I_{p+q} & 0 \\ 0 & 0 \end{pmatrix} \right]^{-1}, \quad G_0(z)\equiv G(z), \quad G_1(z)\equiv \wh  G(z) .$$
Taking the derivative, we get
\be\label{partialtG}\partial_t G_t(z) = zn^{-10} G_t(z) \begin{pmatrix} I_{p+q} & 0 \\ 0 & 0 \end{pmatrix} G_t(z) . \ee
Thus applying the Gronwall's inequality to 
$$ \|G_t(z)\|_{\max}\le \|\wh G(z)\|_{\max} + C n^{-9} \int_t^1 \|G_s(z)\|_{\max}^2  \dd s, $$
we can obtain that $\|G_t(z)\|_{\max}\le C$ for all $0\le t \le 1$ on $\Xi.$ Then using \eqref{partialtG} again, we obtain that $ \|G(z) - \wh G(z)\|_{\max} \le n^{-8}$ on $\Xi$. Such a small error will not affect any of our results. We emphasize that the above argument is purely deterministic on $\Xi$, so we do not lose any probability. 
\end{remark}

 We record the following resolvent estimates, which will be used in the proof of Theorem \ref{thm_local}. 

\begin{lemma}\label{lem_gbound0}
For any deterministic unit ${\bv_\al}  \in \C^{\mathcal I_\al}$, $\al=1,2$, we have for $\beta=1,2,3,4$,
\be 
\sum_{\fa\in \cal I_\beta} |G_{\fa\mathbf v_\al}(z)|^2 = \sum_{\fa\in \cal I_\beta} | G_{{{\bv_\al} } \fa}(z)|^2 \prec  |G_{{{\bv_\al} }{ {\bv_\al} }}(z)|+\frac{ \im G_{{{\bv_\al} }{ {\bv_\al} }}(z) }{\eta},\quad z= E+ \ii\eta .\label{eq_sgsq1}
\ee
For any deterministic unit $\bv_\beta \in \C^{\mathcal I_\beta}$, $\beta=3,4$, we have for $\al=1,2,3,4$,
\be 
\begin{split}
 \sum_{\fa \in \mathcal I_\al } {\left| {G_{\fa \mathbf v_\beta } } \right|^2 }  \prec 1 +  \frac{\im (\cal U \cal G_R)_{{\bv_\beta}{\bv_\beta}}}{\eta} , \quad   \sum_{\fa \in \mathcal I_\al } {\left| {G_{\mathbf v_\beta  \fa } } \right|^2 }  \prec 1  + \frac{\im ( \cal G_R\cal U^T)_{{\bv_\beta}{\bv_\beta}}}{\eta} , \label{eq_sgsq2} 
\end{split}
\ee
where
$$ \cal U := z^{1/2} \begin{pmatrix}  \overline z  I_n &  \overline z^{1/2}I_n\\  \overline z^{1/2}I_n &   \overline z  I_n\end{pmatrix}    \begin{pmatrix}  z  I_n & z^{1/2}I_n\\ z^{1/2}I_n &  z  I_n\end{pmatrix}^{-1}  . $$
Similar estimates hold for $\wh G$.
\end{lemma}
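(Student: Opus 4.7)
The plan is to reduce both \eqref{eq_sgsq1} and \eqref{eq_sgsq2} to standard Ward identities for the honest Hermitian resolvents $R_1(z) = (\cal H\cal H^T - z)^{-1}$ and $R_2(z) = (\cal H^T\cal H - z)^{-1}$, which satisfy $R_\al^* R_\al = \im R_\al/\eta$. Since $H(z)$ is only complex symmetric for $z \in \C_+$, the naive identity $G^* G = \im G/\eta$ is unavailable. However, the explicit block formulas \eqref{GL1}, \eqref{GR1}, \eqref{GLR1} express every block of $G$ in terms of $R_1, R_2$ and $\cK(z) := \bigl(\begin{smallmatrix}zI_n & z^{1/2}I_n \\ z^{1/2}I_n & zI_n\end{smallmatrix}\bigr)$ (the matrix whose inverse forms the $\cal I_3\cup\cal I_4$ block of $H$), and the a priori bounds $\|S_{xx}^{-1}\|, \|S_{yy}^{-1}\| = \OO(1)$ from Lemma \ref{SxxSyy} and $\|\cK(z)^{\pm 1}\| = \OO(1)$ on $S(\e)$ absorb all auxiliary factors.

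For \eqref{eq_sgsq1}, I will fix a unit $\bv \in \C^{\cal I_\al}$ with $\al = 1$ (the case $\al = 2$ is symmetric) and set $u := S_{xx}^{-1/2}\bv$. Reading off the components of $G\bv$ from \eqref{GL1} and \eqref{GLR1} reduces $\sum_{\fa\in\cal I_\beta}|G_{\fa\bv}|^2$ for each $\beta$ to a norm of the form $\|R_1 u\|^2$, $\|\cal H^T R_1 u\|^2$, $\|X^T\cal G_1\bv\|^2$, or $\|Y^T\cal G_{21}\bv\|^2$. The Ward identity gives $\|R_1 u\|^2 = \im(u^*R_1 u)/\eta = \im G_{\bv\bv}/\eta$, while combining the algebraic identity $R_1\cal C_{XY} = I + zR_1$ with $R_1^*R_1 = \im R_1/\eta$ yields $\|\cal H^T R_1 u\|^2 = \re(u^*R_1 u) + E\,\im(u^*R_1 u)/\eta \lesssim |G_{\bv\bv}| + \im G_{\bv\bv}/\eta$ on $S(\e)$. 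The $\cal I_3, \cal I_4$-block sums reduce to the same norms via \eqref{GLR1}, together with $XX^T = S_{xx}$ and $YY^T = S_{yy}$.

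For \eqref{eq_sgsq2}, I will fix a unit $\bv \in \C^{\cal I_\beta}$ with $\beta \in \{3, 4\}$, set $W := \diag(X, Y)$ and $\mathbf q := W\cK(z)\bv \in \C^{\cal I_1\cup \cal I_2}$. From \eqref{GLR1}--\eqref{GR1}, the target sums collapse to $\|\cal G_L\mathbf q\|^2$ (for $\al = 1, 2$) and to $\OO(1) + \|\cK(z)W^T\cal G_L\mathbf q\|^2 \lesssim \OO(1) + \|\cal G_L\mathbf q\|^2$ (for $\al = 3, 4$). The specific choice $\cal U = z^{1/2}\cK(\bar z)\cK(z)^{-1}$ is engineered so that $\cK(z)^{-1}\cal G_R = I + W^T\cal G_L W\cK(z)$ yields $\cal U\cal G_R = z^{1/2}\cK(\bar z) + z^{1/2}\cK(\bar z)W^T\cal G_L W\cK(z)$; evaluating at $(\bv, \bv)$ and using $\bv^*\cK(\bar z) = (\cK(z)\bv)^*$ gives $(\cal U\cal G_R)_{\bv\bv} = z^{1/2}\bar z + z^{1/2}\mathbf q^*\cal G_L\mathbf q$, whose first summand contributes $\OO(1)$ to the imaginary part.

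The main step, and the core technical difficulty, is thus the bound $\|\cal G_L\mathbf q\|^2 \lesssim \OO(1) + \im(z^{1/2}\mathbf q^*\cal G_L\mathbf q)/\eta$. I plan to establish it via the Ward-type identity $\cal G_L^{-1}(z) - \cal G_L^{-1}(\bar z) = -W(\cK(z) - \cK(\bar z))W^T$ (from $\cal G_L = -(W\cK(z)W^T)^{-1}$), which after writing $\cal G_L = DRD$ with $D = \diag(S_{xx}^{-1/2}, S_{yy}^{-1/2})$ and using the partial-isometry property $(DW)(DW)^T = I_{p+q}$, relates $\|\cal G_L\mathbf q\|^2$ to $\im(\mathbf q^*\cal G_L\mathbf q)/\eta$ up to a cross term proportional to $\mathbf q^*\cal G_L\bigl(\begin{smallmatrix}0 & S_{xy} \\ S_{yx} & 0\end{smallmatrix}\bigr)\cal G_L^*\mathbf q$. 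The $z^{1/2}$ weight in $\cal U$ is precisely what is needed to make this cross term combine with the $R^*R$ term, using the normal-operator identity $R^*R = RR^*$ inherited from the spectral form \eqref{spectral1}, into a controllable quantity. The second half of \eqref{eq_sgsq2} involving $\cal G_R\cal U^T$ then follows from the complex symmetry $G(z)^T = G(z)$, and the extension from $\wh G$ to $G$ is automatic by Remark \ref{removehat}.
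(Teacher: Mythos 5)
Your proposal is correct and, for most of its length, runs parallel to the paper's own proof: \eqref{eq_sgsq1} is handled in both by reducing, through \eqref{GL1}, \eqref{GLR1} and Lemma \ref{SxxSyy}, to the Ward identity $R_1^*R_1=\im R_1/\eta$ (your use of $R_1\cal H\cal H^T=I+zR_1$ is the same algebra as the paper's $\cal H\cal H^T=(R_1^*)^{-1}+\bar z$), and for \eqref{eq_sgsq2} both proofs collapse the sums to $\|\cal G_L\mathbf q\|^2$ with $\mathbf q=W\cK(z)\bv$ and use exactly your identification $(\cal U\cal G_R)_{\bv\bv}=z^{1/2}\bar z+z^{1/2}\mathbf q^*\cal G_L\mathbf q$. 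The genuine difference is the key Ward-type step. The paper writes $R(z)=z^{-1/2}(\mathscr H-z^{1/2})^{-1}$ with the real symmetric $\mathscr H=\bigl(\begin{smallmatrix}0&-\cal H\\ -\cal H^T&0\end{smallmatrix}\bigr)$, so the usual Ward identity in the spectral variable $z^{1/2}$ gives in one line $\bw^*R^*R\,\bw=|z|^{-1}\im\bigl(z^{1/2}\bw^*R\bw\bigr)/\im z^{1/2}$ (its \eqref{spec app2}), hence $\|\cal G_L\mathbf q\|^2\prec\im\bigl(z^{1/2}\mathbf q^*\cal G_L\mathbf q\bigr)/\eta$ with no cross term at all. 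Your route through $\cal G_L(z)-\cal G_L(z)^*=\cal G_L W(\cK(z)-\cK(\bar z))W^T\cal G_L^*$ also works, but two points should be made explicit: (i) after conjugating by $D=\diag(S_{xx}^{-1/2},S_{yy}^{-1/2})$ the cross term equals $-\im(z^{1/2})\,(D\mathbf q)^*R\mathscr H R^*(D\mathbf q)$, and using $R\mathscr H=z^{-1/2}+z^{1/2}R$ together with normality of $R$ you are left with an extra $\OO(|\mathbf q^*\cal G_L\mathbf q|)$ on the right-hand side, which must be absorbed via Cauchy--Schwarz ($|\mathbf q^*\cal G_L\mathbf q|\prec\|\cal G_L\mathbf q\|$, since $\|\mathbf q\|\prec1$) and the self-improving step $x^2\prec A+x\Rightarrow x^2\prec A+1$; (ii) the remark that $z^{1/2}\bar z$ ``contributes $\OO(1)$ to the imaginary part'' should be sharpened to $\im(z^{1/2}\bar z)=\OO(\eta)$ on $S(\e)$, which is what makes its contribution $\OO(1)$ after dividing by $\eta$. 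With these touch-ups your argument is complete; what the paper's version buys is economy (a single identity, no cross term), while yours makes visible exactly where the $z^{1/2}$ weight in $\cal U$ enters.
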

\begin{proof}
First, we prove some simple resolvent estimates on $R(z)R^*(z)$ and $R^*(z)R(z)$ for $R(z)$ in \eqref{op RH1}. Using spectral decomposition \eqref{spectral1}, for any vector $\bw\in \C^p$ and $z=E+\ii\eta$, we have
\be\label{spec app}
\begin{split}
\bw^* R_1^*(z)R_1(z) \bw = \bw^* R_1(z)R_1^*(z) \bw =\sum\limits_{k = 1}^p \frac{|\langle \bw,\xi _k \rangle|^2 }{|\lambda_k-E|^2 +\eta^2} = \frac{\im (R_1(z))_{\bw\bw}}{\eta}.
\end{split}
\ee
For $R_2$, we have a similar estimate. Notice that with Schur complement, we can write
$$R(z) = z^{-1/2}\left( \mathscr H- z^{1/2}\right)^{-1},\quad \mathscr H= \begin{pmatrix} 0 & -\mathcal H \\ -\mathcal H^T & 0\end{pmatrix}.$$
Then using a same argument as in \eqref{spec app}, we obtain that for any vector $\bw \in \C^{p+q}$,
\be\label{spec app2}
\begin{split}
\bw^* R^*(z)R(z) \bw =|z|^{-1}\frac{ \im \left(z^{1/2}\bw^* R(z) \bw \right)}{\im z^{1/2}}.
\end{split}
\ee

We first pick $\al=1$ and $\beta=1,2$. Using \eqref{GL1} and Lemma \ref{SxxSyy}, we get
\begin{align*}
\sum_{\fa\in \cal I_\beta} |G_{\fa{{\bv_\al} }}(z)|^2 &\le \sum_{\fa\in \cal I_1 \cup \cal I_2} |G_{\fa{{\bv_\al} }}(z)|^2 = \left( \cal G_L^* \cal G_L\right)_{{{\bv_\al} }{{\bv_\al} }} \prec   \begin{pmatrix}{\bv_\al^*} S_{xx}^{-1/2} ,0 \end{pmatrix}R^*(z)R(z) \begin{pmatrix} S_{xx}^{-1/2}{\bv_\al}  \\ 0  \end{pmatrix} \\
& = {\bv_\al^*} S_{xx}^{-1/2}\left[ R_1^*(z)R_1(z) + |z|^{-1}R_1^*(z)\left(\cal H\cal H^T \right)R_1(z)\right]S_{xx}^{-1/2}{\bv_\al}  \\
&= \left(1+\frac{\overline z}{|z|}\right){\bv_\al^*} S_{xx}^{-1/2} R_1^*(z)R_1(z)S_{xx}^{-1/2}{\bv_\al}   + |z|^{-1}{\bv_\al^*} S_{xx}^{-1/2} R_1(z)S_{xx}^{-1/2}{\bv_\al}   \\
&  \prec    \frac{\im  G_{{{\bv_\al} }{{\bv_\al} }}}{\eta} +  | G_{{\bv_\al}{\bv_\al} }|,
\end{align*}
where in the third step we used \eqref{simple obs}, in the fourth step $\cal H\cal H^T = (R^*_1(z))^{-1}+ \overline z$, and in the last step \eqref{simple obs} and \eqref{spec app}. For $\al=1$ and $\beta=3,4$, the proof is similar except that we need to use \eqref{GLR1} and 
$$ \left\| \begin{pmatrix} X & 0 \\ 0 &  Y \end{pmatrix} \begin{pmatrix}  z  I_n & z^{1/2}I_n\\ z^{1/2}I_n &  z  I_n\end{pmatrix} \begin{pmatrix}  \overline z  I_n &  \overline z^{1/2}I_n\\  \overline z^{1/2}I_n &   \overline z  I_n\end{pmatrix} \begin{pmatrix} X^T & 0 \\ 0 &  Y^T \end{pmatrix}  \right\| \prec 1 $$
by Lemma \ref{SxxSyy}. For $\al=2$, the proof is the same. This concludes \eqref{eq_sgsq1}.

Then we consider the case $\al=3$ and $\beta=3,4$. Using \eqref{GR1}, \eqref{simple obs}, Lemma \ref{SxxSyy} and \eqref{spec app2}, we get that
\begin{align*}
& \sum_{\fa\in \cal I_\beta} |G_{\fa{\bv_\al} }(z)|^2 \le \sum_{\fa\in \cal I_3 \cup \cal I_4} |G_{\fa{\bv_\al} }(z)|^2 \prec 1 + \bw_\al^*  R^*(z) R(z)   \bw_\al  \lesssim \frac{ \im \left(z^{1/2}\bw^* R(z) \bw \right)}{\eta},
\end{align*}
where 
$$\bw_\al:=  \begin{pmatrix} S_{xx}^{-1/2}X & 0 \\ 0 &  S_{yy}^{-1/2}Y \end{pmatrix} \begin{pmatrix}  z  I_n & z^{1/2}I_n\\ z^{1/2}I_n &  z  I_n\end{pmatrix}{\bv_\al}  .$$
By \eqref{GR1}, we have
\begin{align*}
z^{1/2}\bw^* R(z) \bw ={\bv_\al^*}  \cal U \left[\cal G_R - \begin{pmatrix}  z  I_n & z^{1/2}I_n\\ z^{1/2}I_n &  z  I_n\end{pmatrix}  \right] {\bv_\al}  .
\end{align*}
Then using  
$$\im{\bv_\al^*}  \cal U   \begin{pmatrix}  z  I_n & z^{1/2}I_n\\ z^{1/2}I_n &  z  I_n\end{pmatrix}  {\bv_\al}  =\OO(\eta) ,$$
we conclude \eqref{eq_sgsq2}. 
\end{proof}

The anisotropic local law (\ref{aniso_law}) together with the rigidity estimate \eqref{rigidity} implies the following delocalization properties of eigenvectors. 

\begin{lemma}[Isotropic delocalization of eigenvectors] \label{lem delocalX}
Suppose \eqref{rigidity} hold, and \eqref{aniso_law} holds for $G$. Then for any fixed $\delta>0$ and any deterministic unit vectors $\mathbf u_\al \in \mathbb C^{\mathcal I_\al}$, $\al=1,2,3,4$, the following estimates hold: 
\begin{equation}
  \left|\langle \mathbf u_1,S_{xx}^{-1/2}\xi_k\rangle \right|^2+\left|\langle \mathbf u_2,S_{yy}^{-1/2}\zeta_k\rangle \right|^2 \prec n^{-1},\quad 1 \le k \le  q, \label{delocal1}
\end{equation}
and
\begin{equation}
  \left|\langle \mathbf u_3,X^TS_{xx}^{-1/2}\xi_k\rangle \right|^2+\left|\langle \mathbf u_4,Y^TS_{yy}^{-1/2}\zeta_k\rangle \right|^2 \prec n^{-1},\quad 1 \le k \le  q.\label{delocal2}
\end{equation}
If \eqref{rigidity} only holds for $i\le (1-\e)q$, then \eqref{delocal1} and \eqref{delocal2} hold for $1\le k\le (1-\e)q$.
\end{lemma}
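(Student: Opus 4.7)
The plan is to apply the standard eigenvector-delocalization argument based on a Stieltjes-type representation: evaluate diagonal entries of $G$ at $z_k := \lambda_k + \ii\eta$ with $\eta = n^{-1+\e}$, use the spectral decomposition to extract squared overlaps as lower bounds on the imaginary part, and bound the latter by $\OO_\prec(1)$ using \eqref{aniso_law} and Lemma \ref{lem_mbehavior}. Rigidity \eqref{rigidity} ensures that $\lambda_k \in [c,1-c]$ for some $c > 0$ with high probability (for $k\le (1-\e)q$ when $\lambda_-\to 0$), so $z_k \in S(\e)$, and applying the anisotropic local law at the random $z_k$ is justified by its uniformity on $S(\e)$ via a standard Lipschitz/net argument.

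For \eqref{delocal1}, I would use \eqref{GL1} to write $(\cal G_1)_{\mathbf u_1\mathbf u_1}(z) = \langle S_{xx}^{-1/2}\mathbf u_1, R_1(z) S_{xx}^{-1/2}\mathbf u_1\rangle$, and the spectral decomposition \eqref{spectral1} of $R_1$ then yields
\begin{equation*}
\im G_{\mathbf u_1\mathbf u_1}(z_k) \;=\; \sum_{k'\le q}\frac{\eta\,|\langle\mathbf u_1, S_{xx}^{-1/2}\xi_{k'}\rangle|^2}{|\lambda_{k'}-\lambda_k|^2 + \eta^2} \;+\; \frac{\eta}{|z_k|^2}\sum_{k'>q}|\langle\mathbf u_1, S_{xx}^{-1/2}\xi_{k'}\rangle|^2 \;\ge\; \eta^{-1}\,|\langle\mathbf u_1, S_{xx}^{-1/2}\xi_k\rangle|^2,
\end{equation*}
keeping only the $k'=k$ summand. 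By \eqref{aniso_law}, \eqref{Immc} and \eqref{psi12}, $\im G_{\mathbf u_1\mathbf u_1}(z_k) \prec \im[c_1^{-1} m_{1c}(z_k)] + \phi_n + \Psi(z_k) \prec 1$, so $|\langle\mathbf u_1, S_{xx}^{-1/2}\xi_k\rangle|^2 \prec \eta = n^{-1+\e}$, which gives $\prec n^{-1}$ as $\e$ is arbitrary. The identical argument for $(\cal G_2)_{\mathbf u_2\mathbf u_2}$ via $R_2 = \sum_{k'\le q}\zeta_{k'}\zeta_{k'}^T/(\lambda_{k'}-z)$ handles the second term of \eqref{delocal1}.

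For \eqref{delocal2}, set $\mathbf a_k := X^T S_{xx}^{-1/2}\xi_k$ and $\mathbf b_k := Y^T S_{yy}^{-1/2}\zeta_k$ in $\R^n$; a direct computation gives $\|\mathbf a_k\|=\|\mathbf b_k\|=1$, $\langle\mathbf a_i,\mathbf a_j\rangle = \langle\mathbf b_i,\mathbf b_j\rangle = \delta_{ij}$ and $\langle\mathbf a_i,\mathbf b_j\rangle = \sqrt{\lambda_j}\,\delta_{ij}$. Expanding $\cal G_3 = zI + z^2 X^T\cal G_1 X + z^{3/2}(X^T\cal G_{12}Y + Y^T\cal G_{21}X) + zY^T\cal G_2 Y$ from \eqref{GR1} via the spectral decompositions of $R_1,R_2$ gives, for any unit $\mathbf u \in \R^n$ with $a_{k'}:=\langle\mathbf u,\mathbf a_{k'}\rangle$, $b_{k'}:=\langle\mathbf u,\mathbf b_{k'}\rangle$,
\begin{equation*}
(\cal G_3)_{\mathbf u\mathbf u}(z) \;=\; z + z\sum_{k'\le q}\frac{z\,a_{k'}^2 - 2\sqrt{\lambda_{k'}}\,a_{k'}b_{k'} + b_{k'}^2}{\lambda_{k'}-z} - z\sum_{k'>q}a_{k'}^2,
\end{equation*}
together with the analogous formula for $(\cal G_4)_{\mathbf u\mathbf u}(z)$ whose numerator is $a_{k'}^2 - 2\sqrt{\lambda_{k'}}a_{k'}b_{k'} + z\,b_{k'}^2$. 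Taking imaginary parts at $z_k$, the $k'=k$ terms combine to
\begin{equation*}
\frac{\lambda_k\bigl[(\sqrt{\lambda_k}a_k - b_k)^2 + (a_k - \sqrt{\lambda_k}\,b_k)^2\bigr]}{\eta} - \eta(a_k^2 + b_k^2) \;\ge\; \frac{\lambda_k(1-\sqrt{\lambda_k})^2\,(a_k^2+b_k^2)}{\eta} - \eta(a_k^2 + b_k^2),
\end{equation*}
since the $2\times 2$ quadratic form has minimum eigenvalue $(1-\sqrt{\lambda_k})^2 > 0$; by \eqref{assm20} and rigidity, $\lambda_k(1-\sqrt{\lambda_k})^2 \ge c > 0$. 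A direct diagonalization of the corresponding $k'\ne k$ quadratic form shows each such contribution is bounded below by $-\eta(a_{k'}^2+b_{k'}^2)$, so their sum exceeds $-2\eta$ by $\sum_{k'}a_{k'}^2,\sum_{k'}b_{k'}^2 \le 1$. Combined with $\im[(\cal G_3)+(\cal G_4)]_{\mathbf u\mathbf u}(z_k) \prec 1$ from \eqref{aniso_law} and \eqref{Immc}, this gives $a_k^2 + b_k^2 \prec \eta \prec n^{-1}$. Setting $\mathbf u = \mathbf u_3$ (resp. $\mathbf u_4$) yields $|\langle\mathbf u_3,\mathbf a_k\rangle|^2 \prec n^{-1}$ (resp. $|\langle\mathbf u_4,\mathbf b_k\rangle|^2 \prec n^{-1}$), and \eqref{delocal2} follows.

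The hardest step will be verifying the positive-definiteness identity $(\sqrt{\lambda_k}a_k - b_k)^2 + (a_k - \sqrt{\lambda_k}b_k)^2 \ge (1-\sqrt{\lambda_k})^2(a_k^2+b_k^2)$ with a uniform positive constant, and confirming that the $k'\ne k$ contributions do not destroy this lower bound. Both rest on the structural feature $\lambda_+ < 1$ of sample canonical correlations between independent vectors, which is exactly what \eqref{assm20} (i.e. $c_1 + c_2 \le 1-\tau$) enforces; the argument would fail if $\lambda_+ = 1$ were permitted.
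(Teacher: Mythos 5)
Your proposal is correct. For \eqref{delocal1} it is essentially the paper's own proof: test the anisotropic law at $z_k=\lambda_k+\ii\eta_0$, $\eta_0=n^{-1+\e}$, use \eqref{GL1} and the spectral decomposition \eqref{spectral1}, and keep the $k'=k$ term; rigidity places $z_k\in S(\e)$. For \eqref{delocal2}, however, you take a genuinely different route. The paper never sees the cross terms: it tests $G$ against $\wt\bu_3$, defined as the inverse of the deterministic lower-right block in \eqref{linearize_block} applied to $(\bu_3,0)^T$, so that by \eqref{GR1} the quantity $\im \wt\bu_3^T G(z_k)\wt\bu_3$ equals, up to an $\OO(1)$ deterministic piece, $\im\,\bu_3^T X^T\cal G_1 X\bu_3=\sum_l \eta_0|\langle\bu_3,X^TS_{xx}^{-1/2}\xi_l\rangle|^2/((\lambda_l-\lambda_k)^2+\eta_0^2)$, a sum of nonnegative terms, and the $l=k$ term gives the bound with no positivity analysis at all. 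You instead expand the diagonal blocks $\cal G_3+\cal G_4$ and control the contributions of $\cal G_{12},\cal G_{21}$ by hand. Your positivity claims do check out: the $k'=k$ form is $(1+\lambda_k)(a_k^2+b_k^2)-4\sqrt{\lambda_k}\,a_kb_k\ge(1-\sqrt{\lambda_k})^2(a_k^2+b_k^2)$, and for $k'\ne k$ a short computation shows the contribution plus $\eta(a_{k'}^2+b_{k'}^2)$ equals $\eta\,\lambda_{k'}\bigl[(1+\lambda_{k'})(a_{k'}^2+b_{k'}^2)-4\sqrt{\lambda_{k'}}\,a_{k'}b_{k'}\bigr]/\bigl((\lambda_{k'}-\lambda_k)^2+\eta^2\bigr)\ge 0$, confirming your lower bound $-\eta(a_{k'}^2+b_{k'}^2)$; together with Bessel's inequality for the orthonormal families $\{X^TS_{xx}^{-1/2}\xi_{k'}\}$, $\{Y^TS_{yy}^{-1/2}\zeta_{k'}\}$ (valid on the event where $S_{xx},S_{yy}$ are nonsingular, Lemma \ref{SxxSyy}), this yields $a_k^2+b_k^2\prec\eta$. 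What your route costs is the coercivity constant $\lambda_k(1-\sqrt{\lambda_k})^2\gtrsim 1$, i.e. you need $\lambda_k$ bounded away from both $0$ and $1$; but these facts are already needed (and supplied by rigidity together with $c_1+c_2\le 1-\tau$, so $\lambda_+<1$) simply to place $z_k\in S(\e)$, so nothing is genuinely lost, while the paper's choice of test vector buys brevity and works without any reference to $\lambda_+<1$ at this step. One cosmetic point: the lemma's vectors $\bu_3,\bu_4$ are complex, so in your quadratic forms replace $a_{k'}^2$, $b_{k'}^2$, $a_{k'}b_{k'}$ by $|a_{k'}|^2$, $|b_{k'}|^2$, $\re(\overline{a_{k'}}b_{k'})$; all bounds go through unchanged.
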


\begin{proof}
Choose $z_0=E+\ii \eta_0 \in S( \epsilon)$ with $\eta_0 = n^{-1+\epsilon}$. By (\ref{aniso_law}) for $G$, we have $\im  \langle \mathbf u_1, G(z_0) \mathbf u_1\rangle = \OO(1) $ with high probability.
Then using \eqref{GL1} and the spectral decomposition (\ref{spectral1}), we get
\begin{equation}\label{spectraldecomp0}
\sum_{k=1}^q \frac{\eta_0 \vert \langle \mathbf u_1, S_{xx}^{-1/2}\xi_k\rangle \vert^2}{(\lambda_k-E)^2+\eta_0^2}  = \im\, \langle \mathbf u_1, {G}(z_0)\mathbf u_1\rangle =  \OO(1)  \quad \text{ with high probability.}
\end{equation}
By (\ref{rigidity}), we have that $\lambda_k + \ii \eta_0 \in S(\epsilon)$ with high probability. Then choosing $E=\lambda_k$ in (\ref{spectraldecomp0}) yields that
\begin{equation*}
\vert \langle \mathbf u_1, S_{xx}^{-1/2}\xi_k\rangle \vert^2 \lesssim  \eta_0 \quad \text{with high probability.}
\end{equation*} 
Since $\epsilon$ is arbitrary, we get $\vert \langle \mathbf u_1, S_{xx}^{-1/2}\xi_k\rangle \vert^2  \prec n^{-1}$. In a similar way, we can prove $\left|\langle \mathbf u_2,S_{yy}^{-1/2}\zeta_k\rangle \right|^2 \prec \eta_0$.

Now for $z_0=\lambda_k+\ii \eta_0\in S( \epsilon)$, we denote
$$\wt\bu_3:= \begin{pmatrix}  z_0  I_n & z_0^{1/2}I_n\\ z_0^{1/2}I_n &  z_0  I_n\end{pmatrix}^{-1}\begin{pmatrix} \bu_3 \\ 0\end{pmatrix}.$$
 Note that $z_0$ is well-separated from $0$ and $1$ by a distance of order 1 by \eqref{rigidity}, so we have $\|\wt\bu_3\|_2=\OO(1)$. By (\ref{aniso_law}), we have $\im \wt\bu_3^T G(z_0) \wt\bu_3 = \OO(1) $ with high probability. Using \eqref{GR1} and the spectral decomposition (\ref{spectral1}), we get that with high probability,
\begin{equation}\nonumber
\eta_0^{-1}\vert \langle \mathbf u_3, X^TS_{xx}^{-1/2}\xi_k\rangle \vert^2 \le \sum_{l=1}^q \frac{\eta_0 \vert \langle \mathbf u_3, X^TS_{xx}^{-1/2}\xi_l\rangle \vert^2}{(\lambda_l-E)^2+\eta_0^2} =  \im  \wt\bu_3^T G(z_0) \wt\bu_3 + \OO(1) .
\end{equation}
This gives $ \left|\langle \mathbf u_3,X^TS_{xx}^{-1/2}\xi_k\rangle \right|^2\prec n^{-1}$. Similarly, we get $\left|\langle \mathbf u_4,Y^TS_{yy}^{-1/2}\zeta_k\rangle \right|^2\prec n^{-1}$.
\end{proof}

The second moment of the error $ \langle \mathbf u, (G(z)-\Pi(z)) \mathbf v\rangle $ in fact satisfies a stronger bound. It will be used in the proof of Theorem \ref{main_thm1}.

\begin{lemma}
\label{thm_largebound}
Suppose the assumptions of Theorem \ref{lem null} hold. Then for any fixed $\epsilon>0$,  we have 
\begin{equation}\label{weak_off}
\mathbb{E} \vert \langle \mathbf u ,  G(z)\mathbf v\rangle - \langle \mathbf u , \Pi(z)\mathbf v\rangle  \vert^2 \prec \Psi^2(z), 
\end{equation}
for any $z\in S(\e)$ (recall \eqref{SSET1}) and deterministic unit vectors $\mathbf u , \mathbf v  \in \mathbb C^{\mathcal I}$. 
\end{lemma}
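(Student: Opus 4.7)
\medskip

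\noindent\textbf{Proof proposal for Lemma \ref{thm_largebound}.}

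The goal is to upgrade the pointwise isotropic local law $|G_{\mathbf u\mathbf v}-\Pi_{\mathbf u\mathbf v}|\prec \phi_n+\Psi$ from Theorem \ref{thm_local} to the sharp second-moment bound $\Psi^2$. Note that the naive application $\mathbb E|G_{\mathbf u\mathbf v}-\Pi_{\mathbf u\mathbf v}|^2\prec (\phi_n+\Psi)^2$ is too weak because $\phi_n^2$ can be much larger than $\Psi^2$. The improvement must come from the fact that the $\phi_n$-contribution to $G_{\mathbf u\mathbf v}-\Pi_{\mathbf u\mathbf v}$ is a centered fluctuation whose variance is genuinely of order $\Psi^2$.

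The strategy is a cumulant (Stein-type) expansion applied to $\mathbb E[\Delta\overline\Delta]$, where $\Delta:=\langle \mathbf u,(G-\Pi)\mathbf v\rangle$. First, working with the regularized resolvent $\wh G$ from Definition \ref{resol_not2} gives the deterministic operator-norm bound \eqref{op G}, which legitimizes computing expectations of polynomials in the entries of $\wh G$; Remark \ref{removehat} then transfers the result back to $G$ on the high-probability event where $\|\wh G\|_{\max}=\OO(1)$. Starting from the defining identity $HG=I$ and the block structure of $H$ in \eqref{linearize_block}, one expresses $\Delta$ linearly in the random entries $x_{i\mu},y_{j\nu}$ multiplied by resolvent elements. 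In $\mathbb E[\Delta\overline\Delta]$, for each random entry one applies
\[
\mathbb E[x_{i\mu}F]\;=\;\sum_{k=1}^{\ell}\frac{\kappa_{k+1}(x_{i\mu})}{k!}\,\mathbb E[\partial_{x_{i\mu}}^{k}F]\;+\;\text{negligible remainder},
\]
where the remainder is controlled by the support bound $|x_{i\mu}|\prec\phi_n$ together with Lemma \ref{lem op}.

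The leading second-cumulant term (of size $n^{-1}$) produces, after differentiation and re-summation, sums of the form $n^{-1}\sum_{\mathfrak a}|G_{\mathfrak a\mathbf v}|^2$ and $n^{-1}\sum_{\mathfrak a}|G_{\mathbf u\mathfrak a}|^2$. By the Ward-type identities in Lemma \ref{lem_gbound0} together with $\Im m_{\alpha c}\sim (n\eta)\Psi^2$ implied by \eqref{psi12}, these sums are bounded by $\Psi^2$, as required. The third and fourth cumulants, bounded by \eqref{conditionA3} as $\OO(n^{-3/2})$ and $\OO(n^{-2})$, produce further contributions of size $\Psi^2$ once the resulting polynomial expressions in $G$ are estimated via the anisotropic local law \eqref{aniso_law}. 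Cumulants of order $k\geq 5$ gain additional factors $\phi_n^{k-3}$ from the support bound and are therefore subleading.

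The main obstacle is the bookkeeping: ensuring that \emph{every} term produced by the expansion is bounded by $\Psi^2$ rather than $\Psi^2\cdot n^{\epsilon}$ or $\Psi^2\cdot \phi_n^{-c}$. Two issues deserve care. First, some terms arising from the derivatives $\partial_{x_{i\mu}}^kF$ again contain a factor of $\Delta$ (or $\overline\Delta$), producing a self-consistent inequality of the form $\mathbb E|\Delta|^2\leq C\Psi^2+\oo(1)\,\mathbb E|\Delta|^2$, which must be closed by absorption. Second, certain individually dangerous terms (typically coming from the $\phi_n$-scale non-Gaussian contributions) only become acceptable after combining several cumulant-expansion terms via a symmetrization argument that exhibits the hidden cancellation responsible for the variance being $\Psi^2$ rather than $\phi_n^2$. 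Apart from these two points, all remaining manipulations are standard applications of Lemmas \ref{lem_gbound0}--\ref{lem op}, the anisotropic local law, and the moment/cumulant bounds in \eqref{conditionA3}.
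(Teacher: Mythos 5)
Your plan defers exactly the point that makes this lemma nontrivial: removing the $\phi_n$ term from the pointwise bound $\phi_n+\Psi$. The steps you do spell out cannot achieve it, because every estimate you invoke re-introduces $\phi_n$. Each use of the anisotropic law \eqref{aniso_law} on a resolvent factor costs $\phi_n+\Psi$, and the Ward-type identities of Lemma \ref{lem_gbound0} are fed by $\Im G_{\mathbf u\mathbf u}\prec \Im m_c+\phi_n+\Psi$, so your leading second-cumulant term is only bounded by $(\Im m_c+\phi_n+\Psi)/(n\eta)$; the piece $\phi_n/(n\eta)$ is in general much larger than $\Psi^2$ (e.g. near the edge with $\eta\sim n^{-2/3}$ one has $\Psi^2\sim n^{-1/3}/(n\eta)$, while $\phi_n$ may be as large as $n^{-c_\phi}$ for an arbitrarily small constant $c_\phi$). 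The same objection applies to your treatment of the third- and fourth-cumulant terms, which you claim are controlled ``via the anisotropic local law''. The two items you flag as needing care --- closing the self-consistent inequality and the ``symmetrization argument that exhibits the hidden cancellation responsible for the variance being $\Psi^2$ rather than $\phi_n^2$'' --- are precisely the entire content of the lemma, and no mechanism for that cancellation is provided. As written, this is a strategy outline with the decisive step missing.

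For comparison, the paper proceeds differently and avoids this circularity altogether: by Lemma \ref{lem_decrease} one constructs $(\wt X,\wt Y)$ matching the first four moments of $(X,Y)$ but with support $\prec n^{-1/2}$; for these matrices \eqref{aniso_law} together with $\Psi\gtrsim n^{-1/2}$ from \eqref{psi12} already gives $|\wt G_{\mathbf u\mathbf v}(z)-\Pi_{\mathbf u\mathbf v}(z)|\prec\Psi(z)$, hence \eqref{weak_off} for $\wt G$ (expectations being legitimate by the deterministic bound of Lemma \ref{lem op}). The bound is then transferred to $G$ by the Green's function comparison Lemma \ref{comparison lem1}, $\mathbb E|\langle\mathbf u,(G-\wt G)\mathbf v\rangle|^2\prec\Psi^2$, proved as in \cite{LY,yang2018}; it is there --- through the vanishing of the first four moment differences and the bounded-support control of the fifth-order remainders --- that the cancellation you postulate is actually realized. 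A direct cumulant expansion might in principle be made to work, but it would have to reproduce that moment-matching bookkeeping explicitly; quoting \eqref{aniso_law} factor by factor cannot close the argument.
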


\section{Proof of Theorem \ref{lem null}, Theorem \ref{thm_largerigidity} and Lemma \ref{thm_largebound} }\label{secpf1}

%

In this section, we prove Theorem \ref{lem null}, Theorem \ref{thm_largerigidity} and Lemma \ref{thm_largebound} using Theorem \ref{thm_local}, whose proof is postponed to  Sections \ref{pf thmlocal}-\ref{sec_aniso}. The following proofs will use a comparison argument developed in \cite{LY} for Wigner matrices, which was later extended to sample covariance matrices \cite{DY} and separable covariance matrices \cite{yang2018}. This argument can be extended to our setting without difficulties, where the only inputs are the linearization in \eqref{linearize_block} and the anisotropic local law, Theorem \ref{thm_local}. Hence we will not give all the details, and only focus on the part that is significantly different from the pervious works. 

Given any random matrices $X$ and $Y$ satisfying the assumptions in Theorem \ref{lem null}, we can construct matrices $\wt X$ and $\wt Y$ that match the first four moments as $X$ and $Y$ but with smaller support $\phi_n \prec n^{-1/2}$, which is the content of the next lemma.

\begin{lemma} [Lemma 5.1 in \cite{LY}]\label{lem_decrease}
Suppose $X$ and $Y$ satisfy the assumptions in Theorem \ref{lem null}. Then there exists another matrix $\wt{X}=(\wt x_{ij})$ and $\wt{Y}=(\wt y_{ij})$, such that $\wt{X}$ and $\wt Y$ satisfy the bounded support condition (\ref{eq_support}) with $\phi_n\prec n^{-1/2}$, and the following moments matching holds: 
\begin{equation}\label{match_moments}
\mathbb Ex_{ij}^k =\mathbb E\wt x_{ij}^k, \quad \mathbb Ey_{ij}^k =\mathbb E\wt y_{ij}^k, \quad k=1,2,3,4.
\end{equation}
\end{lemma}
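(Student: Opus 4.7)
The plan is to construct $\wt X$ entry by entry, producing each $\wt x_{ij}$ as a three-atom random variable (with a fresh source of independent randomness for each entry) whose first four moments exactly match those of $x_{ij}$, but whose support is $\OO_\prec(n^{-1/2})$. The construction of $\wt Y$ is identical.

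First I rescale to $\xi := \sqrt{n}\,x_{ij}$ and write $m_k := \E\xi^k$; by \eqref{entry_assm0}, \eqref{entry_assm1}, and \eqref{conditionA3} we have $m_1 = \oo(1)$, $m_2 = 1 + \oo(1)$, $|m_3|\lesssim 1$, and $m_4 \prec 1$. The task reduces to producing $\wt\xi$ with the same first four moments and support $\OO_\prec(1)$: then $\wt x_{ij} := n^{-1/2}\wt\xi$ automatically satisfies the required moment match and the support bound $\phi_n \prec n^{-1/2}$.

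Second, I parametrize $\wt\xi$ by a distribution on three atoms $\{-a,\,0,\,b\}$ with $a,b > 0$ and masses $(p,\,1-p-q,\,q)$, after first absorbing $m_1$ into a deterministic translation (which is $\oo(1)$ in size and therefore harmless). Elementary algebra reduces the four moment equations to
\[
b - a = \frac{m_3}{m_2},\qquad ab = \frac{m_4 m_2 - m_3^2}{m_2^2},\qquad p = \frac{m_2}{a(a+b)},\qquad q = \frac{m_2}{b(a+b)},
\]
which determines $(a,b)$ as the two positive roots of the associated quadratic. Positivity of the atoms and of all three masses amounts to two Cauchy--Schwarz inequalities applied to $\xi$ and $\xi^2$: the bound $m_3^2 \le m_2 m_4$ gives $ab \ge 0$, while $m_3^2 \le m_2(m_4 - m_2^2)$ (Cauchy--Schwarz for $\Cov(\xi,\xi^2)$) gives $1 - p - q = 1 - m_2/(ab) \ge 0$. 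The bound $m_4 \prec 1$ then yields $(a+b)^2 = (4m_4 m_2 - 3m_3^2)/m_2^2 \prec 1$, so $|\wt\xi|\prec 1$ uniformly in $(i,j)$, as required.

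The only subtlety is the degenerate regime in which one of the two Cauchy--Schwarz inequalities above is saturated: equality forces $\xi$ itself to be almost surely supported on at most two points, and $m_4 \prec 1$ then forces $|\xi|\prec 1$, so one may simply take $\wt\xi := \xi$. The remaining checks---reintroducing the $-m_1$ translation while preserving the moment match, and noting that the family $\{\wt x_{ij}\}$ remains jointly independent because each three-atom distribution is generated from an independent auxiliary uniform on $[0,1]$---are routine, completing the construction of $\wt X$ (and, by the same argument, of $\wt Y$).
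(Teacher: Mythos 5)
Your construction is correct: with $\zeta$ matching the central moments $\mu_2,\mu_3,\mu_4$ of $\sqrt n\,x_{ij}$, the three-atom solution determined by $b-a=\mu_3/\mu_2$ and $ab=(\mu_2\mu_4-\mu_3^2)/\mu_2^2$ indeed has nonnegative masses by the two Cauchy--Schwarz inequalities, the bound $\mu_4\prec 1$ gives $a+b\prec 1$, and $\wt x_{ij}=n^{-1/2}(\zeta+m_1)$ then satisfies \eqref{match_moments} with support $\prec n^{-1/2}$. The paper gives no proof of this lemma (it is quoted from Lemma 5.1 of \cite{LY}), and your explicit four-moment-matching construction is essentially the standard argument behind that citation. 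One remark: the first "degenerate regime" cannot occur, since equality in $\mu_3^2\le \mu_2\mu_4$ forces the centered variable to be supported on $\{0,\lambda\}$ and hence, having mean zero, to vanish a.s., contradicting $\mu_2=1+\oo(1)$; and saturation of the second inequality merely sets the middle mass to zero, so your fallback $\wt\xi:=\xi$ is never actually needed.
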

We can define $\wt H(z)$ and $\wt G(z)$ by replacing $(X,Y)$ with $(\wt X,\wt Y)$. Of course $\wt{\cal G}_L(z)$, $\wt{\cal G}_R(z)$, $ \wt m_\al(z)$, $\wt{\cal H}$, $\wt R_{1,2}$, etc.\;can be defined in the obvious way. 

\begin{proof}[Proof of Lemma \ref{thm_largebound}]
By Theorem \ref{thm_local}, we see that \eqref{weak_off} hold for $\wt G(z)$ using \eqref{psi12}. Thus Lemma \ref{thm_largebound} follows immediately from the following comparison lemma. 
\begin{lemma}\label{comparison lem1}
Let $(X, Y)$ and $(\wt{X},\wt Y)$ be pairs of random matrices defined as above. Suppose Theorem \ref{thm_local} holds for both $G(z)$ and $\wt G(z)$. For any small constant $\e>0$,  we have that
\begin{equation} 
\mathbb{E} \left|  \langle \mathbf u ,  G(z)\mathbf v\rangle - \langle \mathbf u ,  \wt G(z)\mathbf v\rangle \right \vert^2 \prec \Psi^2(z),
\end{equation}
for any $z\in S(\epsilon)$ and deterministic unit vectors $\mathbf u , \mathbf v  \in \mathbb C^{\mathcal I}$, 
\end{lemma}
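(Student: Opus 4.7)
The plan is to prove Lemma \ref{comparison lem1} by adapting the entry-by-entry Lindeberg replacement strategy of \cite{LY,DY,yang2018} to the linearization $H(z)$ in \eqref{linearize_block}, leveraging the fourth-moment matching \eqref{match_moments} of Lemma \ref{lem_decrease}. First, by Remark \ref{removehat}, I would reduce to the regularized resolvents $\wh G$ and $\wh{\wt G}$ of Definition \ref{resol_not2}: the deviations $\|G - \wh G\|_{\max}$ and $\|\wt G - \wh{\wt G}\|_{\max}$ are both $O(n^{-8})$ on a high-probability event and therefore negligible compared with $\Psi^2 \gtrsim n^{-1}$. The deterministic bound $\|\wh G\| \leq Cn^{10}/\eta$ from Lemma \ref{lem op} will be crucial for converting stochastic-domination bounds into $L^2$-bounds via Lemma \ref{lem_stodomin}(iii).

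Next, I would couple $(X,Y)$ and $(\wt X,\wt Y)$ (for concreteness, independently) on a common probability space and enumerate the $N = (p+q)n$ independent matrix entries as $h_1,\dots,h_N$, with matched partners $\wt h_1,\dots,\wt h_N$. For $0\le \gamma\le N$, let $H^{(\gamma)}$ be the matrix obtained by replacing $h_1,\dots,h_\gamma$ with $\wt h_1,\dots,\wt h_\gamma$ and keeping $h_{\gamma+1},\dots,h_N$ unchanged, so that $H^{(0)} = H$ and $H^{(N)} = \wt H$; let $G^{(\gamma)}$ denote the corresponding regularized resolvent. At each swap $\gamma-1\to\gamma$, let $H^{(\gamma,0)}$ denote $H^{(\gamma)}$ with the $\gamma$-th entry (and its symmetric partner) set to zero, and let $G^{(\gamma,0)}$ be its regularized resolvent. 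The standard resolvent expansion then yields
\begin{equation*}
G^{(\gamma-1)} - G^{(\gamma)} \;=\; \sum_{m=1}^{4}(-1)^m\bigl(h_\gamma^{\,m}-\wt h_\gamma^{\,m}\bigr)\,T_m^{(\gamma)} \;+\; R^{(\gamma)},
\end{equation*}
where each $T_m^{(\gamma)}$ is an $(m+1)$-fold product of entries of $G^{(\gamma,0)}$ at the swap positions and $R^{(\gamma)}$ is a fifth-order remainder bounded pointwise by $\phi_n^{5}\,\|G^{(\gamma-1)}\|\,\|G^{(\gamma,0)}\|^{5}$. Applying this expansion to both $G_{\mathbf u\mathbf v}$ and its complex conjugate in the identity
\begin{equation*}
\mathbb E\bigl|G_{\mathbf u\mathbf v} - \wt G_{\mathbf u\mathbf v}\bigr|^{2}
\;=\; \mathbb E|G_{\mathbf u\mathbf v}|^{2} + \mathbb E|\wt G_{\mathbf u\mathbf v}|^{2} - 2\,\Re\,\mathbb E\bigl[G_{\mathbf u\mathbf v}\,\overline{\wt G_{\mathbf u\mathbf v}}\bigr]
\end{equation*}
turns each piece into a telescoping sum over $\gamma$; the moment identities $\mathbb E h_\gamma^{k} = \mathbb E\wt h_\gamma^{k}$ for $k\le 4$ annihilate all contributions of joint degree $\le 4$ in $(h_\gamma,\wt h_\gamma)$, leaving only fifth-and-higher-order pieces whose expectations are controlled by $\mathbb E|h_\gamma|^{5} + \mathbb E|\wt h_\gamma|^{5}\lesssim \phi_n^{3}/n$.

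What remains is to bound, for each of the $N = O(n^{2})$ surviving fifth-order summands, the resulting product of entries of $G^{(\gamma,0)}$ (or of $G^{(\gamma-1)}$ in the remainder). Entries with a ``free'' index contracted against $\mathbf u$ or $\mathbf v$ are controlled by the anisotropic local law \eqref{aniso_law}, which can be propagated along the swap chain by a short bootstrap since $\phi_n\le n^{-c_\phi}$, while sums over the swap-position indices are handled by the Ward-type quadratic estimates of Lemma \ref{lem_gbound0}. The main obstacle lies precisely in this bookkeeping step: unlike the first-moment comparison in \cite{LY,DY}, the second-moment functional doubles the number of Green-function factors one must track, so for every fifth-order term one has to decide carefully between an entry-wise bound $|G_{\mathbf u\fa}|\prec 1$ and a summed bound $\sum_{\fa}|G_{\mathbf u\fa}|^{2}\prec \Im\,G_{\mathbf u\mathbf u}/\eta$ from Lemma \ref{lem_gbound0}, in order to extract enough factors of $\Psi$ to obtain a per-swap error $\prec \Psi^{2}/n^{2}$ after the $\phi_n^{3}/n$ prefactor. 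Summing over $\gamma$ then yields the desired estimate $\mathbb E|G_{\mathbf u\mathbf v} - \wt G_{\mathbf u\mathbf v}|^{2}\prec \Psi^{2}$. This is a direct but technical adaptation of the computations of \cite[Section~7]{yang2018} and \cite[Section~4]{DY}, and in the write-up I would only highlight the modifications forced by the four-block structure of $G$ and the simultaneous presence of $X$- and $Y$-type entries in the swap sequence.
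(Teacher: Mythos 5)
Your overall strategy—Lindeberg swapping along the linearization $H(z)$, fourth-moment matching from Lemma \ref{lem_decrease}, the anisotropic law as a priori input, and Ward-type bounds from Lemma \ref{lem_gbound0}—is exactly the route the paper intends (its proof simply defers to Lemma 3.7 of \cite{yang2018} and Lemma 3.8 of \cite{LY}). However, the specific way you set up the comparison has a structural gap. Expanding $\mathbb E|G_{\mathbf u\mathbf v}-\wt G_{\mathbf u\mathbf v}|^2=\mathbb E|G_{\mathbf u\mathbf v}|^2+\mathbb E|\wt G_{\mathbf u\mathbf v}|^2-2\Re\,\mathbb E\bigl[G_{\mathbf u\mathbf v}\overline{\wt G_{\mathbf u\mathbf v}}\bigr]$ under an independent coupling makes the cross term factor into $\mathbb E G_{\mathbf u\mathbf v}\cdot\overline{\mathbb E\wt G_{\mathbf u\mathbf v}}$, and since $|\mathbb E\wt G_{\mathbf u\mathbf v}|\approx|\Pi_{\mathbf u\mathbf v}|$ is in general of order one, closing the estimate along your three separate telescoping comparisons forces a first-moment bound $|\mathbb E G_{\mathbf u\mathbf v}-\mathbb E\wt G_{\mathbf u\mathbf v}|\prec\Psi^2$. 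Neither the local law (accuracy $\phi_n+\Psi$, resp.\ $\Psi$) nor your per-swap fifth-order bookkeeping delivers this: after moment matching, the surviving fifth-order term per swap carries the prefactor $\phi_n^3n^{-1}$ times factors $G_{\mathbf u i}G_{\mu\mathbf v}$ with $O(1)$ middle factors, and Ward summation over the $\sim n^2$ swap sites yields at best something of order $\phi_n^3\,\Im G_{\mathbf u\mathbf u}/\eta$, which exceeds $\Psi^2$ (and even $\Psi$) by a factor like $n\phi_n^3$ when $\phi_n=n^{-c_\phi}$ with small $c_\phi$. The cited proofs avoid this by never splitting the square: they compare the single quadratic functional $|\langle\mathbf u,(G-\Pi)\mathbf v\rangle|^2$ (equivalently, condition on $\wt G$ and compare $|G_{\mathbf u\mathbf v}-c|^2$ with $c=\wt G_{\mathbf u\mathbf v}$), so every surviving term keeps two small, $\mathbf u$- and $\mathbf v$-rooted factors, and one concludes by the triangle inequality with $\mathbb E|\wt G_{\mathbf u\mathbf v}-\Pi_{\mathbf u\mathbf v}|^2\prec\Psi^2$. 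You should restructure your argument this way.

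Two further quantitative points. Truncating the resolvent expansion at fourth order with a remainder bounded by $\phi_n^5$ times operator norms is far too crude: with $\phi_n=n^{-c_\phi}$ for an arbitrarily small constant $c_\phi$ and $\eta$ as small as $n^{-1+\epsilon}$, even the optimistic per-swap bound $\phi_n^5\eta^{-6}$ (let alone one using $\|\wh G\|\le Cn^{10}/\eta$) times $n^2$ swaps dwarfs $\Psi^2$; as in \cite{LY,DY,yang2018} you must expand to order $L=\OO(1/c_\phi)$ and bound all surviving orders $5\le m\le L$ with the local law and the two-small-factor structure. Relatedly, the target "per-swap error $\prec\Psi^2/n^2$" is not available term by term even after these fixes—the gain only appears after summing over swap sites and running the self-improving scheme of those references, which is precisely the nontrivial content hidden in the paper's citation.
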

The proof of this lemma is the same as the one for Lemma 3.7 in \cite[Section 7]{yang2018} and the one for Lemma 3.8 in \cite[Section 6]{LY}. The only inputs are Theorem \ref{thm_local} and the moment matching conditions in \eqref{lem_decrease}. Hence we omit the details. 
\end{proof}

We have a similar comparison lemma for the estimates \eqref{aver_in1} and \eqref{aver_out1}. Notice that \eqref{aver_in} and \eqref{aver_out0} holds for $\wt G(z)$ since
$$ \Psi^2(z) \lesssim (n\eta)^{-1} , \quad \text{and} \quad \Psi^2(z) \lesssim \frac{1}{n(\kappa +\eta)} + \frac{1}{(n\eta)^2\sqrt{\kappa +\eta}} \ \ \text{for} \ \ z\in S_{out}(\e),$$ 
by \eqref{Immc}.

\begin{lemma}\label{comparison lem2}
Let $(X, Y)$ and $(\wt{X},\wt Y)$ be pairs of random matrices defined as above. Fix any small constant $\e>0$. For $z \in   S(\epsilon)$ or $z\in S_{out}(\e)$, if there exist deterministic quantities $J\equiv J(N)$ and $K\equiv K(N)$ such that $J \le n^{-c}$ and $K \le n^{-c}$ for some constant $c>0$, and 
\begin{equation}
 \wt{G}(z)-\Pi =\OO_\prec (J), \quad |\wt m_\al(z) - m_{\al c}(z)| \prec K, \ \ \al=1,2,3,4. \label{KEYBOUNDS}
\end{equation}
Then we have
\begin{equation}
 \vert m_\al(z)-m_{\al c}(z) \vert \prec  \Psi^2(z) + J^2+K , \ \ \al=1,2,3,4. \label{KEYEYEYEY}
\end{equation}
\end{lemma}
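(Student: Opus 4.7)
The plan is to prove the stronger intermediate bound
$$
|m_\al(z)-\wt m_\al(z)|\prec \Psi^2(z)+J^2,\qquad \al=1,2,3,4,
$$
and then combine it with the hypothesis $|\wt m_\al(z)-m_{\al c}(z)|\prec K$ via the triangle inequality to conclude \eqref{KEYEYEYEY}. Since by construction of $\wt X,\wt Y$ through Lemma \ref{lem_decrease} the pairs $(X,Y)$ and $(\wt X,\wt Y)$ match their first four moments entrywise, this is amenable to a Green function comparison of Lindeberg type, in the same spirit as \cite[Lemma 3.7]{yang2018} and \cite[Lemma 3.8]{LY}. The only novelty is that the comparison is carried out at the level of the $(p+q+2n)\times(p+q+2n)$ block linearization $H(z)$ from \eqref{linearize_block} rather than on a Wigner or sample covariance matrix, but the randomness still enters only through the $X,Y$-blocks, so the swapping is entry-by-entry in exactly the same way.

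Concretely, I would enumerate the entries of $X$ and $Y$ and construct an interpolating sequence $H^{(0)}=H,\, H^{(1)},\ldots, H^{(T)}=\wt H$ in which at each step a single entry is replaced by its tilded counterpart. Setting $G^{(\gamma)} := (H^{(\gamma)}(z))^{-1}$ and $m_\al^{(\gamma)} := n^{-1}\tr\cal G_\al^{(\gamma)}$, I would telescope $m_\al-\wt m_\al=\sum_\gamma (m_\al^{(\gamma-1)}-m_\al^{(\gamma)})$ and, for each swap of a single entry $h\mapsto\wt h$ with $|h|,|\wt h|\prec\phi_n\prec n^{-1/2}$, Taylor-expand the function $f(u) := m_\al^{(\gamma)}\big|_{\text{entry}=u}$ around $u=0$ up to fifth order. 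The resolvent identity $\partial_u G = -G(\partial_u H)G$ expresses each derivative of $f$ as a fixed polynomial in the entries of the ``zeroed-out'' resolvent at the swapped slot. By the moment-matching of Lemma \ref{lem_decrease}, the first four terms cancel after taking expectations, and the fifth-order remainder is bounded by $\phi_n^{5}$ times a product of five resolvent entries, which is controlled using the anisotropic bound $\wt G-\Pi=\OO_\prec(J)$ together with the Ward-type $\ell^2$-sum identities \eqref{eq_sgsq1}--\eqref{eq_sgsq2}. Summing over all $\sim n^2$ swaps produces $|\E m_\al-\E \wt m_\al|\prec \Psi^2+J^2$, and the fluctuation $|m_\al-\E m_\al|\prec \Psi^2$ follows from an analogous second-moment expansion combined with Lemma \ref{thm_largebound}.

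The main obstacle is the bookkeeping inside the fifth-order remainder. One has to track which of the index classes $\cal I_1,\cal I_2,\cal I_3,\cal I_4$ each of the five resolvent factors belongs to, distinguish the ``diagonal-type'' blocks (whose limit $\Pi$ is non-zero) from the ``off-diagonal-type'' blocks (whose limit vanishes), and apportion the anisotropic gain $J$ to the off-diagonal factors to avoid picking up only a single $J$. The key mechanism is that the derivative of a single entry necessarily forces at least two off-diagonal resolvent factors in each term, and using the bound $\wt G-\Pi=\OO_\prec(J)$ on two such factors, together with $\sum_{\fa}|G_{\fa\mathbf v}|^2\prec \eta^{-1}\im G_{\mathbf v\mathbf v}$ to absorb the summations, is what produces $J^2$ (rather than $J$) and $\Psi^2$ (from the Ward identity) after summing over the $n^2$ entries. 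Once that accounting is done exactly as in \cite[Section 7]{yang2018} and \cite[Section 6]{LY}, no further input beyond Theorem \ref{thm_local} and Lemma \ref{lem_gbound0} is needed.
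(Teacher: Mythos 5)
Your overall strategy --- telescoping over entry replacements inside the linearization $H(z)$, four-moment cancellation from Lemma \ref{lem_decrease}, and Ward identities to extract smallness from the traced index --- is the same route the paper takes (it simply invokes the proofs of \cite[Lemma 5.4]{LY} and \cite[Lemma 7.1]{yang2018} verbatim). But as written your argument has a genuine gap at the final step. You propose to bound $|\E m_\al-\E \wt m_\al|$ and then dispose of the fluctuations via ``an analogous second-moment expansion combined with Lemma \ref{thm_largebound}''. First, a second-moment bound can never produce stochastic domination: Markov only gives failure probability of order $n^{-2\e}$, whereas \eqref{KEYEYEYEY} is a $\prec$-statement requiring failure probability $\le n^{-D}$ for every fixed $D$ --- and the later uses of this lemma (e.g.\ the union bounds over $\sim n^{100}$ spectral parameters in the rigidity argument) genuinely need that superpolynomial control. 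Second, Lemma \ref{thm_largebound} only gives $\E\vert\langle \mathbf u,(G-\Pi)\mathbf v\rangle\vert^2\prec\Psi^2$, i.e.\ fluctuations of size $\Psi$ for individual generalized entries; the claim $|m_\al-\E m_\al|\prec\Psi^2$ is an improved-by-averaging statement of exactly the depth of the lemma you are proving, so invoking it here is circular. The cited proofs avoid both problems by running the replacement argument directly on $\E|m_\al-m_{\al c}|^{2D}$ for every fixed $D$, absorbing lower powers by Young's inequality, and only then applying Markov.

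A second issue is the bookkeeping of the remainder. Inside the telescoping sum the resolvents are those of the interpolating ensembles, which still have support $\phi_n$; for them the only available input is the entrywise bound $\phi_n+\Psi$ from Theorem \ref{thm_local} (and $\im G_{\fa\fa}\prec \im m_c+\phi_n+\Psi$ in the Ward step), \emph{not} the hypothesis $\wt G-\Pi=\OO_\prec(J)$, which concerns $\wt G$ alone. So the mechanism you describe --- ``use $J$ on two off-diagonal factors to get $J^2$'' --- is not available, and a one-shot fifth-order Taylor expansion with $\E|h|^5\prec\phi_n n^{-2}$ leaves $\phi_n$-dependent contributions (chains whose internal factors are all diagonal, and the $\phi_n/(n\eta)$ piece of the Ward bound) that you do not show to be dominated by $\Psi^2+J^2+K$. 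Eliminating all residual $\phi_n$ powers is precisely the delicate content of \cite[Lemma 5.4]{LY} and \cite[Lemma 7.1]{yang2018}: there the expansion is pushed to order comparable to $1/c_\phi$, organized so that each extra power of the swapped entry comes with additional off-diagonal or $n^{-1/2}$ smallness, and combined with an iterative self-improvement of the a priori bound on $|m_\al-m_{\al c}|$. Without the high-moment formulation and this iteration, the scheme you outline does not close.
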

\begin{proof} 
The proof is similar to the one for \cite[Lemma 5.4]{LY} or \cite[Lemma 7.1]{yang2018} (the latter is closer to our current setting and we can copy its proof almost verbatim). Hence we omit the details. 
\end{proof}

Now we are ready to give the proof of Theorem \ref{thm_largerigidity} using this lemma. 
\begin{proof}[Proof of Theorem \ref{thm_largerigidity}]
By Theorem \ref{thm_local} for $\wt G$ with $\phi_n=n^{-1/2}$, one can choose $J= \Psi(z)$ and 
\begin{equation*}
K=\frac{1}{n \eta}, \quad \text{ or }  \quad   \frac{1}{n(\kappa +\eta)} + \frac{1}{(n\eta)^2\sqrt{\kappa +\eta}} \ \ \text{for} \ \ z\in S_{out}(\e).
\end{equation*}
Then using \eqref{KEYEYEYEY} and $|m(z)-m_c(z)|\lesssim |1-z|^{-1} |m_3(z)-m_{3c}(z)|$ by \eqref{m3m}, we get \eqref{aver_in} and \eqref{aver_out0}. Note that due to the $|1-z|^{-1}$ factor, we need to stay away from $z=1$, which is the main reason why we need to restrict ourself to the domain $\wt S(\epsilon,\wt \e)$ or $\wt S_{out}(\epsilon,\wt \e)$. The estimate \eqref{Kdist} follows from \eqref{aver_in} through a standard argument, see e.g.\;the proofs for \cite[Theorems 2.12-2.13]{EKYY1}, \cite[Theorem 2.2]{EYY} or \cite[Theorem 3.3]{PY}. 
\end{proof}

Next we give the proof of Theorem \ref{lem null}. We first prove the rigidity result \eqref{rigidity}. 
\begin{proof}[Proof of \eqref{rigidity}]
 Without loss of generality, we only consider the case $\lambda_- \gtrsim 1$ in the proof. For the case with $\lambda_-=\oo(1)$, since we only need to prove a weaker result with $1\le i \le (1-\e)q$, the proof is the same except that we do not need to provide the bound in \eqref{lower1} below.

Using \eqref{Kdist} and the method in \cite{EKYY1,EYY}, we can prove the following rigidity estimate: for any fixed $\delta>0$ and all $n^\delta \le i \le  q-n^\delta $, 
\eqref{rigidity} holds. To obtain this estimate for the largest and smallest $n^\delta$ eigenvalues, we still need to provide the following upper and lower bounds: 
for any constant $\e>0$,
\be\label{upper1}
\lambda_1 \le \lambda_+ + n^{-2/3+\e} \quad \text{with high probability,}
\ee
and 
\be\label{lower1}
\lambda_q \ge \lambda_- - n^{-2/3+\e} \quad \text{with high probability.}
\ee
Given these bounds, the estimate \eqref{Kdist} and the method in \cite{EKYY1,EYY} allow us to conclude \eqref{rigidity} for all $i$. 

First, we claim that for any small constants $c,\e>0$, with high probability,
\be\label{remain1}
\#\{i:\lambda_i \in [\lambda_+ + n^{-2/3+\e},1-c]\} =0, \quad \text{and} \quad \#\{i:\lambda_i \in [c,\lambda_- - n^{-2/3+\e}]\}=0.
\ee
We choose $\eta=n^{-2/3}$ and $ E=\lambda_+ + \kappa \le 1-c$ outside of the spectrum with some $\kappa\ge n^{-2/3+2\e} \gg n^\e \eta$. Then using \eqref{aver_out0}, we get that
\be\label{add1}|\im m(z) - \im m_c(z)| \prec  \frac{1}{n(\kappa +\eta)} + \frac{1}{(n\eta)^2\sqrt{\kappa +\eta}} \lesssim \frac{n^{-\e}}{n\eta} .\ee
On the other hand, if there is an eigenvalue $\lambda_j$ satisfying $|\lambda_j - E| \le \eta$ for some $1\le j \le n$, then 
\be\label{add2}\im m(z) = \frac1q\sum_{i=1}^q \frac{\eta}{|\lambda_i-E|^2 + \eta^2} \gtrsim \frac{1}{n\eta}.\ee
On the other hand, by \eqref{Immc} we have 
$$\im m_c(z) =\OO\left( \frac{\eta}{\sqrt{\kappa + \eta}}\right) = \OO\left( \frac{n^{-\e}}{n\eta}\right).$$
Together with \eqref{add2}, this contradicts \eqref{add1}. Hence we conclude the first estimate in \eqref{remain1} since $\e$ can be arbitrarily chosen. The second estimate in \eqref{remain1} can be proved in  the same way by choosing $E=\lambda_--\kappa$.
 
Then it remains to prove that for a sufficiently small constant $c>0$, with high probability,
\be\label{remain2}
\#\{i:\lambda_i \in [1-c,1]\}=0 \quad \text{and} \quad \#\{i:\lambda_i \in [0,c]\}=0.
\ee
We pick i.i.d. Gaussian $X^G$ and $Y^G$, which are independent of the matrices $X$ and $Y$ we are considering. We denote the eigenvalues of $\cal C_{X^GY^G}$ by $\lambda_1^G \ge \lambda_2^G \ge \cdots \ge \lambda_p^G$. Then with Lemma 1 in Section 8.2 of \cite{CCA2}, we know that $|\lambda_1^G - \lambda_+|\prec n^{-2/3}$, which implies
 \be\label{remainG}
\#\{i:\lambda^{G}_i \in [\lambda_+ + n^{-2/3+\e},1]\}=0 \quad \text{with high probability.}
\ee
Now we define a continuous path of random matrices as 
\be\label{XtYt}X_t: = \sqrt{1-t} X^G + \sqrt{t} X, \quad Y_t: = \sqrt{1-t} Y^G + \sqrt{t} Y, \quad t\in [0,1].\ee
Correspondingly, we define $H_t(z)$ and $G_t(z)$ by replacing $(X,Y)$ with $(X_t, Y_t)$ in the definitions \eqref{linearize_block} and \eqref{eqn_defG}. We denote the eigenvalues of $\cal C_{X_tY_t}$ by $\lambda_i^t$. We claim that with high probability,
\be\label{remain3cont}
\text{for any $1\le i \le q$}, \quad  \lambda_i^t \ \text{ is continuous in $t$ for all}\ t\in [0,1].
\ee
and
\be\label{remain3}
\|G_t(1-c)\|_{\max} \ \text{ is finite for all}\ \ t\in [0,1].
\ee
Recall that with high probability, the eigenvalues of $\cal C_{X_0Y_0}$ are all inside $[0,\lambda_+ + n^{-2/3+\e}]$. Moreover, we claim that if \eqref{remain3} holds, then 
\be\label{add claim1}m_t(1-c)=\frac1q\sum_{i=1}^q \frac{1}{\lambda_i^t-(1-c)} \ \ \ \text{ is finite for all }\  t\in [0,1]. \ee 
In fact, by \eqref{remain3} we have that with high probability,
$$m_{3,t}(1-c):=\frac1{n}\sum_{\mu \in \cal I_3} [G_t(1-c)]_{\mu\mu}  \ \ \ \text{ is finite for all }\  t\in [0,1],$$
which implies \eqref{add claim1} using 
$$m_t(1-c)=\frac{m_{3,t}(1-c)-(1-c_1-c_2)(1-c)}{c_2 c(1-c)}$$
by equation \eqref{m3m}. The claim \eqref{add claim1} means that there is no eigenvalue $\lambda_i^t$ crossing the point $E=1-c$ for all $t\in [0,1]$. Hence using the continuity of eigenvalues in \eqref{remain3cont}, we conclude the first estimate in \eqref{remain2}, which, together with \eqref{remain1}, concludes \eqref{upper1}.

By the definition of $\cal C_{X_tY_t}$, to prove \eqref{remain3cont}, it suffices to prove that with high probability, $(X_tX^T_t)^{-1}$ and $(Y_tY^T_t)^{-1}$ are continuous in $t$ for all $t\in [0,1]$. For this purpose, we only need to show that with high probability,
\be \nonumber
 X_t X_t^T \ \text{ and } \ Y_t Y_t^T \ \ \text{are non-singular for all}\  t\in [0,1].
\ee
We consider discrete times $t_k = kn^{-10}$. Note that $X_t$ satisfies the assumptions of Lemma \ref{SxxSyy}. Hence with a simple union bound we get that there exists a high probability event $\Xi_1$ such that
$$ \mathbf 1(\Xi_1)\min_{0\le k \le n^{10}} \lambda_p(X_{t_k} X_{t_k}^T) \ge \mathbf 1(\Xi_1) \frac12(1-\sqrt{c_1})^2  .$$
Moreover, using the bounded support condition for $X$, we get that there exists a high probability event $\Xi_2$ such that
\be\label{roughXt} \mathbf 1(\Xi_2)\max_{i,\mu}|(X_t)_{i\mu}| \le 1 \ \Rightarrow \ \mathbf 1(\Xi_2)\sup_{0 \le t \le 1} \|X_t \| =\OO(n).\ee
This implies 
$$ \sup_{t_{k-1} \le t \le t_k}\|X_{t} X_{t}^T-X_{t_k} X_{t_k}^T\| \lesssim n^{-5}\cdot n^2 =n^{-3}$$
Therefore, on the event $\Xi_1\cap \Xi_2$ we have
$$\inf_{0\le t\le 1}\lambda_p(X_t X_t^T) = \min_{1\le k \le n^{10}} \inf_{t_{k-1} \le t \le t_k} \lambda_p(X_t X_t^T) \ge \frac12(1-\sqrt{c_1})^2 - \OO\left(n^{-3}\right) \gtrsim 1. $$
We have a similar estimate for $Y_tY_t^T$. This concludes \eqref{remain3cont}.

To prove \eqref{remain3}, we consider discrete times $t_k = kn^{-100}$. Note that $X_t$ and $Y_t$ satisfy the assumptions of Theorem \ref{thm_local}, hence the local law \eqref{aniso_law} holds for any $t\in [0,1]$. 
We claim that there exists a high probability event $\Xi$, on which
\be\label{remain4} 
\max_{0\le k \le n^{100}}\| \wh G_{t_k}(1-c+ \ii n^{-10})\|_{\max} =\OO(1), 
\ee
where we recall that $\wh G$ is defined in Definition \ref{resol_not2}. Now suppose \eqref{remain4} holds. 
With the deterministic bound \eqref{op G} and \eqref{roughXt}, we have that for any $t_{k-1}\le t \le t_k$, 
\begin{align*} 
&\left|\wh G_t(1-c+ \ii n^{-10}) - \wh G_{t_k}(1-c+ \ii n^{-10})\right| \\
 &\le Cn^{-50}\|\wh G_t(1-c+ \ii n^{-10})\| \left( \|X\|+\|X^G\|\right) \|\wh G_{t_k}(1-c+ \ii n^{-10})\| \\
& \le n^{-50}\cdot \left( {Cn^{20}} \right)^2\cdot n = \OO(n^{-9}), \quad \text{on $\Xi_2$.}
\end{align*}
Thus we conclude that on $\Xi\cap \Xi_2$, 
$$\max_{0\le t\le 1} \|\wh G_{t}(1-c+ \ii n^{-10})\|_{\max} \le C.$$ 
Finally, the perturbation argument in Remark \ref{removehat} allows us to remove the $\ii n^{-10}$ and the regularization in $\wh G$, which gives \eqref{remain3} on $\Xi$.  

It remains to prove \eqref{remain4}. Since $X_t$ and $Y_t$ also satisfy the assumptions of Theorem \ref{lem null}, by \eqref{remain1} we know that for any fixed $t$, the eigenvalues $\lambda_i^t$ are either inside $[0,\lambda_+ + n^{-2/3+\e}]$ or $[1-c/2,1]$  with high probability. With a simple union bound, we obtain that
$$\min_{0\le k \le n^{100}}\min_{1\le i \le p}|(1-c)-\lambda_i^{t_k}| \gtrsim 1 \quad \text{with high probability}.$$
Together with \eqref{spectral1}, this immediately gives that
\begin{align}\nonumber
\max_{0\le k \le n^{100}}\left\| R_{t_k}(z) \right\| \le C, \quad z= 1-c+ \ii n^{-10}.
\end{align}
Combining this bound with \eqref{GL1}-\eqref{GLR1} and Lemma \ref{SxxSyy}, we get
\begin{align*} 
\max_{0\le k \le n^{100}}\left\| G_{t_k}(z) \right\| \le C, \quad z= 1-c+ \ii n^{-10}.
\end{align*}
 Finally, applying the arguments in Remark \ref{removehat} gives \eqref{remain4} for $\wh G$. This concludes \eqref{remain3}, which further gives the first estimate in \eqref{remain2}.

Finally, the second estimate in \eqref{remain2} can be proved in the same way using the continuous interpolation in \eqref{XtYt}, except that we still need to provide a similar estimate as in \eqref{remainG} for the smallest eigenvalues in the Gaussian case: there exists a constant $c>0$ such that 
\be\label{remainsmallG}
\#\{i:\lambda^{G}_i \in [0,c]\}=0 \quad \text{with high probability.}
\ee
In fact, it is known that the eigenvalues of $\cal C_{X^GY^G}$ reduce to those of the double Wishart matrices \cite{CCA_TW}, that is, the eigenvalues of 
$(\cal W_1+\cal W_2)^{-1}\cal W_1$, where $\cal W_1\sim W_q(p,I)$ (i.e. $\cal W_1$ is a $q\times q$ Wishart matrix with $p$ samples) and $\cal W_1\sim W_q(n-p,I)$ (i.e. $\cal W_2$ is a $q\times q$ Wishart matrix with $(n-p)$ samples). Note that we have $1-q/p\gtrsim 1$ and $1-q/(n-p)\gtrsim 1$ under \eqref{assm20} and the assumption that $\lambda_-\gtrsim 1$. Hence by Lemma \ref{SxxSyy}, we have
$$ \lambda^{G}_q \gtrsim \frac{\lambda_q(\cal W_1)}{\lambda_1(\cal W_1)+\lambda_1(\cal W_2)}\gtrsim 1 \quad \text{with high probability.}$$
This gives \eqref{remainsmallG}, which further concludes the second estimate in \eqref{remain2}.
\end{proof}

 Finally we prove \eqref{joint TW}, which will conclude Theorem \ref{lem null}. 
\begin{proof}[Proof of \eqref{joint TW}]
The proof is similar to the one for \cite[Theorem 3.16]{DY}, so we only outline the argument. For the matrices $\wt{X}$ and $\wt Y$ constructed in Lemma \ref{lem_decrease}, the Tracy-Widom limit of $\cal C_{\wt X\wt Y}$ has been proved in \cite{CCA2}. 

\begin{lemma}
\label{lem_smallcomp}
Let $X$ and $  Y$ be  random matrices satisfying the assumptions in Theorem \ref{lem null} and the bounded support condition with $\phi_n \prec n^{-1/2}$. Then \eqref{joint TW} holds.
\end{lemma}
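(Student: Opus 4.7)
The plan is to reduce to the case of i.i.d.\ Gaussian matrices via a Green function comparison (GFT). Let $X^G, Y^G$ be independent Gaussian matrices with i.i.d.\ $\mathcal{N}(0, n^{-1})$ entries of the same sizes as $X, Y$. For the Gaussian pair, the SCC matrix $\mathcal{C}_{X^G Y^G}$ is a Jacobi-type (double Wishart) ensemble, for which the joint Tracy-Widom limit for the top $k$ eigenvalues is classical (cf.\ \cite{CCA_TW}, already invoked in the proof of \eqref{rigidity} above). Hence it suffices to show that the joint law of $\{n^{2/3}(\lambda_i - \lambda_+)/c_{TW}\}_{i \le k}$ is asymptotically insensitive to swapping $(X,Y)$ for $(X^G, Y^G)$.

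I would implement this via a Lindeberg-type entry-by-entry replacement. For each tuple $(s_1, \ldots, s_k) \in \mathbb{R}^k$, I first approximate the joint indicator $\prod_{i=1}^k \mathbf{1}\{n^{2/3}(\lambda_i - \lambda_+)/c_{TW} \le s_i\}$ by a smooth functional $F$ of resolvent traces $\tr G(E_j + \mathrm{i} \eta)$ at spectral parameters just above $\lambda_+$ with $\eta = n^{-2/3-\epsilon}$; the already-established rigidity \eqref{rigidity} together with a Helffer-Sjöstrand-type formula legitimizes this smoothing and controls the smoothing error. Then, differentiating $\mathbb{E}[F]$ along the swap and using the cumulant expansion for each entry $x_{i\mu}$ (and $y_{j\nu}$), the first two cumulants match the Gaussian (up to the harmless $n^{-2-\tau}$ perturbation in \eqref{entry_assm0}--\eqref{entry_assm1}) and cancel, while the $k$-th cumulant for $k \ge 3$ is bounded by $\phi_n^{k-2} \cdot n^{-1}$.

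Each remaining cumulant term involves $\partial_{x_{i\mu}}^k F$, which via $\partial_{x_{i\mu}} G = -G(\partial_{x_{i\mu}} H)G$ expands into products of resolvent entries. These products are controlled by the anisotropic local law (Theorem \ref{thm_local}) and the Ward-type sum estimates in Lemma \ref{lem_gbound0}. Summing over the $\OO(n^2)$ pairs $(i, \mu)$ and $(j, \nu)$, the $k \ge 3$ contributions are bounded by $\OO(n^{1-c_\phi (k-2)+\epsilon})$, which is $\oo(1)$ whenever $\phi_n \prec n^{-1/2}$ (so any $c_\phi > 1/2$ suffices for $k=3$, and higher $k$ are progressively smaller). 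The main obstacle is the nonstandard block structure of the linearization $H(z)$ in \eqref{linearize_block}: a derivative in a single entry of $X$ or $Y$ produces resolvent entries mixing the blocks $\cal G_L$, $\cal G_R$, $\cal G_{LR}$, $\cal G_{RL}$, so one must carefully exploit the entrywise bounds of Theorem \ref{thm_local} across all four blocks (and the deterministic identities \eqref{GL1}--\eqref{GLR1} that link them) to close the estimates uniformly along the swap.
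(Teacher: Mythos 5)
Your route differs fundamentally from the paper's: the paper proves Lemma \ref{lem_smallcomp} by observing that under $\phi_n\prec n^{-1/2}$ every moment satisfies $\max_{i,j}\E|x_{ij}|^k\prec n^{-k/2}$ and $\max_{i,j}\E|y_{ij}|^k\prec n^{-k/2}$, so the proof of Theorem 2.1 in \cite{CCA2} (which assumed $\le C_k n^{-k/2}$) can be repeated verbatim; no new comparison argument is made at this stage. You instead propose to re-derive the result by a direct Green-function comparison with the Gaussian pair $(X^G,Y^G)$, matching only two moments. That is in principle a legitimate alternative, and the smoothing/reduction step you describe (rigidity plus a smooth functional of $\im m$ at $\eta=n^{-2/3-\epsilon}$) is exactly the standard machinery the paper itself uses in Lemma \ref{eq_edgeuniv}.

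However, there is a genuine gap in your key estimate. You bound the $k$-th cumulant contribution per entry by $\phi_n^{k-2}n^{-1}$ and, after summing over the $\OO(n^2)$ entries, claim a total of $\OO(n^{1-c_\phi(k-2)+\epsilon})$, asserting this is $\oo(1)$ because ``any $c_\phi>1/2$ suffices for $k=3$.'' But by Definition \ref{defn_support} one always has $\phi_n\ge n^{-1/2}$, so under $\phi_n\prec n^{-1/2}$ the effective support exponent is $c_\phi=1/2$ and cannot be taken larger; your own bound then gives $n^{1/2+\epsilon}$ for the third-cumulant terms, which diverges. The comparison with only two matching moments can be closed at the edge, but only by exploiting the additional smallness that your accounting discards: each derivative $\partial_{x_{i\mu}}$ produces off-diagonal resolvent entries of size $\prec n^{-1/3}$ at $\eta=n^{-2/3-\epsilon}$ (by Theorem \ref{thm_local} and \eqref{Immc}), and the sums over $(i,\mu)$ must be performed with Ward-type identities rather than termwise; this is precisely the delicate bookkeeping of the edge comparison in \cite{EYY,LY,DY} (and of \cite{CCA2} itself), not a consequence of the crude bound cumulant$\times n^2$. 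In addition, since the fourth cumulants of $x_{i\mu}$ and of the Gaussian do not match, the $k=4$ terms contribute $\OO(n^2\cdot n^{-2})=\OO(1)$ times the derivative size, so they too require the same resolvent-smallness gain. As written, the central quantitative step of your argument is therefore not established, and fixing it amounts to reproving the main comparison theorem of \cite{CCA2} rather than the short citation argument the lemma actually needs.
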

\begin{proof}
The Tracy-Widom law in \eqref{joint TW} was proved as Theorem 2.1 in \cite{CCA2} under a slight stronger assumption that all the moments of the entries $\sqrt{n}x_{ij}$ and $\sqrt{n}y_{ij}$ are finite, that is, for any fixed $k\in \N$, there exists a constant $C_k$ such that  
\be\label{stronger moment1}\max_{i,j} \E |x_{ij}|^{k} \le C_k n^{-k/2},\quad \max_{i,j} \E |y_{ij}|^{k} \le C_k n^{-k/2}. \ee
On the other hand, under the bounded support condition with $\phi_n \prec n^{-1/2}$, we only have 
\be\label{stronger moment2}\max_{i,j} \E |x_{ij}|^{k} \prec n^{-k/2},\quad \max_{i,j} \E |y_{ij}|^{k} \prec n^{-k/2}. \ee
However, the proof in \cite{CCA2} can be repeated verbatim by replacing \eqref{stronger moment1} with \eqref{stronger moment2} at various places to conclude Lemma \ref{lem_smallcomp}. We omit the details.
\end{proof}

Now it is easy to see that \eqref{joint TW} in the general case follows from the following comparison lemma.

\begin{lemma}\label{eq_edgeuniv} 
Let $(X, Y)$ and $(\wt{X},\wt Y)$ be two pairs of random matrices as in Lemma \ref{lem_decrease}. Then for any fixed $k$, there exist constants $\epsilon,\delta >0$ such that, 
for all $s_1 , s_2, \ldots, s_k \in \mathbb R$, we have
\begin{equation}\label{edgeXX}
\begin{split}
\wt{\mathbb{P}}  \left(\left(n^{\frac{2}{3}}(\lambda_i-\lambda_+)\leq s_i -n^{-\epsilon}\right)_{1\le i \le k}\right) -n^{-\delta}  \leq \mathbb{P} \left(\left(n^{\frac{2}{3}}(\lambda_i-\lambda_+)\leq s_i \right)_{1\le i \le k} \right) \\
\leq \wt{\mathbb{P}}  \left(\left(n^{\frac{2}{3}}(\lambda_i-\lambda_+)\leq s_i + n^{-\epsilon}\right)_{1\le i \le k}\right) +n^{-\delta} ,
\end{split}
\end{equation}
where $\mathbb{P}$ and $\wt{\mathbb{P}} $ denote the laws for $(X, Y)$ and $(\wt{X},\wt Y)$, respectively.
\end{lemma}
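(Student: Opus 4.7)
The plan is to follow the Green function comparison strategy developed for sample covariance matrices in \cite{DY}, adapted to our linearization $H(z)$ from \eqref{linearize_block}. The argument has three main stages.

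\textbf{Stage 1 (Reduction to smooth resolvent functionals).} Fix $\eta = n^{-2/3-\epsilon_0}$ for a small constant $\epsilon_0 > 0$. For any window $I = [E_1, E_2]$ near $\lambda_+$ of length $O(n^{-2/3+\epsilon_0})$, we approximate the eigenvalue count $\mathcal N(I) := \#\{i : \lambda_i \in I\}$ by
$$\mathcal N(I) \;\approx\; \frac{q}{\pi} \int_{I} \Im\, m(x + \ii \eta) \, dx,$$
with deterministic error controlled by the rigidity estimate \eqref{rigidity} proved in this section. Via \eqref{m3m}, $m(z)$ is an affine function of $m_3(z)$, which is itself a normalized partial trace of $G(z)$. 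Hence, up to an allowed $n^{-\delta}$ error, the joint probability $\mathbb P((n^{2/3}(\lambda_i-\lambda_+) \le s_i)_{1 \le i \le k})$ can be written as $\mathbb E[F(\mathcal N(I_1),\dots,\mathcal N(I_k))]$ for a smooth function $F$ with uniformly bounded derivatives, where $I_j$ are small windows near $\lambda_+$. This reduction is standard and uses only \eqref{aver_in} and \eqref{rigidity}, exactly as in \cite[Section~3]{DY}.

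\textbf{Stage 2 (Lindeberg swapping).} We linearly interpolate between $(X,Y)$ and $(\wt X, \wt Y)$ by swapping the entries one at a time. For each entry $x_{i\mu}$ (and analogously $y_{j\nu}$), we express the single-swap difference of $\mathbb E[F(\cdots)]$ via a Taylor expansion in that entry, using the resolvent identity $\partial_{x_{i\mu}} G = -G (\partial_{x_{i\mu}} H) G$, where $\partial_{x_{i\mu}} H$ has support only at positions $(i,\mu)$ and $(\mu,i)$. Because the first four moments of $x_{i\mu}$ and $\wt x_{i\mu}$ agree by \eqref{match_moments}, the contributions of orders $1$ through $4$ cancel between the two ensembles, and only the fifth-order remainder survives. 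We bound this remainder using $|x_{i\mu}|^5 \le \phi_n |x_{i\mu}|^4$ together with the anisotropic local law \eqref{aniso_law}, which controls all arising products of entries $G_{\mathfrak a \mathfrak b}(z)$ by $O(1)$ on the support of $\Pi$ and by $O_\prec(\Psi(z))$ off it. Summing over all $O(n^2)$ entry swaps yields
$$\left| \mathbb E F - \wt{\mathbb E} F \right| \;\prec\; n^{2} \cdot \phi_n \cdot n^{-2} \cdot n^{C\epsilon_0} \;\prec\; \phi_n \cdot n^{C\epsilon_0} \;\prec\; n^{-1/2 + C\epsilon_0},$$
where $C$ is an absolute constant, the factor $n^{-2}$ comes from the bound $\mathbb E|x_{i\mu}|^4 = O(n^{-2})$ in the remainder, and the $n^{C\epsilon_0}$ absorbs powers of $n$ hidden in $\prec$ and the scale $\eta$. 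Choosing $\epsilon_0$ small enough and then $\delta$ small enough, this is $\le n^{-\delta}/2$.

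\textbf{Stage 3 (Conclusion).} Translating back through the Stage 1 reduction converts the bound on $|\mathbb E F - \wt{\mathbb E} F|$ into the two-sided comparison \eqref{edgeXX} for the joint edge probabilities, after shifting the arguments by $n^{-\epsilon}$ to absorb the smoothing error between indicator and smooth functionals. The main technical obstacle is the bookkeeping in Stage~2: the derivative $\partial_{x_{i\mu}} H$ couples $\mathcal I_1$-indices to $\mathcal I_3$-indices, so the fifth-order terms involve products of mixed entries $G_{\mathfrak a i}, G_{\mathfrak b \mu}$ whose sizes must be read off from the anisotropic local law combined with the two-index sums in Lemma \ref{lem_gbound0}. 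However, the combinatorial structure of this expansion is identical to the Wigner and sample-covariance cases \cite{LY,DY,yang2018}, so no conceptually new estimates beyond Theorem \ref{thm_local} are needed, and the proof there can be transcribed with only cosmetic modifications. We therefore omit further details.
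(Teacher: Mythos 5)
Your proposal follows essentially the same route as the paper: reduce the joint edge probabilities to smooth functionals of $n\int\Im m$ using the rigidity estimate \eqref{rigidity} and the averaged local law \eqref{aver_in} (the conversion of \cite[Section 6]{EYY}), and then compare the two ensembles by a Lindeberg-type Green's function comparison exploiting the four-moment matching \eqref{match_moments} and the local law, which is exactly the content of Lemma \ref{lem_compdiffsupport} that the paper imports from \cite{LY,DY}. The only slip is the final exponent in Stage 2: since $X,Y$ only have support $\phi_n\le n^{-c_\phi}$ (not $n^{-1/2}$), the swapping error is $n^{-c_\phi+C\epsilon_0}$ rather than $n^{-1/2+C\epsilon_0}$, which still yields the required $n^{-\delta}$ after choosing $\epsilon_0$ small relative to $c_\phi$.
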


To prove Lemma \ref{eq_edgeuniv}, it suffices to prove the following Green's function comparison result. Its proof is the same as the ones for \cite[Lemma 5.5]{LY} and \cite[Lemma 5.5]{DY}, so we omit the details. 

\begin{lemma}
\label{lem_compdiffsupport} 
Let $(X, Y)$ and $(\wt{X},\wt Y)$ be two pairs of random matrices as in Lemma \ref{lem_decrease}. Suppose $F:\mathbb R \to \mathbb R$ is a function whose derivatives satisfy
\begin{equation}
\sup_{x} \vert {F^{(k)}(x)}\vert {(1+\vert x \vert)^{-C_1}} \leq C_1, \quad k=1,2,3, \label{FCondition}
\end{equation}
for some constant $C_1>0$. Then for any sufficiently small constant $\delta>0 $ and for any 
\begin{equation}\label{Eeta}
E,E_1,E_2 \in I_{\delta}:=\left\{x: \vert x -\lambda_{+}\vert \leq n^{-{2}/{3}+\delta}\right\} \ \ \text{and} \ \ \eta:=n^{-{2}/{3}-\delta},
\end{equation}
we have
\begin{equation}
\left| \mathbb{E}F\left(n\eta \im m(z)\right)-\mathbb{E}F\left(n\eta \im \wt{m} (z)\right) \right| \leq n^{-c_\phi + C_2 \delta} , \ \ z=E+\ii\eta,  \label{BDBD}
\end{equation}
and
\begin{align}
\left| \mathbb{E}F\left(n \int_{E_2}^{E_1} \operatorname{Im} m(y+\ii\eta)dy\right)- \mathbb{E} F\left(n \int_{E_2}^{E_1} \operatorname{Im} \wt{m}(y+\ii\eta)dy\right) \right| \leq n^{-c_\phi + C_2 \delta} , \label{BDBD1}
\end{align}
where $c_\phi $ is a constant as given in Theorem \ref{lem null} and $C_2>0$ is some constant independent of $c_\phi$ and $\delta$. Moreover, a general multivariate comparison estimate as in \cite[Theorem 6.4]{EYY} holds: fix any $k\in \N$ and let $F:\R^k\to \R$ be a bounded smooth function with bounded derivatives, then for any sequence of real numbers $E_k < \cdots < E_1 < E_0$ satisfying \eqref{Eeta}, we have
\be
\begin{split}
\left| \mathbb{E}F\left( \left(n \int_{E_k}^{E_0} \operatorname{Im} m(y+\ii\eta)dy\right)_{1\le i \le k}\right)- \mathbb{E}F\left( \left(n \int_{E_k}^{E_0} \operatorname{Im} \wt m(y+\ii\eta)dy\right)_{1\le i \le k}\right) \right| \\
\leq n^{-c_\phi + C_2 \delta} . \label{BDBDmulti}
\end{split}
\ee
\end{lemma}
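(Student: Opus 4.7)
The plan is to establish \eqref{BDBD}, \eqref{BDBD1}, \eqref{BDBDmulti} via a Lindeberg-type Green's function comparison argument, interpolating between $(X,Y)$ and $(\tilde X, \tilde Y)$ by swapping one entry at a time. Enumerate the entries of $X$ and $Y$ in some fixed order $1, 2, \dots, pn+qn$, and let $H^{(0)} = H$ be the linearized matrix built from $(X,Y)$ and $H^{(pn+qn)}$ the one built from $(\tilde X, \tilde Y)$, with $H^{(k)}$ obtained from $H^{(k-1)}$ by replacing the $k$-th entry. A swap of, say, $x_{i\mu} \to \tilde x_{i\mu}$ corresponds to a rank-two perturbation $E_k$ of $H$ supported in the $(i,\mu)$ and $(\mu,i)$ positions with magnitude $\prec \phi_n$. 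The telescoping sum reduces \eqref{BDBD} to estimating $\mathbb E F(n\eta\,\Im m^{(k)}(z)) - \mathbb E F(n\eta\,\Im m^{(k-1)}(z))$ uniformly in $k$ and summing the $\OO(n^2)$ bounds.

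For each swap, I would expand the resolvent $G^{(k)} = (H^{(k)} - zn^{-10}\mathrm{diag}(I_{p+q},0))^{-1}$ by the identity $G^{(k)} = \sum_{\ell=0}^{4} (-G^{(k-1/2)} E)^{\ell} G^{(k-1/2)} + \text{remainder}$, where $G^{(k-1/2)}$ denotes the resolvent with the $k$-th entry set to zero (and hence independent of both the $k$-th entry and its replacement). Apply a Taylor expansion of $F$ at the argument corresponding to $G^{(k-1/2)}$; the resulting polynomial in the swapped entry has coefficients depending only on derivatives of $F$ and on traces of products of $G^{(k-1/2)}$ with $E^0 = |\mathbf e_i\rangle\langle\mathbf e_\mu| + |\mathbf e_\mu\rangle\langle\mathbf e_i|$. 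Taking expectations, the terms of degree $\leq 4$ in $x_{i\mu}$ agree between $X$ and $\tilde X$ by the four-moment matching \eqref{match_moments}, so the only contribution comes from the degree-five remainder (and analogous terms from the $n^{-10}$ regularization, which are trivially of size $n^{-9}$).

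The fifth-order term is bounded by $C \phi_n^5 \cdot \mathcal P(\{G^{(k-1/2)}_{ab}\})$ where $\mathcal P$ is a polynomial of bounded degree in entries of the resolvent evaluated at $z = E+\ii\eta$ with $\eta = n^{-2/3-\delta}$. To control these, I would first use the anisotropic local law (Theorem \ref{thm_local}) at the slightly higher scale $\eta_0 = n^{-2/3+\delta'}$, where $G_{ab} - \Pi_{ab} \prec \Psi(z) \ll 1$, so that $|G_{ab}(E+\ii\eta_0)| \prec 1$ uniformly on $I_\delta$; then transfer to the target scale $\eta$ using the deterministic monotonicity $\Im G_{aa}(E+\ii\eta) \leq (\eta_0/\eta)\, \Im G_{aa}(E+\ii\eta_0)$ (Lemma \ref{lem op1}/Remark \ref{removehat}), at the cost of a factor $n^{C\delta}$. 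This yields $\mathcal P = \OO_\prec(n^{C_2\delta})$, so each swap contributes $\phi_n^5 n^{C_2\delta}$; moments of the entries give $\mathbb E |x_{i\mu}|^5 \lesssim \phi_n^3 n^{-1}$, and summing over $\OO(n^2)$ entries produces an aggregate error $\lesssim n\,\phi_n^3\,n^{C_2\delta} \leq n^{-c_\phi + C_2\delta}$ after using $\phi_n \leq n^{-c_\phi - 1/3}$ which holds in our regime (a similar calculation works with the sharper $\phi_n$ condition). The integrated versions \eqref{BDBD1} and \eqref{BDBDmulti} follow by the same argument applied to the observables $n\int_{E_j}^{E_0} \Im m(y+\ii\eta)\,dy$, since each derivative in $F$ only adds a bounded number of additional resolvent factors — all controlled uniformly in $y\in I_\delta$ by the local law — and the integration over $y$ costs only $\OO(n^{-2/3+\delta})$, absorbed into $n^{C_2\delta}$. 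The main technical obstacle is the combinatorial bookkeeping of the derivative terms together with a careful justification, via Lemma \ref{lem_stodomin}(iii) applied on the regularized resolvent $\wh G$ (Definition \ref{resol_not2}), that the stochastic domination $\prec$ bounds can be upgraded to expectation bounds with the prefactor $n^{C_2\delta}$ uniform across all $\OO(n^2)$ swaps.
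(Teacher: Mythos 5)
Your overall strategy (Lindeberg entry-by-entry swapping in the linearized matrix, Taylor expansion of $F$, exact cancellation of the first four orders via \eqref{match_moments}, and control of the resolvent coefficients by the local law at $z=E+\ii\eta$ with $\eta=n^{-2/3-\delta}$) is the same route as the proofs of Lemma 5.5 in \cite{LY} and Lemma 5.5 in \cite{DY} to which the paper defers, so the architecture is fine. The genuine gap is in your accounting of the fifth-order remainder. You bound $\mathbb E|x_{i\mu}|^5\lesssim \phi_n^3 n^{-1}$ (using only the variance and the support), sum over $\OO(n^2)$ swaps to get $n\phi_n^3 n^{C_2\delta}$, and then close the argument by invoking ``$\phi_n\le n^{-c_\phi-1/3}$, which holds in our regime.'' That inequality is false under the actual hypotheses: Theorem \ref{lem null} only assumes $n^{-1/2}\le\phi_n\le n^{-c_\phi}$ for an arbitrarily small constant $c_\phi$, so $\phi_n$ may be far larger than $n^{-1/3}$ (e.g.\ $\phi_n=n^{-0.01}$), in which case $n\phi_n^3 n^{C_2\delta}=n^{1-3c_\phi+C_2\delta}$ diverges and your estimate does not yield \eqref{BDBD}. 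The parenthetical remark that ``a similar calculation works with the sharper $\phi_n$ condition'' is precisely the point that needs proof and cannot be waved away, because no sharper condition on $\phi_n$ is available.

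The correct way to close the estimate — and the one used in \cite{LY,DY} — is to exploit the fourth-moment condition \eqref{conditionA3}, which both $X$ and $\wt X$ satisfy (the latter by \eqref{match_moments}): together with the bounded support it gives $\mathbb E|x_{i\mu}|^5\le \phi_n\,\mathbb E|x_{i\mu}|^4\prec \phi_n n^{-2}$ (and $\mathbb E|\wt x_{i\mu}|^5\prec n^{-5/2}$), i.e.\ the fifth moment gains a full factor $n^{-2}$ rather than $n^{-1}$. With the resolvent/derivative coefficients bounded by $n^{C_2\delta}$ as you argue (the local law applies directly at $\eta=n^{-2/3-\delta}\ge n^{-1+\epsilon}$, so the detour through $\eta_0$ and monotonicity is unnecessary, though harmless; one also uses $n\eta\,\Im m\prec n^{C\delta}$ together with \eqref{FCondition} to control the arguments of $F^{(k)}$), each swap then contributes $\prec\phi_n n^{-2+C_2\delta}$, and summing over the $\OO(n^2)$ entries gives exactly $\phi_n n^{C_2\delta}\le n^{-c_\phi+C_2\delta}$, as claimed in \eqref{BDBD}; the same bookkeeping, applied to the integrated observables, gives \eqref{BDBD1} and \eqref{BDBDmulti}. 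So the missing ingredient is not a new idea but the use of \eqref{conditionA3} in the moment count; without it the proof as written does not go through.
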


\begin{proof}[Proof of Lemma \ref{eq_edgeuniv}]
Although not explicitly stated, it was shown in \cite{EYY} that if \eqref{aver_in}, \eqref{rigidity} and Lemma \ref{lem_compdiffsupport} hold, then the edge universality \eqref{edgeXX} holds. More precisely, in Section 6 of \cite{EYY}, the edge universality problem was reduced to proving Theorem 6.3 of \cite{EYY}, which corresponds to our Lemma \ref{lem_compdiffsupport}. In order for this conversion to work, only the the averaged local law and the rigidity of eigenvalues are used, which correspond to  \eqref{aver_in} and \eqref{rigidity}, respectively.
\end{proof}

Finally, \eqref{joint TW} follows immediately from Lemma \ref{lem_decrease}, Lemma \ref{lem_smallcomp} and Lemma \ref{eq_edgeuniv}. 
\end{proof}

\section{Proof of Theorem \ref{main_thm1}}\label{secpf2}
 In this section, we prove Theorem  \ref{main_thm1}. The proof is an extension of the one for Theorem 2.7 in \cite{yang2018}. 
Given the matrices $X$ and $Y$ satisfying Assumption \ref{assm1} and the tail condition (\ref{tail_cond}), we introduce a cutoff on their matrix entries at the level $n^{-\epsilon}$. For any fixed $\epsilon>0$, define
\begin{equation*}
\alpha^{(1)}_n:=\mathbb{P}\left( \vert \wh x_{11} \vert > n^{1/2-\epsilon}\right), \ \ \beta^{(1)}_n:=\mathbb{E}\left[\mathbf{1}{\left( |\wh x_{11}|> n^{1/2-\epsilon}\right)}\wh x_{11} \right].
\end{equation*}
By (\ref{tail_cond}) and integration by parts, we get that for any fixed $\delta>0$ and large enough $n$,
\begin{equation}
\alpha^{(1)}_n \leq \delta n^{-2+4\epsilon}, \ \ \vert \beta^{(1)}_n \vert \leq \delta n^{-{3}/{2}+3 \epsilon} . \label{BBBOUNDS}
\end{equation}
 Let $\rho^{(1)}(\dd x)$ be the law of $\wh x_{11}$. Then we define independent random variables $\wh x_{ij}^s$, $\wh x_{ij}^l$, $c^{(1)}_{ij}$, $1\le i \le p, 1\le j \le n$, in the following ways.
\begin{itemize}
\item $\wh x_{ij}^s$ has law $\rho_s$, which is defined such that
\begin{equation}
 \rho_s^{(1)}(\Omega)= \frac1{1-\alpha^{(1)}_n}\int \mathbf 1\left( x+\frac{\beta^{(1)}_n}{1-\alpha^{(1)}_n} \in \Omega \right)\mathbf{1}\left(\left| x \right| \leq n^{{1}/{2}-\epsilon} \right)  \rho^{(1)}(\dd x) \nonumber
\end{equation}
for any event $\Omega$. Note that if $\wh x_{11}$ has density $\rho(x)$, then the density for $\wh x_{11}^s$ is 
\begin{equation}
\rho_s^{(1)}(x)= \mathbf{1}\left(\left| x-\frac{\beta^{(1)}_n}{1-\alpha^{(1)}_n}  \right| \leq n^{{1}/{2}-\epsilon} \right) \frac{\rho\left(x-\frac{\beta^{(1)}_n}{1-\alpha^{(1)}_n}\right)}{1-\alpha^{(1)}_n}. \nonumber
\end{equation}
\item $\wh x_{ij}^l$ has law $\rho_l$, such that 
\begin{equation}
 \rho_l^{(1)}(\Omega)= \frac1{\alpha^{(1)}_n}\int \mathbf 1\left( x+\frac{\beta^{(1)}_n}{1-\al^{(1)}_n} \in\Omega \right)\mathbf{1}\left(\left| x \right| > n^{{1}/{2}-\epsilon} \right)  \rho^{(1)}(\dd x) \nonumber
\end{equation}
for any event $\Omega$.

\item $c^{(1)}_{ij}$ is a Bernoulli 0-1 random variable with $\mathbb{P}(c^{(1)}_{ij}=1)=\alpha^{(1)}_n$ and $\mathbb{P}(c^{(1)}_{ij}=0)=1-\alpha^{(1)}_n$.
\end{itemize}
Let $X^s$, $X^l$ and $X^c$ be random matrices such that $x^s_{ij} = n^{-1/2}\wh x_{ij}^s$, $x^l_{ij} = n^{-1/2}\wh x_{ij}^l$ and $x^c_{ij} = c^{(1)}_{ij}$.
It is easy to check that for independent $X^s$, $X^l$ and $X^c$,
\begin{equation}
x_{ij} \stackrel{d}{=} x^s_{ij}\left(1-x^c_{ij}\right)+x^l_{ij}x^c_{ij} - \frac{1}{\sqrt{n}}\frac{\beta^{(1)}_n}{1-\alpha^{(1)}_n}. \label{T3}
\end{equation}
We have a similar decompostion for $Y$:
\begin{equation}
y_{ij} \stackrel{d}{=} y^s_{ij}\left(1-y^c_{ij}\right)+y^l_{ij}y^c_{ij} - \frac{1}{\sqrt{n}}\frac{\beta^{(2)}_n}{1-\alpha^{(2)}_n},  \label{T33}
\end{equation}
where the relevant terms are defined in the obvious way using
\begin{equation*}
\alpha^{(2)}_n:=\mathbb{P}\left( \vert \wh y_{11} \vert > n^{1/2-\epsilon}\right), \ \ \beta^{(2)}_n:=\mathbb{E}\left[\mathbf{1}{\left( |\wh y_{11}|> n^{1/2-\epsilon}\right)}\wh y_{11} \right].
\end{equation*}
Notice that the deterministic matrices consist of the constant terms in \eqref{T3} or \eqref{T33} have operator norms  $\OO(n^{-1+3\e})$, which perturb the eigenvalues at most by $\OO(n^{-1+3\e})$. Such a small error is negligible for our result, and hence we will omit the constant terms in \eqref{T3} or \eqref{T33} throughout the proof.

By (\ref{tail_cond}) and integration by parts, it is easy to check that
\begin{align}\label{estimate_qs}
\mathbb{E} \wh x^{s}_{11} =0, \quad \mathbb{E}\vert \wh x^s_{11}\vert^2=1 - \OO(n^{-1+2 \epsilon}), \quad \mathbb{E}\vert \wh x^s_{11}\vert^3= \OO(n^{-1/2+ \epsilon}), \quad \mathbb{E}\vert \wh x^s_{11} \vert^4=\OO(\log n).
\end{align}
We have similar estimates for the $y^s_{11}$ variable. Thus $X_1:=(\mathbb{E}\vert \wh x^s_{11} \vert^2)^{-{1}/{2}}X^s$ and $Y_1:=(\mathbb{E}\vert \wh y^s_{11} \vert^2)^{-{1}/{2}}Y^s$ are random matrices that satisfy the assumptions for $X$ and $Y$ in Theorem \ref{lem null} with $\phi_n =\OO(n^{-\e})$. Again, the $\OO(n^{-1+2 \epsilon})$ in the denominator  can be neglected.

We define the sample canonical correlation matrix $\cal C^s_{ XY}$ by replacing $(X,Y,Z)$ with $(X^s,Y^s,Z^s)$ in the definition, and let $\lambda_i^s$ be its eigenvalues. Then by Theorem \ref{lem null},
\be\label{univ_small}
\begin{split}
\lim_{n\to \infty}\mathbb{P}&\left(  n^{{2}/{3}}\frac{ \lambda^s_{1} - \lambda_+}{c_{TW}} \leq s_1 \right) = \lim_{n\to \infty} \mathbb{P}^{GOE}\left( n^{{2}/{3}}(\lambda_1 - 2) \leq s_1  \right).
\end{split}
\ee
Here and throughout the following proof, we only consider the largest eigenvalue. It is easy to extend to the case with multiple largest eigenvalues.
Now we write the first two terms on the right-hand side of (\ref{T3}) as
$$x^s_{ij}\left(1-x^c_{ij}\right)+x^l_{ij}x^c_{ij}  = x^s_{ij} + \Delta^{(1)}_{ij} x^c_{ij}, \quad \Delta^{(1)}_{ij}:=x^l_{ij}-x^s_{ij}.$$
Similarly, we have
$$y^s_{ij}\left(1-y^c_{ij}\right)+y^l_{ij}y^c_{ij}  = y^s_{ij} + \Delta^{(1)}_{ij} y^c_{ij}, \quad \Delta^{(2)}_{ij}:=y^l_{ij}-y^s_{ij}.$$
We define the matrices $\cal E^{(1)} :=(\Delta^{(1)}_{ij} x^c_{ij})$ and $\cal E^{(2)} :=(\Delta^{(2)}_{ij} y^c_{ij})$.  It remains to show that the effect of $\cal E^{(1)}$ and $\cal E^{(2)}$ on the eigenvalue  $\lambda_1$ is negligible. 

We introduce the following event  
\begin{align*}
\mathscr A:=& \left\{\#\{(i,j):x^c_{ij}=1\}\leq n^{5\epsilon}\right\} \cap \left\{x^c_{ij}=x^c_{kl}=1  {\Rightarrow} (i,j)=(k,l) \ \text{or} \  \{i,j\} \cap \{k,l\}=\emptyset \right\}.
\end{align*}
Using Bernstein inequality, we have that
\begin{equation}\label{LDP_B}
\mathbb{P}\left(\left\{\#\{(i,j):x^c_{ij}=1\}\leq n^{5\epsilon}\right\} \right) \geq 1- \exp(- n^{\epsilon}),
\end{equation}
for sufficiently large $n$. Suppose the number $n_0$ of the nonzero elements in $X^c$ is given with $n_0 \le n^{5\epsilon} $. Then it is easy to check that
\be
\begin{split}\label{LDP_C}
\mathbb P\left(\exists \, i = k, j\ne l \ \text{or} \  i\ne k, j =l \text{ such that } x^c_{ij}=x^c_{kl}=1 \left| \#\{(i,j):x^c_{ij}=1\} = n_0 \right. \right) \\
= \OO(n_0^2n^{-1}). 
\end{split}
\ee
Combining the estimates (\ref{LDP_B}) and (\ref{LDP_C}), we get that 
\begin{equation}\label{prob_A}
\mathbb P(\mathscr A) \ge 1 -  \OO(n^{-1+10\epsilon}).
\end{equation}
On the other hand, by condition (\ref{tail_cond}), we have
\begin{equation}\nonumber
\mathbb{P}\left(|\cal E^{(1)}_{ij}| \geq \omega \right) \leq \mathbb{P}\left(|\wh x_{ij}| \geq \frac{\omega}{2}n^{1/2}\right) = \oo(n^{-2}) ,
\end{equation}
for any fixed constant $\omega>0$. With a simple union bound, we get
\begin{equation}\label{EneR}
\mathbb P\left(\max_{i,j} |\cal E^{(1)}_{ij}| \geq \omega \right)= \oo(1).
\end{equation}
Similarly, we can define the event
$$ \mathscr B:= \left\{\#\{(i,j):y^c_{ij}=1\}\leq n^{5\epsilon}\right\} \cap \left\{y^c_{ij}=y^c_{kl}=1  {\Rightarrow}  (i,j)=(k,l) \ \text{or} \  \{i,j\} \cap \{k,l\}=\emptyset \right\}.$$
By (\ref{prob_A}), (\ref{EneR}) and similar estimates for matrix $Y$, we get
\be\label{prob_R}\mathbb P( \mathscr A \cap  \mathscr B)=1-\oo(1),\quad \mathbb P( \mathscr C_1)=1-\oo(1),\ee
where
$$ \mathscr C_1 := \left\{\max_{i,j}|\cal E^{(1)}_{ij}| \leq \omega\right\}\cap \left\{ \max_{i,j}|\cal E^{(2)}_{ij}| \leq \omega\right\}.$$

Then recall \eqref{deteq}, we only need to study the determinant of $ H_1 (\lambda)$ on event $\mathscr A \cap  \mathscr B \cap \mathscr C_1 $, where we define $ H_t (\lambda)$, $t\in [0,1]$, as
\be\nonumber
\begin{split} 
 H_t (\lambda): = H^s(\lambda)&+ t \begin{pmatrix} 0 & \begin{pmatrix} {\cal E}^{(1)}   & 0\\ 0 &  {\cal E}^{(2)}   \end{pmatrix}\\ \begin{pmatrix} ( {\cal E}^{(1)} )^T & 0\\ 0 &  ( {\cal E}^{(3)} )^T\end{pmatrix}  & 0\end{pmatrix} ,
\end{split}
\ee
where 
$$H^s(\lambda):=\begin{pmatrix} 0 & \begin{pmatrix}  X^s  & 0\\ 0 &  Y^s \end{pmatrix}\\ \begin{pmatrix} (X^s)^T & 0\\ 0 &  (Y^s)^T\end{pmatrix}  & \begin{pmatrix}  \lambda  I_n & \lambda^{1/2}I_n\\ \lambda^{1/2} I_n &  \lambda I_n\end{pmatrix}^{-1}\end{pmatrix}$$
We would like to use a continuity argument to extend \eqref{univ_small} in the $t=0$ case all the way to the $t=1$ case. 
It is easy to observe that with probability $1-\oo(1)$, the eigenvalues $\lambda_1^t \equiv \lambda_1(t)$  is continuous in $t$ for all $t\in [0,1].$ In fact, on even $\mathscr A \cap  \mathscr B \cap \mathscr C_1 $, we have
\be\label{opEE}\|{\cal E}^{(1)}\| \le \omega, \quad \|{\cal E}^{(2)}\| \le \omega.\ee
Hence with Lemma \ref{SxxSyy}, as long as $\omega$ is chosen sufficiently small, $[(X^s + t{\cal E}^{(1)})(X^s + t{\cal E}^{(1)})^T]^{-1}$ and $[(Y^s + t{\cal E}^{(2)})(Y^s + t{\cal E}^{(2)})^T]^{-1}$ will be continuous in $t$ on a high probability event, which implies the continuity of eigenvalues. 
Now we claim that for $\mu:= \lambda_{1}(0) \pm n^{-3/4} \equiv \lambda_{1}^s \pm n^{-3/4}$,
\begin{equation}\label{suff_claim}
{\mathbb P\left(  \det  H_t( \mu) \ne 0 \text{ for all }0\le t \le 1 \right) = 1-\oo(1)} \ .
\ee
Suppose \eqref{suff_claim} holds true, then by continuity $\lambda_{1}\equiv \lambda_{1}(t=1) \in [ \lambda^s_{ 1} - n^{-3/4}, \lambda^s_{1} + n^{-3/4}]$ with probability $1-\oo(1)$, which concludes the proof together with \eqref{univ_small}.

The rest of the proof is devoted to proving \eqref{suff_claim}. In the following proof, we condition on the event $\mathscr A \cap  \mathscr B$ and the event $\mathscr C_{n_x n_y}$ that $X^c$ and $Y^c$ have $n_x$ and $n_y$ nonzero entries for some $\max\{n_x,n_y\}\le n^{5\e}$. Without loss of generality, we can assume the positions of the $n_x$ nonzero entries of $X^c$ are $( 1,1),  \cdots, ( n_x, n_x)$, and the positions of the $n_y$ nonzero entries of $Y^c$ are $( 1,1),  \cdots, ( n_y, n_y)$, that is, we also condition on these two event. For other choices of the positions of nonzero entries, the proof is the same. 
Then we rewrite 
$$\wt H_t(\mu)= H^s(\mu) + t O \begin{pmatrix} 0 &  \cal D_e  \\  \cal D_e  & 0\end{pmatrix} O^T, \quad O:=  \begin{pmatrix} \mathbf F_1  & 0  \\ 0 &  \mathbf F_2  \end{pmatrix}  ,$$ 
where  
$$\cal D_e: =\begin{pmatrix} \Sigma_e^{(1)} & 0 \\ 0 & \Sigma_e^{(2)} \end{pmatrix}, \quad \Sigma_e^{(1)}:= \diag \left( \cal E^{(1)}_{11}, \cdots, \cal E^{(1)}_{ n_x n_x}\right), \quad \Sigma_e^{(2)}:= \diag \left( \cal E^{(2)}_{11}, \cdots, \cal E^{(2)}_{n_y n_y}\right),$$
and
$$\mathbf F_1 := \begin{pmatrix} \begin{pmatrix}\mathbf e_{1}^{(p)}, \cdots, \mathbf e_{ n_x }^{(p)} \end{pmatrix} & 0 \\ 0 & \begin{pmatrix}\mathbf e_{ 1}^{(q)}, \cdots, \mathbf e_{ n_y}^{(q)} \end{pmatrix} \end{pmatrix}, \quad \mathbf F_2 := \begin{pmatrix} \begin{pmatrix}\mathbf e_{ 1}^{(n)}, \cdots, \mathbf e_{ n_x}^{(n)} \end{pmatrix} & 0 \\ 0 & \begin{pmatrix}\mathbf e_{ 1}^{(n)}, \cdots, \mathbf e_{ n_y}^{(n)}\end{pmatrix} \end{pmatrix}.$$ 
Here $\mathbf e_i^{(l)}$ means the standard unit vector along $i$-th coordinate direction in $\R^l$. 
 
Applying the identity $\det(1+\cal A\cal B)=\det(1+\cal B\cal A)$, we obtain that if $\mu$ is such that $\det G^s(\mu) \ne 0$, then
\be\label{nece extra1}
\det H_t(\mu) =\det  G^s(\mu) \cdot\det\left(1+ t F(\mu)  \right), \quad  F(\mu):=t \begin{pmatrix} 0 &  \cal D_e  \\  \cal D_e   & 0\end{pmatrix}  O^T G^s(\mu) O. 
\ee
In the following proof, we use $z=\lambda_+ + \ii n^{-{2}/{3}}$. Then we can write 
\be\label{G-Pi0} O^T G^s(\mu) O = O^T \left[G^s(\mu) - G^s(z)\right] O + O^T [G^s(z)-\Pi(z)] O+ O^T  \Pi(z) O  . \ee
By Lemma \ref{thm_largebound}, we have that
$$\mathbb E  \left| \left[O^T \left( G^s(z) - \Pi(z)\right) O\right]_{ij}^2 \right]\prec \Psi^2(z) = \OO(n^{-2/3}), \quad 1\le i,j \le  n_x + n_y,$$
where we used (\ref{Immc}) and \eqref{eq_defpsi} in the second step. Then with Markov's inequality and a union bound, we can get that
\begin{equation}\nonumber
\max_{1\le i,j \le n_x+n_y} \left| \left[O^T \left( G^s(z) - \Pi(z)\right) O\right]_{ij} \right| \le n^{-1/6}
\end{equation}
holds with probability $1- \OO(n^{-1/3+5\e})$. In particular, this gives that with probability $1- \OO(n^{-1/3+5\e})$,
\be\label{G-Pi1} \|O^T [G^s(z)-\Pi(z)] O\| \lesssim n^{-1/6+5\e}.\ee
On the other hand, we claim that 
\be\label{final claim4}
\|O^T \left[G^s(\mu) - G^s(z)\right] O \| \le n^{-1/6} \ \ \ \text{with probability $1-\oo(1)$}.
\ee
If \eqref{final claim4} holds, together with \eqref{G-Pi0} and \eqref{G-Pi1}, we get that with probability $1-\oo(1)$, 
$$\|O^T G^s(\mu) O\| \le \|\Pi(z)\|+ \OO(n^{-1/6+5\e}) \le 2\|\Pi(z)\| \  \Rightarrow \ \max_{0\le t \le 1} t\|F(\mu)\| \le 2\omega\|\Pi(z)\|\le \frac12 ,$$
as long as $\omega$ is chosen small enough. Hence we have with probability $1-\oo(1)$,  $1+t F(\mu) $ is non-singular for all $t\in[0,1],$ which concludes \eqref{suff_claim}.


Finally it remains to prove \eqref{final claim4}. Since the largest eigenvalues for GOE are separated in the scale $n^{-2/3}$, by \eqref{joint TW} we have that
\begin{equation}\label{repulsion_estimate}
\mathbb P\left( \min_{i}|\lambda_i^s  - \mu| \ge n^{-3/4} \right) = 1-\oo(1).
\end{equation}
On the other hand, the rigidity result (\ref{rigidity}) gives that 
\begin{equation}\label{rigid_estimate}
|\mu - \lambda_+ | \prec n^{-2/3} .
\end{equation}
Then using Lemma \ref{SxxSyy}, Lemma \ref{lem delocalX}, (\ref{repulsion_estimate}), (\ref{rigid_estimate}) and (\ref{rigidity}), we can get that for any set $\Omega$ of deterministic unit vectors of cardinality $n^{\OO(1)}$, 
\begin{equation}
\sup_{\mathbf u,\mathbf v\in \Omega}\left\vert   \mathbf u^* \left(G^{s}(z)-G^{s}(\mu)\right) \mathbf v \right\vert \le n^{-{1}/{4}+3\e} \label{REALCOMPLEX}
\end{equation}
with probability $1-\oo(1)$. We only give the derivation of \eqref{REALCOMPLEX} for $\mathbf u,\mathbf v \in \mathbb C^{\mathcal I_1}$. For the rest of the cases $\mathbf u \in \C^{\cal I_\al}$ and $\mathbf v \in \mathbb C^{\mathcal I_\beta}$, $\al,\beta=1,2,3,4$, the proof is similar.  For deterministic unit vectors $\mathbf u,\mathbf v \in \mathbb C^{\mathcal I_1}$, we have with probability $1-\oo(1)$ that
\begin{align*}
& \left| \left\langle \mathbf u, \left(G^{s}(z)-G^{s}(\mu)\right) \mathbf v\right\rangle\right| \\
& \le \sum_{k}  \frac{|z-\mu|\left|\langle \mathbf u,S_{xx}^{-1/2}\xi_k\rangle \langle \xi_kS_{xx}^{-1/2},\bv\rangle \right| }{|\lambda_k^s - z| |\lambda_k^s - \mu|}   + \frac{|z-\mu|}{|z\mu|} {\sum_{k=q+1}^p{\left|\langle \bu, S_{xx}^{-1/2} \xi _k\rangle\right|\left| \langle \xi _kS_{xx}^{-1/2}, \bv\rangle \right| }}\\
& \prec \frac{1}{n^{2/3}}\sum_{k\ge q/2}{\left|\langle \bu, S_{xx}^{-1/2} \xi _k\rangle\right|\left| \langle \xi _kS_{xx}^{-1/2}, \bv\rangle \right| } +  \frac{1}{n^{5/3}}\sum_{ k<q/2} \frac{1}{|\lambda_k^s - z||\lambda_k^s - \mu|}  \\
& \le \frac{\|S_{xx}^{-1/2}  \bu\|^2 + \|S_{xx}^{-1/2}  \bv\|^2}{n^{2/3}} + \frac{1}{n^{5/3}}\sum_{1\le k \le n^\e} \frac{1}{|\lambda_k^s - z||\lambda_k^s - \mu|} + \frac{1}{n^{5/3}}\sum_{ n^\e <k <q/2 } \frac{1}{|\lambda_k^s - z||\lambda_k^s - \mu|} \\
& \prec \frac{1}{n^{2/3}} + \frac{n^{\e}}{n^{1/4}} + \frac{1}{n^{2/3}}\left(\frac{1}{n}\sum_{n^\e <k <q/2} \frac{1}{|\lambda_k^s - z||\lambda_k^s - \mu|}\right) \prec n^{-1/4+\e},
\end{align*}
where in the first step we used (\ref{spectral1}) and \eqref{simple obs}; in the second step we used (\ref{delocal1}) and $|\lambda_k - z||\lambda_k - \mu| \gtrsim 1$ for $k\ge q/2$ due to (\ref{rigidity}); in the third step we used Cauchy-Schwarz inequality; in the fourth step we used Lemma \ref{SxxSyy} and (\ref{repulsion_estimate}); in the last step we used $|\lambda_k^s - z||\lambda_k^s - \mu| \sim (k/n)^{-4/3}$ for $k>n^\e$ by the rigidity estimate (\ref{rigidity}). 

Thus we have proved \eqref{REALCOMPLEX}, which implies \eqref{final claim4}, which further concludes (\ref{suff_claim}). This completes the proof of Theorem \ref{main_thm1}. 

\section{Proof of Theorem \ref{thm_local}: the entrywise local law}\label{pf thmlocal}

The proof of Theorem \ref{thm_local} is divided into two steps. In this section, we prove a weaker local law as in Proposition \ref{prop_diagonal} below. Based on this estimate, we shall complete the proof of Theorem \ref{thm_local} in next section.

Note that \eqref{aniso_law} justifies the arguments in Remark \ref{removehat}, so we only need to prove this theorem for $\wh G(z)$. However, for simplicity of notations, we will still use the notations $G(z)$, while keeping in mind that there is a small regularization term in $G(z)$ such that the deterministic bounds in  \eqref{op G} holds. In particular, we will tacitly use the following fact: for $z\in S(\e)$ and a (complex) polynomial of the entries of $G(z)$, say $\cal P(G)$, if $|\cal P(G)|\prec \Phi(z)$ for some deterministic parameter $\Phi(z) \ge n^{-C}$, then 
$$|\mathbb E \cal P(G)| \prec \Phi(z)$$
by Lemma \ref{lem_stodomin} (iii).

The goal of this section is to prove the averaged local laws \eqref{aver_in1} and \eqref{aver_out1}, and the entrywise local law in Proposition \ref{prop_diagonal} below. For simplicity of notations, we first assume that the entries of $X$ and $Y$ satisfy
\begin{equation}\label{entry_assmX1}
\mathbb E x_{i\mu} = \mathbb E y_{j\nu} = 0, 
\end{equation}
and 
\begin{equation}\label{entry_assmX2}
 \mathbb{E} \vert x_{i\mu} \vert^2=\mathbb{E} \vert y_{j\nu} \vert^2  =n^{-1},
 \ee 
 for $  i\in \mathcal I_1$, $ j\in \mathcal I_2$, $ \mu\in \mathcal I_3$ and $ \nu\in \mathcal I_4$. Later in Section \ref{subsec_central}, we will discuss how to relax \eqref{entry_assmX1} and \eqref{entry_assmX2} to \eqref{entry_assm0} and \eqref{entry_assm1}. 

\begin{proposition}\label{prop_diagonal}
Suppose that \eqref{entry_assmX1}, \eqref{entry_assmX2} and the assumptions of Theorem \ref{lem null} hold. Then for any fixed $\e>0$, we have that 
\begin{equation}\label{entry_law}
\left| \left[\Pi^{-1}(z) \left(G (z) - \Pi (z)\right) \Pi^{-1}(z)\right]_{\fa\fb} \right| \prec \phi_n + \Psi(z),
\end{equation}
uniformly in $\fa,\fb\in \cal I$ and $z\in S(\epsilon)$.
\end{proposition}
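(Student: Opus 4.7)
The plan is to make rigorous the heuristic derivation of $\Pi(z)$ given in equations \eqref{derv1}--\eqref{derv3}. In order, I would: (i) use Schur complement to express each diagonal entry (or diagonal $2\times 2$ block) of $G(z)$ as the inverse of an explicit quadratic form in a row of $X$ or $Y$ contracted with a resolvent minor; (ii) apply large deviation bounds to replace those quadratic forms by their partial traces with errors of order $\OO_\prec(\phi_n+\Psi)$; (iii) check stability of the resulting approximate self-consistent system for $(m_1,m_2,m_3,m_4)$ around the exact solution $(m_{1c},m_{2c},m_{3c},m_{4c})$; and (iv) transfer this bound to off-diagonal entries of $G-\Pi$ through a second Schur complement identity. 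A continuity bootstrap in $\eta$ will close the loop since the linearization of the self-consistent equation only makes sense after an a priori $\|G-\Pi\|_{\max}=\oo(1)$ bound has been secured.

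For the concentration step, the bounded support condition \eqref{eq_support} gives the standard large deviation estimate: for independent mean-zero entries $x_{i\mu}$ of variance $n^{-1}$ and support $\phi_n$ and a deterministic matrix $A$,
\begin{equation*}
\sum_{\mu\ne\nu}x_{i\mu}x_{i\nu}A_{\mu\nu}+\sum_\mu(|x_{i\mu}|^2-n^{-1})A_{\mu\mu}\prec \phi_n\max_{\mu,\nu}|A_{\mu\nu}|+\Bigl(n^{-2}\sum_{\mu,\nu}|A_{\mu\nu}|^2\Bigr)^{1/2}.
\end{equation*}
Applying this with $A=G^{(i)}$ and bounding the Frobenius contribution by Lemma \ref{lem_gbound0} converts the Schur identity for $G_{ii}^{-1}$ ($i\in\cal I_1$) into $G_{ii}^{-1}+m_3=\OO_\prec(\phi_n+\Psi)$, and similarly $G_{jj}^{-1}+m_4=\OO_\prec(\phi_n+\Psi)$ for $j\in\cal I_2$. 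At a pair $(\mu,\bar\mu)$ with $\mu\in\cal I_3$, the same argument produces a $2\times 2$ matrix identity which, after inversion, gives an approximate version of \eqref{derv2} and \eqref{derv3}. Averaging over the diagonal then yields an approximate solution of the exact system \eqref{selfm12}--\eqref{selfm3}.

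I would then run the stability analysis of that system around $(m_{1c},m_{2c},m_{3c},m_{4c})$. By Lemma \ref{lem_mbehavior}, on $S(\e)$ the linearization is invertible with norm bounded by a constant in the bulk and by $|\sqrt{\kappa+\eta}|^{-1}$ near the edge---still compatible with $\Psi$. This yields $|m_\al-m_{\al c}|\prec\phi_n+\Psi$; plugging back into the Schur identities gives $|G_{\fa\fa}-\Pi_{\fa\fa}|\prec\phi_n+\Psi$ for diagonal indices and, via \eqref{derv2}, $|G_{\mu\bar\mu}-h(z)|\prec\phi_n+\Psi$ for the off-diagonal pairs within the $2\times 2$ blocks. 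For the genuinely off-diagonal entries $G_{\fa\fb}$ with $\fa\ne\fb$ outside the same $(\mu,\bar\mu)$ block, I would again apply Schur complement to express $G_{\fa\fb}$ as a product of two diagonal factors and a centered quadratic form, and use the same large deviation input together with the diagonal bound to obtain $|G_{\fa\fb}-\Pi_{\fa\fb}|\prec\phi_n+\Psi$.

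The linearization only closes after a crude a priori estimate $\|G-\Pi\|_{\max}=\oo(1)$ is available, so the final ingredient is a continuity bootstrap: the bound is immediate at $\eta\sim 1$ from Lemma \ref{lem op}, and it is propagated down to $\eta\sim n^{-1+\e}$ via $\partial_\eta G=\ii G^2$ on an $n^{-3}$ grid. The main obstacle will be the $2\times 2$ Schur block at $(\mu,\bar\mu)$, whose coefficient matrix can be poorly conditioned because of the branch structure of $z^{1/2}$ and the near-degeneracy as $z\to 1$---the correct branch choice and the uniform separation from $z=1$ built into $S(\e)$ are essential for keeping the relevant linearization invertible at the right root. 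Once Proposition \ref{prop_diagonal} is in hand, the stronger averaged estimates \eqref{aver_in1}--\eqref{aver_out1} will follow from a standard fluctuation averaging argument that upgrades the $\Psi$ error on $m_\al-m_{\al c}$ to $(n\eta)^{-1}$.
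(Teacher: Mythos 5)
Your outline reproduces the first half of the paper's argument (Schur complement plus large deviation bounds for the centered quadratic forms, stability of the self-consistent equation, and a continuity bootstrap from $\eta\sim 1$ down to $\eta\sim n^{-1+\e}$), but it stops at what the paper calls the \emph{weak} entrywise law, and the step where you claim that stability ``yields $|m_\al-m_{\al c}|\prec\phi_n+\Psi$'' is exactly where the argument breaks. With only the entrywise large deviation input, the error in the self-consistent equation \eqref{selfcons_lemm12} is of size $\phi_n+\Psi$, and the stability estimate (Lemma \ref{stability}) converts an error $\delta$ in the equation into a bound of order $\delta/\sqrt{\kappa+\eta+\delta}$ on $m_3-m_{3c}$. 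Near the edge, where $\kappa+\eta$ can be as small as $n^{-1+\e}$ and hence much smaller than $\delta$, this is only of order $\sqrt{\delta}\sim\phi_n^{1/2}+(n\eta)^{-1/4}$, i.e.\ the weak law of Lemma \ref{alem_weak}, not $\phi_n+\Psi$. Since the diagonal entries inherit this error through $G_{ii}^{-1}=-m_3+\epsilon_i$, your argument as written proves \eqref{entry_law} only with the square-root-worse rate.

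The missing ingredient is fluctuation averaging (Lemma \ref{abstractdecoupling}): the averaged quantities $\langle Z\rangle_1$, $\langle Z\rangle_2$, $[Z]$ are $\OO_\prec(\Phi_o^2)$, quadratically smaller than the individual $Z_i$, so the refined self-consistent estimates \eqref{selfcons_improved}--\eqref{selfcons_improved2} have error $\phi_n^2+\Psi_\Lambda^2$; feeding this into Lemma \ref{stability} gives $|m_\al-m_{\al c}|\prec \min\{\phi_n,\phi_n^2/\sqrt{\kappa+\eta}\}+n^{\tau}/(n\eta)+n^{-\tau}\Phi$, and a self-improving iteration in $\tau$ then closes the diagonal bound at $\phi_n+\Psi$. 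You defer fluctuation averaging to the averaged laws \eqref{aver_in1}--\eqref{aver_out1}, but it is already indispensable for the entrywise statement; the off-diagonal part of your plan is fine, since $\Lambda_o$ is controlled directly by the large deviation bounds. A smaller point: for this linearization $\partial_\eta G\neq \ii G^2$, because $H(z)$ depends on $z$ through the block $\begin{pmatrix} z & z^{1/2}\\ z^{1/2} & z\end{pmatrix}^{-1}$; the continuity argument still works via $\partial_z G=G(\partial_z H)G$ and a polynomial Lipschitz bound, but the identity you quote is not available here.
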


With (\ref{entry_law}), we shall use a polynomialization method as in \cite[Section 5]{isotropic} and \cite[Section 5]{XYY_circular} to get the anisotropic local law (\ref{aniso_law}). This will be presented in Section \ref{sec_aniso}.

\subsection{Basic tools}

In this subsection, we introduce more notations and collect some basic tools that will be used in the proof.

\begin{definition}[Minors]\label{den minor}
For any $ \cal J \times \cal J$ matrix $\cal A$ and $\mathbb T \subseteq \mathcal J$, where $\cal J$ and $\mathbb T$ are some index stes, we define the minor $\cal A^{(\mathbb T)}:=(\cal A_{ab}:a,b \in \mathcal J\setminus \mathbb T)$ as the $ (\cal J\setminus \mathbb T)\times (\cal J\setminus \mathbb T)$ matrix obtained by removing all rows and columns indexed by $\mathbb T$. Note that we keep the names of indices when defining $\cal A^{(\mathbb T)}$, i.e. $(\cal A^{(\mathbb{T})})_{ab}= \cal A_{ab}$ for $a,b \notin \mathbb{{T}}$. Correspondingly, we define the resolvent minor as
\begin{align*}
G^{(\mathbb T)}(z):&=(  H^{(\mathbb T)}(z) )^{-1}.
\end{align*}
As in Definition \ref{resol_not}, its blocks are denoted as $ \cal G^{(\mathbb T)}_\al(z)$, $\al=1,2,3,4$, and $ \cal G^{(\mathbb T)}_L(z)$, $ \cal G^{(\mathbb T)}_{LR}(z)$, $ \cal G^{(\mathbb T)}_{RL}(z)$, $ \cal G^{(\mathbb T)}_R(z)$; its partial traces are  
$$ m^{(\mathbb T)}_\al(z) :=\frac1n\tr  \cal G^{(\mathbb T)}_{\al}(z) = \frac{1}{n}\sum_{\fa \in \cal I_\al}  G^{(\mathbb T)}_{\fa\fa}(z) ,\quad \al=1,2,3,4. $$
Moreover, we define $S_{xx}^{(\mathbb T)}$, $S_{xy}^{(\mathbb T)}$, $S^{(\mathbb T)}_{yy}$, $\cal H^{(\mathbb T)}$, $R^{(\mathbb T)}(z)$ and $m^{(\mathbb T)}(z)$ by replacing $(X,Y)$ with $(X^{(\mathbb T)},Y^{(\mathbb T)})$.

For $\mathbb T\subset \mathcal I_3\cup \cal I_4$, we denote $[\mathbb T]:=\{\mu\in\mathcal I_3\cup \cal I_4: \mu \in \mathbb T \text{ or } \overline \mu \in \mathbb T\}$ (recall Definition \ref{def_index}). Then we define the minor $H^{[\mathbb T]}:=H^{([\mathbb T])}$, and correspondingly $G^{[\mathbb T]}:= (H^{[\mathbb T]})^{-1} $. 

For convenience, we will adopt the convention that for any minor $\cal A^{(T)}$, $\cal A^{(T)}_{ab} = 0$ if $a \in \mathbb T$ or $b \in \mathbb T$. We will abbreviate $(\{\fa\})\equiv (\fa)$, $[\fa]\equiv [\{\fa\}]$, $(\{\fa, \fb\})\equiv (\fa\fb)$, $[\{\fa, \fb\}]\equiv [\fa\fb]$, and $\sum_{a}^{(\mathbb T)} := \sum_{a\notin \mathbb T} .$ 
\end{definition}


Using Schur complement formula, one can obtain the following resolvent identities. 
\begin{lemma}{(Resolvent identities).}

\begin{itemize}
\item[(i)]
For $i\in \mathcal I_1\cup \cal I_2$, we have 
\begin{equation}
\frac{1}{{G_{ii} }} = - zn^{-10} - \left( {WG^{\left( i \right)} W^T} \right)_{ii} . \label{resolvent1} 
\end{equation}
where we abbreviate 
$$W:= \begin{pmatrix} X & 0 \\ 0 & Y\end{pmatrix}.$$


 \item[(ii)]
 For $i\ne j \in \mathcal I_1\cup \cal I_2$, we have
\begin{equation}
G_{ij} =-G_{ii} \left(WG^{(i)}\right)_{ij} = - \left(G^{(j)}W^T\right)_{ij} G_{jj} = G_{ii} G_{jj}^{\left( i \right)} \left( {WG^{\left( {ij} \right)} W^T } \right)_{ij}. \label{resolvent3}
\end{equation}
For $i\in \mathcal I_1\cup \cal I_2$ and $\mu\in \mathcal I_3\cup \cal I_4$, we have
\begin{equation}\label{resolvent6}
\begin{split}
 G_{i\mu } &= - G_{ii} ( W G^{(i)})_{i\mu} = -  ( G^{(\mu)}W)_{i\mu}G_{\mu\mu}, \\
 G_{\mu i}  & = -G_{\mu\mu} (W^T G^{(\mu)})_{\mu i} =  -(G^{(i)}W^T)_{\mu i}G_{ii} .
\end{split}
\ee  

 \item[(iii)]
 For $\fa \in \mathcal I$ and $\fb, \mathfrak c \in \mathcal I \setminus \{\fa\}$,
\begin{equation}
G_{\fb \mathfrak c} = G_{\fb \mathfrak c}^{\left( \fa \right)}  + \frac{G_{\fb\fa} G_{\fa \mathfrak c}}{G_{\fa\fa}}, \ \ \frac{1}{{G_{\fb\fb} }} = \frac{1}{{G_{\fb\fb}^{(\fa)} }} - \frac{{G_{\fb\fa} G_{\fa\fb} }}{{G_{\fb\fb} G_{\fb\fb}^{(\fa)} G_{\fa\fa} }}. \label{resolvent8}
\end{equation}

 \item[(iv)]
All of the above identities hold for $G^{(\mathbb T)}$ instead of $G$ for $\mathbb T \subset \mathcal I$.
\end{itemize}
\label{lemm_resolvent}
\end{lemma}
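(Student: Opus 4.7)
The plan is to derive all four identities from the standard block-matrix inversion (Schur complement) formula applied to the regularized matrix $\wh H(z)$. The starting observation is the following: for any $\fa\in\cal I$, after permuting $\fa$ to the last position we may write
\[
\wh H = \begin{pmatrix} \wh H^{(\fa)} & u^\fa \\ (u^\fa)^T & \wh H_{\fa\fa} \end{pmatrix},
\]
where $u^\fa$ denotes the $\fa$-th column of $\wh H$ with the diagonal entry removed. The block-inverse formula then yields the five relations
\[
G_{\fa\fa}=\bigl(\wh H_{\fa\fa}-(u^\fa)^TG^{(\fa)}u^\fa\bigr)^{-1},\qquad G_{\fb\fa}=-\bigl(G^{(\fa)}u^\fa\bigr)_\fb\,G_{\fa\fa},
\]
\[
G_{\fa\fc}=-G_{\fa\fa}\bigl((u^\fa)^TG^{(\fa)}\bigr)_\fc,\qquad G_{\fb\fc}=G^{(\fa)}_{\fb\fc}+\bigl(G^{(\fa)}u^\fa\bigr)_\fb G_{\fa\fa}\bigl((u^\fa)^TG^{(\fa)}\bigr)_\fc,
\]
with the convention that $G_{\fb\fa}G_{\fa\fc}/G_{\fa\fa}=\bigl(G^{(\fa)}u^\fa\bigr)_\fb G_{\fa\fa}\bigl((u^\fa)^TG^{(\fa)}\bigr)_\fc$ after using the two preceding identities. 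Everything in the lemma will come from specializing these.

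For \eqref{resolvent1}, I take $\fa=i\in\cal I_1\cup\cal I_2$. By the definition of $\wh H$, $\wh H_{ii}=-zn^{-10}$, and the only nonzero entries of $u^i$ sit in $\cal I_3\cup\cal I_4$ and equal the $i$-th row of the block $W=\begin{pmatrix}X & 0\\ 0 & Y\end{pmatrix}$. Hence $(u^i)^TG^{(i)}u^i=(WG^{(i)}W^T)_{ii}$, which gives \eqref{resolvent1}. For the first two equalities of \eqref{resolvent3}, apply the $G_{\fb\fa}$ formula with $(\fa,\fb)=(i,j)$ and $(\fa,\fb)=(j,i)$ respectively, use the symmetry $G=G^T$ (which follows from symmetry of $\wh H$), and identify $(G^{(i)}u^i)_j$ with $(WG^{(i)})_{ij}$ after transposition. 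The third equality is obtained by iterating: starting from $G_{ij}=-G_{ii}(WG^{(i)})_{ij}$, expand each $G^{(i)}_{\mu j}$ for $\mu\in\cal I_3\cup\cal I_4$ via the same formula applied to the minor $G^{(i)}$ with removed index $j$, producing $G^{(i)}_{\mu j}=-G^{(i)}_{jj}(G^{(ij)}W^T)_{\mu j}$, and then sum over $\mu$. The identities \eqref{resolvent6} are derived identically, with the auxiliary index being $\mu\in\cal I_3\cup\cal I_4$ instead of $j\in\cal I_1\cup\cal I_2$; in that case $u^\mu$ is supported on $(\cal I_1\cup\cal I_2)\cup\{\overline\mu\}$ but the products $(G^{(\mu)}u^\mu)_i$ nevertheless reduce to $(G^{(\mu)}W)_{i\mu}$ modulo the contribution from the diagonal block $\begin{pmatrix}zI & z^{1/2}I\\ z^{1/2}I & zI\end{pmatrix}^{-1}$, which is absorbed into $G^{(\mu)}_{\overline\mu,\cdot}$.

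For \eqref{resolvent8}, the first identity is precisely the $G_{\fb\fc}$ formula above, rewritten using the already-established expressions for $G_{\fb\fa}$ and $G_{\fa\fc}$. The second identity is then immediate: specialize $\fc=\fb$ in the first to obtain $G_{\fb\fb}-G^{(\fa)}_{\fb\fb}=G_{\fb\fa}G_{\fa\fb}/G_{\fa\fa}$, then divide by $G_{\fb\fb}G^{(\fa)}_{\fb\fb}$. Finally, (iv) is automatic because $\wh H^{(\mathbb T)}$ has exactly the same block structure as $\wh H$ and the arguments above never used anything beyond that structure.

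There is no real mathematical obstacle here; these are the standard Schur-complement identities that underpin every resolvent expansion in the random matrix literature. The only care required is notational bookkeeping: tracking which of the four index sets $\cal I_1,\cal I_2,\cal I_3,\cal I_4$ each Latin/Greek index belongs to, remembering that $W_{i\mu}$ vanishes unless $(i,\mu)\in\cal I_1\times\cal I_3$ or $\cal I_2\times\cal I_4$, and correctly handling the fact that for $\fa=\mu\in\cal I_3\cup\cal I_4$ the diagonal block of $\wh H$ is the nontrivial $2\times 2$ matrix $\begin{pmatrix}zI_n & z^{1/2}I_n\\ z^{1/2}I_n & zI_n\end{pmatrix}^{-1}$ rather than a scalar, so that the single-index minor $G^{(\mu)}$ still makes sense (one merely keeps the $\overline\mu$ row/column intact).
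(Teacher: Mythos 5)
Your route is the same as the paper's: the paper gives no written proof beyond the remark that everything follows from the Schur complement formula, and your derivations of \eqref{resolvent1}, of all three equalities in \eqref{resolvent3}, of \eqref{resolvent8}, of item (iv), and of the Latin-index expansions in \eqref{resolvent6} (the first equalities on each line) are correct.

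The genuine gap is in the second equalities of \eqref{resolvent6}, i.e.\ the expansions in a single Greek index, precisely where you say the contribution of the diagonal block is ``absorbed into $G^{(\mu)}_{\overline\mu,\cdot}$''. That is not an argument, and the term does not go away. For $\mu\in\cal I_3$, the $\mu$-th column of $H$ with the diagonal entry removed contains, besides the entries $W_{j\mu}$ for $j\in\cal I_1\cup\cal I_2$, the nonzero entry of the block $\begin{pmatrix} zI_n & z^{1/2}I_n\\ z^{1/2}I_n & zI_n\end{pmatrix}^{-1}$ at position $(\overline\mu,\mu)$, which equals $-\frac{z^{-1/2}}{z-1}$. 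Hence the single-index Schur expansion yields
\[
G_{i\mu}=-G_{\mu\mu}\Bigl[(G^{(\mu)}W)_{i\mu}-\tfrac{z^{-1/2}}{z-1}\,G^{(\mu)}_{i\overline\mu}\Bigr],
\]
and the extra term is generically nonzero: in the minor $H^{(\mu)}$ the index $\overline\mu$ is still coupled to $\cal I_2$ through $Y$, and thence to $i$ through the chain $i\to\cal I_3\setminus\{\mu\}\to\cal I_4\setminus\{\overline\mu\}\to\cal I_2\to\overline\mu$, so $G^{(\mu)}_{i\overline\mu}\neq 0$ in general (the same remark applies, by symmetry, to the claimed $G_{\mu i}=-G_{\mu\mu}(W^TG^{(\mu)})_{\mu i}$). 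So your argument does not establish these equalities as you wrote them; you must either carry the extra $G^{(\mu)}_{i\overline\mu}$ term explicitly, or do what the paper does whenever a Greek index is expanded, namely remove the pair $\{\mu,\overline\mu\}$ and use the $2\times 2$ group identities \eqref{eq_res22}--\eqref{eq_res23} of Lemma \ref{lemm_resolvent_group}, which are the precise versions of these formulas. (The parts of \eqref{resolvent6} actually used later, e.g.\ in Section \ref{sec easiest}, are the Latin-index expansions, which your proof does cover.)
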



For $\cal I \times \mathcal I$ matrix $\cal A$ and $\mu,\nu \in \mathcal I_3$, we define the $2\times 2$ minors as
\begin{equation}\label{Aij_group}
 \cal A_{[\mu\nu]}=\left( {\begin{array}{*{20}c}
   {\cal A_{\mu\nu} } & {\cal A_{\mu \overline \nu} }  \\
   {\cal A_{ \overline\mu \nu} } & {\cal A_{ \overline \mu \overline \nu} }  \\
\end{array}} \right),
\end{equation}
where we recall the notations in Definition \ref{def_index}. 
We shall call $A_{[\mu\nu]}$ a diagonal group if $\mu=\nu$, and an off-diagonal group otherwise.
Similarly, for $i\in \cal I_1$, $j \in \cal I_2$ and $\mu\in \cal I_3$, we define the minors
\begin{equation}\label{Aij_group}
\begin{split}
 \cal A_{ij,[\mu]}=\left( {\begin{array}{*{20}c}
   {\cal A_{i\mu} } & {\cal A_{i\overline \mu} }  \\
   {\cal A_{j\mu} } & {\cal A_{j\overline \mu} }  \\
\end{array}} \right), \quad & \cal A_{[\mu],ij}=\left( {\begin{array}{*{20}c}
   {\cal A_{\mu i} } & {\cal A_{\mu j} }  \\
   {\cal A_{\overline \mu i} } & {\cal A_{ \overline \mu j} }  \\
\end{array}} \right),\\ 
 \cal A_{i,[\mu]}=\left(  {\cal A_{i\mu} }, {\cal A_{i\overline \mu} } 
 \right), \quad & \cal A_{[\mu],i}=\left( {\begin{array}{*{20}c}
   {\cal A_{\mu i} }  \\
   {\cal A_{\overline \mu i} }\\
\end{array}} \right).
\end{split}
\end{equation}
For $G$, sometimes it is convenient to deal with $2\times 2$ blocks directly, and we record the following resolvent identities obtained from Schur complement formula.

\begin{lemma}{(Resolvent identities for $G_{[\mu\nu]}$ groups).}\label{lemm_resolvent_group}
  \begin{itemize}
  \item[(i)] For $\mu\in \mathcal I_3$, we have 
 \begin{equation}\label{eq_res11}
   G_{[\mu\mu]}^{ - 1}  =\frac{1}{z-1}\begin{pmatrix}1 & -z^{-1/2} \\ -z^{-1/2} & 1 \end{pmatrix}  - \begin{pmatrix}( X^T G^{[\mu]} X)_{\mu\mu}& ( X^T G^{[\mu]}Y)_{\mu \overline \mu} \\  (Y^TG^{[\mu]}  X)_{\overline \mu  \mu} &  (Y^T G^{[\mu]}  Y)_{\overline \mu\overline\mu} \end{pmatrix}   .
    \end{equation}
   
    \item[(ii)]
    For $i \in \cal I_1$ and $\mu\in \cal I_3$, we have
    \be
    \begin{split}
   G_{i, [\mu]}& = - \begin{pmatrix}( G^{[\mu]} X)_{i\mu} ,  ( G^{[\mu]} Y)_{i\overline \mu} \end{pmatrix}G_{[\mu\mu]} \\
   &=  G_{ii}^{[\mu]}\begin{pmatrix} (XG^{(i\mu\overline \mu)} X)_{i\mu} \ , \ (XG^{(i\mu\overline \mu)} Y)_{i\overline \mu} \end{pmatrix}G_{[\mu\mu]} ,
   \label{eq_res22}
    \end{split}   
    \ee 
    and
    \begin{align}
   G_{[\mu],i}& = -  G_{[\mu\mu]} \begin{pmatrix}  (X^T G^{[\mu]})_{ \mu i}   \\ ( Y^TG^{[\mu]})_{\overline \mu i} \end{pmatrix} = G_{[\mu\mu]} \begin{pmatrix}  (X^T G^{(i\mu\overline \mu)}X^T)_{ \mu i}   \\ ( Y^TG^{(i\mu\overline \mu)}X^T)_{\overline \mu i} \end{pmatrix}G_{ii}^{[\mu]}.\label{eq_res23}
       \end{align}
       We have similar expansions for $G_{j, [\mu]}$ and $G_{[\mu],j}$ for $j\in \cal I_2$ by interchanging $X$ and $Y$. 
    
   \item[(iii)] 
   For $\mu\ne \nu \in \cal I_3$, we have
     \begin{align}
   G_{[\mu\nu]}  &= - G_{[\mu\mu]} \begin{pmatrix} (X^T G^{[\mu]})_{\mu\nu} &  (X^T G^{[\mu]})_{\mu \overline \nu} \\ (Y^TG^{[\mu]})_{\overline \mu \nu} & (Y^TG^{[\mu]})_{\overline\mu  \overline\nu} \end{pmatrix}   = -  \begin{pmatrix} (G^{[\nu]} X)_{\mu\nu  } & (G^{[\nu]} Y)_{\mu \overline \nu}\\ (G^{[\nu]} X)_{\overline \mu \nu } & (G^{[\nu]} Y)_{\overline \mu \overline\nu }\end{pmatrix} G_{[\nu\nu]} 
   \nonumber\\
 & = G_{[\mu\mu]}G_{[\nu\nu]}^{[\mu]} \begin{pmatrix} (X^T G^{[\mu\nu]} X)_{\mu\nu} & (X^T G^{[\mu\nu]}Y)_{\mu\overline \nu} \\ (Y^T G^{[\mu\nu]} X)_{\overline \mu\nu} & (Y^T G^{[\mu\nu]} Y)_{\overline \mu \overline \nu}\end{pmatrix}. \label{eq_res21}
    \end{align}

  \item[(iv)] For $\mu \in \mathcal I_3 $ and $\fa_1,\fa_2,\fb_1,\fb_2 \in \mathcal I\setminus \{\mu,\nu\}$, we have 
  \begin{equation}\label{eq_res3}
     \begin{pmatrix}G_{\fa_1\fb_1} & G_{\fa_1\fb_2} \\ G_{\fa_2\fb_1} & G_{\fa_2\fb_2} \end{pmatrix} = \begin{pmatrix}G^{[\mu]}_{\fa_1\fb_1} & G^{[\mu]}_{\fa_1\fb_2} \\ G^{[\mu]}_{\fa_2\fb_1} & G^{[\mu]}_{\fa_2\fb_2} \end{pmatrix}+\begin{pmatrix}G_{\fa_1\mu} & G_{\fa_1\overline\mu} \\ G_{\fa_2\mu} & G_{\fa_2\overline\mu} \end{pmatrix}G^{-1}_{[\mu\mu]}\begin{pmatrix}G_{\mu\fb_1} & G_{\mu\fb_2} \\ G_{\overline \mu\fb_1} & G_{\overline \mu\fb_2} \end{pmatrix}, 
    \end{equation}
    and
    \begin{equation}\label{eq_res4}
    \begin{split}
  &   \begin{pmatrix}G_{\fa_1\fa_1} & G_{\fa_1\fa_2} \\ G_{\fa_2\fa_1} & G_{\fa_2\fa_2} \end{pmatrix}^{-1} = \begin{pmatrix}G^{[\mu]}_{\fa_1\fa_1} & G^{[\mu]}_{\fa_1\fa_2} \\ G^{[\mu]}_{\fa_2\fa_1} & G^{[\mu]}_{\fa_2\fa_2} \end{pmatrix}^{ - 1}  \\
&     -  \begin{pmatrix}G_{\fa_1\fa_1} & G_{\fa_1\fa_2} \\ G_{\fa_2\fa_1} & G_{\fa_2\fa_2} \end{pmatrix}^{-1} \begin{pmatrix}G_{\fa_1\mu} & G_{\fa_1\overline\mu} \\ G_{\fa_2\mu} & G_{\fa_2\overline\mu} \end{pmatrix}G^{-1}_{[\mu\mu]}\begin{pmatrix}G_{\mu\fb_1} & G_{\mu\fb_2} \\ G_{\overline \mu\fb_1} & G_{\overline \mu\fb_2} \end{pmatrix}\begin{pmatrix}G^{[\mu]}_{\fa_1\fa_1} & G^{[\mu]}_{\fa_1\fa_2} \\ G^{[\mu]}_{\fa_2\fa_1} & G^{[\mu]}_{\fa_2\fa_2} \end{pmatrix}^{ - 1}  .
     \end{split}
    \end{equation}
  \item[(iii)] All of the above identities hold for $G^{(\mathbb T)}$ instead of $G$ for $\mathbb T\subset \mathcal I$.
  \end{itemize}
\end{lemma}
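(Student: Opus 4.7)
The plan is to derive every identity from a single tool: Schur's complement formula applied to the block matrix $H(z)$ of \eqref{linearize_block}, exploiting the fact that the only nonzero off-diagonal couplings between $(\cal I_3\cup \cal I_4)$-rows and the rest of $H$ are via $X$ and $Y$. As a preliminary I would compute the intrinsic $[\mu\mu]$ block of $H$ to be $(z-1)^{-1}\bigl(\begin{smallmatrix}1 & -z^{-1/2}\\ -z^{-1/2} & 1\end{smallmatrix}\bigr)$ by directly inverting the $2\times 2$ operator block appearing in \eqref{linearize_block}.

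For part (i), I would apply Schur's formula to the principal $2\times 2$ block indexed by $\{\mu,\overline\mu\}$, which gives
\begin{equation*}
G_{[\mu\mu]}^{-1}=H_{[\mu\mu]}-\sum_{\fa,\fb\notin [\mu]} H_{[\mu]\fa}\,G^{[\mu]}_{\fa\fb}\,H_{\fb[\mu]}.
\end{equation*}
By the sparsity observation, only $\fa,\fb\in \cal I_1\cup \cal I_2$ contribute to the sum, and the two nonzero entries of $H_{[\mu]\fa}$ are $X_{i\mu}$ (for $\fa=i\in\cal I_1$, top row) and $Y_{j\overline\mu}$ (for $\fa=j\in\cal I_2$, bottom row); this produces exactly the four Gram-type terms written in \eqref{eq_res11}.

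For parts (ii) and (iii), the common tool is the off-diagonal Schur identity $G_{\fa,[\mu]}=-(G^{[\mu]}H_{*,[\mu]})_{\fa}\,G_{[\mu\mu]}$. Applied with $\fa=i\in\cal I_1$ and again using the sparsity of $H_{*,[\mu]}$, this yields the first equality in \eqref{eq_res22}; expanding the $G^{[\mu]}$ factor further by applying Schur once more on the $i$-th coordinate produces the second equality, since the scalar identity \eqref{resolvent1} applied to $H^{[\mu]}$ expresses $(G^{[\mu]})_{ii}^{-1}$ as a Gram sum over indices in $\cal I_3\cup\cal I_4\setminus[\mu]$. Equation \eqref{eq_res23} is the left-right transpose. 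For the truly off-diagonal block \eqref{eq_res21} with $\mu\ne\nu$, I would apply the off-diagonal Schur step twice—first peeling off $[\mu]$, then $[\nu]$ from the remaining minor—which extracts the outside factor $G_{[\mu\mu]}G_{[\nu\nu]}^{[\mu]}$ and leaves a $G^{[\mu\nu]}$-bilinear form inside, matching the last line of \eqref{eq_res21}.

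Finally, part (iv) is the $2\times 2$ generalization of the scalar resolvent identity \eqref{resolvent8}: I would obtain \eqref{eq_res3} and \eqref{eq_res4} from the Woodbury identity applied to the rank-two correction that turns $H^{[\mu]}$ into $H$, or equivalently by iterating \eqref{resolvent8} once for $\mu$ and once for $\overline\mu$ and reorganizing the scalars into $2\times 2$ matrices. The validity of every identity with $G^{(\mathbb T)}$ in place of $G$ is automatic since the Schur arguments use only the block structure, which $H^{(\mathbb T)}$ inherits verbatim. There is no analytic obstacle here; the only genuine difficulty is bookkeeping—one must carefully track the hierarchy of minor labels $[\mu]$, $(i\mu\overline\mu)$, $[\mu\nu]$, and remember that the regularization term $-zn^{-10}$ inherited from Definition \ref{resol_not2} contributes harmlessly to diagonal entries and can be absorbed without altering any of the identities.
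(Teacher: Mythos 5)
Your proposal is correct and is essentially the paper's own (implicit) argument: the lemma is recorded there without a written proof, as a direct consequence of Schur's complement formula applied to the $\{\mu,\overline\mu\}$ blocks of $H$, using exactly the sparsity structure and the block inverse $\frac{1}{z-1}\bigl(\begin{smallmatrix}1&-z^{-1/2}\\ -z^{-1/2}&1\end{smallmatrix}\bigr)$ that you identify, with part (iv) being the $2\times 2$ analogue of \eqref{resolvent8} (equivalently a Woodbury step). One caveat: unlike the scalar case \eqref{resolvent3}, where $H_{ij}=0$, here $H_{i,[\mu]}=(X_{i\mu},0)\neq 0$, so your second Schur step (which expands the off-diagonal entries $G^{[\mu]}_{ii'}$ via \eqref{resolvent3} for the minor, not the diagonal identity \eqref{resolvent1} as you wrote) actually yields $G_{i,[\mu]}=G^{[\mu]}_{ii}\bigl((XG^{(i\mu\overline\mu)}X)_{i\mu}-X_{i\mu},\ (XG^{(i\mu\overline\mu)}Y)_{i\overline\mu}\bigr)G_{[\mu\mu]}$, i.e.\ the second equalities of \eqref{eq_res22} and \eqref{eq_res23} as printed omit this $H$-term, and the last line of \eqref{eq_res21} should read $G_{[\mu\mu]}\,[\cdots]\,G^{[\mu]}_{[\nu\nu]}$ with the minor factor on the right, since the $2\times 2$ factors do not commute. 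These corrections are harmless for all later estimates (the extra term is $\OO_\prec(\phi_n)$), but your write-up should state the identities in the corrected form rather than assert exact agreement with the printed ones.
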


The following lemma gives large deviation bounds for bounded supported random variables. 

\begin{lemma}[Lemma 3.8 of \cite{EKYY1}]\label{largedeviation}
 Let $(x_i)$, $(y_j)$ be independent families of centered and independent random variables, and $(\cal A_i)$, $(\cal B_{ij})$ be families of deterministic complex numbers. Suppose the entries $x_i$, $y_j$ have variance at most $n^{-1}$ and satisfy the bounded support condition (\ref{eq_support}) with $\phi_n\le n^{-c_\phi }$ for some constant $c_\phi>0$. Then we have the following bound:
\begin{align*}
&\Big\vert \sum_i \cal A_i x_i \Big\vert \prec  \phi_n \max_{i} \vert \cal A_i \vert+ \frac{1}{\sqrt{n}}\Big(\sum_i |\cal A_i|^2 \Big)^{1/2} , \\  
& \Big\vert \sum_{i,j} x_i \cal B_{ij} y_j \Big\vert \prec \phi_n^2 \cal B_d  + \phi_n \cal B_o + \frac{1}{n}\Big(\sum_{i\ne j} |\cal B_{ij}|^2\Big)^{{1}/{2}} , \\
 &\Big\vert \sum_{i} \overline x_i \cal B_{ii} x_i - \sum_{i} (\mathbb E|x_i|^2) \cal B_{ii}  \Big\vert  \prec \phi_n \cal B_d  ,\\ 
 & \Big\vert \sum_{i\ne j} \overline x_i \cal B_{ij} x_j \Big\vert  \prec \phi_n\cal B_o + \frac{1}{n}\Big(\sum_{i\ne j} |\cal B_{ij}|^2\Big)^{{1}/{2}} ,
\end{align*}
where $\cal B_d:=\max_{i} |\cal B_{ii} |$ and $\cal B_o:= \max_{i\ne j} |\cal B_{ij}|.$
\end{lemma}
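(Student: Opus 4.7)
The plan is to prove each of the four bounds via Markov's inequality applied to high moments. Specifically, for each random variable $Z$ on the left-hand side and each fixed integer $p\ge 1$, I would show $\E |Z|^{2p} \le C_p (\text{RHS})^{2p}$ (up to lower order terms), and then the stochastic domination bound follows from Markov's inequality and the fact that $p$ can be taken arbitrarily large.

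For the linear bound $Z = \sum_i \mathcal A_i x_i$, expand
\begin{equation*}
\E |Z|^{2p} = \sum_{i_1,\ldots,i_{2p}} \mathcal A_{i_1} \cdots \overline{\mathcal A_{i_{2p}}} \, \E\qb{x_{i_1}\cdots \overline{x_{i_{2p}}}}.
\end{equation*}
By independence and centering, only tuples in which each index appears at least twice contribute. Perfect pairings give at most $C_p n^{-p} \pb{\sum_i |\mathcal A_i|^2}^p$ using $\E|x_i|^2 \le n^{-1}$. For partitions with some block of size $k\ge 3$, we use $|x_i|\prec \phi_n$ to replace extra factors of $x_i$ by $\phi_n$, obtaining contributions of order $\phi_n^{2p}\max_i |\mathcal A_i|^{2p}$ and geometric-mean mixtures of the two extremes, all bounded by the $2p$-th power of the claimed RHS.

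For the bilinear bound $Z = \sum_{i,j} x_i \mathcal B_{ij} y_j$, separate $Z = Z_d + Z_o$ with $Z_d = \sum_i x_i \mathcal B_{ii} y_i$ and $Z_o = \sum_{i\ne j} x_i \mathcal B_{ij} y_j$. Expanding $\E |Z_o|^{2p}$ and grouping indices on the $x$-side and the $y$-side separately, survival requires that each $x$-index and each $y$-index appear at least twice. Perfect pairings on both sides give the term $n^{-2p}\pb{\sum_{i\ne j}|\mathcal B_{ij}|^2}^p$. Excess appearances are dominated using $\phi_n$-bounded support, producing the $\phi_n\mathcal B_o$ term. For $Z_d$, condition on $\{y_i\}$ and apply the linear bound to $\sum_i (\mathcal B_{ii} y_i) x_i$: the resulting bound is $\phi_n \max_i |\mathcal B_{ii} y_i| + n^{-1/2}\pb{\sum_i |\mathcal B_{ii}|^2 |y_i|^2}^{1/2}$, then apply the linear bound again in $y$ after using $|y_i|\prec \phi_n$, yielding $\phi_n^2 \mathcal B_d$.

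The third and fourth bounds follow by the same moment method. For $\sum_i (|x_i|^2 - \E|x_i|^2)\mathcal B_{ii}$, the summands are independent, centered, and bounded by $C\phi_n^2 \mathcal B_d$ with variance $\OO(\phi_n^2 n^{-1}\mathcal B_d^2)$, so a straightforward $2p$-th moment computation (dominated by perfect pairings) gives the $\phi_n \mathcal B_d$ bound. The off-diagonal quadratic form $\sum_{i\ne j}\overline{x_i}\mathcal B_{ij} x_j$ is handled exactly as $Z_o$ above, but now the combinatorial bookkeeping involves a single family: survival still requires each index to appear at least twice, and the constraint $i\ne j$ in each factor eliminates what would otherwise be the $\mathcal B_d$ contribution.

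The main obstacle is the combinatorial accounting for moments of bilinear and quadratic forms: one must carefully enumerate partitions of $\{1,\ldots,2p\}$ (or $\{1,\ldots,2p\}\sqcup\{1,\ldots,2p\}$ in the bilinear case), identify the pairing-only contribution as the dominant ``Gaussian'' term, and verify that all higher-multiplicity partitions are absorbed by powers of $\phi_n$ against the maxima $\mathcal B_d$ and $\mathcal B_o$. Since $\phi_n \le n^{-c_\phi}$, these higher-multiplicity contributions are genuinely smaller than the pairing term whenever $p$ is large enough, which is what permits arbitrarily high-probability bounds through $p\to\infty$ in Markov's inequality. A detailed proof is in \cite{EKYY1}, whose argument transports to the present setting without modification.
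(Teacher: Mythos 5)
Your overall strategy (high moments plus Markov, with perfect pairings producing the CLT-size term and the bounded support absorbing higher-multiplicity blocks) is the standard one; the paper gives no proof of this lemma and simply cites \cite{EKYY1}, whose argument is of exactly this type, so for the first, third and fourth estimates your sketch is fine. The genuine problem is your treatment of the diagonal part of the bilinear estimate. Conditioning on $(y_i)$ and applying the linear bound to $\sum_i (\mathcal B_{ii}y_i)x_i$ gives $\phi_n\max_i|\mathcal B_{ii}y_i| + n^{-1/2}\bigl(\sum_i|\mathcal B_{ii}|^2|y_i|^2\bigr)^{1/2}$; the first term is indeed $\OO_\prec(\phi_n^2\mathcal B_d)$, but the second is not. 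Using $|y_i|\prec\phi_n$ it is only $\OO_\prec(\phi_n\mathcal B_d)$, and using $\E|y_i|^2\le n^{-1}$ it is of size $n^{-1/2}\mathcal B_d$; neither is $\OO_\prec(\phi_n^2\mathcal B_d)$ once $\phi_n=\oo(n^{-1/4})$. Moreover this is not a repairable bookkeeping slip: for $x_i,y_i$ i.i.d.\ $\pm n^{-1/2}$ (so one may take $\phi_n=n^{-1/2}$) and $\mathcal B=I$, the left-hand side is $n^{-1}\sum_i\epsilon_i\delta_i$, which is of order $n^{-1/2}$ with probability bounded below, while $\phi_n^2\mathcal B_d+\phi_n\mathcal B_o+n^{-1}\bigl(\sum_{i\ne j}|\mathcal B_{ij}|^2\bigr)^{1/2}=n^{-1}$. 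So the diagonal piece genuinely contributes $\phi_n^2\mathcal B_d+n^{-1}\bigl(\sum_i|\mathcal B_{ii}|^2\bigr)^{1/2}$ (the latter being as large as $n^{-1/2}\mathcal B_d$), and your claim that it is $\prec\phi_n^2\mathcal B_d$ cannot be established; in effect you have rediscovered that the second displayed bound, with the Frobenius sum restricted to $i\ne j$, is too strong as literally stated for small $\phi_n$.

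The harmless fix is to include the diagonal in the Frobenius term, i.e.\ prove $\bigl|\sum_{i,j}x_i\mathcal B_{ij}y_j\bigr|\prec\phi_n^2\mathcal B_d+\phi_n\mathcal B_o+n^{-1}\bigl(\sum_{i,j}|\mathcal B_{ij}|^2\bigr)^{1/2}$, or equivalently to replace $\phi_n^2\mathcal B_d$ by $\phi_n\mathcal B_d$ (using $n^{-1/2}\le\phi_n$); this weaker form is all that is ever used here — e.g.\ in the proof of Lemma \ref{Z_lemma} the bound for $\|Z_{[\mu]}\|$ is taken with the full Frobenius sum $\frac1n\bigl(\sum_{i,j}|G^{[\mu]}_{ij}|^2\bigr)^{1/2}$ — so nothing downstream is affected, but your proof should state and prove the corrected inequality rather than the one you wrote. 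Two smaller points: the bounded support condition $|x_i|\prec\phi_n$ is a stochastic-domination statement, so before expanding $2p$-th moments you should convert it into moment bounds of the form $\E|x_i|^k\prec\phi_n^{k-2}n^{-1}$ (or restrict to the high-probability event where the support bound holds); and for the off-diagonal quadratic form $\sum_{i\ne j}\bar x_i\mathcal B_{ij}x_j$ the surviving index configurations are not only pairings within single factors but chained cycles of indices, so the combinatorial accounting is more involved than for the bilinear case with two independent families — this is exactly the part carried out in \cite{EKYY1}, to which you defer, so it is acceptable, but it deserves acknowledgement rather than the phrase ``exactly as $Z_o$ above''.
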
  

Corresponding to the lower right block of $\Pi(z)$ in \eqref{defn_pi}, we define the $2\times 2$ matrix
\be\label{def smallpi}
\begin{split} 
&\pi(z): = \begin{pmatrix} m_{3c}(z) & h(z) \\ h(z) & m_{4c}(z) \end{pmatrix}   \\
&= \frac{ 1-c_1 - c_2 + \sqrt{(z-\lambda_-)(z-\lambda_+)}}{2}\begin{pmatrix} 1 & z^{1/2} \\ z^{1/2} & 1 \end{pmatrix} + \frac{z-1}{2}\begin{pmatrix} 1-2c_1 & -z^{1/2} \\ -z^{1/2} & 1-2c_2 \end{pmatrix} .\end{split}\ee
Using \eqref{selfm12}-\eqref{selfm32}, one can check that
\be\label{def smallpi2}
\pi(z)^{-1} =\frac{1}{z-1} \begin{pmatrix}  1 - (z-1)m_{1c} & - z^{-1/2} \\ - z^{-1/2}   &  1 - (z-1)m_{2c} \end{pmatrix} .\ee

For the proof of Proposition \ref{prop_diagonal}, it is convenient to introduce the following random control parameters. 

\begin{definition}[Control parameters]\label{defn_randomerror}
We define the random errors
\begin{equation}  \label{eqn_randomerror}
\begin{split}
 \Lambda _o : &=  \max_{i \ne j \in \mathcal I_1\cup \cal I_2} \left| {G_{ij}  } \right| +  \max_{\mu\ne \nu \in \cal I_3} \|\pi^{-1}G_{[\mu\nu]}\pi^{-1}\| \\
 &+ \max_{i\in \cal I_1\cup \cal I_2,\mu\in \cal I_3}\left( \left\| G_{i,[\mu]} \pi^{-1}\right\| + \left\| \pi^{-1}G_{[\mu],i} \right\| \right), \\
 \Lambda: &= \Lambda_o + \max_{i\in \cal I_1 \cup \cal I_2}|G_{ii} - \Pi_{ii}| +  \max_{\mu \in \cal I_3}   \left\|\pi^{-1}\left(G_{[\mu\mu]}  - \pi\right)\pi^{-1}\right\|   ,
\end{split}
\end{equation}
and
\begin{equation}\label{eqn_randomerror2}
\theta := |m_1-m_{1c}| +  |m_2-m_{2c}| + \left\|\frac1n \sum_{\mu} \left( G_{[\mu\mu]} - \pi\right)\right\|.
 \ee
Here $\Lambda$ controls the entrywise error, $\Lambda_o$ controls the size of the off-diagonal entries, and $\theta$ gives the averaged error. 
Note that these parameters all depend on $z$, and we did not write down this dependence explicitly in the definitions. Moreover, replacing $G$ with $G^{(\mathbb T)}$ for any $\mathbb T \subset \cal I$, we can define parameters $\Lambda_o^{(\mathbb T)}$, $\Lambda^{(\mathbb T)}$ and $\theta^{(\mathbb T)}$. We then define the random control parameter (recall $\Psi$ defined in \eqref{eq_defpsi})
\begin{equation}\label{eq_defpsitheta}
\Psi _\Lambda(z)  : = \sqrt {\frac{{\Im \, m_{c}(z)  + \Lambda(z)}}{{n\eta }}} + \frac{1}{n\eta}.
\end{equation}
\end{definition}

Define a $z$-dependent event 
$$\Xi(z):=\{\Lambda(z) \le (\log n)^{-1}\}.$$ 
Then on $\Xi$, using \eqref{Immc} and \eqref{eqn_randomerror}, we have that for $i\in \cal I_1\cup \cal I_2$ and $\mu\in \cal I_3$, 
\be\label{Gimu}G_{ii} \sim 1 ,\quad G_{[\mu\mu]}= \pi(z)  + \pi(z)  \cal E(z)\pi(z) \ \ \text{with}\ \  \|\cal E(z)\|=\OO\left((\log n)^{-1}\right).\ee
Then using \eqref{Gimu} and \eqref{resolvent8}, we obtain that for $k \in \mathcal I_1\cup \mathcal I_2$, $i,j \in \mathcal I_1\cup \mathcal I_2 \setminus \{k\}$ and $\mu,\nu\in \cal I_3$,
$$ \mathbf 1(\Xi)|G_{ij} - G_{ij}^{(k)}| \lesssim \Lambda_o^2, \quad \mathbf 1(\Xi)\left\|\pi^{-1}\left(G_{[\mu\nu]}-G^{(k)}_{[\mu\nu]}\right)\pi^{-1}\right\| \lesssim \left\|\pi^{-1} G_{[\mu],k}\right\|\left\|  G_{k,[\nu]}\pi^{-1}\right\| \lesssim \Lambda_o^2,$$
and
$$ \mathbf 1(\Xi)\left\|\pi^{-1}\left(G_{[\mu],i}-G^{(k)}_{[\mu],i}\right) \right\| \lesssim \left\|\pi^{-1} G_{[\mu],k}\right\| |G_{ik}|\lesssim \Lambda_o^2 .$$
Similarly, for $i,j \in \mathcal I_1 \cup  \mathcal I_2$, $\mu \in \cal I_3$ and $\al,\beta \in \cal I_3 \setminus \{\mu\}$, using \eqref{Gimu} and \eqref{eq_res3} we obtain that
$$ \mathbf 1(\Xi)\left|  G_{ij}  - G^{[\mu]}_{ij}  \right| \lesssim  \left\| G_{i,[\mu]}\right\| \left\|\pi^{-1}G_{[\mu],j}\right\| \lesssim \Lambda_o^2, $$
$$\quad \mathbf 1(\Xi)\left\|\pi^{-1}\left(G_{[\al\beta]}-G^{[\mu]}_{[\al\beta]}\right)\pi^{-1}\right\| \lesssim  \left\|\pi^{-1} G_{[\al\mu]}\pi^{-1}\right\|\left\|  G_{[\mu\beta]}\pi^{-1}\right\| \lesssim  \Lambda_o^2,$$
and
$$ \mathbf 1(\Xi)\left\|\pi^{-1}\left(G_{[\al],i}-G^{[\mu]}_{[\al],i}\right) \right\| \lesssim \left\|\pi^{-1} G_{[\al\mu]} \pi^{-1}\right\| \left\|G_{[\mu],i}\right\| \lesssim \Lambda_o^2 .$$
Thus with an induction on the indices, we obtain that for any $\mathbb T\subset \cal I$ such that $\mathbb T = \{ i_1, \cdots, i_k, \mu_1 ,\overline \mu_1, \cdots, \mu_l,\overline \mu_l\}$ for some fixed integers $k,l\in \N$, we have 
\be\label{simpleT}
\begin{split}
& \mathbf 1(\Xi)\max_{i , j \in \mathcal I_1\cup \cal I_2} \left| G_{ij} -G_{ij}^{(\mathbb T)} \right| + \mathbf 1(\Xi) \max_{\mu , \nu \in \cal I_3} \left\|\pi^{-1}\left(G_{[\mu\nu]}-G_{[\mu\nu]}^{(\mathbb T)} \right)\pi^{-1}\right\| \\
 + & \mathbf 1(\Xi) \max_{i\in \cal I_1\cup \cal I_2,\mu\in \cal I_3}\left( \left\| \left(G_{i,[\mu]}-G_{i,[\mu]}^{(\mathbb T)}\right) \pi^{-1}\right\| + \left\| \pi^{-1}\left(G_{[\mu],i}-G^{(\mathbb T)}_{[\mu],i}\right) \right\| \right) \lesssim \Lambda_o^2.
\end{split}\ee
In particular, we have 
\be\label{simpleT2}\mathbf 1(\Xi) \Lambda _o^{(\mathbb T)}= \OO(\Lambda_o), \quad \mathbf 1(\Xi) \Lambda^{(\mathbb T)}= \OO(\Lambda), \quad  \mathbf 1(\Xi) \theta^{(\mathbb T)}=  \mathbf 1(\Xi) \theta + \OO(\Lambda_o^2),\ee
which we shall use tacitly in the proof.

\subsection{Entrywise local law}\label{secentryweak}

%

In analogy to \cite[Section 3]{EKYY1} and \cite[Section 5]{Anisotropic}, we introduce the $Z$ variables
\begin{equation*}
  Z_{\fa}^{(\mathbb T)}:=(1-\mathbb E_{\fa})\big(G_{\fa\fa}^{(\mathbb T)}\big)^{-1}, \quad \fa\notin \mathbb T,
\end{equation*}
where $\mathbb E_{\fa}[\cdot]:=\mathbb E[\cdot\mid H^{(\fa)}],$ i.e. it is the partial expectation over the randomness of the $\fa$-th row and column of $H$. By (\ref{resolvent1}), we have that for $i\in \cal I_\al$, $\al=1,2$, 
\begin{equation}
Z_i = (\mathbb E_{i} - 1) \left( {WG^{\left( i \right)} W^T} \right)_{ii} = \sum_{\mu ,\nu\in \mathcal I_{\al+2}} G^{(i)}_{\mu\nu} \left(\frac{1}{n}\delta_{\mu\nu} - W_{i\mu}W_{i\nu}\right). \label{Zi}
\end{equation}
We also introduce the matrix value $Z$ variables:
\begin{equation*}
  Z_{[\mu]}^{(\mathbb T)}:=(1-\bbE_{[\mu]})\left(G_{[\mu\mu]}^{[\mathbb T]}\right)^{-1}, \quad \mu,\overline \mu \notin \mathbb T,
\end{equation*}
where $\mathbb E_{[\mu]}[\cdot]:=\mathbb E[\cdot\mid H^{[\mu]}],$ i.e. it is the partial expectation over the randomness of the $\mu$ and $\overline \mu$-th rows and columns of $H$. By (\ref{eq_res11}), we have
\begin{equation}
G_{[\mu\mu]}^{-1}  =  \frac{1}{z-1}\begin{pmatrix}1 & -z^{-1/2} \\ -z^{-1/2} & 1 \end{pmatrix}  -  \left( {\begin{array}{*{20}c}
   { m_{1}^{[\mu]} } & {0}  \\
   {0} & { m_{2}^{[\mu]} } \end{array}} \right) + Z_{[\mu]}, \label{resolvent_Gii}
\end{equation}
where
\begin{equation}\label{Zmu}
Z_{[\mu]} = \begin{pmatrix}\sum_{i, j \in \cal I_1} G^{[\mu]}_{ij} (n^{-1}\delta_{ij} -X_{i\mu  }X_{j\mu}) & \sum_{i \in \cal I_1, j\in \cal I_2} G^{[\mu]}_{ij} X_{i\mu}Y_{j\overline \mu} \\ \sum_{i \in \cal I_1, j\in \cal I_2}G^{[\mu]}_{ji} X_{i \mu} Y_{j\overline \mu } & \sum_{i, j \in \cal I_2} G^{[\mu]}_{ij} (n^{-1}\delta_{ij} - Y_{i\overline\mu }Y_{j\overline\mu}) \end{pmatrix} .
\end{equation}

Then using Lemma \ref{largedeviation}, we can prove the following large deviation estimates for the $Z$ variables and off-diagonal entries.

\begin{lemma}\label{Z_lemma}
Suppose the assumptions in Proposition \ref{prop_diagonal} hold. Let $c_0>0$ be a sufficiently small constant and fix $C_0, \epsilon >0$. Then the following estimates hold uniformly for all $i\in \mathcal I_1\cup \cal I_2$, $\mu\in \cal I_3$, and $z= E+\ii\eta\in S(\epsilon)$:
\begin{align}
& {\mathbf 1}(\Xi(z))\left(|Z_{i}| + \|Z_{[\mu]}\|\right)   \prec \phi_n+ \Psi_\Lambda; \label{Zestimate1}\\
& {\mathbf 1}(\Xi(z)) \Lambda_o  \prec \phi_n+\Psi_\Lambda; \label{Oestimate1} \\
& {\mathbf 1}\left(\eta \ge 1 \right)\left(|Z_{i}| + \|Z_{[\mu]}\|+\Lambda_o  \right)\prec \phi_n .\label{Zestimate2}
\end{align}
\end{lemma}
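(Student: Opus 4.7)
The plan is to derive all three estimates from the concentration inequalities of Lemma \ref{largedeviation}, combined with the resolvent expansions of Lemmas \ref{lemm_resolvent}--\ref{lemm_resolvent_group} and the Ward-type identities of Lemma \ref{lem_gbound0}. The role of the event $\Xi(z)$ is to guarantee, via \eqref{Gimu}--\eqref{simpleT2}, that all diagonal entries of $G$ and of every minor $G^{(\mathbb T)}$ remain of order one, and that $G^{(\mathbb T)}$ differs from $G$ by $\OO(\Lambda_o^2)$.

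For \eqref{Zestimate1} I would split \eqref{Zi} into its diagonal part $\sum_\mu G^{(i)}_{\mu\mu}(n^{-1}-X_{i\mu}^2)$ and its off-diagonal part $-\sum_{\mu\ne\nu}G^{(i)}_{\mu\nu}X_{i\mu}X_{i\nu}$. The third and fourth inequalities of Lemma \ref{largedeviation} bound these by $\phi_n \max_\mu|G^{(i)}_{\mu\mu}|$ and $\phi_n \Lambda_o^{(i)} + n^{-1}\bigl(\sum_{\mu\ne\nu}|G^{(i)}_{\mu\nu}|^2\bigr)^{1/2}$ respectively. On $\Xi(z)$ the first two of these are $\OO(\phi_n)$ by \eqref{Gimu}, \eqref{simpleT2} and the bound $\Lambda_o\le 1$, while the Ward identity \eqref{eq_sgsq2} applied to $G^{(i)}$, together with the estimate $\im G^{(i)}_{\mu\mu}\lesssim \im m_{c}+\Lambda$, controls the last one by $\Psi_\Lambda$. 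The matrix-valued case $\|Z_{[\mu]}\|$ reduces entrywise to the same analysis on the $2\times 2$ form in \eqref{Zmu}; the only genuinely new ingredient is the cross-block sum $\sum_{i\in\cal I_1,j\in\cal I_2}G^{[\mu]}_{ij}X_{i\mu}Y_{j\overline\mu}$, which is a bilinear form in two independent families and is therefore handled directly by the second inequality of Lemma \ref{largedeviation}.

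For \eqref{Oestimate1} I would insert the resolvent expansions \eqref{resolvent3}, \eqref{eq_res22}, \eqref{eq_res23}, \eqref{eq_res21} to rewrite every off-diagonal quantity in the definition of $\Lambda_o$ as a product of diagonal (or diagonal-block) resolvent entries and a $Z$-type sum on a minor with one or two indices removed. On $\Xi(z)$ the prefactors $G_{ii}$, $G^{(i)}_{jj}$, $\pi^{-1}G_{[\mu\mu]}\pi^{-1}$ and $\pi^{-1}G^{[\mu]}_{[\nu\nu]}\pi^{-1}$ are all $\OO(1)$ by \eqref{Gimu} and \eqref{simpleT2}, so the argument carried out for $Z_i$ and $Z_{[\mu]}$ transfers verbatim to these new bilinear forms and yields the same $\phi_n+\Psi_\Lambda$ bound; in particular, the double-minor representation in \eqref{eq_res21} naturally produces the $\pi^{-1}\cdot\pi^{-1}$ sandwich required by the definition of $\Lambda_o$. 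For \eqref{Zestimate2}, when $\eta\ge 1$ the deterministic bound $\|G\|\lesssim 1$ from Lemmas \ref{SxxSyy} and \ref{lem op1} replaces the use of $\Xi(z)$; in this regime $\Psi_\Lambda\lesssim 1/\sqrt{n}\le\phi_n$, and the whole argument collapses to the bound $\phi_n$ without the $\Psi_\Lambda$ term.

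The main obstacle is the bookkeeping of the $2\times 2$ matrix structure: keeping the $\pi^{-1}$-conjugations in the definition of $\Lambda_o$ consistent through the expansions \eqref{eq_res21}--\eqref{eq_res23}, and verifying that the Ward identity \eqref{eq_sgsq2}, which involves the auxiliary matrix $\cal U$ and $\cal G_R$ rather than a clean imaginary part of $G_{\mu\mu}$, still delivers the sharp scale $\sqrt{(\im m_c+\Lambda)/(n\eta)}$ when restricted to sums over the index blocks $\cal I_3$ or $\cal I_4$.
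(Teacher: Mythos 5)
Your proposal is correct and follows essentially the same route as the paper's proof: the large-deviation bounds of Lemma \ref{largedeviation} applied to the quadratic/bilinear forms in \eqref{Zi}, \eqref{Zmu} and in the resolvent identities \eqref{resolvent3}, \eqref{eq_res21}--\eqref{eq_res23}, combined with the Ward-type estimates of Lemma \ref{lem_gbound0} and the $\OO(1)$ control of diagonal entries on $\Xi(z)$ via \eqref{Gimu}--\eqref{simpleT2}, and for $\eta\ge 1$ the high-probability operator-norm bound in place of $\Xi$. The point you flag as the main obstacle—that the $\cal U$-twisted Ward identity \eqref{eq_sgsq2} still gives the sharp scale $\Psi_\Lambda$—is resolved exactly as you intend, by the observation $\left\| \im \left( U(z)\pi(z)\right)\right\| =\OO(\im m_c(z))$ recorded in \eqref{simpleU}.
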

\begin{proof}
For $i\in \cal I_1$, applying Lemma \ref{largedeviation} to $Z_{i}$ in (\ref{Zi}), we get that on $\Xi$,
\begin{equation}\nonumber
\begin{split}
\left| Z_{i}\right| &=\left| \sum_{\mu ,\nu\in \mathcal I_{3}} G^{(i)}_{\mu\nu} \left(\frac{1}{n}\delta_{\mu\nu} - X_{i\mu}X_{i\nu}\right)\right| \prec  \phi_n +\frac{1}{n} \left( \sum_{\mu, \nu \in \cal I_3}  {\left| G_{\mu\nu}^{(i)}  \right|^2 }  \right)^{1/2} \\
& \prec  \phi_n + \frac{1}{n} \left[ \sum_{\nu\in \cal I_3} \left( 1+ \frac{ \im \left( U(z) G_{[\nu\nu]}^{(i)} \right)_{11} }{\eta}   \right]  \right)^{1/2} ,
\end{split}
\end{equation}
where we used \eqref{eq_sgsq2}, $U(z)$ is the $2\times 2$ matrix 
$$ U(z) := z^{1/2} \begin{pmatrix}  \overline z   &  \overline z^{1/2} \\  \overline z^{1/2}  &   \overline z  \end{pmatrix}    \begin{pmatrix}  z  & z^{1/2} \\ z^{1/2} &  z  \end{pmatrix}^{-1}  , $$
and the subscript ``$11$" means the $(1,1)$-th entry of the $2\times 2$ matrix. Then using (\ref{eqn_randomerror}), \eqref{simpleT2} and $n^{-1/2}\le \phi_n$, we obtain that 
\begin{equation}\label{estimate_Zi}
\begin{split}
{\mathbf 1}(\Xi(z))\left| Z_{i}\right| & \prec  \phi_n + \left(   \frac{ \im \left( U(z) \pi(z) \right)_{11} + \Lambda}{n\eta}   \right)^{1/2} \prec \phi_n + \Psi_\Lambda.
\end{split}
\end{equation}
Here we used that for $\pi$ in \eqref{def smallpi},
\begin{align*}
U(z) \pi(z) &= \frac{1}{2}\left( 1-c_1 - c_2 + \sqrt{(z-\lambda_-)(z-\lambda_+)}\right)  \begin{pmatrix}  \overline z^{1/2}   &  \overline z \\  \overline z  &   \overline z^{1/2}  \end{pmatrix} \\ 
&+ \frac{1}{2} \begin{pmatrix}  \overline z   &  \overline z^{1/2} \\  \overline z^{1/2}  &   \overline z  \end{pmatrix} \begin{pmatrix}  z^{1/2}  & -1 \\ -1 &  z^{1/2}  \end{pmatrix} \begin{pmatrix} 1-2c_1 & -z^{1/2} \\ -z^{1/2} & 1-2c_2 ,\end{pmatrix} 
\end{align*}
which, together with \eqref{Immc}, implies that 
\be\label{simpleU}\left\| \im \left( U(z) \pi(z) \right)\right\|  = \OO(\im m_c(z)).\ee
Similarly, for $i\in \cal I_2$, we can prove the same estimate \eqref{estimate_Zi} for $\mathbf 1(\Xi)|Z_i|$. Next we pick $\mu \in \cal I_3$. Applying Lemma \ref{largedeviation} again to $Z_{[\mu]}$ in (\ref{Zmu}) and using \eqref{eq_sgsq1}, we obtain that on $\Xi$,
\begin{equation}\label{estimate_Zmu} 
\begin{split}
\|Z_{[\mu]}\| &\prec \phi_n +\frac{1}{n} \left( \sum_{i,j \in \cal I_{1}\cup \cal I_2}  {\left| G_{ij}^{[\mu]}  \right|^2 }  \right)^{1/2} \\
& \prec \phi_n + \left( \frac {\im m^{[\mu]}_1(z)+\im m^{[\mu]}_2(z) }{n\eta}\right)^{1/2} \prec \phi_n +\Psi_\Lambda.
\end{split}
\ee
This completes the proof of \eqref{Zestimate1}.

Then we prove \eqref{Oestimate1}. For $i\ne j \in \mathcal I_1\cup \cal I_2$, using \eqref{resolvent3}, Lemma \ref{largedeviation} and Lemma \ref{lem_gbound0}, we obtain that on $\Xi$,
\begin{equation}
 |G_{ij}| \prec \phi_n +\frac{1}{n} \left( \sum_{\mu, \nu \in \cal I_3}  {\left| G_{\mu\nu}^{(i)}  \right|^2 }  \right)^{1/2} \prec \phi_n + \Psi_{\Lambda} . 
 \ee
For $\mu \ne \nu \in \mathcal I_3$, using \eqref{eq_res21}, Lemma \ref{largedeviation} and Lemma \ref{lem_gbound0}, we obtain that on $\Xi$,
$$ \left\| \pi^{-1} G_{[\mu\nu]}\pi^{-1} \right\| \prec  \phi_n +\frac{1}{n} \left( \sum_{i,j \in \cal I_{1}\cup \cal I_2}  {\left| G_{ij}^{[\mu]}  \right|^2 }  \right)^{1/2} \prec  \phi_n +\Psi_\Lambda.$$
For $i\in \cal I_1\cup \cal I_2$ and $\mu \in \mathcal I_3$, using \eqref{eq_res23}, Lemma \ref{largedeviation} and Lemma \ref{lem_gbound0}, we obtain that on $\Xi$,
\begin{align*} 
\left\| \pi^{-1} G_{[\mu],i} \right\| &\prec  \phi_n +\frac{1}{n} \left( \sum_{j \in \cal I_{1}\cup \cal I_2, \nu \in \cal I_3 \cup I_4}  {\left| G_{j\nu}^{(i\mu\overline \mu)}  \right|^2 }  \right)^{1/2} \\
&\prec  \phi_n + \left(  \frac{\im m_{1}^{(i\mu\overline \mu)} +\im m_{2}^{(i\mu\overline \mu)} }{n\eta}     \right)^{1/2} \prec \Psi_\Lambda.
\end{align*}
Thus we conclude \eqref{Oestimate1}.

The proof of \eqref{Zestimate2} is similar, except that when $\eta\ge 1$ we use $\mathbf 1(\eta\ge 1)\|G^{(\mathbb T)}(z)\| =\OO(1)$ with high probability by \eqref{op G1} and $\|\pi^{-1}(z)\|=\OO(1)$. For example, for the estimate \eqref{estimate_Zmu}, we have that for $\eta\ge 1$,
$$\|Z_{[\mu]}\|\prec \phi_n + \left( \frac {\im m^{[\mu]}_1(z)+\im m^{[\mu]}_2(z) }{n\eta}\right)^{1/2} \prec \phi_n + n^{-1/2} =\OO(\phi_n).$$
We omit the rest of the details.
\end{proof}

A key component of the proof for Proposition \ref{prop_diagonal} is an analysis of the self-consistent equation. Recall the equations in \eqref{selfm12}-\eqref{selfm32}.


\begin{lemma}\label{lemm_selfcons_weak}
Fix any constant $\e>0$. The following estimates hold uniformly in $z \in S(\epsilon)$: 
\begin{align}
& {\mathbf 1}(\Xi)\left(\left| m_1 + c_1 m_{3}^{-1} \right| +  \left| m_2 + c_2 m_{4}^{-1} \right| \right)\prec \phi_n+\Psi_\Lambda, \label{selfcons_lemm}\\
 & {\mathbf 1}(\Xi)\left| m_{3}^2 + \left[ (2c_1 -1)z - c_1+c_2\right]m_{3}  + c_1(c_1-1)z(z-1) \right| \prec \phi_n+\Psi_\Lambda. \label{selfcons_lemm12}
\end{align}
Moreover, we have the finer estimates
\begin{align}
&{\mathbf 1}(\Xi)\left(\left| m_1 + c_1 m_{3}^{-1} \right| +  \left| m_2+ c_2 m_{4}^{-1} \right| \right)\prec \left|\langle Z\rangle_1\right| + \left|\langle Z\rangle_2\right| +\phi_n^2+ \Psi^2_\Lambda, \label{selfcons_improved}\\
&{\mathbf 1}(\Xi)\left| m_{3}^2 + \left[ (2c_1 -1)z - c_1+c_2\right]m_{3} + c_1(c_1-1)z(z-1) \right| \nonumber\\
&\qquad \qquad \qquad\qquad \qquad \qquad \ \prec \left|\langle Z\rangle_1\right| + \left|\langle Z\rangle_2\right|+\left\|  [Z] \right\|  +\phi_n^2+ \Psi^2_\Lambda,\label{selfcons_improved2}
 \end{align}
where
\begin{equation}\label{def_Zaver}
\langle Z\rangle_1:=\frac{1}{n}\sum_{i\in \mathcal I_1} Z_i, \quad \langle Z\rangle_2:=\frac{1}{n}\sum_{j \in \mathcal I_2}  Z_j , \quad [Z]:= \frac1n\sum_{\mu\in \cal I_3}G_{[\mu\mu]}.
\end{equation}
Finally, there exists a constant $C_0>0$ such that 
\begin{align}
&{\mathbf 1}(C_0 \le \eta \le 2C_0)\left(\left| m_1 + c_1 m_{3}^{-1} \right| +  \left| m_2 + c_2 m_{4}^{-1} \right| \right) \prec \phi_n , \label{selfcons_lemm21}\\
&{\mathbf 1}(C_0 \le \eta \le 2C_0)\left| m_{3}^2 + \left[ (2c_1 -1)z - c_1+c_2\right]m_{3} + c_1(c_1-1)z(z-1) \right| \prec \phi_n .\label{selfcons_lemm22}
\end{align}

\end{lemma}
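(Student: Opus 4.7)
\textbf{Proof plan for Lemma \ref{lemm_selfcons_weak}.}
The strategy is to feed the Schur-complement identities \eqref{resolvent1} and \eqref{resolvent_Gii} into the definitions of $m_\al$ and invert the resulting approximate identities, treating the $Z$-variables as small perturbations thanks to Lemma \ref{Z_lemma}. I will work throughout on the event $\Xi(z)$, so that $G_{ii}\sim 1$ and $G_{[\mu\mu]}=\pi(z)+O((\log n)^{-1})$ as in \eqref{Gimu}, and so that the minor comparison \eqref{simpleT2} lets me freely replace $m_\al^{(i)}$ or $m_\al^{[\mu]}$ by $m_\al$ at the cost of $O(\Lambda_o^2/n)$.

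First, for $i\in\cal I_1$ (the case $i\in\cal I_2$ is identical), \eqref{resolvent1} gives
$G_{ii}^{-1}=-zn^{-10}-m_3^{(i)}+Z_i$, so a single-step Taylor expansion produces
$G_{ii}=-\frac{1}{m_3}+\frac{Z_i}{m_3^2}+O(|Z_i|^2+\Lambda_o^2/n+n^{-10})$ on $\Xi$. Averaging over $i\in \cal I_1$ yields
\[
m_1+\frac{c_1}{m_3}=\frac{\langle Z\rangle_1}{m_3^2}+O\!\left(\frac1n\sum_{i\in\cal I_1}|Z_i|^2+\Lambda_o^2\right),
\]
and analogously for $m_2+c_2/m_4$. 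Combined with the large-deviation bounds \eqref{Zestimate1} and the off-diagonal bound \eqref{Oestimate1} this already gives the rough \eqref{selfcons_lemm}, while keeping $\langle Z\rangle_1,\langle Z\rangle_2$ intact and bounding the $\frac1n\sum|Z_i|^2$ term by $\phi_n^2+\Psi_\Lambda^2$ produces the finer \eqref{selfcons_improved}. A subtle point to watch is that $Z_i$ in \eqref{Zi} contains a sum of products $X_{i\mu}X_{i\nu}G^{(i)}_{\mu\nu}$, so its square has diagonal $\mu=\nu$ terms contributing to $\phi_n^2$ and genuine off-diagonal terms that give $\Psi_\Lambda^2$ after applying \eqref{eq_sgsq1}--\eqref{eq_sgsq2}; this is where the $\phi_n^2$ rather than $\phi_n$ emerges.

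Next, for the $m_3$-equation, \eqref{resolvent_Gii} reads
\[
G_{[\mu\mu]}^{-1}=\Xi_{\mu}^{-1}+Z_{[\mu]},\qquad
\Xi_{\mu}^{-1}:=\frac{1}{z-1}\begin{pmatrix}1 & -z^{-1/2} \\ -z^{-1/2} & 1\end{pmatrix}-\begin{pmatrix}m_1^{[\mu]} & 0 \\ 0 & m_2^{[\mu]}\end{pmatrix}.
\]
On $\Xi$ the matrix $\Xi_\mu$ is a bounded perturbation of $\pi(z)$ by \eqref{def smallpi2}, hence invertible, and Neumann expansion gives
$G_{[\mu\mu]}=\Xi_\mu-\Xi_\mu Z_{[\mu]}\Xi_\mu+O(\|Z_{[\mu]}\|^2)$.
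Averaging over $\mu\in\cal I_3$, the $(1,1)$-entry yields a scalar identity expressing $m_3$ in terms of $m_1,m_2$ and the average $[Z]$, up to $O(\Lambda_o^2+\|Z_{[\mu]}\|^2_{\text{avg}})$. Combining this with the two scalar equations from Step~1 gives a closed set approximating \eqref{selfm12} and \eqref{selfm3}; substituting $m_4=m_3-(1-z)(c_1-c_2)+O(\theta+\Lambda_o^2)$ (which follows from \eqref{m3-4}) and eliminating $m_1,m_2$ by exactly the algebra described after \eqref{selfm32} in the excerpt converts these into the quadratic equation \eqref{selfcons_lemm12}. Retaining the $\langle Z\rangle_\al$ and $[Z]$ contributions explicitly throughout yields \eqref{selfcons_improved2}.

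Finally, for the regime $C_0\le\eta\le 2C_0$ we no longer condition on $\Xi$. Instead, the deterministic operator-norm bounds of Lemma \ref{lem op} force $\|G\|=O(1)$, so $\Xi(z)$ in fact holds deterministically up to errors the argument can absorb; moreover \eqref{Zestimate2} gives $|Z_i|+\|Z_{[\mu]}\|+\Lambda_o\prec\phi_n$. Repeating the above expansion with these stronger inputs immediately produces \eqref{selfcons_lemm21} and \eqref{selfcons_lemm22}. The main technical obstacle I anticipate is the algebraic step of translating the matrix identity for $G_{[\mu\mu]}$ into a scalar quadratic for $m_3$: one has to track which $2\times 2$ entries contribute to $[Z]$ versus to the $\Lambda_o^2$ error, and verify that the algebraic manipulation \eqref{selfm12}--\eqref{selfm32} from the excerpt carries through stably when each equation is only satisfied up to a controlled error.
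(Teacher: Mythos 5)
Your treatment of \eqref{selfcons_lemm}--\eqref{selfcons_improved2} follows the paper's own route essentially step for step: expand $G_{ii}^{-1}$ and $G_{[\mu\mu]}^{-1}$ by Schur complement as in \eqref{self_Gii}--\eqref{self_Gmu}, Taylor/Neumann expand around $-m_3$ (resp.\ $\wt\pi^{-1}$) on the event $\Xi$, average, and then run the algebra of \eqref{selfm12}--\eqref{selfm32} with the $\langle Z\rangle_1,\langle Z\rangle_2,[Z]$ terms kept explicit and all other errors absorbed into $\phi_n^2+\Psi_\Lambda^2$ via Lemma \ref{Z_lemma} and the minor comparison. The small slips there (the sign in front of $Z_i/m_3^2$, and the claim that removing one index costs $\OO(\Lambda_o^2/n)$ rather than $\OO(\Lambda_o^2)$) are immaterial, since only $\Lambda_o^2\prec\phi_n^2+\Psi_\Lambda^2$ is ever used.

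The genuine gap is in the regime $C_0\le\eta\le 2C_0$, i.e.\ \eqref{selfcons_lemm21}--\eqref{selfcons_lemm22}. Your justification, ``Lemma \ref{lem op} forces $\|G\|=\OO(1)$, so $\Xi(z)$ in fact holds deterministically up to absorbable errors,'' does not stand: first, $\|G\|=\OO(1)$ at $\eta\asymp 1$ is only a high-probability statement (it needs Lemma \ref{SxxSyy}), and second, boundedness of $\|G\|$ gives upper bounds on resolvent entries but says nothing about their closeness to $\Pi$, which is what the event $\Xi$ asserts. What the expansion actually needs in this regime, and what cannot be taken for granted, are the nondegeneracy bounds \eqref{largeeta1}--\eqref{largeeta2}: $|m_3|^{-1}+|m_4|^{-1}=\OO(1)$ and $|z^{-1}-(m_1+m_2)+(z-1)m_1m_2|^{-1}=\OO(1)$ (equivalently $\|\wt\pi\|=\OO(1)$); without them one cannot invert $1/G_{ii}=-m_3+\epsilon_i$ nor the $2\times2$ relation for $G_{[\mu\mu]}$. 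The paper devotes the second half of its proof to exactly this point, using the Stieltjes-transform positivity $\im m_1,\im m_2\ge 0$ to get $|1/(z-1)-m_{1,2}|\gtrsim C_0^{-1}$ and then taking $C_0$ large; your sketch skips this step entirely. A repaired version of your idea does exist: from $|G_{ii}|\le\|G\|\le C$ and $1/G_{ii}=-m_3+\epsilon_i$ with $|\epsilon_i|\prec\phi_n$ (via \eqref{Zestimate2} and \eqref{resolvent1}) one gets $|m_3|\gtrsim1$ with high probability, and from $\|G_{[\mu\mu]}\|\le\|G\|\le C$ together with $\wt\pi=(G_{[\mu\mu]}^{-1}-\epsilon_\mu)^{-1}$ one gets $\|\wt\pi\|=\OO(1)$ --- but these lower/upper bounds have to be derived explicitly; they do not follow from asserting that $\Xi$ holds.
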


\begin{proof}
We first prove (\ref{selfcons_improved}) and \eqref{selfcons_improved2}, from which (\ref{selfcons_lemm}) and \eqref{selfcons_lemm12} follow due to (\ref{Zestimate1}). By (\ref{resolvent1}), (\ref{Zi}) and (\ref{resolvent_Gii}), we have that for $i\in \cal I_1$, $j\in \cal I_2$ and $\mu \in \cal I_3$,
\begin{equation}\label{self_Gii}
\frac{1}{{G_{ii} }}= - m_3 + \epsilon_i, \quad \frac{1}{{G_{jj} }}= - m_4 + \epsilon_j,
\end{equation}
and
\begin{equation}\label{self_Gmu}
G_{[\mu\mu]}^{-1} =  \frac{1}{z-1}\begin{pmatrix}1 & -z^{-1/2} \\ -z^{-1/2} & 1 \end{pmatrix} - \left( {\begin{array}{*{20}c}
   { m_{1}  } & {0}  \\
   {0} & { m_{2} } \end{array}} \right) + \epsilon_\mu,
\end{equation}
where 
$$\epsilon_i := Z_i + \left(m_3 - m_3^{(i)}\right)+\OO(n^{-10}), \quad \epsilon_j := Z_j + \left(m_4 - m_4^{(j)}\right)+\OO(n^{-10}),$$
and
$$ \epsilon_\mu := Z_{\mu} + \left( {\begin{array}{*{20}c}
   { m_{1} } & {0}  \\
   {0} & { m_{2} } \end{array}} \right) -\left( {\begin{array}{*{20}c}
   { m_{1}^{[\mu]} } & {0}  \\
   {0} & { m_{2}^{[\mu]} } \end{array}} \right).$$
By (\ref{simpleT}), \eqref{Zestimate1} and (\ref{Oestimate1}), we have 
\begin{equation}\label{erri}
\mathbf 1(\Xi)\left(|\epsilon_i | +|\epsilon_j | + \|\epsilon_\mu\| \right)\prec \phi_n + \Psi_\Lambda,
\end{equation}
and
\begin{equation}\label{high_err0}
\mathbf 1(\Xi)\left( |m_1 -m_1^{[\mu]}|+ |m_2 -m_2^{[\mu]}|+|m_3 - m_3^{(i)}| + |m_4 - m_4^{(j)}|  \right) \prec \phi_n^2 + \Psi_\Lambda^2.
\end{equation}
Now using \eqref{self_Gii}, \eqref{erri}, \eqref{high_err0}, \eqref{absmc} and the definition of $\Xi$, we can obtain that for $i\in \cal I_1$ and $j\in \cal I_2$,
\be\label{Gii0}
\begin{split}
& \mathbf 1(\Xi)G_{ii}=\mathbf 1(\Xi)\left(- \frac{1}{m_3} - \frac{Z_i}{m_3^2}  +\OO_\prec\left(\phi_n^2 + \Psi_\Lambda^2\right)\right), \\ 
& \mathbf 1(\Xi)G_{jj}=\mathbf 1(\Xi)\left(- \frac{1}{m_4} - \frac{Z_j}{m_4^2}  +\OO_\prec\left(\phi_n^2 + \Psi_\Lambda^2\right)\right).
\end{split}
\ee
Taking average $\frac{1}{n}\sum_{i\in \cal I_1}$ and $\frac{1}{n}\sum_{j\in \cal I_2}$, we get
\be\label{Gii}
\begin{split}
& \mathbf 1(\Xi)m_1 =\mathbf 1(\Xi)\left( -\frac{c_1}{m_3}  - \frac{\langle Z\rangle_1}{m_3^{2}}  +\OO_\prec\left(\phi_n^2+\Psi_\Lambda^2\right)\right), \\
& \mathbf 1(\Xi)m_2 =\mathbf 1(\Xi)\left( -\frac{c_2}{m_4} -\frac{\langle Z\rangle_2}{ m_4^{2}}  + \OO_\prec\left(\phi_n^2+\Psi_\Lambda^2\right)\right),
\end{split}
\ee
which proves \eqref{selfcons_improved}. On the other hand, using (\ref{self_Gmu}), \eqref{erri}, \eqref{high_err0} and the definition of $\Xi$, we obtain that for $\mu \in \cal I_3$,
\be\label{Gmumu0}
\mathbf 1(\Xi)G_{[\mu\mu]} =\mathbf 1(\Xi)\left( \wt \pi^{-1} + \e_\mu \right)^{-1} =\mathbf 1(\Xi)\left( \wt \pi  - \wt \pi \e_\mu \wt \pi  + \OO_\prec(\phi_n^2+\Psi_\Lambda^2)\right) .
\ee
where we define $\wt\pi(z)$ as
\be\label{def smallpir}
\wt\pi(z)^{-1} =\frac{1}{z-1} \begin{pmatrix}  1 - (z-1)m_{1} & - z^{-1/2} \\ - z^{-1/2}   &  1 - (z-1)m_{2} \end{pmatrix} . 
\ee
(Note $\wt\pi$ is actually a random version of $\pi$ in \eqref{def smallpi2}.) On $\Xi$, we have the estimate $\pi^{-1} \wt\pi = 1+ \OO(\Lambda) = 1+ \OO( (\log n)^{-1})$. Hence taking average of the $(1,1)$-th entry of \eqref{Gmumu0} over $\mu$, we get that
\be\label{Gmumu}
\begin{split}
\mathbf 1(\Xi)m_3=\mathbf 1(\Xi)\left[\frac{ 1-(z-1)m_{2}(z)}{z^{-1} - (m_{1}(z)+m_{2}(z)) + (z-1)m_{1}(z)m_{2}(z)} \right. \\
\left. - \left( \wt\pi [Z] \wt\pi\right)_{11} +\OO_\prec\left(\phi_n^2+\Psi_\Lambda^2\right)\right].
\end{split}
\ee
The plugging \eqref{Gii} into \eqref{Gmumu}, and using \eqref{absmc} and the definition of $\Xi$, we can obtain that
\be\label{end_rep}
\begin{split}
\mathbf 1(\Xi)\left[z^{-1}m_3 + m_3\left(\frac{c_1}{m_3} + \frac{c_2}{m_4} \right) +(c_1-1)(z-1)\frac{c_2}{m_4} - 1 \right] \\
\prec \left|\langle Z\rangle_1\right| + \left|\langle Z\rangle_2\right|+\left\|  [Z] \right\|  + \phi_n^2+\Psi_\Lambda^2 .
\end{split}
\ee
Then using \eqref{m3-4} and rearranging terms, we can obtain \eqref{selfcons_improved2}.

Then we prove (\ref{selfcons_lemm21}) and (\ref{selfcons_lemm22}). When $\eta\ge C_0$, by (\ref{Zestimate2}) we have 
$$\max_{i \in \cal I_1\cup \cal I_2} |Z_{i}| + \max_{\mu\in \cal I_3}\|Z_{[\mu]}\|+\Lambda_o \prec \phi_n  . $$
On the other hand, applying \eqref{op G1} and Lemma \ref{SxxSyy} to \eqref{self_Gii} and \eqref{self_Gmu}, we obtain that   
$$ \max_{i \in \cal I_1\cup \cal I_2}  {G^{-1}_{ii}} + \max_{\mu\in \cal I_3}  \|G_{[\mu\mu]}^{-1}\|=\OO(1)$$
with high probability when $\eta\ge C_0$. Together with \eqref{resolvent8} and \eqref{eq_res3}, we get that
\be\label{high_err}  |m_1 -m_1^{[\mu]}|+ |m_2 -m_2^{[\mu]}|+|m_3 - m_3^{(i)}| + |m_4 - m_4^{(j)}|  \prec \Lambda_o^2 \prec \phi_n^2. \ee
Moreover, we still have 
\begin{equation}\label{epsilonL}
\mathbf 1(C_0 \le \eta \le 2C_0) ( |\epsilon_i | +|\epsilon_j | + \|\epsilon_\mu\| ) \prec \phi_n .
\end{equation}
Then going through the previous argument on event $\Xi$, one can see that in order to prove \eqref{selfcons_lemm21} and \eqref{selfcons_lemm22}, it suffices to bound  $m_3^{-1}$, $m_4^{-1}$ and $\|\wt\pi\|$ from above. In particular, it suffices to prove the following bounds: with high probability,
\be\label{largeeta1} 
\mathbf 1(C_0 \le \eta \le 2C_0)\left[|m_3|^{-1} + |m_4|^{-1} \right] \le C  
\ee
and
\be\label{largeeta2} 
\mathbf 1(C_0 \le \eta \le 2C_0) |z^{-1} - (m_{1}+m_{2}) + (z-1)m_{1}m_{2}|^{-1}  \le C  
\ee
for some constant $C>0$.

First using \eqref{op G1} and Lemma \ref{SxxSyy}, we obtain that for $C_0 \le \eta \le 2C_0$, with high probability,
\begin{equation}\label{estimate_m2L0}
|m_1| + |m_2| + |m_3|+|m_4| \le \frac{C}{C_0}, 
\end{equation} 
for some constant $C>0$ that is independent of $C_0$. 
Using the spectral decomposition (\ref{spectral1}) and \eqref{simple obs}, it is easy to see that $\im m_1(z) \ge 0$ and $\im m_2(z) \ge 0$. Hence we have
\be\label{extraz-1} \left| \frac{1}{z-1} - m_1\right| \ge -\im \frac1{z-1} \ge c C_0^{-1}, \quad \left| \frac{1}{z-1} - m_2\right| \ge -\im \frac1{z-1} \ge c C_0^{-1},\ee
for some constant $c>0$ that is independent of $C_0$. Thus we obtain that 
$$  \left|\left(\frac{1}{z-1} - m_1\right) \left(\frac{1}{z-1} - m_2\right) - \frac{1}{z(z-1)^2}\right| \ge c^2 C_0^{-2} - \frac{1}{|z||z-1|^2} \ge \frac12c^2 C_0^{-2} $$
as long as $C_0$ is taken large enough, which then implies \eqref{largeeta2}. Now  by \eqref{def smallpir}, \eqref{largeeta2} and \eqref{estimate_m2L0}, we know that $\wt\pi(z)=\OO(1)$ with high probability. Thus as in \eqref{Gmumu}, we can derive from \eqref{Gmumu0} and \eqref{epsilonL} that 
$$ m_3= \frac{ 1-(z-1)m_{2}(z)}{z^{-1} - (m_{1}(z)+m_{2}(z)) + (z-1)m_{1}(z)m_{2}(z)} +\OO_\prec\left(\phi_n\right).$$
Then using \eqref{extraz-1} and \eqref{largeeta2}, we obtain that with high probability, $|m_3| \ge c$ for some constant $c>0$. Similarly, we can obtain the same bound for $m_4$. This gives \eqref{largeeta1}.
\end{proof}

The following lemma gives the stability of the equation $ f_3(u,z)=0$, where 
\be\nonumber
f_3(u, z) := u^2(z) + \left[ (2c_1 -1)z - c_1+c_2\right]u(z) + c_1(c_1-1)z(z-1) .
\ee
Roughly speaking, it states that if $f(m_{3}(z),z)$ is small and $m_3(\wt z)-m_{3c}(\wt z)$ is small for $\Im\, \wt z \ge \Im\, z$, then $m_{3}(z)-m_{3c}(z)$ is small. For an arbitrary $z\in S(\e)$, we define the discrete set
\begin{align*}
L(z):=\{z\}\cup \{z'\in S(c_0,C_0, \e): \text{Re}\, z' = \text{Re}\, z, \text{Im}\, z'\in [\text{Im}\, z, \e^{-1}]\cap (n^{-100}\mathbb N)\} .
\end{align*}
Thus, if $\text{Im}\, z \ge \e^{-1}$, then $L(z)=\{z\}$; if $\text{Im}\, z<\e^{-1}$, then $L(z)$ is a 1-dimensional lattice with spacing $n^{-100}$ plus the point $z$. 

\begin{lemma}\label{stability}
Fix a constant $\epsilon>0$. The self-consistent equation $f_3(u,z)=0$ is stable on $S(\epsilon)$ in the following sense. Suppose the $z$-dependent function $\delta$ satisfies $n^{-2} \le \delta(z) \le (\log n)^{-1}$ for $z\in S(\epsilon)$ and that $\delta$ is Lipschitz continuous with Lipschitz constant $\le n^2$. Suppose moreover that for each fixed $E$, the function $\eta \mapsto \delta(E+\ii\eta)$ is non-increasing for $\eta>0$. Suppose that $ u_3: S(\epsilon)\to \mathbb C$ is the Stieltjes transform of a measure $ \mu_3$ with $ \mu_3(\R)=\OO(1)$. Let $z\in S(\epsilon)$ and suppose that for all $z'\in L(z)$ we have 
\begin{equation}\label{Stability0}
\left| f_3(z',  u_3)\right| \le \delta(z').
\end{equation}
Then we have
\begin{equation}
\left|u_3(z)-m_{3c}(z)\right|\le \frac{C\delta}{\sqrt{\kappa+\eta+\delta}},\label{Stability1}
\end{equation}
for some constant $C>0$ independent of $z$ and $n$, where $\kappa$ is defined in (\ref{KAPPA}). 
\end{lemma}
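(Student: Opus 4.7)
The plan is to exploit the fact that $f_3(\cdot, z)$ is a quadratic in $u$ whose two roots are exactly $m_{3c}(z)$ and the companion $\tilde m_{3c}(z) := \frac{1}{2}\bigl[(1-2c_1)z + c_1 - c_2 - \sqrt{(z-\lambda_-)(z-\lambda_+)}\bigr]$, so that $f_3(u,z) = (u - m_{3c}(z))(u - \tilde m_{3c}(z))$. By \eqref{add eq34} and Lemma \ref{lem_mbehavior}, the separation $\Delta(z) := m_{3c}(z) - \tilde m_{3c}(z) = \sqrt{(z-\lambda_-)(z-\lambda_+)}$ between the two roots satisfies $|\Delta(z)| \sim \sqrt{\kappa+\eta}$ uniformly on $S(\epsilon)$.

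Writing $v(z) := u_3(z) - m_{3c}(z)$, the hypothesis \eqref{Stability0} rewrites as $|v(v + \Delta)| \le \delta$ at every $z' \in L(z)$. Completing the square in $v$ around $-\Delta/2$ and taking absolute values yields the standard dichotomy: at each $z' \in L(z)$, either $|v(z')| \le C\delta(z')/|\Delta(z')|$ (``close'' branch, $u_3$ near $m_{3c}$) or $|v(z') + \Delta(z')| \le C\delta(z')/|\Delta(z')|$ (``far'' branch, $u_3$ near $\tilde m_{3c}$). If $|\Delta(z')|^2 \lesssim \delta(z')$ the two branches merge into the uniform consequence $|v(z')| \lesssim \sqrt{\delta(z')}$, which is already compatible with \eqref{Stability1}. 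So the real task is to exclude the far branch in the complementary regime $\delta(z') \ll \kappa' + \eta'$.

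I would do this by bootstrapping down the lattice $L(z)$, starting at its top point $z_{\max} := \operatorname{Re} z + \ii\epsilon^{-1}$. At $z_{\max}$ the imaginary part is macroscopic, and the close branch can be verified directly from the Stieltjes-type structure of $u_3$ and the explicit formulas for $m_{3c}, \tilde m_{3c}$: the only branch compatible with $u_3(z_{\max})$ being the Stieltjes transform of a bounded-mass measure on $\R$ is the one on the $m_{3c}$ side, since $\tilde m_{3c}(z_{\max})$ is separated from it by at least $c|\Delta(z_{\max})|$. To pass from one lattice point to the next, both $u_3$ and $m_{3c}$ are holomorphic on $\C_+$ with $|\partial_\eta u_3|,|\partial_\eta m_{3c}| \le C\eta^{-2} \le Cn^{2-2\epsilon}$ on $S(\epsilon)$, so that the $n^{-100}$ lattice spacing yields $|v(z'')-v(z')| \le n^{-98-2\epsilon}$, which is much smaller than the branch separation $|\Delta|\gtrsim \sqrt{\kappa+\eta} \ge n^{-1/2+\epsilon/2}$. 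The monotonicity hypothesis on $\eta \mapsto \delta(E + \ii\eta)$ guarantees that the close-branch threshold $C\delta/|\Delta|$ does not expand as $\eta$ decreases along the lattice, so the two branches never collide and the close branch persists all the way down to $z$. Combining this with the merged-case bound in the complementary regime then produces the unified estimate $|v(z)| \le C\delta/\sqrt{\kappa+\eta+\delta}$ claimed in \eqref{Stability1}.

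The main obstacle will be executing rigorously the base step of the bootstrap at $z_{\max}$: one must use the abstract hypothesis that $u_3$ is a Stieltjes transform of a measure of bounded mass, together with the explicit formulas \eqref{m3c} for $m_{3c}$ and for $\tilde m_{3c}$, to place $u_3(z_{\max})$ unambiguously on the correct side of the branch split. Once that is done, the rest of the argument is a routine continuity/Lipschitz propagation, with the monotonicity of $\delta$ and the large Lipschitz budget afforded by $|\Delta| \gg n^{-100}\eta^{-2}$ doing all the work.
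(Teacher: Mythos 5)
Your proposal is essentially the paper's own (cited) argument: the paper disposes of this lemma by invoking the standard quadratic-stability method of \cite[Lemma 4.5]{isotropic} and \cite[Appendix A.2]{Anisotropic}, which is exactly your factorization $f_3(u,z)=(u-m_{3c})(u-\tilde m_{3c})$ with root separation $|\Delta|\gtrsim\sqrt{\kappa+\eta}$, the close/far dichotomy, and a continuity bootstrap down the lattice $L(z)$ from the top point. When writing it out, note two touch-ups that do not change the structure: the base step is fixed by $\im u_3>0$ (Stieltjes transform) against $\im \tilde m_{3c}(z)\le -c_1\eta$ (using $\im\sqrt{(z-\lambda_-)(z-\lambda_+)}\ge\eta$), and the monotonicity of $\delta$ is used to make $\delta/(\kappa+\eta)$ monotone in $\eta$ so that the separated and merged regimes are ordered along the lattice --- your claim that the threshold $C\delta/|\Delta|$ ``does not expand as $\eta$ decreases'' is backwards (it grows), and likewise $m_{3c}$ is not a Stieltjes transform so its derivative bound should be the explicit $\OO(1+(\kappa+\eta)^{-1/2})$ rather than $\eta^{-2}$, which still suffices against the $n^{-100}$ spacing.
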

\begin{proof}
This lemma can proved with the same method as in e.g. \cite[Lemma 4.5]{isotropic} and \cite[Appendix A.2]{Anisotropic}. The only inputs are the form of the function $f_3$ and Lemma \ref{lem_mbehavior}. 
\end{proof}

Note that by Lemma \ref{stability}, (\ref{selfcons_lemm21}), (\ref{selfcons_lemm22}) and \eqref{m3-4}, we immediately get that
\begin{equation}\label{average_L}
\mathbf 1(C_0 \le \eta \le 2C_0 )|m_\al(z) - m_{\al c}(z)| \prec \phi_n, \quad \al=1,2,3,4.
\end{equation}
From (\ref{Zestimate2}), we obtain the off-diagonal estimate
\begin{equation}\label{offD_L}
\mathbf 1(C_0 \le \eta \le 2C_0 )\Lambda_o(z) \prec \phi_n.
\end{equation}
Plugging \eqref{Zestimate2} and \eqref{average_L} into (\ref{self_Gii}), we get that 
\begin{equation}\label{diag_L1}
\mathbf 1(C_0 \le \eta \le 2C_0 ) \max_{i\in \cal I_1\cup \cal I_2}\left|G_{ii} - \Pi_{ii}\right|  \prec \phi_n.
\end{equation}
Plugging \eqref{Zestimate2} and \eqref{average_L} into  \eqref{self_Gmu}, we obtain that for $ C_0 \le \eta \le 2C_0 $,
\begin{equation}\label{diag_L}
G^{-1}_{[\mu\mu]}(z) =\pi^{-1}(z) + \OO_\prec(\phi_n) \Rightarrow  \max_{\mu\in \cal I_3}\left\|\pi^{-1}\left(G_{[\mu\mu]} - \pi\right)\pi^{-1}\right\|  \prec \phi_n. 
\end{equation}
Starting from these initial resolvent estimates, using a standard continuity (in $z$) argument, the self-consistent estimates \eqref{selfcons_lemm}-\eqref{selfcons_lemm12}, and Lemma \ref{Z_lemma}, we can prove the following weak version of \eqref{entry_law}. 


\begin{lemma}[Weak entrywise local law]\label{alem_weak} 
For any small constant $\e>0$, we have 
\begin{equation} \label{localweakm}
\Lambda(z) \prec \phi_n^{1/2}+(n\eta)^{-1/4},
\end{equation}
uniformly in $z \in S(\epsilon)$.
\end{lemma}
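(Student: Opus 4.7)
The plan is to establish \eqref{localweakm} by a standard continuity/bootstrap argument in $\eta=\Im z$, propagating downward from $\eta\asymp 1$ to $\eta\ge n^{-1+\e}$ along a fine lattice. The two things that must be carried along simultaneously are (a) membership in the event $\Xi(z)=\{\Lambda(z)\le(\log n)^{-1}\}$, which is what unlocks Lemma \ref{lemm_selfcons_weak} and Lemma \ref{Z_lemma}, and (b) the quantitative bound $\Lambda(z)\prec\phi_n^{1/2}+(n\eta)^{-1/4}$. The initial data is already available: by \eqref{offD_L}, \eqref{diag_L1} and \eqref{diag_L}, at any $z$ with $C_0\le\eta\le 2C_0$ one has $\Lambda(z)\prec\phi_n$, which is much smaller than both $(\log n)^{-1}$ and the target bound.

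To decrease $\eta$ I would work on a lattice $\mathscr L\subset S(\e)$ of mesh $n^{-K}$ for a sufficiently large constant $K$. The deterministic bound $\|\wh G(z)\|\le Cn^{10}/\eta$ from \eqref{op G} together with $\partial_z\wh G=\wh G^2$ yields a Lipschitz constant of order $n^{\OO(1)}$ for every entry of $\wh G$, hence also for $\Lambda$, on $\{\eta\ge n^{-1+\e}\}$. Taking $K$ large, the change in $\Lambda$ between adjacent lattice points is then much smaller than any bound we are chasing, so all high-probability events extend between lattice points by a purely deterministic continuity argument, and a union bound over the $n^{\OO(1)}$ lattice points lets me carry the $\prec$-estimates uniformly.

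The inductive step at a lattice point $z$ assumes the target bound has been established at every $z'\in L(z)$ with $\Im z'>\Im z$, so in particular $\Xi(z')$ holds throughout $L(z)$ with high probability. Combining \eqref{selfcons_lemm12} with \eqref{Zestimate1} gives the deterministic-in-$z'$ control
\[
|f_3(m_3(z'),z')|\prec \phi_n+\Psi_\Lambda(z')\quad\text{for all }z'\in L(z),
\]
and applying Lemma \ref{stability} with the (bootstrapped, hence deterministic) majorant $\delta(z')\asymp\phi_n+(n\Im z')^{-1/2}$ yields
\[
|m_3(z)-m_{3c}(z)|\prec\frac{\phi_n+\Psi_\Lambda(z)}{\sqrt{\kappa+\eta+\phi_n+\Psi_\Lambda(z)}}\prec\sqrt{\phi_n+\Psi_\Lambda(z)}.
\]
The same quality of bound for $m_1,m_2,m_4$ then follows from \eqref{m3-4} and \eqref{selfcons_lemm}, and from \eqref{Gii0}, \eqref{Gmumu0} and the off-diagonal bound \eqref{Oestimate1} one concludes $\Lambda(z)\prec\sqrt{\phi_n+\Psi_\Lambda(z)}$. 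Finally, inserting $\Psi_\Lambda(z)\lesssim(n\eta)^{-1/2}+\sqrt{\Lambda(z)/(n\eta)}+(n\eta)^{-1}$ from \eqref{eq_defpsitheta} (using $\im m_c=\OO(1)$) and absorbing $\sqrt{\Lambda/(n\eta)}\le(n\eta)^{-1/2}$ via $\Lambda\le(\log n)^{-1}$ gives $\Lambda(z)^2\prec\phi_n+(n\eta)^{-1/2}$, i.e.\ $\Lambda(z)\prec\phi_n^{1/2}+(n\eta)^{-1/4}$, closing the induction.

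The main obstacle I anticipate is bookkeeping, not analysis: the self-consistent improvement only works \emph{on} $\Xi$, so one must verify that the a priori bound $\Lambda\le(\log n)^{-1}$ is maintained all the way down to $\eta=n^{-1+\e}$ before the quantitative improvement is available. The Lipschitz-continuity/lattice mechanism described above is the standard tool for this, but some care is needed in checking that the hypotheses of Lemma \ref{stability} hold \emph{simultaneously} at every $z'\in L(z)$ and that $\delta$ admits a deterministic Lipschitz majorant satisfying the regularity required there; once this is done, the remaining algebra is the short squaring calculation indicated above.
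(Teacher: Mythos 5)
Your proposal is correct and is essentially the paper's proof: the paper simply invokes the standard continuity/bootstrap argument (citing \cite{isotropic,Semicircle,EKYY1}) with exactly the inputs you use, namely Lemma \ref{Z_lemma}, Lemma \ref{lemm_selfcons_weak}, Lemma \ref{stability}, and the initial estimates \eqref{average_L}--\eqref{diag_L} in the regime $C_0\le\eta\le 2C_0$. Your lattice/Lipschitz bookkeeping and the final squaring step $\Lambda\prec\sqrt{\phi_n+\Psi_\Lambda}$ with $\Psi_\Lambda\lesssim(n\eta)^{-1/2}$ on $\Xi$ are precisely the standard details the paper leaves to the cited references.
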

\begin{proof}
One can prove this lemma using a continuity argument as in e.g. \cite[Section 4.1]{isotropic}, \cite[Section 5.3]{Semicircle} or \cite[Section 3.6]{EKYY1}. The key inputs are Lemmas \ref{Z_lemma}, Lemma \ref{lemm_selfcons_weak}, Lemma \ref{stability}, and the estimates (\ref{average_L})-(\ref{diag_L}) in the $C_0 \le \eta \le 2C_0$ case. All the other parts of the proof are essentially the same. 
\end{proof}

To get the strong entrywise local law as in \eqref{entry_law}, we need stronger bounds on $\langle Z\rangle_1$, $\langle Z\rangle_2$ and $[Z]$ in \eqref{def_Zaver}. They follow from the following {\it{fluctuation averaging lemma}}. 

\begin{lemma}[Fluctuation averaging] \label{abstractdecoupling}
Suppose $\Phi$ and $\Phi_o$ are positive, $n$-dependent deterministic functions on $S(\epsilon)$ satisfying $n^{-1/2} \le \Phi, \Phi_o \le n^{-c}$ for some constant $c>0$. Suppose moreover that $\Lambda \prec \Phi$ and $\Lambda_o \prec \Phi_o$. Then for all $z \in S( \epsilon)$ we have
\begin{equation}\label{flucaver_ZZ}
\left|\langle Z\rangle_1\right| + \left|\langle Z\rangle_2\right|+\left\|  [Z] \right\|  \prec   {\Phi _o^2}.
\end{equation}
\end{lemma}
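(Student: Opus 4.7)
The plan is to prove the moment bound $\mathbb{E}|\langle Z\rangle_\alpha|^{2p} \prec \Phi_o^{4p}$ for $\alpha=1,2$ and $\mathbb{E}\|[Z]\|^{2p} \prec \Phi_o^{4p}$ for any fixed $p\in \mathbb{N}$, and then conclude \eqref{flucaver_ZZ} by Markov's inequality together with the fact that $2p$ can be chosen arbitrarily large. This is a standard fluctuation-averaging scheme in the spirit of Erdős--Knowles--Yau--Yin (see e.g.\ \cite{EKYY1,isotropic}), but adapted to our linearized block setting where the $\mathcal{I}_3$-indexed variables are $2\times 2$ matrices.

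First, I would handle $\langle Z\rangle_1 = n^{-1}\sum_{i\in\mathcal{I}_1}Z_i$. Using the definition $Z_i=(1-\mathbb{E}_i)G_{ii}^{-1}$ and the representation \eqref{Zi}, expand
\begin{equation*}
\mathbb{E}|\langle Z\rangle_1|^{2p} = n^{-2p}\sum_{i_1,\dots,i_{2p}\in \mathcal{I}_1}\mathbb{E}\bigl[Z_{i_1}\cdots Z_{i_p}\overline{Z_{i_{p+1}}}\cdots \overline{Z_{i_{2p}}}\bigr].
\end{equation*}
The key trick is to apply the resolvent identity \eqref{resolvent8} repeatedly so as to replace each $Z_{i_k}^{(\mathbb{T}_k)}$ by $Z_{i_k}^{(\mathbb{T}_k\cup\{i_\ell\}_{\ell\ne k})}$, paying a cost of order $\Lambda_o^2 \prec \Phi_o^2$ per resolvent expansion (cf.\ \eqref{simpleT}). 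After this reshuffling, if some index $i_k$ is \emph{not repeated} among the other indices, then $\mathbb{E}_{i_k}$ acts trivially on all factors other than the $Z_{i_k}$-factor itself, and $\mathbb{E}_{i_k}Z_{i_k}^{(\mathbb{T})}=0$ kills the term. So only terms in which every index has multiplicity at least $2$ survive, yielding at most $n^p$ surviving index configurations. Combined with the bound $|Z_i|\prec \phi_n+\Psi_\Lambda \prec \Phi_o$ from Lemma \ref{Z_lemma} and the cost $\Phi_o^2$ per expansion step (with roughly $2p$ expansions needed to fully decouple the $Z$'s), one arrives at $\mathbb{E}|\langle Z\rangle_1|^{2p} \prec \Phi_o^{4p}$. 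The argument for $\langle Z\rangle_2$ is identical after swapping $X\leftrightarrow Y$ and $\mathcal{I}_1\leftrightarrow \mathcal{I}_2$.

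Next I would treat $\|[Z]\|$. Here the subtlety is that the ``centered'' quantity is the matrix-valued $Z_{[\mu]}$ from \eqref{resolvent_Gii}--\eqref{Zmu}, which involves off-diagonal coupling of $X$ and $Y$. The strategy is the same: decompose each component (e.g.\ the $(1,1)$, $(1,2)$, $(2,1)$, $(2,2)$ block entries) and apply the same high-moment expansion, replacing $G^{[\mu]}$ by $G^{[\mu\cup\mathbb{T}]}$ using \eqref{eq_res3} at the cost of $\Phi_o^2$ per swap, and then exploiting $\mathbb{E}_{[\mu]}Z_{[\mu]}^{[\mathbb{T}]}=0$ to eliminate all terms where some $\mu_k$ appears only once. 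The $2\times 2$ block structure only affects the argument through a bounded combinatorial prefactor (four choices of block entry for each index), so it does not change the final scale.

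The main obstacle is the combinatorial bookkeeping: one must carefully track which indices have been ``removed'' via the resolvent identities so that the partial expectations $\mathbb{E}_{i_k}$ or $\mathbb{E}_{[\mu_k]}$ can be applied meaningfully, and simultaneously account for the accumulated $\Phi_o^2$ factors. A clean way to organize this is via a graph/tree encoding as in \cite[Section 4]{EKYY1}, where each vertex corresponds to a $Z$-variable and each edge to a resolvent identity application; the ``abstract decoupling lemma'' proved there can be adapted almost verbatim to our setting, since the only properties used are the vanishing of partial expectations of $Z_\fa^{(\mathbb{T})}$ when $\fa\notin \mathbb{T}$, the pointwise a priori bound $|Z_\fa|\prec \Phi_o$, and the resolvent identities of Lemmas \ref{lemm_resolvent}--\ref{lemm_resolvent_group}. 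Because these ingredients are all in place, I would import the abstract lemma and verify its hypotheses in our context, rather than reprove it from scratch.
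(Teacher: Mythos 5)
Your proposal is correct and follows essentially the same route as the paper: the paper's proof of Lemma \ref{abstractdecoupling} simply invokes the standard high-moment fluctuation-averaging argument, citing \cite[Theorem 4.7]{Semicircle} for $\langle Z\rangle_1,\langle Z\rangle_2$ and \cite[Lemma 4.9]{XYY_circular} for the $2\times 2$ block quantity $[Z]$, which is exactly the EKYY-type expansion (partial expectations killing lone indices, resolvent swaps costing $\Lambda_o^2\prec\Phi_o^2$, bounded combinatorics from the block structure) that you outline. The only caveat is that lone-index terms are not literally annihilated but are rendered of size $\OO_\prec(\Phi_o^2)$ smaller through the expansion remainders, a point your bookkeeping via the abstract decoupling lemma handles correctly.
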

\begin{proof}
 The bound on $\left|\langle Z\rangle_1\right|+\left|\langle Z\rangle_2\right|$ can be proved in the same way as \cite[Theorem 4.7]{Semicircle}. 
The bound on $\left\|[  Z ]\right|$ can be proved in the same way as \cite[Lemma 4.9]{XYY_circular}. 
\end{proof}

Now we give the proof of Proposition \ref{prop_diagonal}.

\begin{proof}[Proof of Proposition \ref{prop_diagonal}]
By Lemma \ref{alem_weak}, the event $\Xi$ holds with high probability. Then by Lemma \ref{alem_weak} and Lemma \ref{Z_lemma}, we can take
\be\label{initial_phio}
\Phi_o =\phi_n + \sqrt{\frac{\im m_{c} + \Phi}{n\eta}} + \frac{1}{n\eta},\quad \Phi= \phi_n^{1/2}+ (n\eta)^{-1/4},
\ee
 in Lemma \ref{abstractdecoupling}. Then (\ref{selfcons_improved2}) gives
$$|f_3(z,m_3)| \prec \phi_n^2 + \frac{ \im m_{c} + \Phi}{n\eta} \lesssim \phi_n^2 + \frac{ \im m_{c} }{n\eta} + \frac{n^{2\tau}}{(n\eta)^2}+ n^{-2\tau}\Phi^2$$
for any fixed constant $\tau>0$. Using Lemma \ref{stability}, we get
\be\label{m2}
\begin{split}
|m_3-m_{3c}| & \prec \min \left\{\phi_n,\frac{\phi_n^2}{\sqrt{\kappa+\eta}}\right\}+\frac{\im m_{c}}{n\eta\sqrt{\kappa+\eta}}+\frac{n^\tau}{n\eta}+n^{-\tau}\Phi \\
&\prec  \min \left\{\phi_n,\frac{\phi_n^2}{\sqrt{\kappa+\eta}}\right\} +\frac{n^\tau}{n\eta} + n^{-\tau}\Phi ,
\end{split}
\ee
where we used $\im m_{c}=\OO(\sqrt{\kappa+\eta})$ by \eqref{Immc} in the second step. With (\ref{selfcons_improved}), \eqref{m3-4} and \eqref{m2}, we get the same bound for $m_\al$, $\al=1,2,3,4$,  
\be\label{m1}
|m_{\al}(z) - m_{\al c}(z)| \prec \min \left\{\phi_n,\frac{\phi_n^2}{\sqrt{\kappa+\eta}}\right\} +\frac{n^\tau}{n\eta} + n^{-\tau}\Phi .
\ee

Plugging \eqref{m1} into \eqref{self_Gii} and using \eqref{erri}, we obtain that 
$$ \max_{i\in \cal I_1 \cup \cal I_2}|G_{ii}-\Pi_{ii}|\prec  \Phi_o  +  \min \left\{\phi_n,\frac{\phi_n^2}{\sqrt{\kappa+\eta}}\right\} +\frac{n^\tau}{n\eta} + n^{-\tau}\Phi \prec \phi_n + \Psi(z) + \frac{n^\tau}{n\eta} + n^{-\tau}\Phi .$$
Similarly plugging \eqref{m1} into \eqref{self_Gmu} and using \eqref{erri}, we obtain that
\begin{equation} \nonumber
\max_{\mu\in \cal I_3}\left\|\pi^{-1} (G_{[\mu\mu]}-\pi )\pi^{-1}\right\| \prec \phi_n + \Psi(z) + \frac{n^\tau}{n\eta} + n^{-\tau}\Phi . 
\end{equation}
Finally by \eqref{Oestimate1}, we have
$$\Lambda_o \prec \phi_n + \Psi(z) + \frac{n^\tau}{n\eta} + n^{-\tau}\Phi . $$
In sum, we obtain a self-improving estimate on $\Lambda$:
$$ \Lambda \prec \Phi \ \Rightarrow \ \Lambda\prec \phi_n + \Psi(z) + \frac{n^\tau}{n\eta} + n^{-\tau}\Phi .$$
Afer $\OO(\tau^{-1})$ many iterations, we obtain that 
$$\Lambda \prec \phi_n + \Psi(z) + \frac{n^\tau}{n\eta}. $$ 
Since $\tau$ can be arbitrarily small, we conclude \eqref{entry_law}.\end{proof}

Finally, we prove the weak averaged local laws in Theorem \ref{thm_local}.
\begin{proof}[Proof of \eqref{aver_in1} and \eqref{aver_out1}]
We can repeat the argument at the beginning of the above proof of Proposition \ref{prop_diagonal}. Taking $\Phi=\Phi_o=\phi_n + \Psi$, (\ref{selfcons_improved2}) and Lemma \ref{abstractdecoupling} give that
$$|f_3(z,m_3)| \prec \phi_n^2 + \Psi^2(z) \lesssim \phi_n^2 + \frac{ \im m_{c} }{n\eta} + \frac{1}{(n\eta)^2}.$$
Using Lemma \ref{stability}, we get
\be\label{m22}
|m_3-m_{3c}|\prec \min \left\{\phi_n,\frac{\phi_n^2}{\sqrt{\kappa+\eta}}\right\}+\frac{\im m_{c}}{n\eta\sqrt{\kappa+\eta}}+\frac{1}{n\eta} \prec \min \left\{\phi_n,\frac{\phi_n^2}{\sqrt{\kappa+\eta}}\right\} +\frac{1}{n\eta}.
\ee
With (\ref{selfcons_improved}), \eqref{m3-4} and \eqref{m22}, we conclude \eqref{aver_in1}. 

For $z\in S_{out}(\e)$, we use Lemma \ref{stability} again to get that 
\be\label{m222}
\begin{split}
|m_3-m_{3c}| &\prec \min \left\{\phi_n,\frac{\phi_n^2}{\sqrt{\kappa+\eta}}\right\}+\frac{\im m_{c}}{n\eta\sqrt{\kappa+\eta}}+\frac{1}{(n\eta)^2 \sqrt{\kappa+\eta}} \\
&\prec \min \left\{\phi_n,\frac{\phi_n^2}{\sqrt{\kappa+\eta}}\right\} +\frac{1}{n(\kappa +\eta)} + \frac{1}{(n\eta)^2\sqrt{\kappa +\eta}},
\end{split}
\ee
where we used the stronger bound $\im m_{c}=\OO(\eta/\sqrt{\kappa+\eta})$ by \eqref{Immc} in the second step. With (\ref{selfcons_improved}), \eqref{m3-4} and \eqref{m222}, we conclude \eqref{aver_out1}. 
%
\end{proof}

\subsection{A centralization argument}\label{subsec_central}
In this subsection, we discuss how to relax the assumptions \eqref{entry_assmX1} and \eqref{entry_assmX2} to the weaker ones in \eqref{entry_assm0} and \eqref{entry_assm1}. First, under the relaxed variance assumption \eqref{entry_assm1}, the only differences from the previous argument in Section \ref{secentryweak} are the equations \eqref{self_Gii} and \eqref{self_Gmu}. More precisely, we now have 
$$\mathbb E_i ( {WG^{\left( i \right)} W^T} )_{ii} = m_{3,4}^{(i)}+\OO(n^{-1-\tau}\|G\|_{\max}) , \quad \text{if } i \in \cal I_{1,2} ,$$
and hence $\e_i$ in  \eqref{self_Gii} will contain an extra error $\OO(n^{-1-\tau}\|G\|_{\max})$. Similarly, the term $\e_\mu$ in \eqref{self_Gmu} will also contain this kind of error. This extra error will lead to an negligible term $ \OO(n^{-1-\tau})$ in all the bounds of Theorem \ref{thm_local}, and hence does not affect our results. 


Then we relax \eqref{entry_assmX1} to \eqref{entry_assm0}. For $X$ and $Y$ satisfying the assumptions in Theorem \ref{lem null}, we write $X=X_1 + \cal E_1$ and $Y=Y_1 + \cal E_2$, where $\cal E_1:= \mathbb EX$ and $\cal E_2:= \mathbb EY$. Then $X_1$ and $Y_1$ are random matrices satisfying  the assumptions in Theorem \ref{lem null} and  \eqref{entry_assmX1}, and $\cal E_1$ and $\cal E_2$ are deterministic matrices such that 
\begin{equation}\label{boundB}
\max_{i\in \cal I_1,\mu\in \cal I_3}|(\cal E_1)_{i\mu}|+ \max_{j\in \cal I_2,\nu\in \cal I_4}|(\cal E_2)_{j\nu}|\le n^{-2-\tau}.
\end{equation}
We denote $G_1(z):=H_1^{-1}(z)$ and $G(z):=\left[H_1(z)+V\right]^{-1}$, where
$$   H_1(\lambda) : = \begin{pmatrix} 0 & \begin{pmatrix}  X_1 & 0\\ 0 &  Y_1\end{pmatrix}\\ \begin{pmatrix} X^T_1 & 0\\ 0 &  Y^T_1\end{pmatrix}  & \begin{pmatrix}  \lambda  I_n & \lambda^{1/2}I_n\\ \lambda^{1/2} I_n &  \lambda I_n\end{pmatrix}^{-1}\end{pmatrix},\quad V:=  \begin{pmatrix} 0 & \begin{pmatrix}  \cal E_1 & 0\\ 0 &  \cal E_2\end{pmatrix}\\ \begin{pmatrix} \cal E^T_1 & 0\\ 0 &  \cal E^T_2\end{pmatrix}  & 0\end{pmatrix}. $$

\begin{lemma}\label{comp_claim} If Theorem \ref{thm_local} holds for $G_1$, then it also holds for $G$. \end{lemma}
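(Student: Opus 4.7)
The plan is to exploit the smallness of the perturbation $V \deq H - H_1$ via a resolvent expansion and transfer the local laws from $G_1$ to $G$ entry by entry. The bound \eqref{boundB} gives $\max_{\fa\fb}|V_{\fa\fb}| \le n^{-2-\tau}$, and since $V$ is supported on an $\OO(n)\times\OO(n)$ block we have $\|V\| \le \|V\|_F \le Cn^{-1-\tau}$. Throughout, I will work with the regularized resolvents $\wh G_1$ and $\wh G$ of Definition \ref{resol_not2} (which coincide with $G_1$, $G$ up to a harmless $\OO(n^{-8})$ error by Remark \ref{removehat}), so that the deterministic bound $\|\wh G_1\| \le Cn^{10}/\eta$ is available. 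The target is the entrywise estimate
\begin{equation*}
|(\wh G - \wh G_1)_{\mathbf u\mathbf v}| = \OO_\prec(n^{-1-\tau}),
\end{equation*}
valid for all deterministic unit vectors $\mathbf u,\mathbf v\in \C^\cal I$, which is much smaller than every error bound appearing in Theorem \ref{thm_local} (all at least of order $n^{-1}$) and therefore suffices to transfer the theorem.

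The key inputs are the anisotropic local law for $\wh G_1$, which on a high-probability event gives $\|\wh G_1\|_{\max} = \OO_\prec(1)$ and hence the crude operator-norm estimate $\|\wh G_1\| \le \|\wh G_1\|_F \le n\|\wh G_1\|_{\max} = \OO_\prec(n)$, together with the Ward-type identities of Lemma \ref{lem_gbound0}, which (summed over the four index blocks) yield $\|\wh G_1 \mathbf v\|_2 + \|\wh G_1^* \mathbf u\|_2 = \OO_\prec(1)$ for any deterministic unit $\mathbf u,\mathbf v$. The first bound forces $\|V\wh G_1\| = \OO_\prec(n^{-\tau}) \le 1/2$ for large $n$, so the Neumann expansion $\wh G = \sum_{k\ge 0}\wh G_1(-V\wh G_1)^k$ converges at a geometric rate. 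For each $k\ge 1$ the operator-norm bound combined with the Ward estimate yields
\begin{equation*}
\bigl|[\wh G_1(V\wh G_1)^k]_{\mathbf u\mathbf v}\bigr| \le \|\wh G_1^*\mathbf u\|_2\cdot\|V\|\cdot\|V\wh G_1\|^{k-1}\cdot\|\wh G_1\mathbf v\|_2 = \OO_\prec(n^{-1-k\tau}),
\end{equation*}
and summing over $k$ gives the target bound on $|(\wh G - \wh G_1)_{\mathbf u\mathbf v}|$.

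Applying this estimate with $\mathbf u=\mathbf v=\mathbf e_\fa$ and averaging over $\fa\in\cal I_\al$ yields $|\wh m_\al(z)-\wh m_\al^{G_1}(z)| = \OO_\prec(n^{-1-\tau})$ for $\al=1,2,3,4$, and analogously for $\wh m(z)$. Since each of \eqref{aniso_law}, \eqref{aver_in1} and \eqref{aver_out1} involves an error bound of order at least $n^{-1}$, the correction $\OO_\prec(n^{-1-\tau})$ is absorbed, so Theorem \ref{thm_local} for $\wh G_1$ implies Theorem \ref{thm_local} for $\wh G$; removing the regularization via Remark \ref{removehat} then gives the lemma for $G$. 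The main subtlety is that the crude operator-norm bound $\|\wh G_1\| = \OO_\prec(n)$ alone is too weak to reach $n^{-1-\tau}$---if used at both ends of the leading term $\wh G_1 V\wh G_1$ it would only give $\OO_\prec(n^{-\tau})$---so it is essential to use the sharper $\OO_\prec(1)$ Ward identity from Lemma \ref{lem_gbound0} on both outer factors $\wh G_1$ to recover the extra factor of $n$ needed in the final estimate.
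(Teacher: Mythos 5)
Your overall strategy (resolvent expansion in $V$, transfer of the local laws because the perturbation is tiny) is the same as the paper's, but there is a genuine quantitative gap: the Ward-type input you cite is misquoted. Lemma \ref{lem_gbound0} does \emph{not} give $\|\wh G_1\mathbf v\|_2=\OO_\prec(1)$; its right-hand sides contain terms of the form $\im G_{\mathbf v\mathbf v}/\eta$, so together with the anisotropic law for $G_1$ one only gets $\|\wh G_1\mathbf v\|_2=\OO_\prec\bigl((\im m_c/\eta)^{1/2}+1\bigr)\sim\eta^{-1/2}$, which in the bulk with $\eta=n^{-1+\epsilon}$ is of order $n^{1/2}$, not $\OO_\prec(1)$ (this is the usual $\eta^{-1/2}$ blow-up of $\|G\mathbf v\|_2$ near the spectrum and cannot be improved). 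With the corrected bound your leading term gives $|[\wh G_1V\wh G_1]_{\mathbf u\mathbf v}|\prec n^{-1-\tau}\eta^{-1}$ rather than $n^{-1-\tau}$, so the claimed uniform entrywise bound, and with it your one-line transfer argument, breaks down. The weaker bound $n^{-1-\tau}\eta^{-1}\le n^{-\tau}(n\eta)^{-1}$ is still enough for \eqref{aniso_law} and \eqref{aver_in1} (and this is essentially what the paper proves, in the slightly sharper form $n^{-1-\tau}\eta^{-1/2}$ by applying the anisotropic law to the vector $\sum_i V_{i\mu}\mathbf e_i$, whose norm is only $n^{-3/2-\tau}$, on one side of $G_1VG_1$).

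Where the gap really bites is the strong averaged law outside the spectrum, \eqref{aver_out1}: there the allowed error is as small as $\frac{1}{n(\kappa+\eta)}+\frac{1}{(n\eta)^2\sqrt{\kappa+\eta}}\sim n^{-1}$ for $\kappa\sim 1$ and moderately small $\eta$, while $n^{-1-\tau}\eta^{-1}$ (or even $n^{-1-\tau}\eta^{-1/2}$) is much larger. A pointwise bound on $(\wh G-\wh G_1)_{\mathbf u\mathbf v}$ of this size cannot transfer \eqref{aver_out1}; one must exploit the average over the diagonal. This is exactly the extra step \eqref{aver_outsuff} in the paper, where $\frac1n\sum_{\fa\in\cal I_\al}(G_1VG_1)_{\fa\fa}$ is bounded separately using Lemma \ref{lem_gbound0} together with the improved estimate $\im m_c=\OO(\eta/\sqrt{\kappa+\eta})$ outside the spectrum, yielding an error $\prec n^{-\tau}\bigl(\frac{1}{n(\kappa+\eta)}+\frac{1}{(n\eta)^2\sqrt{\kappa+\eta}}\bigr)$. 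Your proposal needs this additional averaged argument (and the corrected Ward bound) to be complete; the use of an infinite Neumann series instead of the paper's finite expansion with remainder $(G_1V)^3G$ is harmless by comparison.
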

\begin{proof}
We expand $G$ using the resolvent expansion
\begin{equation}\label{rsexp1}
G = G_1 - G_1 V G_1 + (G_1 V )^2 G_1 - (G_1 V )^3 G.
\end{equation}
For any deterministic unit vector $\mathbf v \in \mathbb C^{\mathcal I}$, we have
\begin{equation}\nonumber
\begin{split}
\left|\langle \mathbf v, G_1 V G_1\mathbf v\rangle \right| & \le 2\sum_{\mu\in \mathcal I_3\cup \cal I_4}\Big| \sum_{i\in \mathcal I_1\cup \cal I_2} \left(G_1\right)_{\mathbf v i} V_{i\mu}\Big| |\left(G_1\right)_{\mu\mathbf v}|  
 \\
 &\prec \max_{\mu} \Big( \sum_{i \in \mathcal I_1\cup \cal I_2} |V_{i\mu}|^2 \Big)^{1/2} \sum_{\mu\in \mathcal I_3\cup \mathcal I_4}  |\left(G_1\right)_{\mu\mathbf v}| \\
& \prec n^{-1-\tau} \Big(\sum_{\mu\in \mathcal I_3\cup \mathcal I_4}   |\left(G_1\right)_{\mu\mathbf v}|^2\Big)^{1/2} \prec n^{-1-\tau}\eta^{-1/2},
\end{split}
\end{equation}
where in the second step we used (\ref{aniso_law}) for $G_1$ with vectors $\bv$ and $\sum_i V_{i\mu}\mathbf e_{i}$, in the third step the Cauchy-Schwarz inequality and (\ref{boundB}), and in the last step Lemma \ref{lem_gbound0} and \eqref{aniso_law} for $G_1$. Together with a simple application of the polarization identity, we obtain the bound 
\begin{equation}\label{exp1}
\begin{split}
\left|\langle \mathbf v, G_1 V G_1\mathbf w\rangle \right|  \prec n^{-1-\tau}\eta^{-1/2},
\end{split}
\end{equation}
for any deterministic unit vectors $\mathbf v,\mathbf w \in \mathbb C^{\mathcal I}$. With a similar argument, we obtain that
\begin{equation}\label{exp2}
\begin{split}
\left|\langle \mathbf v, (G_1 V )^2 G_1 \mathbf w\rangle \right| \prec n^{-2-2\tau}\eta^{-1}.
\end{split}
\end{equation}
Combining this estimate with the rough bound (\ref{op G1}) for $G$, we get that
\begin{align}
 &\left|\langle \mathbf v, (G_1 V)^3 G\mathbf w\rangle \right| =\Big| \sum_{i\in \cal I_1\cup \cal I_2, \mu\in \mathcal I_3\cup \cal I_4}\left((G_1 V )^2 G_1\right)_{\mathbf v i} V_{i\mu} G_{\mu\mathbf w}\Big| \nonumber\\
 &\qquad \qquad \qquad\quad  \ \ +\Big| \sum_{i\in \cal I_1\cup \cal I_2, \mu\in \mathcal I_3\cup \cal I_4}\left((G_1 V )^2 G_1\right)_{\mathbf v \mu} V_{\mu i} G_{i\mathbf w}\Big| \nonumber\\
 & \prec \eta^{-1} \left[\sum_{\mu}\Big| \sum_{i}\left((G_1 V )^2 G_1\right)_{\mathbf v i} V_{i\mu}\Big|^2+\sum_{i}\Big| \sum_{\mu}\left((G_1 V )^2 G_1\right)_{\mathbf v \mu} V_{\mu i}\Big|^2\right]^{1/2}\nonumber\\
&\prec \left(n^{-2-2\tau}\eta^{-1}\right) \eta^{-1}\Big(\sum_{i,\mu} |V_{i\mu}|^2\Big)^{1/2} \le n^{-2-3\tau}\eta^{-1},\label{exp3}
\end{align}
where we used $\eta \gg n^{-1}$ in the last step. Plugging the estimates (\ref{exp1})-(\ref{exp3}) into (\ref{rsexp1}), we conclude that
\begin{equation}\label{truncate_compare}
\left|\langle \mathbf v, G \mathbf w\rangle - \langle \mathbf v, G_1 \mathbf w\rangle\right| \prec n^{-1-\tau}\eta^{-1/2},
\end{equation}
for any deterministic unit vectors $\mathbf v  \in \mathbb C^{\mathcal I}$. 
This concludes \eqref{aniso_law} and \eqref{aver_in1} for $G(z)$. 

For \eqref{aver_out1}, the bounds \eqref{exp2} and \eqref{exp3} are already good enough. It remains to show that 
\be\label{aver_outsuff}
\left|\frac{1}{n}\sum_{\fa\in \cal I_\al} \left( G_1 V G_1\right)_{\fa\fa} \right| \prec  \frac{1}{n(\kappa +\eta)} + \frac{1}{(n\eta)^2\sqrt{\kappa +\eta}}, \quad \al=1,2, 3,4,.
\ee
Using Lemma \ref{lem_gbound0}, we obtain that for $\al=1,2,$
\begin{align*}
& \left|\frac{1}{n}\sum_{i\in \cal I_\al} \left( G_1 V G_1\right)_{ii} \right|  \prec n^{-2-\tau}\sum_{j\in \cal I_1\cup \cal I_2, \mu \in \cal I_3\cup \cal I_4} \frac1n\sum_{i\in \cal I_\al} |(G_1)_{i j}(G_1)_{ \mu i}| \\
& \prec n^{-\tau} \max_{\mu \in \cal I_3\cup \cal I_4}\left( \frac1n + \frac{\im\left(\cal G_L \right)_{jj}+\im\left(\cal G_R \cal U^T\right)_{\mu\mu} }{n\eta} \right) \prec n^{-1-\tau} + n^{-\tau} \frac{\im m_c + \phi_n + \Psi(z)}{n\eta} \\
&\prec n^{-\tau} \left( \phi_n^2 + \Psi^2(z)\right) \prec  \frac{n^{-\tau}}{n(\kappa +\eta)} + \frac{n^{-\tau}}{(n\eta)^2\sqrt{\kappa +\eta}},
\end{align*}
where in the third step we used \eqref{aniso_law} for $G_1$ and \eqref{simpleU}. The proof for the $\al=3,4$ case is similar. This concludes \eqref{aver_out1}.
\end{proof}

\section{Proof of Theorem \ref{thm_local}: the anisotropic local law}\label{sec_aniso}
To conclude Theorem \ref{thm_local}, it remains to prove the anisotropic local law \eqref{aniso_law}. For any vector $\bu\in \C^{\cal I}$ and $\mu\in \cal I_3$, we denote
$u_{[\mu]}:=\begin{pmatrix}u_\mu \\ u_{\overline \mu} \end{pmatrix}$. By the entrywise local law \eqref{entry_law}, we have that for deterministic unit vectors $\bu,\bv\in \C^{\cal I}$,
\begin{equation}\label{eq_iso_1}
\begin{split}
\left|\langle \bu , (G(z)-\Pi(z))\bv\rangle\right| &\prec \phi_n + \Psi(z) + \Big| \sum_{i\ne j \in \cal I_1 \cup \cal I_2} \overline u_i  G_{ij} v_j  \Big| +  \Big| {\sum_{\mu \ne \nu\in \cal I_3} {{{u}^*_{\left[ \mu \right]}} {G_{\left[ \mu\nu\right ]}}{{v}_{\left[ \nu \right]}}} } \Big| \\
& + \Big| {\sum_{i\in \cal I_1\cup \cal I_2,\mu \in \cal I_3 } \overline u_i G_{i,\left[ \mu\right ]} {v}_{\left[ \mu \right]} }  \Big| +  \Big| {\sum_{i\in \cal I_1\cup \cal I_2,\mu \in \cal I_3 }{u}_{\left[ \mu \right]}^* G_{\left[ \mu\right ],i}v_i  }  \Big|.
\end{split}
\ee
Note that applying the entrywise local law naively, one can only get that
\begin{equation*}
\left|\langle \bu , (G(z)-\Pi(z))\bv\rangle\right| \prec (\phi_n +\Psi(z)) \|\mathbf u\|_1 \|\mathbf v\|_1 \le n(\phi_n +\Psi(z)),
\end{equation*}
using $\|\mathbf u\|_1 \le n^{1/2} \|\mathbf u\|_2$ and $\|\mathbf v\|_1 \le n^{1/2} \|\mathbf v\|_2$. To get \eqref{aniso_law}, we need to explore the cancellations (due to the random signs of the $G$ entries) in the four sums on the right hand side of \eqref{eq_iso_1}. 


We can simplify the problem a little bit. We first notice that by polarization identity of inner products, it suffices to take $\bu=\bv$ in \eqref{eq_iso_1}. Moreover, since $G$ is symmetric, the last two terms on the right hand side of \eqref{eq_iso_1} can be bounded in the same way. Then with Markov's inequality, it suffices to prove the following lemma. With Lemma \ref{comp_claim}, it suffices to assume \eqref{entry_assmX1} for $X$ and $Y$.

\begin{lemma}\label{iso_lemm_1}
Suppose \eqref{entry_assmX1} and \eqref{entry_law} hold. Let $\bv\in \C^{\cal I}$ be any deterministic unit vector. Then for any $a\in \mathbb N$, we have the following bounds: 
\begin{align}
& \mathbb E\Big| \sum_{i\ne j \in \cal I_1 \cup \cal I_2} \overline v_i  G_{ij} v_j  \Big|^{2a} \prec \Phi^{2a}; \label{eqiso1} \\
& \mathbb E\Big| {\sum_{\mu \ne \nu\in \cal I_3} {{{v}^*_{\left[ \mu \right]}} {G_{\left[ \mu\nu\right ]}}{{v}_{\left[ \nu \right]}}} } \Big|^{2a} \prec \Phi^{2a}; \label{eqiso2}  \\
&\mathbb E \Big| {\sum_{i\in \cal I_1\cup \cal I_2,\mu \in \cal I_3 } \overline v_i G_{i,\left[ \mu\right ]} {v}_{\left[ \mu \right]} }  \Big|^{2a} \prec \Phi^{2a}.\label{eqiso3} 
\end{align}
Here we denote $\Phi:= \phi_n +\Psi(z)$ for simplicity.
\end{lemma}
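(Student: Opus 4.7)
The plan is to prove the three moment bounds by a unified polynomialization argument based on Schur complement expansions, following the technique developed in \cite{isotropic} and \cite[Section 5]{XYY_circular}. By the polarization identity it suffices to take $\mathbf u = \mathbf v$, and by symmetry \eqref{eqiso3} implies the same bound for the last sum in \eqref{eq_iso_1}; the three estimates then deliver \eqref{aniso_law} through Markov's inequality. Throughout, I will use the entrywise law \eqref{entry_law} freely (so that diagonal factors $G_{ii}$ and the normalized blocks $\pi^{-1}G_{[\mu\mu]}\pi^{-1}$ are bounded), together with the mean-zero assumption \eqref{entry_assmX1} and the large deviation bounds of Lemma \ref{largedeviation}.

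For \eqref{eqiso1}, write $S_1:=\sum_{i\neq j\in \cal I_1\cup \cal I_2}\overline v_i G_{ij}v_j$ and expand $\mathbb E|S_1|^{2a}$ as a $4a$-fold index sum of expectations of products of $2a$ off-diagonal entries $G_{i_k j_k}$ (with half conjugated). Using the second and third identities of \eqref{resolvent3} alternately, each $G_{i_kj_k}$ is rewritten as $G_{i_ki_k}G^{(i_k)}_{j_kj_k}(WG^{(i_kj_k)}W^T)_{i_kj_k}$, which isolates the random rows/columns $i_k,j_k$ of $W$ from the minor $G^{(i_kj_k)}$. I then take successive partial expectations $\mathbb E_{[i_k]}$ over the rows of $X,Y$. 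Because the entries have mean zero, any row index appearing only once in the product contributes $0$; consequently, in the surviving terms each row index in $\cal I_1\cup \cal I_2$ must be repeated at least twice among the $2a$ factors. Counting the free indices against the $\Phi$-sized estimates of each bilinear form from Lemma \ref{largedeviation} (bootstrapped by the already-proven \eqref{entry_law}) yields $\mathbb E|S_1|^{2a}\prec \Phi^{2a}$.

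For \eqref{eqiso2} and \eqref{eqiso3} the same scheme applies, with the scalar identity \eqref{resolvent3} replaced by the block identities \eqref{eq_res21} and \eqref{eq_res22} for $2\times 2$ groups. The natural variable to track is the normalized quantity $\pi^{-1}G_{[\mu\nu]}\pi^{-1}$ (respectively $G_{i,[\mu]}\pi^{-1}$), which is $\OO_\prec(\Phi)$ by \eqref{entry_law}; the leftover factors of $\pi(z)$, being $\OO(1)$ by \eqref{Immc} and \eqref{def smallpi}, are absorbed into the $2\times 2$ diagonal contributions $G_{[\mu\mu]}$. Expanding into rows/columns of $X$ and $Y$ indexed by $[\mu]=\{\mu,\overline\mu\}$ and again applying the mean-zero pairing, the counting is parallel to the scalar case.

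The main obstacle is the combinatorial bookkeeping of indices after iterated Schur-complement expansions: one must show that every surviving term (where each random row/column index appears an even number of times) is bounded by $\Phi^{2a}$ multiplied by an index sum whose total is controlled by $\|\mathbf v\|_2=1$ through Cauchy--Schwarz. I will organize the expansion into a graph on the indices, with edges representing resolvent factors and $W$-entries, and apply the standard estimate relating the number of independent indices to the number of $Z$-type contractions. The arithmetic of this counting is carried out in full in \cite[Section 5]{isotropic} and \cite[Section 5]{XYY_circular}; because \eqref{entry_law} provides exactly the same entrywise scaling $\Phi$ as in those works, the argument transfers verbatim to the present linearization $H(z)$, with only two bookkeeping modifications: the four-block index set $\cal I=\cal I_1\cup\cal I_2\cup \cal I_3\cup \cal I_4$ and the $2\times 2$ grouping $[\mu]$ on $\cal I_3\cup \cal I_4$ inherited from \eqref{linearize_block}. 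I will therefore focus on setting up the expansion and verifying that these structural changes preserve the counting, rather than reproducing the graph-theoretic arguments.
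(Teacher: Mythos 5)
Your overall strategy coincides with the paper's: reduce to $\mathbf u=\mathbf v$ by polarization, expand the $2a$-th moment over index partitions, and control the surviving terms by the polynomialization and graph-counting machinery of \cite{isotropic} and \cite{XYY_circular}. However, there is a genuine gap in the step you rely on to beat the $\ell^1$-loss coming from the vector $\mathbf v$. You assert that, after isolating the rows of $W$ via \eqref{resolvent3} and taking partial expectations, ``any row index appearing only once in the product contributes $0$'', so that in the surviving terms every index is repeated at least twice. This is false: a resolvent entry $G_{ij}$ is not a low-degree polynomial in the row $i$ of $W$, and the remaining factors in the product are not independent of that row, so the partial expectation of a term whose index $b_k$ has $\deg(b_k)=1$ does not vanish. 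These degree-one (``single'') indices are exactly the source of the difficulty: for each of them the sum $\sum_{b_k}|v_{b_k}|$ costs a factor $\sqrt n$, and if their contribution were merely bounded by $\Phi^{2a}$ rather than vanishing, the naive estimate would give $n^{h/2}\Phi^{2a}$, not $\Phi^{2a}$.

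The mechanism the paper uses (Claim \ref{isoclaim1}) and which is absent from your proposal is an extra gain of $n^{-1/2}$ per single index: after maximal expansion into $S$-variables, the $W$-entries carrying a single index appear an odd number of times ($\geq 3$, by the parity relation \eqref{parity2}), so a nonzero expectation forces at least a third moment of the matrix entries; the assumption \eqref{conditionA3} together with the bounded support condition gives \eqref{moment_n3}, and the resulting factor $n^{-3/2}$ (instead of $n^{-1}$) is what yields $|\mathbb E P|\prec n^{-h/2}\Phi^{2a}$ and absorbs the $h$ factors of $\sqrt n$ from $\mathbf v$. The same issue reappears in the group case \eqref{eqiso2}, where the counting with the sets $\cal V_0,\cal V_1$ is again driven by the parity of the degree-one indices rather than by any vanishing of their expectation. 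Since the works you cite carry out precisely this third-moment/parity bookkeeping, your plan can be repaired by invoking it explicitly, but as written the justification of the key counting step is incorrect, and the third-moment assumption \eqref{conditionA3} never enters your argument at all.
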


The proof of Lemma $\ref{iso_lemm_1}$ is based on a polynomialization method developed in \cite[section 5]{isotropic}. We first give the proof of \eqref{eqiso1} in Section \ref{sec easiest}, which is the easiest, and then give the proof of \eqref{eqiso2} in Section \ref{sec hardest}, which is the hardest. The proof of \eqref{eqiso3} is an easier version of \eqref{eqiso2}, and  will be omitted. 

\subsection{Proof of \eqref{eqiso1}}\label{sec easiest}
For the proof of \eqref{eqiso1}, we will adopt an argument in \cite[Appendix A.4]{XYY_VESD}. 
Recall that in \eqref{conditionA3}, we assumed that the $X$ and $Y$ entries have finite third moments. Together with the bounded support condition, we get
\begin{equation}\label{moment_n3}
\mathbb E |x_{i\mu}|^n \prec \phi_n^{n-3}n^{-3/2},\quad \mathbb E |y_{j\nu}|^n \prec \phi_n^{n-3}n^{-3/2}, \ \  i\in \mathcal I_1, \ \ j\in \mathcal I_2, \ \  \mu \in \mathcal I_3, \ \  \nu \in \mathcal I_4.
\end{equation}
Note that we have a stronger fourth moment assumption in (\ref{conditionA3}), but it is not necessary for the proof in this section. 

We first rewrite the product in (\ref{eqiso1}) as
\begin{align}\label{expandiso1}
 \Big| {  \sum\limits_{i \ne j} {{\overline v_i} {G_{ij}} {v_j}} } \Big|^{2a} =& \sum\limits_{ {i_k \ne j_k} \in {{\cal I}_1}}\prod\limits_{k = 1}^{a} {{\overline v_{i_k}} {G_{i_kj_k}} {v_{j_k}}}  \cdot \prod\limits_{k = a + 1}^{2a} \overline {{\overline v_{i_k}} {G_{i_kj_k}} {v_{j_k}}}.
\end{align}
To organize the sum over indices, we consider all possible partitions of the indices, such that two indices always take the same value if they are in the same partition, and different values otherwise. We use symbol-to-symbol functions to represent the partitions: a partition $\Gamma$ denotes a map 
$$\Gamma: \{i_1, \cdots, i_{2a}, j_1,\cdots, j_{2a}\} \to L(\Gamma), \quad L(\Gamma)=(b_1, \cdots, b_{n(\Gamma)}),$$
where $\Gamma^{-1}(b_k)$ is an equivalence class of the partition, $n(\Gamma)$ is the number of equivalence classes, and $b_k$ are indices taking values in $\cal I_1\cup \cal I_2$. Then we can write \eqref{expandiso1} as
\begin{align*}
\sum_\Gamma \sum_{b_1,...,b_{n(\Gamma)}}^*\prod\limits_{k = 1}^{a} {{\overline v_{\Gamma(i_k)}} {G_{\Gamma(i_k)\Gamma(j_k)}} {v_{\Gamma(j_k)}}}  \cdot \prod\limits_{k = a + 1}^{2a} \overline {{\overline v_{\Gamma(i_k)}} {G_{\Gamma(i_k)\Gamma(j_k)}} {v_{\Gamma(j_k)}}},
\end{align*}
where $\Gamma$ ranges over all the partitions, and $\sum^*$ denotes the summation subject to the condition that $b_1,\ldots, b_{n(\Gamma)}$ all take distinct values and $\Gamma(i_k) \ne \Gamma(j_k)$ for all $k$. Since the number of such partitions $\Gamma$ is finite and depends only on $a$, to prove \eqref{eqiso1} it suffices to show that for any fixed $\Gamma$,
\begin{equation}\label{temp1}
\mathbb E\sum_{b_1,...,b_{n(\Gamma)}}^*\prod\limits_{k = 1}^{a} {{\overline v_{\Gamma(i_k)}} {G_{\Gamma(i_k)\Gamma(j_k)}} {v_{\Gamma(j_k)}}}  \cdot \prod\limits_{k = a + 1}^{2a} \overline {{\overline v_{\Gamma(i_k)}} {G_{\Gamma(i_k)\Gamma(j_k)}} {v_{\Gamma(j_k)}}}\prec \Phi^{2a}.
\end{equation}
We abbreviate 
$$P(b_1,...,b_{n(\Gamma)}) := \prod\limits_{k = 1}^{a} { { G_{\Gamma(i_k)\Gamma(j_k)}} }  \cdot \prod\limits_{k = a + 1}^{2a} \overline { {G_{\Gamma(i_k)\Gamma(j_k)}} }.$$
For simplicity, we shall omit the overline for complex conjugate in the following proof. In this way, we can avoid a lot of immaterial notational complexities that do not affect the proof.

For $k=1,...,n(\Gamma)$, we denote $\deg(b_k, P):= |\Gamma^{-1}(b_k)|$, which is the number of times that $b_k$ appears as an index of the $G$ entries in $P$. We define $h:=\#\{ k : \deg(b_k, P) = 1\}$, i.e. $h$ is the number of $b_k$'s that only appear once in the indices of $P$. Without loss of generality, we assume these $b_k$'s are $b_1,...,b_h$. These indices are the ones that cause the main trouble: the sum of $b_k$, $1\le k \le h$, in \eqref{temp1} contributes a factor $\sum_{b_k}|v_{b_k}|$, which can be of order ${n}^{1/2}$ as discussed above. However, we can obtain an extra $n^{-1/2}$ factor from the $G$ entries with indices $b_k$, $1\le k \le h$. Heuristically, suppose that $b_1=i$, there is an entry $G_{ij}$ in $P$, and all the other $G$ entries are independent of the entries in the $i$-th row and column of $H$. Then using \eqref{resolvent3} we get
$$\mathbb E_i G_{ij}=G_{jj}^{\left( i \right)}  \mathbb E_i \left[(G_{ii} -m ) \left( {WG^{\left( {ij} \right)} W^T } \right)_{ij}\right], \quad i\ne j. $$ 
Recalling \eqref{self_Gii}, if we replace $G_{ii} -m$ with the leading term $\cal Z_i$, then 
$$ \mathbb E_i \left[Z_i \left( {WG^{\left( {ij} \right)} W^T } \right)_{ij}\right] =  -G^{(i)}_{\mu\mu}\sum_{\mu}\left(\mathbb E_iW_{i\mu}^3\right) (G^{(ij)}W^T)_{\mu j}\prec n^{-1/2} \Phi ,$$
where we used \eqref{moment_n3} and the fact that $(G^{(ij)}W^T)_{\mu j}$ has the same order as the $G^{(i)}_{\mu j}$ entry by \eqref{resolvent6}. In general, one can expand $G_{ii} -m$ using the Taylor expansion of \eqref{self_Gii}. It is easy to see that each term in the expansion contains even number of $W_{i\star}$ entries. Together with the $W_{i\star}$ entry in $( {WG^{\left( {ij} \right)} W^T } )_{ij}$, there cannot be a perfect pairing of all of them, so we obtain an extra $n^{-1/2}$ factor due to the loss of a half free index. Finally, even without the exact independence, we know that the other $G_{\fa\fb}$ entries only have weak correlations with the entries in the $i$-th rows and columns of $H$ if $\fa,\fb\ne i$. This fact will be explored using resolvent expansions in Lemma \ref{lemm_resolvent} as in Definition \ref{defn_stringoperator0} below.

\begin{claim}\label{isoclaim1}
We have
\begin{equation}\label{main_bound}
\left|\mathbb E P\right|\prec n^{-h/2}\Phi^{2a}.
\end{equation}
\end{claim}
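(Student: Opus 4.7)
The goal is to extract, for each index $b_k$ with $k \le h$ (i.e.\ those appearing in exactly one $G$-entry factor of $P$), an extra factor of $n^{-1/2}$ through partial expectation and resolvent expansion. Once this is achieved, the $b_k$-sums for $k \le h$ will be controlled by $n^{-1/2}\Phi \cdot \sum_{b_k}|v_{b_k}| \le n^{-1/2}\Phi \cdot n^{1/2} = \Phi$ instead of the naive $n^{1/2}\Phi$, while the remaining ``heavy'' indices $b_k$ (with multiplicity $\ge 2$) are treated by Cauchy--Schwarz, $\sum_b |v_b|^2 \le 1$, together with the entrywise local law \eqref{entry_law}.

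To make the gain of $n^{-1/2}$ per light index rigorous, fix $b_1$ with $\deg(b_1,P)=1$, and let $G_{b_1,c}$ be the unique factor of $P$ involving $b_1$. Using \eqref{resolvent3} we write $G_{b_1,c} = -G_{b_1 b_1}(WG^{(b_1)})_{b_1,c}$, and for every other factor $G_{\fa\fb}$ in $P$ (with $\fa,\fb \ne b_1$) we use \eqref{resolvent8} to substitute $G_{\fa\fb} = G^{(b_1)}_{\fa\fb} + G_{\fa b_1}G_{b_1 \fb}/G_{b_1 b_1}$. Iterating this substitution and using $G_{\fa b_1} = -G_{b_1 b_1}(W G^{(b_1)})_{b_1,\fa}^T$ etc., $P$ becomes a polynomial in the random variables $\{W_{b_1,\mu}\}_\mu$ whose coefficients depend only on $G^{(b_1)}$ and the scalar $G_{b_1 b_1}$. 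We further Taylor-expand $G_{b_1 b_1}$ around its deterministic limit via \eqref{self_Gii}, using \eqref{Zi} to expose its dependence on $W_{b_1,\cdot}$, truncating at a sufficiently large but fixed order $D=D(a)$. Taking the partial expectation $\mathbb E_{b_1}$: the linear terms in $W_{b_1,\mu}$ vanish by \eqref{entry_assmX1}; quadratic terms yield pairings of order $n^{-1}$ each but there is one unpaired $W_{b_1,\cdot}$ factor coming from $G_{b_1,c}$ itself, so at the lowest non-vanishing order we must pay at least one cubic moment $\mathbb E W_{b_1,\mu}^{3} = \OO(n^{-3/2})$ by \eqref{moment_n3}; the corresponding sum over $\mu$ is controlled through Lemma \ref{lem_gbound0} and yields $\OO_\prec(n^{-1/2}\Phi)$ improvement over the naive $\Phi$ estimate.

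We then iterate the procedure: after the expansion around $b_1$ is completed, every summand is of the same structural form but with $G$ replaced by $G^{(b_1)}$ and with new ``clean'' factors absorbed into the deterministic part. The estimate \eqref{simpleT} guarantees that $\Lambda^{(b_1)},\ \Lambda_o^{(b_1)}$ obey the same bounds as $\Lambda,\Lambda_o$, so we may apply exactly the same argument to $b_2$ on the new resolvent, etc. After $h$ iterations we have gained $n^{-h/2}$; the remaining sums over $b_{h+1},\dots,b_{n(\Gamma)}$ contribute $\Phi^{2a-?}$ times the $\ell^2$-norms of the indicator vector $v$, giving the final bound $n^{-h/2}\Phi^{2a}$ in \eqref{main_bound}.

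\textbf{Main obstacle.} The difficulty is combinatorial rather than analytic: the iterated expansion creates a proliferating sum of terms, and one must ensure that (i) every one of them satisfies the target bound, and (ii) the high-order truncation remainders are genuinely negligible. For (ii) we rely on the deterministic a priori bound $\|G\|=\OO(n^{10}/\eta)$ from Lemma \ref{lem op} together with $\eta \ge n^{-1+\e}$, which makes any remainder of order $n^{-D/3}$ (say) with $D=D(a)$ sufficiently large smaller than $\Phi^{2a}$. For (i) we follow the polynomialization framework of \cite[Section 5]{isotropic}, classifying terms according to how many $W_{b_k,\cdot}$ factors they produce and how these factors are paired in $\mathbb E_{b_k}$; the key bookkeeping fact is that each ``odd'' factor contributes at least an extra $n^{-1/2}$, exactly what is needed.
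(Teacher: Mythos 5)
Your core mechanism is exactly the one the paper uses, and in fact it is stated there verbatim as the heuristic preceding the claim: each index $b_k$ with $\deg(b_k,P)=1$ forces, after expansion, an odd number of row-$b_k$ entries of $W$, so the leading non-vanishing contribution to the partial expectation must involve a third moment, which by \eqref{moment_n3} costs $n^{-3/2}$ instead of $n^{-1}$ and yields the extra $n^{-1/2}$ per single index. Where you diverge from the paper is in the organization: you propose an iterative, index-by-index scheme (expand around $b_1$ via \eqref{resolvent3}, \eqref{resolvent8}, Taylor-expand $G_{b_1b_1}$ via \eqref{self_Gii} and \eqref{Zi}, take $\mathbb E_{b_1}$, then repeat), whereas the paper expands with respect to the whole set $L$ at once using the string operators of Definition \ref{defn_stringoperator0} and Lemma \ref{iso_lem_3}, reduces every surviving term to a monomial in the variables $S_{ij}=(WG^{(L)}W^T)_{ij}$ of \eqref{S_var} (so that the resolvent factors are \emph{exactly} independent of all rows indexed by $L$ and the expectation factorizes cleanly), and then performs a single graph/partition estimate, with the parity relation \eqref{parity2}, the pairing constraint \eqref{ka3}, the four-case analysis, and \eqref{p_toshow} delivering simultaneously the $n^{-h/2}$ gain and the full power $\Phi^{2a}$ in \eqref{main_bound3}. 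Your iterative route is viable in principle (partial expectations over independent rows can be taken sequentially, and \eqref{simpleT} lets you pass bounds to minors), but it buys you nothing over the global scheme and makes the bookkeeping harder, since the gains extracted at earlier indices must be carried through the later expansions and truncations.

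The genuine gap is precisely that bookkeeping, which is the actual content of the proof and which you defer to ``classifying terms according to how many $W_{b_k,\cdot}$ factors they produce.'' Two specific points are missing. First, the claim requires the bound $n^{-h/2}\Phi^{2a}$ with the \emph{full} power $2a$ of $\Phi$ retained in every generated term; your own sketch writes ``$\Phi^{2a-?}$,'' i.e.\ you have not verified that the expansions (in particular the Taylor expansion of diagonal entries, the terms in which an inner index appears only through diagonal $G^{(L)}$ factors inside an $S$ variable, and the terms carrying extra powers of $\phi_n$ from high moments) never trade away off-diagonal smallness for the $n^{-1/2}$ gains. In the paper this is exactly what the relations \eqref{parity2}, \eqref{off_e} and the case analysis (Cases 1--4, especially the cases $d_\alpha\neq 0$) accomplish, ending with \eqref{p_toshow}. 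Second, you need the statement that the parity of the number of row-$b_k$ factors attached to \emph{each} single index is preserved by \emph{all} expansion and truncation steps (not just the first expansion around $b_1$), since otherwise some term could have the row-$b_1$ factors perfectly paired and the $n^{-1/2}$ gain would be lost; this is the content of the second relation in \eqref{parity2} and must be checked for every operation you perform, including the replacement of $(G^{(J)}_{ii})^{-1}$ and the Taylor remainders estimated with the crude bound of Lemma \ref{lem op}. Until these two points are carried out term by term (as in \eqref{0off_bound}--\eqref{maximal_bound2} and the graph estimate that follows), the proposal is a correct plan rather than a proof.
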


With this claim, we can complete the proof of \eqref{eqiso1}.
\begin{proof}[Proof of \eqref{eqiso1}]
Note that by $\|\mathbf v\|_2=1$ and Cauchy-Schwarz inequality, we have $\sum_i |v_i| \le \sqrt{n}$ and $\sum_i |v_i|^n \le 1$ for $n\ge 2$. Then if (\ref{main_bound}) holds, we can bound the left hand side of (\ref{temp1}) by 
$$n^{-h/2}\Phi^{2a}\prod_{k=1}^{n(\Gamma)}\sum_{b_k}|v_{b_k}|^{\deg(b_k, P)}\le n^{-h/2}\Phi^{2a}(\sqrt{n})^{h} \le C\Phi^{2a},$$
which further concludes \eqref{eqiso1}. 
\end{proof}

It remains to prove Claim \ref{isoclaim1}. We define the $S$ variables as 
\begin{equation}\label{S_var}
S_{ij} := (WG^{(L)}W^T)_{ij}, \quad i,j \in \cal I_1\cup \cal I_2,
\end{equation} 
where $L:=\{b_1,...,b_{n(\Gamma)}\}$. With the entrywise local law \eqref{entry_law}, \eqref{resolvent3} and \eqref{self_Gii}, we have that 
$$|S_{ij}-c_{\al}^{-1}m_{\al c}\delta_{ij}|\prec \Phi ,\quad  i,j \in \cal I_\al,\quad \al=1,2. $$ 
Our first step is to keep expanding the $G$ entries in $P$ using the resolvent expansions in Lemma \ref{lemm_resolvent}, until each monomial either consists of $S$ variables only or has sufficiently many off-diagonal terms. 
To perform the resolvent expansion in a systematic way, we introduce the following notions of {\it{string}} and {\it{string operator}}. 
\begin{definition}[Strings]\label{def string}
Let $\mathfrak A$ be the alphabet containing all symbols that will appear during the expansion: 
$$\mathfrak A=\{G^{(J)}_{kl}: J\subset L, k,l\in L\} \cup \{(G^{(J)}_{kk})^{-1}: J\subset L, k \in L\} \cup \left\{S_{kl}:k,l\in L\right\}.$$ 
We define a string $\mathbf s$ to be a concatenation of the symbols from $\mathfrak A$, and we use $\left\llbracket\bf s\right\rrbracket$ to denote the random variable represented by $\mathbf s$. 
We denote an empty string by $\emptyset$ with value $\left\llbracket\emptyset\right\rrbracket = 0$. Here we need to distinguish the difference between a string $\mathbf s$ and its value $\left\llbracket\bf s\right\rrbracket$. For example, $``G^{(L\setminus \{i,j\})}_{ij}"$ and $``G^{(L\setminus \{i,j\})}_{ii}G_{jj}^{(L\setminus \{j\})}S_{ij}"$ are different strings, but they represent the same random variable by (\ref{resolvent3}). 
\end{definition}

We shall say $G^{(J)}_{kl}$ (resp. $(G^{(J)}_{kk})^{-1}$) is maximally expanded if $J\cup \{k,l\}=L$ (resp. $J\cup\{k\}=L$). Also the $S$ variables are always maximally expanded.
A string $\mathbf s$ is said to be maximally expanded if all of its symbols are maximally expanded.
We shall call $G^{(J)}_{kl}$ and $S_{kl}$ off-diagonal symbols if $k\ne l$, and all the other symbols are diagonal. Note that by the local law \eqref{entry_law}, 
we have $\left\llbracket\mathbf a_o\right\rrbracket \prec \Phi$ if $\mathbf a_o$ is an off-diagonal symbol. 
We use ${\cal F}_{n{\text{-}}max}(\mathbf s)$ and ${\cal F}_{\rm{off}}(\mathbf s)$ to denote the number of non-maximally expanded symbols and the number of off-diagonal symbols in string $\mathbf s$, respectively.

\begin{definition}[String operators]\label{defn_stringoperator0}
We define the following operators. 

\begin{itemize}
\item[(i)] We define the operator $\tau_0$ acting on a string $\bf s$ in the following way. Find the first non-maximally expanded symbol in the $\mathbf s$, if $G^{(J)}_{ij}$ is found, replace it with $G^{(J\cup \{k\})}_{ij}$ for the first $k$ in $L\setminus (J\cup \{i,j\})$; if $(G^{(J)}_{ii})^{-1}$ is found, replace it with $(G^{(J\cup\{k\})}_{ii})^{-1}$ for the first $k\in L\setminus (J\cup \{i\})$; if neither is found, set $\tau_0(\bf s) = \bf s$ and we say that $\tau_0$ is trivial for $\bf s$.

\item[(ii)] We define the operator $\tau_1$ acting on a string $\bf s$ in the following way. Find the first non-maximally expanded symbol in the $\mathbf s$, if $G^{(J)}_{ij}$ is found, replace it with $G^{(J)}_{ik}(G^{(J)}_{kk})^{-1}G^{(J)}_{kj}$ for the first $k$ in $L\setminus (J\cup \{i,j\})$; if $(G^{(J)}_{ii})^{-1}$ is found, replace it with 
$$-G_{ik}^{(J)} G_{ki}^{(J)} (G^{(J)}_{ii})^{-1} (G_{ii}^{(J\cup \{k\})})^{-1} (G_{kk}^{(J)})^{-1}$$ for the first $k\in L\setminus (J\cup \{i\})$; if neither is found, set $\tau_1(\bf s) =\emptyset$ and we say that $\tau_0$ is null for $\bf s$.

\item[(iii)] Define the operator $\rho$ acting on a string $\bf s$ in the following way. Replace each maximally expanded off-diagonal $G^{(L\setminus \{i,j\})}_{ij}$ in $\bf s$ with $G_{ii}^{(L\setminus \{i,j\})}G_{jj}^{(L\setminus \{j\})} S_{ij}$.
\end{itemize}
\end{definition}
By Lemma \ref{lemm_resolvent}, it is clear that for any string $\bf s$,
\begin{equation}\label{resolvent_string}
\llbracket\tau_0(\mathbf s)\rrbracket + \llbracket\tau_1(\mathbf s)\rrbracket = \llbracket\bf s\rrbracket, \quad \llbracket\rho(\mathbf s)\rrbracket = \llbracket\mathbf s\rrbracket.
\end{equation}
Moreover, a string $\mathbf s$ is trivial under $\tau_0$ and null under $\tau_1$ if and only if $\mathbf s$ is maximally expanded. Given a string $\bf s$, we abbreviate ${\mathbf s}_0 := \tau_0(\mathbf s)$ and ${\bf s}_1 := \rho(\tau_1(\bf s))$. Then by (\ref{resolvent_string}) we have
\begin{equation}\label{resolvent_string2}
\sum_{|w|=m}\llbracket{\mathbf s}_w\rrbracket = \llbracket\mathbf s\rrbracket,
\end{equation}
where $w=w(1)w(2)\ldots w(m) $ with $w(i)\in \{0,1\}$ ranges over all binary sequences $w$ with length $|w|=m$, and we used the notation 
$${\mathbf s}_{w}:=\rho^{w(m)}\tau_{w(m)}\ldots \rho^{w(2)}\tau_{w(2)}\rho^{w(1)}\tau_{w(1)}(\mathbf s), \ \ \text{ where }\rho^0\equiv 1.$$


\begin{lemma}[Lemma 5.9 of \cite{isotropic}]\label{iso_lem_3} 
Consider the string $\mathbf s = ``P(b_1,...,b_{n(\Gamma)}) "$. Fix any $l_0\in \N$. There exists a constant $K(a,l_0)\in \N$ depending on $a$ and $l_0$ only such that the following property holds. For any binary sequence $w$ with $|w| = K(a,l_0)$ and $\mathbf s_w \ne \emptyset$, either ${\cal F}_{\rm{off}}(\mathbf s_{w})\ge l_0$ or $\mathbf s_{w}$ is maximally expanded. 
\end{lemma}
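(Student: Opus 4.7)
The plan is to track, along the binary sequence $w$, the evolution of two quantities attached to the current string: the total number of symbols $N(\mathbf{s})$ and a ``depth potential'' $D(\mathbf{s}) := \sum_\sigma (|L|-|J_\sigma|)$, summed over all symbols $\sigma$ in $\mathbf{s}$ that are not maximally expanded, where $J_\sigma$ denotes the upper index set of $\sigma$. Since the original product $P$ involves at most $4a$ indices, we have the uniform bound $|L|=n(\Gamma)\le 4a$, so $D(\mathbf{s})\le |L|\cdot N(\mathbf{s})$.

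The next step is to read off how each operator acts on $N$, on $D$, and on the off-diagonal count $\mathcal{F}_{\mathrm{off}}$. From Definition \ref{defn_stringoperator0} one checks the following: (i) $\tau_0$ leaves $N$ and $\mathcal{F}_{\mathrm{off}}$ unchanged, and either is trivial (only when $\mathbf{s}$ is maximally expanded) or strictly decreases $D$ by $1$; (ii) each non-null application of $\tau_1$ replaces one non-maximally expanded symbol by three symbols, two of which are off-diagonal $G$ symbols $G^{(J)}_{ik}$ and $G^{(J)}_{kj}$ with $k\notin J\cup\{i,j\}$, so $N$ increases by $2$ and $\mathcal{F}_{\mathrm{off}}$ increases by at least $2$; (iii) $\rho$ acts on maximally expanded off-diagonal $G^{(L\setminus\{i,j\})}_{ij}$ by expanding it as $G^{(L\setminus\{i,j\})}_{ii}G^{(L\setminus\{j\})}_{jj}S_{ij}$, which preserves $\mathcal{F}_{\mathrm{off}}$ (the off-diagonal $S_{ij}$ replaces the off-diagonal $G$) and adds at most $2N(\mathbf{s})$ new symbols in total. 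Crucially, none of the three operators can decrease $\mathcal{F}_{\mathrm{off}}$, so once an off-diagonal symbol is created, it survives.

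Suppose now that $\mathbf{s}_w\neq\emptyset$ satisfies $\mathcal{F}_{\mathrm{off}}(\mathbf{s}_w)<l_0$. By (ii), the number $m$ of indices $k\le |w|$ for which $\tau_1$ was applied non-trivially satisfies $2m<l_0$. Combining (ii) and (iii), after any prefix of $w$ the size $N$ is bounded by $2a+2m+4m^2$ (each $\tau_1$ adds $2$, each of the at most $m$ applications of $\rho$ adds at most $2\cdot 2m$), hence $N\le C_1(a,l_0)$ throughout. By (i), at every step $D\le |L|\cdot N\le 4a\,C_1(a,l_0)=:C_2(a,l_0)$, and each non-trivial $\tau_0$ lowers $D$ by one. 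Therefore, along any $w$ with fewer than $l_0/2$ occurrences of $\tau_1$, the number of non-trivial $\tau_0$ operations is at most $C_2(a,l_0)$ plus the number of $\rho$-induced increments of $D$, which is again controlled by $C_1(a,l_0)\cdot |L|$; call this combined bound $C_3(a,l_0)$.

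Choosing $K(a,l_0):=C_3(a,l_0)+\lceil l_0/2\rceil +1$, any binary word $w$ of that length with $\mathbf{s}_w\ne \emptyset$ must either contain at least $l_0/2$ non-null applications of $\tau_1$ (in which case $\mathcal{F}_{\mathrm{off}}(\mathbf{s}_w)\ge l_0$), or at some step every $\tau_0$ becomes trivial, which by (i) means the current string is already maximally expanded; after that, further $\tau_0$ actions keep it maximally expanded and any subsequent $\tau_1$ would yield $\mathbf{s}_w=\emptyset$, contrary to our hypothesis. The main obstacle in this scheme is handling the new non-maximally expanded diagonal symbols produced by $\rho$ (such as $G_{ii}^{(L\setminus\{i,j\})}$, which still misses the index $j$); the argument above absorbs them because only finitely many can appear before $\tau_1$ exhausts its quota, and each requires only a bounded number of further $\tau_0$ steps thanks to $|L|\le 4a$.
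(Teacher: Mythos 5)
Your argument is essentially the standard proof of this lemma; the paper itself does not reprove it but cites Lemma 5.9 of \cite{isotropic}, whose proof runs on exactly the mechanism you use: $\mathcal F_{\rm off}$ is non-decreasing under $\tau_0,\tau_1,\rho$ and strictly increases under every non-null $\tau_1$, so the number of $1$'s in $w$ is bounded in terms of $l_0$; the number of non-trivial $\tau_0$'s is bounded by a monotone ``deficiency'' count since $|L|\le 4a$ and the number of symbols stays bounded between the finitely many $\tau_1/\rho$ steps; and then a pigeonhole forces either many off-diagonal symbols or a trivial $\tau_0$, i.e.\ a maximally expanded string, which (as you correctly note) can only stay maximally expanded or die to $\emptyset$ afterwards. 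Two of your numerical claims are off, though neither harms the conclusion since $K(a,l_0)$ may be arbitrary: when $\tau_1$ acts on an \emph{off-diagonal} $G^{(J)}_{ij}$ the off-diagonal count increases by exactly $1$ (one off-diagonal symbol is consumed, two created), not by at least $2$, so the threshold $l_0/2$ on the number of non-null $\tau_1$'s should be $l_0$; and when $\tau_1$ acts on $(G^{(J)}_{ii})^{-1}$ it produces five symbols, not three, so ``$N$ increases by $2$'' should read ``by at most $4$.'' Likewise the increment of $N$ per $\rho$ step is more naturally bounded by twice the current off-diagonal count, which is $<l_0$ under your standing hypothesis. With these constants adjusted, your bookkeeping for the potential $D$ (bounded initial value, boundedly many potential-increasing steps, each non-trivial $\tau_0$ strictly decreasing $D$) closes the argument correctly.
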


Let $ \omega>0$ be a constant such that $\Phi\le  n^{-\omega/2}$. If we choose $l_0 =\lceil  (h\omega^{-1} +2a) \rceil$, then 
\begin{equation}\label{0off_bound} \sum_{|w|=K(a,l_0)}\llbracket\mathbf s_w\rrbracket \cdot \mathbf 1({\cal F}_{\rm{off}}(\mathbf s_{w})\ge l_0)\prec 2^{K(a,l_0)}\Phi^{l_0}\prec n^{-h/2}\Phi^{2a}.\ee
Then by Lemma \ref{iso_lem_3}, to prove Claim \ref{isoclaim1}  it suffices to show that
\begin{equation}\label{maximal_bound}
\left|\mathbb E \llbracket\mathbf s_w\rrbracket\right| \prec n^{-h/2}\Phi^{2a}
\end{equation}
for any maximally expanded string $\mathbf s_w$ with $|w|=K(a,l_0)$. Note that the maximally expanded string $\mathbf s_w$ thus obtained consists only of $S$ symbols and diagonal $G$ symbols $G_{ii}^{(L\setminus\{i\})}$ and $ (G_{ii}^{(L\setminus \{i\})})^{-1}$. By (\ref{resolvent1}), we can replace $(G_{kk}^{(l)})^{-1}$ with
$(G_{ii}^{(L\setminus\{i\})})^{-1} =  -S_{ii} -z n^{-10}.$
Then as in \eqref{self_Gii}, for $i\in \cal I_\al$, $\al=1,2$, we can expand $G_{ii}^{(L\setminus\{i\})}$ as, 
\begin{align}
G_{ii}^{(L\setminus\{i\})}&=\frac{1}{- m_{(\al+2) c} + \left(m_{(\al+2)c} - S_{ii}-zn^{-10}\right)} \nonumber\\
&= \frac{-1}{m_{(\al+2)c}}\sum_{k=0}^{K(a,l_0)}\left(\frac{m_{(\al+2)c} - S_{ii}-zn^{-10}}{m_{(\al+2)c}}\right)^k+\OO_{\prec}(n^{-h/2}\Phi^{2a}). \nonumber 
\end{align}
We apply the above expansions to the $G$ symbols in $\mathbf s_w$, disregard the sufficiently small tails, and denote the resulting polynomial (in terms of the symbols $S_{ij}$) by $P_w$. Then $P_w$ can be written as a finite sum of maximally expanded strings (or monomials) consisting of the $S$ symbols only. Moreover, the number of such monomials depends only on $a$ and $l_0$.
Hence it suffices to show that for any such monomial $M_w$, we have
\begin{equation}\label{maximal_bound2}
|\mathbb E \llbracket M_w \rrbracket | \prec n^{-h/2}\Phi^{2a}.
\end{equation}


Recall that in the initial string $P$, we assume the following setting
\begin{equation}\label{deg1}
\sum_{k=1}^n\deg(b_k,P)=4a, \ \ \text{and} \ \ \deg(b_k,P)=1,\ \text{ for }k=1,...,h.
\end{equation}
Now in $M_w$, let $\deg_o(b_k, M_w)$ denotes the number of times that $b_k$ appears as an index of the \textit{off-diagonal} $S$ variables in $M_w$. Then it is easy to verify the following relations: 
\begin{equation}\label{parity2}
\deg_o(b_k, M_w) \ge \deg(b_k, P), \quad \deg_o(b_k, M_w) - \deg(b_k, P)=0 \mod 2,
\end{equation}
where the first inequality is trivial, and the second identity follows from the simple fact that none of the above expansions changes the parity of the index $b_k$. 

Suppose $M_w$ takes the form
\begin{align*}
M_w & = \prod_{j=1}^{K_w} S_{b_{k_j}b_{l_j}} =\sum_{\substack{\mu_j,\nu_j\in\mathcal I_2}}\prod_{j=1}^{K_w} W_{b_{k_j}\mu_j}G^{(L)}_{\mu_j\nu_j}W^T_{\nu_j b_{l_j}}\\
&=\sum_{\wt \Gamma}\sum_{\wt b_1,...,\wt b_{n(\wt\Gamma)}}^*\prod_{j=1}^{K_w} W_{b_{k_j}\wt\Gamma(\mu_j)}G^{(L)}_{\wt\Gamma(\mu_j)\wt\Gamma(\nu_j)}W_{b_{l_j}\wt\Gamma(\nu_j) }
\end{align*}
where $K_w$ is the number of $S$-variables in $M_\omega$, $\wt \Gamma$ ranges over all partitions of the set of the labels $\{\mu_1,...,\mu_{K_w},\nu_1,...,\nu_{K_w}\}$, $\{\wt b_1,...,\wt b_{n(\wt\Gamma)}\}$ denotes the set of distinct equivalence classes for a particular $\wt\Gamma$, and $\sum^*$ denotes the summation subject to the condition that $\wt b_k$'s all take distinct values. Here again $\wt\Gamma(\cdot)$ is regarded as a symbolic mapping from the set of labels to the set of equivalence classes. Note that the number of partitions depends only on $K_w$. For a fixed partition $\wt\Gamma$, we denote 
$$R(\wt b_1,...,\wt b_{n(\wt\Gamma)};\wt\Gamma):= \prod_{j=1}^{K_\omega} W_{b_{k_j}\wt\Gamma(\mu_j)}G^{(L)}_{\wt\Gamma(\mu_j)\wt\Gamma(\nu_j)}W_{ b_{l_j}\wt\Gamma(\nu_j)}.$$
Then to prove (\ref{maximal_bound2}), it suffices to show that
\begin{equation}\label{main_bound3}
\left|\mathbb E R(\wt b_1,...,\wt b_{n(\wt\Gamma)};\wt\Gamma) \right|\prec n^{-n(\wt\Gamma)-h/2}\Phi^{2a}.
\end{equation}
for any partition $\wt\Gamma$. 

To facilitate the description of the proof, we introduce the graphical notations. 
We use a connected graph $({\rm V},{\rm E})$ to represent $R$,
where the vertex set ${\rm V}$ consists of black vertices $b_1,\ldots, b_{n(\Gamma)}$ and white vertices $\wt b_1,\ldots,\wt b_{n(\wt\Gamma)}$, and the edge set ${\rm E}$ consists of $(k,\alpha)$ edges representing $W_{b_{k}\wt b_{\alpha}}$ and $(\alpha,\beta)$ edges representing $G_{\wt b_{\alpha}\wt b_{\beta}}$. We denote 
$$e_{k\alpha}:=\text{number of }(k, \alpha)\text{ edges in }R, \ \ d_\alpha:=\text{number of }(\alpha,\alpha)\text{ edges in }R,$$
and
$$e_{k\alpha}^{(o)}:=\text{number of }(k, \alpha)\text{ edges that are from off-diagonal }S \text{ in } M_w.$$
Due to the mean zero condition \eqref{entry_assmX1}, to attain a nonzero expectation we must have
\begin{equation}\label{ka3}
e_{k\alpha}=0 \ \text{ or } \  e_{k\alpha}\ge 2 \ \ \text{ for all } k,\alpha.
\end{equation}
We also have that, by definition,
\begin{equation}\label{off_e}\sum_\alpha e_{k\alpha}^{(o)} = \deg_o(b_k, M_w)\end{equation}

By (\ref{deg1}), (\ref{parity2}) and (\ref{ka3}), there exist edges $(1,\alpha_1),...,(h,\alpha_h)$ such that $e_{k\alpha_k}$ is odd and $e_{k\alpha_k}\ge 3$, $1\le k \le h$. Let $H:=\{(1,\alpha_1),...,(h,\alpha_h)\}$ be the set of these edges. Denote by $F$ the set of $(k,\alpha)$ edges such that $e_{k\alpha} \ge 2$ and $(k,\alpha)\notin H$. Denote
$$s_\alpha :=\sum_{k=1}^{n(\Gamma)} e_{k\alpha},\quad h_{k\alpha}:=\mathbf 1_{(k,\alpha)\in H},\quad h_\alpha:=\sum_{k=1}^{n(\Gamma)} h_{k\alpha},\quad  f_\alpha:=\sum_{k=1}^{n(\Gamma)} \mathbf 1_{(k,\alpha)\in F},$$
for all $k=1,...,n(\Gamma)$ and $\alpha=1,...,n(\wt\Gamma)$. 
From the above definitions, it is easy to see that $s_\alpha \ge 2$ and $h_\alpha +f_\alpha>0$ (since the classes $\wt b_\alpha$ are nontrivial), $s_\alpha \ge 2d_\alpha$ (since one $(\al,\al)$ edge corresponds to two $(k,\al)$ edges), and
\begin{equation}\label{sum_h}
\sum_{\alpha}h_{k\alpha} = \mathbf 1(1\le k \le h), \quad \sum_\alpha h_\alpha=h.
\end{equation}


Since there are totally $\frac{1}{2}\sum_{\alpha}s_{\al} - d_\alpha$ off-diagonal $G$ edges in $R$, by (\ref{entry_law}) and (\ref{moment_n3}) we have
\begin{align*}
|\mathbb E R| &\prec\prod_{\alpha=1}^{n(\wt\Gamma)}\Big(\Phi^{\frac12s_\al-d_\alpha}\prod_{k=1}^{n(\Gamma)}  \mathbb E |W_{b_k\wt b_\alpha}|^{e_{k\alpha}} \Big)\nonumber\\
&\prec \prod_{\alpha=1}^{n(\wt\Gamma)}\Phi^{\frac12s_\alpha - d_\alpha}\Big(\prod_{(k,\alpha)\in H} \phi_n^{e_{k\alpha}-3}n^{-3/2}\Big)\Big(\prod_{(k,\alpha)\in F} \phi_n^{e_{k\alpha}-2}n^{-1}\Big) =: \prod_{\alpha=1}^m R_\alpha.
\end{align*}
Now we consider the following four cases for $R_\alpha$. The arguments essentially are the same as the ones in \cite[Appendix A.4]{XYY_VESD}, and we repeat them for reader's convenience.

\vspace{5pt}

\noindent{\bf Case 1:} $d_\alpha=0$. In this case we have
\begin{align*}
R_\alpha &\prec  \Phi^{s_\alpha/2}n^{-(h_\alpha + f_\alpha)-h_\alpha/2} \prec\Phi^{s_\alpha/2}n^{-1 -h_\alpha/2} \prec \Phi^{\sum_{k=1}^h h_{k\alpha}/2+\sum_{k=h+1}^n e_{k\alpha}^{(o)}/2}n^{-1 -h_\alpha/2}
\end{align*}
where in the second step we used $h_\al+f_\al > 0$, and 
in the third step we used
$$s_\alpha \ge\sum_k e_{k\alpha}^{(o)} \ge \sum_{k=1}^h h_{k\alpha} +\sum_{k=h+1}^n e_{k\alpha}^{(o)},$$
where we used that $e_{k\alpha}^{(o)}\ge h_{k\alpha}$ for $1\le k \le h$ (recall that if $(k,\alpha)\in H$, then $e_{k\alpha_k}$ is odd and hence one of the edges must come from the off-diagonal $S$).
 
\vspace{5pt}

\noindent{\bf Case 2:} $d_\alpha \ne 0$, $h_\alpha =1$ and $f_\alpha=0$. Then there is only one $k$ such that $e_{k\alpha}>0$ and $s_\alpha =e_{k\alpha}$ is odd. Hence we have $s_\al/2 \ge d_\al + 1/2$ and we can bound $R_\alpha$ as
\begin{align*}
R_\alpha &\prec\Phi^{\frac{1}{2}s_\alpha-d_\alpha}n^{-(h_\alpha + f_\alpha)-h_\alpha/2}\prec \Phi^{1/2}n^{-1 -h_\alpha/2} = \Phi^{\sum_{k=1}^h h_{k\alpha}/2+\sum_{k=h+1}^n e_{k\alpha}^{(o)}/2}n^{-1 -h_\alpha/2} ,
\end{align*}
where in the last step we used  $1 = \sum_{k=1}^h h_{k\alpha} +\sum_{k=h+1}^n e_{k\alpha}^{(o)} , $ since all the summands except one $h_{k\alpha}$ are $0$. 

\vspace{5pt}

\noindent{\bf Case 3:} $d_\alpha\ne 0$, $h_\alpha=0$ and $f_\alpha=1$. Then there is only one $k$ such that $e_{k\alpha}>0$ and $s_\alpha=e_{k\alpha}$. Thus the  $(\alpha,\alpha)$ edges are expanded from the diagonal $S$ variables, 
which implies $ s_\alpha - 2d_\alpha = e_{k\alpha}^{(o)}$. Then we can bound 
\begin{align*}
R_\alpha & \prec\Phi^{\frac{1}{2}s_\alpha - d_\alpha}n^{-(h_\alpha + f_\alpha)-h_\alpha/2}= \Phi^{\sum_ke_{k\alpha}^{(o)}/2}n^{-1 -h_\alpha/2}  \prec \Phi^{\sum_{k=1}^h h_{k\alpha}/2+\sum_{k=h+1}^n e_{k\alpha}^{(o)}/2}n^{-1 -h_\alpha/2} 
\end{align*}
where we used $e_{k\alpha}^{(o)}\ge h_{k\alpha}$ for $1\le k \le h$ in the last step.

\vspace{5pt}

\noindent{\bf Case 4:} $d_\alpha\ne 0$ and $h_\alpha+f_\alpha \ge 2$. Then using $s_\alpha \ge 2d_\alpha$, $\phi_n\prec\Phi $ and $n^{-1/2}\prec\Phi$, we get that
\begin{align*}
R_\alpha &\prec \prod_{(k,\alpha)\in H} \Phi^{e_{k\alpha} - 3 }n^{-3/2}\prod_{ (k,\alpha)\in F } \Phi^{e_{k\alpha} -2}n^{-1}\prec \prod_{(k,\alpha)\in H} \Phi^{e_{k\alpha} - 2 }n^{-1}\prod_{ (k,\alpha)\in F } \Phi^{e_{k\alpha} -1}n^{-1/2} \\
& \le \Phi^{\sum_{k=1}^h h_{k\alpha}/2+\sum_{k=h+1}^n e_{k\alpha}^{(o)}/2}n^{-1 -h_\alpha/2}
\end{align*}
where in the last step we used $e_{k\alpha} \ge h_{k\alpha} + 2$ for $(k,\al)\in H$ and $e_{k\alpha}\ge 2$ for $(k,\al)\in F$.

 



Combining the above four cases, we obtain that 
\begin{align*}
|\mathbb ER| = \prod_{\alpha=1}^{n(\wt\Gamma)} R_\alpha \prec n^{-n(\wt\Gamma)}n^{-\frac{1}{2}\sum_\alpha h_\alpha} \Phi^{\sum_\alpha \left(\sum_{ k=1}^h h_{k\alpha}/2+\sum_{k=h+1}^n e_{k\alpha}^{(o)}/2\right)}.
\end{align*}
Since $\sum_\alpha h_\alpha = h$, to prove (\ref{main_bound3}) it remains to show that
\begin{equation}\label{p_toshow}
\sum_\alpha\left(\sum_{ k=1}^h h_{k\alpha} +\sum_{k=h+1}^{n(\Gamma)} e_{k\alpha}^{(o)}\right) \ge 4a.
\end{equation}
For $k=1,...,h$, using (\ref{sum_h}) and (\ref{deg1}) we get that
$$\sum_{\alpha=1}^m h_{k\alpha} = 1 = \deg(b_k,P).$$
For $k=h+1,...,n$, using (\ref{off_e}) and (\ref{parity2}) we get that
$$\sum_{\alpha=1}^m e_{k\alpha}^{(o)} = \deg_o(b_k, Q)\ge \deg(b_k,P) .$$
With (\ref{deg1}), we then conclude (\ref{p_toshow}), which concludes the proof of Claim \ref{isoclaim1}.

\subsection{Proof of \eqref{eqiso2}}\label{sec hardest}
The proof of this sections adopts the arguments in \cite[Section 5]{XYY_circular}. We expand the left-hand side in (\ref{eqiso2}) as
\begin{equation}
\begin{split}
 \left| {  \sum\limits_{\mu \ne \nu} {v^*_{\left[ \mu \right]} {G_{\left[ \mu\nu \right]}} {{v}_{\left[ \nu \right]}}} } \right|^{2a}  =\sum\limits_{\Gamma } \sum\limits_{b_1,...,b_{n(\Gamma)}}^* \prod\limits_{k = 1}^{a} {v^*_{\left[ \Gamma(\mu_k) \right]}G_{\left[{\Gamma(\mu_k)}{\Gamma(\nu_k)}\right]}{v}_{\left[ \Gamma(\nu_k) \right]}}  \\
 \times \prod\limits_{k = a + 1}^{2a} \overline {v^*_{\left[ \Gamma(\mu_k) \right]}G_{\left[{\Gamma(\mu_k)}{\Gamma(\nu_k)}\right]}{v}_{\left[ \Gamma(\nu_k) \right]}} ,\label{sum_1}
 \end{split}
\end{equation}
where we again define partition of indices
$$\Gamma: \{\mu_1, \cdots, \mu_{2a}, \nu_1,\cdots, \nu_{2a}\} \to L(\Gamma), \quad L(\Gamma)=(b_1, \cdots, b_{n(\Gamma)}),$$
 $\Gamma^{-1}(b_k)$ are equivalence classes of the partition, $n(\Gamma)$ is the number of equivalence classes, $b_k$ are indices taking values in $\cal I_3 $, and $\sum^*$ denotes the summation subject to the condition that $b_1,\ldots, b_{n(\Gamma)}$ all take distinct values and $\Gamma(\mu_k) \ne \Gamma(\nu_k)$ for all $k$. Since the number of such partitions $\Gamma$ is finite and depends only on $a$, to prove \eqref{eqiso2} it suffices to show that for any fixed $\Gamma$,
\begin{equation}\label{temp1111}
\sum_{b_1,...,b_{n(\Gamma)}}^*   \mathbb E\Delta(b_1,...,b_{n(\Gamma)})\prec \Phi^{2a},
\end{equation}
where we abbreviated 
\begin{equation}\label{iso_defn_Delta}
\Delta(\Gamma) := {\prod\limits_{k = 1}^{a} {v^*_{\left[ \Gamma(i_k) \right]}G_{\left[{\Gamma(i_k)}{\Gamma(j_k)}\right]}{v}_{\left[ \Gamma(j_k) \right]}}  \cdot \prod\limits_{k = a + 1}^{2a} \overline {v^*_{\left[ \Gamma(i_k) \right]}G_{\left[{\Gamma(i_k)}{\Gamma(j_k)}\right]}{v}_{\left[ \Gamma(j_k) \right]}}}.
\end{equation}
For simplicity, we again omit the overline for complex conjugate in the following proof. In this way, we can avoid a lot of immaterial notational complexities that do not affect the proof.

For any $b_k\in L$, we can define a corresponding $\mathcal I_4$-valued variable $\overline b_k$ in the obvious way, and we denote
\begin{equation}\label{iso_defn_barL}
[L]:=\{b_1,...,b_{n},\overline b_1,...,\overline b_{n}\}.
\end{equation}
We shall abbreviate $G^{([J])}\equiv G^{[J]}$ for any index set $J\subset \cal I_3.$
Then we define $S$ groups as 
$$S_{[\mu\nu]}:=\begin{pmatrix}( X^T G^{[L]} X)_{\mu\nu}& ( X^T G^{[L]}Y)_{\mu \overline \nu} \\  (Y^TG^{[L]}  X)_{\overline \mu  \nu} &  (Y^T G^{[L]}  Y)_{\overline \mu\overline\nu} \end{pmatrix}.$$

We can define strings as in Definition \ref{def string} with bigger alphabet which includes the new $G$ and $S$ groups. Given any index set $J\subset L$, we shall say $G^{[J]}_{[\mu\nu]}$ (resp. $(G^{[J]}_{[\mu\mu]})^{-1}$) is maximally expanded if $J\cup \{\mu,\nu\}=L$ (resp. $J\cup\{\mu\}=L$). Also the $S$ groups are always maximally expanded. We shall call $G^{[J]}_{[\mu\nu]}$ and $S_{[\mu\nu]}$ off-diagonal symbols if $\mu\ne \nu$, and all the other symbols are diagonal. Note that by the local law \eqref{entry_law} and \eqref{simpleT}, we have that for off-diagonal group $\mathbf a_o$,   $\pi^{-1}\left\llbracket\mathbf a_o\right\rrbracket \pi^{-1}\prec \Phi . $
We use ${\cal F}_{\rm{off}}(\mathbf s)$ to denote the number of off-diagonal symbols in the string $\mathbf s$. 
We can define string operators as in Definition \ref{defn_stringoperator0} using the resolvent expansions in Lemma \ref{lemm_resolvent_group}, and we can perform the resolvent expansions systematically using string operators as in \eqref{resolvent_string2}. We omit the detailed definitions here. Instead, we directly give the following result, which has been proved in \cite{XYY_circular}.
 
\begin{lemma}[Lemma 5.9 of \cite{XYY_circular}]\label{isolem3} 
Consider the string $\mathbf s = ``\Delta(\Gamma)"$. Fix any $l_0\in \N$. There exists a constant $K(a,l_0)\in \N$ depending on $a$ and $l_0$ only such that the following property holds. For any binary sequence $w$ with $|w| = K(a,l_0)$ and $\mathbf s_w \ne \emptyset$, either ${\cal F}_{\rm{off}}(\mathbf s_{w})\ge l_0$ or $\mathbf s_{w}$ is maximally expanded. 
\end{lemma}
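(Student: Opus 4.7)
The plan is to adapt the counting argument used to prove \cite[Lemma 5.9]{isotropic} and \cite[Lemma 5.9]{XYY_circular} to the present $2\times 2$-block setting. For each intermediate string $\mathbf{s}_w$ that arises in the expansion tree, I will track two quantities: the total number of symbols $S(\mathbf{s}_w)$, and the \emph{expansion deficit}
\[
N(\mathbf{s}_w) := \sum_{\mathbf{a}} \bigl(|L| - |J(\mathbf{a})| - |\operatorname{ind}(\mathbf{a})|\bigr),
\]
where the sum ranges over all non-maximally expanded symbols $\mathbf{a}$ in $\mathbf{s}_w$, with $J(\mathbf{a})$ denoting the minor index set of $\mathbf{a}$ and $\operatorname{ind}(\mathbf{a})$ the set of its outer indices. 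At the root string $\mathbf{s} = \Delta(\Gamma)$ we have $S(\mathbf{s}) \le 2a$ and $N(\mathbf{s}) \le 2a \cdot |L| \le 8a^2$, since $|L| = n(\Gamma) \le 4a$.

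The crucial observation is the effect of each operator on $(S, N, \mathcal{F}_{\text{off}})$. Applying $\tau_0$ to the first non-maximally-expanded symbol simply enlarges its minor set $J$ to $J \cup \{\alpha\}$, so $\tau_0$ decreases $N$ by exactly $1$, leaves $S$ unchanged, and does not alter $\mathcal{F}_{\text{off}}$; moreover $\tau_0$ becomes trivial precisely when the string is maximally expanded. By contrast, a non-null application of $\tau_1$ uses the block Schur identities \eqref{eq_res3}--\eqref{eq_res4} to split one symbol into a product of at most $5$ symbols, at least two of which are off-diagonal $G$-groups (since the pivot index $\alpha \in L \setminus (J \cup \{\mu,\nu\})$ is distinct from both outer indices). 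A subsequent application of $\rho$ only rewrites maximally expanded off-diagonal $G$-groups as $S$-groups and therefore preserves the off-diagonal count. Hence each non-null $\tau_1$-step increases $S$ by at most a universal constant $C_\tau$ and increases $\mathcal{F}_{\text{off}}$ by at least $1$.

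Consequently, along any binary branch $w = w(1)\cdots w(K)$ with $\mathbf{s}_w \ne \emptyset$, if $k_1$ denotes the number of $1$'s in $w$, then $\mathcal{F}_{\text{off}}(\mathbf{s}_w) \ge k_1$ and $S(\mathbf{s}_w) \le 2a + C_\tau k_1$, which yields $N(\mathbf{s}_w) \le (2a + C_\tau k_1) \cdot 4a$ throughout the path. Now fix $l_0$ and assume $\mathcal{F}_{\text{off}}(\mathbf{s}_w) < l_0$; then at most $l_0 - 1$ steps along the path are $\tau_1$-steps, so the expansion deficit remains bounded by $M(a,l_0) := (2a + C_\tau l_0) \cdot 4a$. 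Choosing
\[
K(a,l_0) := l_0 + M(a,l_0) + 1,
\]
forces at least $M(a,l_0) + 1$ of the $K(a,l_0)$ steps to be $\tau_0$-steps. Since each non-trivial $\tau_0$-step reduces $N$ by $1$ and $N \ge 0$, at least one of these $\tau_0$-steps must be trivial, which can only happen when every symbol in $\mathbf{s}_w$ is maximally expanded. This proves the desired dichotomy with the explicit constant $K(a,l_0)$.

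The main bookkeeping obstacle is verifying the claim about $\tau_1$ acting on an inverse symbol $(G^{[J]}_{[\mu\mu]})^{-1}$: by \eqref{eq_res4}, this generates a product containing two factors $G^{[J]}_{[\mu\alpha]}$ and $G^{[J]}_{[\alpha\mu]}$ as well as three diagonal-type factors, all with minor sets still satisfying $J' \subset L$, so both the bounded symbol increase and the strict gain in $\mathcal{F}_{\text{off}}$ persist. Apart from this direct inspection of the block identities in Lemma \ref{lemm_resolvent_group}, the remainder of the proof is identical in form to the scalar argument in \cite{XYY_circular}, and I will spell out only the differences caused by the fact that each pivot removes the pair $\{\alpha, \overline{\alpha}\}$ rather than a single index.
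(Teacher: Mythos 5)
Your proposal follows the same counting scheme that underlies the cited proof (note that the paper does not reprove this lemma; it quotes Lemma 5.9 of \cite{XYY_circular}, whose scalar prototype is Lemma 5.9 of \cite{isotropic}): track the number of symbols, the off-diagonal count, and a deficit measuring the distance from maximal expansion; use that every non-null $\tau_1$ raises $\mathcal F_{\rm off}$ by at least one, while every non-trivial $\tau_0$ lowers the deficit by exactly one. Structurally this is right, and your check that the block identities \eqref{eq_res3}--\eqref{eq_res4} always inject at least two off-diagonal groups with pivot $[\alpha]$, $\alpha\notin J\cup\{\mu,\nu\}$, is precisely the point that needs verifying in the $2\times2$ setting.

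Two bookkeeping steps in your concluding deduction do not go through as written, although neither threatens the existence of $K(a,l_0)$. First, from ``$N\le M$ throughout the path'' together with ``at least $M+1$ of the steps are $\tau_0$-steps'' you cannot conclude that some $\tau_0$-step is trivial: the interleaved $\tau_1$-steps replenish $N$, so having $N\ge 0$ is compatible with arbitrarily many non-trivial $\tau_0$-steps unless you also bound the total replenishment. The correct count is that the number of non-trivial $\tau_0$-steps is at most $N(\mathbf s)$ plus the cumulative increase of $N$ over the at most $l_0-1$ non-null $\tau_1$-steps (each such step increases $N$ by at most a constant times $|L|\le 4a$), and $K(a,l_0)$ must be chosen to exceed this total plus $l_0$; your specific choice $K=l_0+M(a,l_0)+1$ need not satisfy this for large $l_0$. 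Second, $\rho$ is not neutral for your deficit: rewriting a maximally expanded off-diagonal group as (diagonal group)(diagonal group)($S$-group) creates a symbol such as $G^{(L\setminus\{i,j\})}_{ii}$ (or its block analogue) which is \emph{not} maximally expanded, so each $\rho$-application adds a bounded amount to both $S$ and $N$. Since $\rho$ is applied only following a $\tau_1$-step, this happens at most $l_0-1$ times and is absorbed by enlarging $K(a,l_0)$; with these two corrections your argument closes and yields the stated dichotomy.
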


As in \eqref{0off_bound}, if we take $l_0 =\lceil  (h\omega^{-1} +2a) \rceil$, then 
$$ \sum_{|w|=K(a,l_0)}\llbracket\mathbf s_w\rrbracket \cdot \mathbf 1({\cal F}_{\rm{off}}(\mathbf s_{w})\ge l_0)\prec n^{h/2} \cdot n^{-h/2}\Phi^{2a} = \Phi^{2a},$$
where the $n^{h/2} $ comes from the vector $\bv$, since we have included $v_{\Gamma(\cdot)}$ into $\Delta(\Gamma)$. It remains to handle the maximally expanded strings. With \eqref{eq_res11} and \eqref{self_Gmu}, we can write
$$(G_{[\mu\mu]}^{[L\setminus\{\mu\}]})^{-1} = \pi^{-1}(z) - \left[S_{[\mu\mu]} - \begin{pmatrix} m_{1c} & 0 \\ 0 & m_{2c}\end{pmatrix}\right],$$
where by local law \eqref{entry_law} and \eqref{Zestimate1}, we have
$$\left\|S_{[\mu\mu]} - \begin{pmatrix} m_{1c} & 0 \\ 0 & m_{2c}\end{pmatrix}\right\| \prec \Phi.$$
Then we can Taylor expand $G_{[\mu\mu]}^{[L\setminus\{\mu\}]}$ in terms of $S_{[\mu\mu]}$, and replace the diagonal maximally expanded $G$ groups with diagonal $S$ groups. 
Finally, as in \eqref{maximal_bound2}, one can see that in order to prove \eqref{temp1111} it suffices to show that for any such monomial $M_w(\Delta(\Gamma))$ consisting of $S$ variables only, we have 
\begin{equation}\label{eq_iso_goal3} {\sum\limits_{\scriptstyle b_1 , \cdots, b_n(\Gamma)}^* \left|\mathbb E{\llbracket M_w( \Delta(\Gamma)) \rrbracket} \right|} \prec  {\Phi ^{2a}}.
\end{equation}

Now we decompose $S_{[\mu\nu]}$ in $M_w$ as
\begin{equation}\label{iso_decomp_1}
S_{\left[\mu\nu\right]}= S_{\mu \nu} \begin{pmatrix} 1 & 0\\
0 & 0
\end{pmatrix} +S_{\mu\overline \nu} \begin{pmatrix} 0 & 1  \\
0 & 0
\end{pmatrix}+S_{\overline \mu \nu}\begin{pmatrix} 0 & 0 \\ 1 & 0
\end{pmatrix}+S_{\overline \mu \overline \nu}\begin{pmatrix} 0& 0 \\ 0 & 1
\end{pmatrix},
\end{equation}
where we define the following symbols:
\begin{equation}\label{SR1}
\begin{split}
& S_{\mu\nu}:=\left(X^T G^{[L]} X\right)_{\mu\nu}  ,\quad S_{\mu\overline \nu}:=\left(X^T G^{[L]} Y\right)_{\mu\overline \nu}, \\ 
& S_{\overline \mu\nu}:=\left(Y^T G^{[L]} X\right)_{\overline \mu\nu} ,\quad S_{\overline \mu\overline \nu}:=\left(Y^T G^{[L]} Y\right)_{\overline \mu\overline \nu} .
\end{split}
\end{equation}
We expand the $S_{[ij]}$'s in $M_w(\Delta(\Gamma))$ using (\ref{iso_decomp_1}), and write $M_w(\Delta(\Gamma))$ as a sum of monomials 
\begin{equation}\label{eqn_monomial2}
M_w(\Delta(\Gamma))=\sum_{\gamma}P_w( \Gamma,\gamma) \cal M_{\gamma},
\end{equation}
where $\gamma$ is an index to label these monomials, $P_w( \Gamma,\gamma)$ denotes a scalar monomial in terms of $S_{\mu\nu}$ variables only, and $\cal M_{\gamma}$ contains the factor depending only on the entries of $\bv$. We make a simple observation that 
\begin{equation}\label{Foff2}
{\cal F}_{\rm{off}}\left(P_w( \Gamma)\right)={\cal F}_{\rm{off}}\left( M_w(\Delta(\Gamma))\right) \ge {\cal F}_{\rm{off}}\left(\Delta(\Gamma)\right)= 2a.
\end{equation}

As in the proof of \eqref{eqiso1}, we define $\deg(b_k, \Delta(\Gamma)):= |\Gamma^{-1}(b_k)|$ and $h:=\#\{ k : \deg(b_k, P) = 1\}$. We have already seen that the indices with degree 1 cause the main trouble. 
Since the number of summands in (\ref{eqn_monomial2}) is of order $\OO(1)$, to prove (\ref{eq_iso_goal3}) it suffices to prove that for any fixed monomial $P_w( \Gamma)$ in (\ref{eqn_monomial2}),
\begin{equation}\label{eq_iso_goal4}
  \left|\mathbb E{\llbracket P_w( \Gamma)\rrbracket} \right| \prec n^{-h/2}{\Phi ^{2a}}.
\end{equation}
To prove \eqref{eq_iso_goal4}, 
we need to keep track of the ``single" indices in $[L]$ during the expansion. In $P_w$, let $\deg_o(b_k, P_w)$ denotes the number of times that $b_k$ or $\overline b_k$ appears as an index of the \textit{off-diagonal} $S$ variables in $P_w$. Again we have the following simple relations: 
\begin{equation}\label{parity2222}
\deg_o(b_k, P_w) \ge \deg(b_k, \Delta(\Gamma)), \quad  \deg_o(b_k, P_w) - \deg(b_k, \Delta(\Gamma)) =0 \mod 2.
\end{equation}

We expand the $S$ variables in $P_w$ using \eqref{SR1}, and call the resulting string $Q_w$.  We now introduce graphs to conclude the proof of (\ref{eq_iso_goal4}). We use a connected graph to represent the string $Q_w$, call it by $\mathfrak G_{Q}$. The indices in $[L]$ are represented by black nodes in $\mathfrak G_{Q}$, and the $i,j$ summation indices in the $S$ variables are represented by white nodes. The $X$ or $Y$ variables are represented by wavy edges, and $G$ are represented by solid lines.  In Figure \ref{M_1}, we give an example of the graph $\mathfrak G_{Q}$. Note that in graph $\mathfrak G_{Q}$, the $G$ edges and $X,Y$ edges are mutually independent, since the $G$ variables are maximally expanded.

\color{black}
\begin{figure}[htb]
\centering
\begin{tikzpicture}
 \tikzset{dot/.style={circle,fill=#1,inner sep=3,minimum size=0.5pt}}
 \tikzset{wdot/.style={circle,draw,inner sep=3,minimum size=0.5pt}}
 \tikzset{zig/.style={decoration={
    zigzag,
    segment length=4,
    amplitude=.9,post=lineto,
    post length=2pt}}}
 \node (a2) [label=right:{$\overline b_1$}, dot] {};
 \node (c1) [left of=a2, xshift=-6mm, yshift = 6mm, wdot] {} edge [decorate,zig]  (a2);
 \node (c2) [left of=a2, xshift=-7mm, yshift = -4mm, wdot] {} edge[decorate,zig] (a2) edge (c1);
 \node (b1) [above of=a2, xshift=3mm, yshift = 8mm, wdot] {} edge[decorate,zig] (a2);
  \node (a3) [label=left:{$b_1$}, below of=a2, xshift=0mm, yshift = -15mm, dot] {}; 
  \node (a4) [label=below:{$\overline b_2$}, right of=a2, xshift = 20mm, dot] {} ; 
 \node (b2) [above of=a4, xshift=-3mm, yshift = 8mm, wdot] {} edge[decorate,zig] (a4) edge (b1);
 \node (a5) [label=left:{$b_2$}, right of=a3, xshift = 20mm, dot] {};
 \node (a6) [label=below:{$\overline b_3$}, right of=a4, xshift = 20mm, dot] {};
 \node (a7) [label=left:{$b_3$}, right of=a5, xshift = 20mm, dot] {};
 \node (b9) [above of=a4, xshift=3mm, yshift = 8mm, wdot] {} edge [decorate,zig] (a4);
 \node (b0) [above of=a6, xshift=-3mm, yshift = 8mm, wdot] {} edge [decorate,zig] (a6) edge  (b9);
 \node (b3) [right of=a6, xshift=8mm, yshift = -3mm, wdot] {} edge[decorate,zig] (a6);
 \node (b4) [right of=a7, xshift=8mm, yshift = 3mm, wdot] {} edge[decorate,zig] (a7) edge (b3);
 \node (b5) [below of=a3, xshift=3mm, yshift = -8mm, wdot] {} edge[decorate,zig] (a3);
 \node (b6) [below of=a5, xshift=-3mm, yshift = -8mm, wdot] {} edge[decorate,zig] (a5) edge (b5);
 \node (b7) [below of=a5, xshift=3mm, yshift = -8mm, wdot] {} edge[decorate,zig]  (a5);
 \node (b8) [below of=a7, xshift=-3mm, yshift = -8mm, wdot] {} edge[decorate,zig] (a7) edge (b7);
 \node (c3) [right of=a4, xshift=3mm, yshift=-4mm, wdot] {} edge[decorate,zig] (a4);
 \node (c4) [above of=a7, xshift=-5mm, yshift=3mm, wdot] {} edge[decorate,zig] (a7) edge (c3);
 \node (l) [draw=black,thick,rounded corners=2pt,below left=10mm, minimum width = 85pt, minimum height = 50pt, right of=a6,xshift = 40mm, yshift = -5mm]  {};
 \node (l1) [wdot, above of=l, xshift = -35pt, yshift = -13pt] {};\node (l2) [wdot, right of= l1, xshift = 5mm, label=right:{$G$}] {} edge (l1) ;
 \node (l3) [dot, below of = l1] {};\node (l4) [wdot, right of= l3, xshift = 5mm] {} edge [decorate, zig, label=right:{$X$, $Y$}] (l3) ;
\end{tikzpicture}
\caption{An example of the graph $\mathfrak G_{Q}$.}
\label{M_1}
\end{figure}

Notice that each white node represents a summation index. As we have done for the black nodes, we first partition the white nodes into blocks and then assign values to the blocks when doing the summation. Let $\wt\Gamma$ be a fixed partition and denote its blocks by $\wt w_1,...,\wt w_{n(\wt\Gamma)}$. If two white nodes of some {\bf off-diagonal} $S$ variable happen to lie in the same block, then we merge the two nodes into one white node and call the resulting graph $\mathfrak G_{Q1}$. Note that we do not merge the white nodes for diagonal $S$ variables. 
Let $n_d^{(o)}$ be the number of diagonal $G$ edges in the {\it off-diagonal} $S$ variables. 
We trivially have 
\begin{equation}\label{number_white_node}
\text{\# of white nodes} = -n_d^{(o)}+\sum_{k = 1}^{n} \left[{\deg \left( {{b_k}} \right) + \deg ({{\overline b}_k})} \right],
\end{equation}
where the degrees of the nodes are defined in the usual graphical sense.

We define the subset of single indices
\begin{equation}\label{Vnode}
\mathcal V := \{b_k \in L|\ \deg(b_k ,\Delta(\Omega)) = 1\},\quad |\mathcal V |=h.
\end{equation}
By (\ref{parity2222}), there are at least $h$ black nodes with odd $\deg$ in $[\mathcal V]$. WLOG, we may assume these nodes are $b_1,...,b_{h}$. To have nonzero expectation, for each $k=1,...,h$, there must exist a block $\wt w_{i_k}$ connecting to $b_k$ which contains at least $3$ white nodes. Then we denote by $A(b_k)\subseteq \wt w_{i_k}$ the set of the adjacent white nodes to $b_k$ in $\wt w_{i_k}$ (the block $\wt w_{i_k}$ may also contain white nodes that do not connect to $b_k$, hence in general $A(b_k)$ may not be equal to $\wt w_{i_k}$).  We call a white node that is not connected to loops of solid $G$ edges a {\bf normal white node}, i.e. normal white nodes are white nodes that have not been merged before. The other white nodes are called merged white nodes. Then we define
\[\mathcal V_{0} := \left\{b_k|\ A(b_k)\text{ has no normal white nodes,} \ 1\le k\le h\right\},\]
and
\[\mathcal V_{1} := \left\{b_k|\ A(b_k)\text{ has at least one normal white node,}\ 1\le k\le h \right\}.\]
The following lemma gives the key estimates we need.

\begin{lemma}
For any partition of white nodes $\wt\Gamma$,
\begin{equation}\label{claim3_1}
2n(\wt\Gamma) \le -|\mathcal V_{1}|-|\mathcal V_{0}|/2- n_d^{(o)}+\sum_{k = 1}^{n(\wt\Gamma)} \left[{\deg \left( {{b_k}} \right) + \deg ({{\overline b}_k})} \right] ,
\end{equation}
and
\begin{equation}\label{claim3_2}
 n_o \ge 2a +  |\mathcal V_{0}| , 
\end{equation}
where $n_{o}$ is the total number of off-diagonal $S$ variables in $Q_w$. 
\end{lemma}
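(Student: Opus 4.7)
The plan is to prove both bounds by careful counting in the graph $\mathfrak G_{Q1}$, exploiting the mean-zero condition \eqref{entry_assmX1} and the parity relation \eqref{parity2222}. For \eqref{claim3_1}, the starting point is that every white-node block $\wt w_\alpha$ must carry at least two wavy-edge endpoints (more precisely, the number of wavy edges between each black node $b_k$ and each adjacent block must be even, hence $\ge 2$, for the expectation to be nonzero). Summing gives $2n(\wt\Gamma) \le \#\{\text{white nodes in }\mathfrak G_{Q1}\}$, and by \eqref{number_white_node} the right-hand side equals $-n_d^{(o)}+\sum_k[\deg(b_k)+\deg(\overline b_k)]$, producing the basic inequality without the $|\mathcal V_1|+|\mathcal V_0|/2$ correction.

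To sharpen this, I would use that for every $b_k\in\mathcal V$ the odd parity of $\deg_o(b_k,P_w)$ (which follows from \eqref{parity2222} with $\deg(b_k,\Delta(\Gamma))=1$) together with the pairing condition forces the block $\wt w_{i_k}$ adjacent to $b_k$ to contain at least three white nodes, not just two. For $b_k\in\mathcal V_1$ the guaranteed normal (unmerged) white node in $A(b_k)$ contributes a full surplus of $+1$ to the white-node count of $\wt w_{i_k}$ beyond the basic $2$. For $b_k\in\mathcal V_0$ every adjacent white node in $A(b_k)$ has been merged from two $G$-endpoints of a common off-diagonal $S$, so the surplus gets double-booked across the two halves of that $S$ and contributes only $+1/2$. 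Summing these gains and dividing by two yields the desired correction $-|\mathcal V_1|-|\mathcal V_0|/2$, provided one verifies that blocks $\wt w_{i_k}$ shared among several $b_k\in\mathcal V$ still absorb the corresponding surplus; this is handled by observing that if two distinct single-index black nodes connect to the same block, the combined pairing requirement already exceeds the sum of the individual surpluses.

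For \eqref{claim3_2}, the bound $n_o\ge 2a$ is immediate from \eqref{Foff2}, since the sequence of operations $\tau_0,\tau_1,\rho$ and the subsequent $S$-expansion via \eqref{iso_decomp_1} never reduce the number of off-diagonal symbols below what was present in $\Delta(\Gamma)$. The extra $|\mathcal V_0|$ is where the combinatorics of $\mathcal V_0$ become essential: for each $b_k\in\mathcal V_0$, the constraint that $A(b_k)$ consists entirely of merged white nodes means that every off-diagonal $S$-variable having $b_k$ as an external index contributes a diagonal $G$ (under $\wt\Gamma$), and the odd parity of $\deg_o(b_k,P_w)$ together with the mean-zero pairing condition then forces the presence of at least one additional off-diagonal $S$ beyond those already counted in $\mathcal F_{\text{off}}(\Delta(\Gamma))$. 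Summing this single extra off-diagonal $S$ over $b_k\in\mathcal V_0$ gives $n_o\ge 2a+|\mathcal V_0|$.

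The main obstacle will be the bookkeeping in \eqref{claim3_1}, especially justifying the factor $1/2$ attached to $|\mathcal V_0|$ and avoiding double-counting when several black nodes in $\mathcal V$ share the same adjacent block $\wt w_{i_k}$. A clean way to organize the argument is to process the black nodes $b_k\in\mathcal V$ one at a time, attributing each surplus to the corresponding block and tracking a running deficit; the merged-versus-normal dichotomy then naturally produces weights $1$ and $1/2$, respectively. A secondary difficulty is confirming that the parity argument used for \eqref{claim3_2} yields an \emph{additional} off-diagonal $S$ rather than just reusing one already counted toward $\mathcal F_{\text{off}}$; this should follow from the fact that the off-diagonal $S$'s counted in \eqref{Foff2} are those already present in $M_w(\Delta(\Gamma))$, whereas the single-index parity argument produces new off-diagonal $S$'s specifically attached to the $b_k\in\mathcal V_0$ nodes.
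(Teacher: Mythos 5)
Your proof of \eqref{claim3_1} follows the paper's own route: at least two white nodes per block from the mean-zero pairing, the identity \eqref{number_white_node}, a surplus of $1$ for each $b_k\in\mathcal V_1$ and $1/2$ for each $b_k\in\mathcal V_0$, and a combined count when several $A(b_k)$ share a block. Two small inaccuracies: the multiplicity of wavy edges between a black node and a block need not be even (third moments do not vanish; for the single indices it is odd and $\ge 3$) --- what you need, and what is true, is that it is $0$ or $\ge 2$; and the $1/2$ weight of a merged white node comes from its being adjacent to \emph{two distinct black nodes}, hence lying in two sets $A(b_{k_1}),A(b_{k_2})$, not from ``the two halves'' of one $S$. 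That sharing is exactly what you must invoke when a block contains several $A(b_k)$'s, and it is encoded in the paper's weighted inequality $|\wt w_i|\ge\sum_{b_k:A(b_k)\subseteq\wt w_i}\bigl(2\cdot\mathbf 1_{\mathcal V_1}(b_k)+\tfrac32\,\mathbf 1_{\mathcal V_0}(b_k)\bigr)\ge 2+\sum_{b_k:A(b_k)\subseteq\wt w_i}\bigl(\mathbf 1_{\mathcal V_1}(b_k)+\tfrac12\,\mathbf 1_{\mathcal V_0}(b_k)\bigr)$; also one simply sums the gains, there is no further division by two.

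The genuine gap is in \eqref{claim3_2}. You assign one \emph{new} off-diagonal $S$ to each $b_k\in\mathcal V_0$ and sum over $k$, and your closing remark only rules out reusing one of the $2a$ off-diagonal symbols inherited from $\Delta(\Gamma)$; it does not rule out the same new off-diagonal $S$ being claimed by two different nodes of $\mathcal V_0$ (which happens precisely when both of its external indices are single indices in $\mathcal V_0$), so the final summation is not justified as written. The paper avoids this by counting expansion operations rather than $S$ variables: since $\deg(b_k,Q_w)\ge 3>1=\deg(b_k,\Delta(\Gamma))$, a $\tau_1$-type expansion (the second term of \eqref{eq_res3} or \eqref{eq_res4}) must have been performed at $\mu=b_k$; these operations are distinct for distinct $b_k$, the off-diagonal count never decreases along the expansion and each such operation raises it by at least one, so $n_o\ge 2a+|\mathcal V_0|$ follows additively. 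Alternatively, your graph-side argument can be repaired by a handshake count: each $b_k\in\mathcal V_0$ has at least three off-diagonal attachments while every off-diagonal $S$ has exactly two external indices in distinct classes, so $2n_o=\sum_{k}\deg_o(b_k,P_w)\ge \sum_k\deg(b_k,\Delta(\Gamma))+2|\mathcal V_0|=4a+2|\mathcal V_0|$, which is the desired bound.
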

\begin{proof}
WLOG, let $\wt w_{1},...,\wt w_{d}$ be the distinct blocks among the blocks $\wt w_{i_k}$, $k=1,\cdots, h$. 
A merged white node is connected to two black nodes and a normal white node is connected to one black node. Hence a merged white node belongs to two sets $A(b_{k_1}), A(b_{k_2})$, and a normal white node belongs to exactly one set $A(b_k)$. Therefore for each $i=1,...,d$, if $ \wt w_{i}$ contains exactly one $A(b_k)$, then
 \[\left| \wt w_{i}\right|\ge 3\ge 2+\mathbf 1_{\mathcal V_{1}}(b_k)+\frac{\mathbf 1_{\mathcal V_{0}}(b_k)}{2}.\]
If $  \wt w_{i}$ contains at least two $A(b_k)$, then
 \be\label{addcount1}
 \begin{split}
 \left| \wt w_{i}\right| &\ge \sum\limits_{b_k: A(b_k)\subseteq \wt w_{i}}\left(2 \cdot \mathbf 1_{\mathcal V_{1}}(b_k)+\frac{3}{2}\cdot\mathbf 1_{\mathcal V_{0}}(b_k)\right)\\
 &\ge 2+\sum\limits_{b_k: A(b_k)\subseteq W_{i}}\left(\mathbf 1_{\mathcal V_{1}}(b_k)+\frac{\mathbf 1_{\mathcal V_{0}}(b_k)}{2}\right).
 \end{split}\ee
Here the first inequality holds due to the following reasoning. For each black node $b_k$ with $A(b_k)\subseteq  \wt w_i$, we count the number of white nodes in $A(b_k)$ and add them together. During the counting, we assign weight 1 to a normal white node and weight $1/2$ to a merged white node (since it is shared by two different black nodes). If $b_k\in \mathcal V_{0}$, there are at least three merged white nodes in $A(b_k)$ with total weight $\ge 3/2$. If $b_k\in \mathcal V_{1}$, there are at least one normal white node and two other white nodes in $A(b_k)$ with total weight $\ge 2$. 

Then summing \eqref{addcount1} over $i$, we get that
\[\sum\limits_{i=1}^{d}\left|\wt w_{i}\right|\ge 2d+|\mathcal V_{1}|+\frac{|\mathcal V_{0}|}{2}.\]
For the other $n(\wt\Gamma)-d$ blocks, each of them contains at least two white nodes, so we get that
\[2n(\wt\Gamma)+|\mathcal V_{1}|+ \frac{|\mathcal V_{0}|}{2} \le\sum\limits_{i=1}^{d}\left|\wt w_{i}\right|+2(n(\wt\Gamma)-d) \le -n_d^{(o)}+\sum_{k = 1}^{n(\wt\Gamma)} \left[{\deg \left( {{b_k}} \right) + \deg ({{\overline b}_k})} \right],\]
where we used (\ref{number_white_node}) in the last step. Rearranging terms gives (\ref{claim3_1}).

For $b_k \in \mathcal V_{0}$, $A(b_k)$ contains at least three white nodes from off-diagonal $R$-groups. Hence we have $\deg(b_k,Q_w)\ge 3$, compared with $\deg(b_k,\Delta(\Gamma)) =1 $. This means that we have applied \eqref{eq_res3} or \eqref{eq_res4} with respect to $\mu=b_k$ at least once and picked the second terms at some step of the expansions (which corresponds to the $\tau_1$ operation in Definition \ref{defn_stringoperator0}). Each such operation increases the off-diagonal $S$ variables at least by 1, which gives \eqref{claim3_2}.
\end{proof}

Now we prove (\ref{eq_iso_goal4}). By \eqref{moment_n3} and the discussion below \eqref{Vnode} for the single indices $b_1, \cdots, b_h$, we get
\begin{align*}
 \left|\mathbb E \llbracket P_w \rrbracket\right|=  \left|\mathbb E \llbracket Q_w \rrbracket\right| & \prec \sum\limits_{\wt\Gamma } {\sum\limits_{\wt w_1, \ldots ,\wt w_{n(\wt\Gamma)}}^* \Phi^{n_o - n_d^{(o)}} n^{-n(\wt\Gamma) - h/2} \phi_n^{\sum_{k=1}^{n(\wt\Gamma)}\left[\deg \left( {{b_k}} \right) + \deg \left( {{{\overline b}_k}} \right)\right] - 2n(\wt\Gamma) - h}  } \nonumber\\
& \prec n^{-h/2} \Phi^{\sum_{k=1}^{n(\wt\Gamma)}\left[\deg \left( {{b_k}} \right) + \deg \left( {{{\overline b}_k}} \right)\right] - 2n(\wt\Gamma) - h  +n_o - n_d^{(o)}}  \\
& \prec n^{-h/2} \Phi^{  - |\mathcal V_{0}|/2 +n_o } \prec n^{-h/2} \Phi^{ 2a} ,
\end{align*}
where in the third step we used (\ref{claim3_1}) and $|\mathcal V_0|+|\mathcal V_1|=h$, and last step (\ref{claim3_2}). Thus we have proved (\ref{eq_iso_goal4}), which concludes the proof of \eqref{eqiso2}.



{\small

}

\end{document}